\newtheorem{theorem}{Theorem}[section]
\newtheorem{lemma}[theorem]{Lemma}
\newtheorem{corollary}[theorem]{Corollary}
\newtheorem{proposition}[theorem]{Proposition}
\theoremstyle{definition}
\newtheorem{definition}[theorem]{Definition}
\newtheorem{remark}[theorem]{Remark}
\newtheorem{example}[theorem]{Example}
\newtheorem{notation}[theorem]{Notation}
\newcommand{\h}{{\rm I\hskip-2pt h}}
\newcommand{\depth}{{\rm d\hskip-2pt I}}
\newcommand{\ostar}{\circledast}
\newcommand{\igual}{{=\hskip-3pt =\hskip-3pt=\hskip-3pt =\hskip-3pt}}
\newcommand{\End}{{ \rm End }}
\newcommand{\Cok}{{ \rm Cok }\,}
\newcommand{\Coker}{{ \rm Coker }}
\newcommand{\Ext}{{ \rm Ext }}
\newcommand{\Hom}{{ \rm Hom }}
\newcommand{\Ker}{{ \rm Ker }\,}
\newcommand{\Ob}{{ \rm Ob }}
\newcommand{\ad}{{ \rm ad }}
\newcommand{\cv}{{ \rm cv }}
\newcommand{\tw}{{ \rm tw }}
\newcommand{\Mod}{{ \rm Mod }}
\newcommand{\Gba}{{\rm G\overline{b}}}
\newcommand{\Gbb}{{\rm G\underline{b}}}
\newcommand{\GM}{{\rm GMod}}
\newcommand{\rad}{{ \rm rad }}
\renewcommand{\top}{{ \rm top }}
\newcommand{\proj}{{\rm proj }}
\newcommand{\g}{\hbox{-}}
\newcommand{\uddots}{\mathinner{\mkern1mu\raise1pt\vbox{\kern7pt\hbox{.}}
\mkern2mu\raise4pt\hbox{.}\mkern2mu\raise7pt\hbox{.}\mkern1mu}}
\newcommand{\hueca}[1]{\mathbb{#1}}
\newcommand{\lddots}{
\mathinner{
\mkern1mu\raise1pt}\vbox{\kern7pt\hbox{.}}
\mkern2mu\raise3pt\hbox{.}
\mkern2mu\raise7pt\hbox{.}\mkern1mu}
\renewcommand{\mod}{{\rm mod }}
\renewcommand{\Im}{{\rm Im}\,}
\newcommand{\rightdashmap}[1]{\smash{\mathop{\hbox to 
20pt{-\,-\,-\,\rightarrowfill}}\limits^{#1}}}
\newcommand{\rightmap}[1]{\smash{\mathop{\hbox to 
20pt{\rightarrowfill}}\limits^{#1}}}
\newcommand{\leftmap}[1]{\smash{\mathop{\hbox to 
20pt{\leftarrowfill}}\limits^{#1}}}
\newcommand{\lmapdown}[1]{\llap{$\vcenter{\hbox{$\scriptstyle#1$}}$}
\Bigg\downarrow}
\newcommand{\rmapdown}[1]{\Bigg\downarrow\rlap{$\vcenter{\hbox{$\scriptstyle#1$}
}$}}
\newcommand{\lmapup}[1]{\llap{$\vcenter{\hbox{$\scriptstyle#1$}}$}\Bigg\uparrow}
\newcommand{\rmapup}[1]{\Bigg\uparrow\rlap{$\vcenter{\hbox{$\scriptstyle#1$}}$}}
\newcommand{\longrightmap}[1]{\smash{\mathop{\hbox to 
4cm{\rightarrowfill}}\limits^{#1}}}
\newcommand{\longleftmap}[1]{\smash{\mathop{\hbox to 
4cm{\leftarrowfill}}\limits^{#1}}}
\newcommand{\medrightmap}[1]{\smash{\mathop{\hbox to 
2cm{\rightarrowfill}}\limits^{#1}}}
\newcommand{\medleftmap}[1]{\smash{\mathop{\hbox to 
2cm{\leftarrowfill}}\limits^{#1}}}
\newcommand{\shortlmapdown}[1]
{\downarrow\rlap{$\vcenter{\hbox{$\scriptstyle#1$}}$}}
\newcommand{\idmapdown}[1]
{\hskip-8pt\mathop{\hskip-5pt\raise6pt
\hbox{$\scriptstyle#1$}\hskip-5pt\swarrow}}
\newcommand{\ddmapdown}[1]
{\hskip-5pt\mathop{\searrow\hskip-6pt\raise5pt\hbox{$\scriptstyle#1$}}}
\newcommand{\idmapup}[1]
{\hskip-5pt\mathop{\nwarrow\hskip-6pt \raise5pt\hbox{$\scriptstyle#1$}}}
\newcommand{\ddmapup}[1]
{\hskip-8pt\mathop{\hskip-5pt\raise6pt\hbox{$\scriptstyle#1$}\hskip-5pt\nearrow}
}
\newcommand{\flechypunt}[2]{\ \smash{\mathop{
   \raise 3pt \hbox to 40pt{\rightarrow}\hskip-40pt \lower 3pt
   \hbox to 40pt{\dashrightarrow}}\limits^{#1}_{#2}}\ }
\newcommand{\dobleflechavieneva}[2]{\ \smash{\mathop{
   \raise 3pt \hbox to 40pt{\leftarrowfill}\hskip-40pt \lower 3pt
   \hbox to 40pt{\rightarrowfill}}\limits^{#1}_{#2}}\ }
\newcommand{\dobleflechavienevabis}[2]{\ \smash{\mathop{
   \raise 3pt \hbox to 20pt{\leftarrowfill}\hskip-20pt \lower 3pt
   \hbox to 40pt{\rightarrowfill}}\limits^{#1}_{#2}}\ }
\newcommand{\dobleflecha}[2]{\ \smash{\mathop{
   \raise 3pt \hbox to 40pt{\rightarrowfill}\hskip-40pt \lower 3pt
   \hbox to 40pt{\rightarrowfill}}\limits^{#1}_{#2}}\ }
\newcommand{\longequal}{\ \smash{\mathop{
   \raise 5pt \hbox to 35pt{\hrulefill}\hskip-35pt \lower 0pt
   \hbox to 35pt{\hrulefill}}}\ }
\begin{document}
 
\title{\bf Homological systems and bocses}
\vskip-1cm
\author{R. Bautista, E. P\'erez and L. Salmer\'on}
\vskip-1cm
\date{}
\maketitle

\renewcommand{\thefootnote}{}

\footnote{2010 \emph{Mathematics Subject Classification}:
   16E45, 16E30, 16G10, 16G20.}

\footnote{\emph{Keywords and phrases}: differential tensor
algebra, ditalgebra, bocs, homological systems, 
standardly filtered modules, exact Borel subalgebras.}
  
  \begin{abstract}
\noindent 
We show that, up to Morita equivalence, any finite-dimensional algebra with a suitable homological system, admits an exact Borel subalgebra. This generalizes   
a theorem by Koenig, K\"ulshammer, and Ovsienko, which holds for quasi-hereditary algebras. Our proof follows the same general scheme proposed 
by these authors, in a more general context: we associate a differential graded tensor algebra with relations, using the structure of $A_\infty$-algebra of a suitable Yoneda algebra, and use its category of modules to describe the category of filtered modules associated to the given homological system. 
\end{abstract}

\section{Introduction}
 
 We denote by $k$ a fixed algebraically closed ground field. For every $k$-algebra or any bimodule over given $k$-algebras we consider, we assume that the field $k$ acts centrally on them. 
 
 Given a $k$-algebra $\Lambda$, we denote by $\Lambda\g\Mod$ the category of left $\Lambda$-modules and by $\Lambda\g \mod$ its full subcategory of finitely generated $\Lambda$-modules.

 We recall that a \emph{preordered set} $({\cal P},\leq)$ is a non-empty set ${\cal P}$ equipped with a relation $\leq$ such that $i\leq i$, for all $i\in {\cal P}$ and such that, whenever we have $i,j,s\in {\cal P}$ with  $i\leq j$ and $j\leq s$, we also have $i\leq s$.  Two elements $i,j\in {\cal P}$ are \emph{equivalent} iff $i\leq j$ and $j\leq i$. In this case, we write $i\sim j$.

 \begin{remark}\label{R: P sobre sim}
 Let $\overline{{\cal P}}={\cal P}/\sim$ be the set of equivalence 
  classes of ${\cal P}$ modulo the equivalence relation $\sim$. For any $i\in {\cal P}$, denote by $\overline{i}$ its equivalence class. Then, $\overline{\cal P}$ is a partially ordered set with the relation defined by $\overline{i}\leq \overline{j}$ iff $i\leq j$. 
 \end{remark}

 We recall the following terminology from \cite{MSX}, 
 in the particular context we are interested in. 
 
 \begin{definition}\label{D: homological system y F(Delta)}
  Given a finite-dimensional $k$-algebra $\Lambda$, a \emph{(finite) homological system $({\cal P},\leq,\{\Delta_i\}_{i\in {\cal P}})$} consists of 
  a finite preordered set $({\cal P},\leq)$ and 
  a family of pairwise non-isomorphic indecomposable finite-dimensional $\Lambda$-modules $\{\Delta_i\}_{i\in {\cal P}}$ satisfying the following two conditions:
  \begin{enumerate}
  \item $\Hom_\Lambda(\Delta_i,\Delta_j)\not=0$ implies $i\leq j$;
  \item $\Ext^1_\Lambda(\Delta_i,\Delta_j)\not=0$ implies $i\leq j$ and $i\not\sim j$.
  \end{enumerate}
  Given such an homological system, we write  $\Delta:=\bigoplus_{i\in {\cal P}}\Delta_i$ and denote by ${\cal F}(\Delta)$  the full subcategory of $\Lambda\g\mod$ consisting of the trivial module $0$ and all those $M\in \Lambda\g\mod$ which admit a \emph{$\Delta$-filtration},  that is a filtration of submodules
$$0=M_t\subseteq M_{t-1}\subseteq \cdots\subseteq M_1\subseteq M_0=M$$
such that $M_j/M_{j+1}$ is isomorphic to some module in $\{\Delta_i\mid i\in {\cal P}\}$, for each $j\in [0,t-1]$.  
 \end{definition}

 The following special homological systems will be relevant for us.

 \begin{definition}\label{D: admissible hs and strict hs}
 A finite homological system ${\cal H}=({\cal P},\leq,\{\Delta_i\}_{i\in {\cal P}})$ for a finite-dimensional $k$-algebra $\Lambda$ is called \emph{admissible} if $\Lambda\in {\cal F}(\Delta)$ and the number of isomorphism classes of indecomposable projective $\Lambda$-modules coincides with the cardinality of ${\cal P}$. 
 
  An admissible homological system  ${\cal H}=({\cal P},\leq,\{\Delta_i\}_{i\in {\cal P}})$  will be called \emph{strict} if the following condition is satisfied: 
 $$ \hbox{For each } n\geq 1, \hbox{ we have that } \Ext^n_\Lambda(\Delta_i,\Delta_j)\not=0 \hbox{ implies } i\leq j \hbox{ and } i\not\sim j.$$
 \end{definition}
 
 \begin{proposition}\label{P: admis implies H2}
 Let ${\cal H}=({\cal P},\leq,\{\Delta_i\}_{i\in {\cal P}})$ be an admissible homological system for some finite-dimensional algebra $\Lambda$. Denote by $\proj_\Lambda$ a fixed  set of representatives  of the isoclasses  of the  indecomposable projective $\Lambda$-modules. Then,  we can index $\proj_\Lambda$ with ${\cal P}$   in such a way  that each $P_i\in \proj_\Lambda$ is the projective cover of $\Delta_i$, for all $i\in {\cal P}$ and, moreover,
 there is an exact   sequence 
 $$\begin{matrix}
   0&\rightmap{}&Q_i&\rightmap{}&P_i&\rightmap{}&\Delta_i&\rightmap{}&0,
   \end{matrix}$$ 
in $\Lambda\g\mod$ such that the module $Q_i$ admits a $\Delta$-filtration with factors of the form $\Delta_j$, with $\overline{j}>\overline{i}$. 
 \end{proposition}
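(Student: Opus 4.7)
\medskip\noindent
\textit{Proof plan.} The strategy is to extract, for each $i\in{\cal P}$, the projective cover $P_i$ from a suitably sorted $\Delta$-filtration of an indecomposable projective summand of $\Lambda$, and to obtain the indexing bijection via a cardinality count based on admissibility.

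The key preliminary is a sorting lemma: in any $\Delta$-filtration $0=M_t\subseteq\cdots\subseteq M_0=M$ of a module in ${\cal F}(\Delta)$, two adjacent filtration factors $\Delta_{\sigma(j)}$ (above) and $\Delta_{\sigma(j+1)}$ (below) can be interchanged whenever $\Ext^1_\Lambda(\Delta_{\sigma(j)},\Delta_{\sigma(j+1)})=0$, because the canonical sequence $0\to\Delta_{\sigma(j+1)}\to M_j/M_{j+2}\to\Delta_{\sigma(j)}\to 0$ then splits. By condition (2) of Definition~\ref{D: homological system y F(Delta)}, this vanishing holds unless $\sigma(j)<\sigma(j+1)$ strictly. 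Fixing \emph{any} linear extension of the partial order on $\overline{\cal P}$ (cf.\ Remark~\ref{R: P sobre sim}), a bubble-sort argument then rearranges any $\Delta$-filtration so that the classes of its factors are non-decreasing along that extension. Moreover, within a single equivalence class, (2) forces $\Ext^1_\Lambda(\Delta_a,\Delta_b)=0$ for all pairs, so any iterated extension by $\Delta$'s in a single class splits as a direct sum of them.

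Next, invoking that ${\cal F}(\Delta)$ is closed under direct summands (a standard property of homological systems, cf.\ \cite{MSX}), every indecomposable projective $P$ of $\Lambda$ lies in ${\cal F}(\Delta)$ and admits a sorted $\Delta$-filtration $P=P^{(0)}\supset P^{(1)}\supset\cdots\supset P^{(s)}=0$. Let $\overline{C}$ be the class of $\sigma(0)$ and $c\geq 1$ the number of top factors lying in $\overline{C}$; then $P/P^{(c)}\cong\bigoplus_{j<c}\Delta_{\sigma(j)}$, whose top is a nonzero quotient of the simple module $\top P$, hence simple. Since each $\top(\Delta_{\sigma(j)})$ is nonzero, this forces $c=1$ and $\top(\Delta_{\sigma(0)})\cong\top P$ is simple, so $P$ is the projective cover of $\Delta_{\sigma(0)}$. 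Define $\tilde T(P):=\{i\in{\cal P}:\top(\Delta_i)\cong\top P\}$; the previous step gives $\tilde T(P)\neq\emptyset$, and since distinct indecomposable projectives have non-isomorphic tops the sets $\tilde T(P)$ are pairwise disjoint, whence $\sum_P|\tilde T(P)|\leq|{\cal P}|$. Admissibility yields $|\proj_\Lambda|=|{\cal P}|$ and each $|\tilde T(P)|\geq 1$, forcing $|\tilde T(P)|=1$ for every $P$ and $\bigsqcup_P\tilde T(P)={\cal P}$. Hence every $\Delta_i$ has simple top and $i\mapsto P_i:=\tilde T^{-1}(i)$ is the required bijection ${\cal P}\leftrightarrow\proj_\Lambda$, with $P_i$ the projective cover of $\Delta_i$.

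For the exact sequence I would take any sorted $\Delta$-filtration of $P_i$: its top factor is $\Delta_i$ (since $\tilde T(P_i)=\{i\}$) and $c=1$ leaves no other class-$\overline{i}$ factor. To rule out factors $\Delta_j$ with $\overline{j}$ merely incomparable to $\overline{i}$, I would exploit the freedom in the linear extension: an extension placing $\overline{j}$ before $\overline{i}$ (available by incomparability) would yield a sorted filtration whose minimal-class factor lies in $\overline{j}$, contradicting $\sigma(0)=i$. Consequently every filtration factor of $Q_i:=P_i^{(1)}$ satisfies $\overline{j}>\overline{i}$ strictly in the partial order, as required. The hard part is the invocation of closure of ${\cal F}(\Delta)$ under direct summands, which is the essential non-trivial input allowing us to work with $\Delta$-filtrations of the indecomposable projectives; the rest reduces to bookkeeping with short exact sequences, a cardinality count, and the flexibility in choosing the linear extension.
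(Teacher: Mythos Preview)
Your proof is correct and takes a genuinely different route from the paper's.

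Both arguments start from the same external input, namely that ${\cal F}(\Delta)$ is closed under direct summands \cite{MSX}(3.16), so every indecomposable projective lies in ${\cal F}(\Delta)$. From that point the paper proceeds structurally: it invokes \cite{MSX}(3.12) and (3.4) to obtain, for each $i$, an exact sequence $0\to U_i\to X_i\to\Delta_i\to 0$ with $X_i$ an indecomposable Ext-projective in ${\cal F}(\Delta)$ and $U_i$ filtered by $\Delta_j$'s with $\overline{j}>\overline{i}$; then it shows $P_i\cong X_i$ by a local-endomorphism-ring argument, constructing morphisms $s_i:P_i\to X_i$ and $t_i:X_i\to P_i$ over $\Delta_i$ and checking that both composites are units. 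The description of the kernel thus comes for free from the cited \cite{MSX} results.

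Your argument instead stays entirely within elementary filtration manipulations: the sorting lemma, the observation that extensions within a single class of $\overline{\cal P}$ split, and the linear-extension trick to exclude incomparable factors. This avoids the Ext-projective cover machinery of \cite{MSX}(3.12) and (3.4) altogether. The trade-off is that the paper's approach identifies $P_i$ conceptually as the relative projective cover of $\Delta_i$ in ${\cal F}(\Delta)$, which is of independent interest, whereas yours is more self-contained but less structural. Your step showing $c=1$ via $\top(P/P^{(c)})$ being a nonzero quotient of the simple $\top P$ is a particularly clean replacement for the local-ring comparison. One minor imprecision: in your last paragraph the minimal-class factor under the new linear extension need not lie exactly in $\overline{j}$, only in some class $\leq\overline{j}$; but since this is still $<\overline{i}$ while the top factor must be $\Delta_i$, the contradiction stands.
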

 
 \begin{proof} From \cite{MSX}(3.16), we know that ${\cal F}(\Delta)$ is closed under direct summands, so every indecomposable projective $\Lambda$-module $P$ belongs to ${\cal F}(\Delta)$. For each one of these indecomposable projectives $P$ we can choose a $\Delta$-filtration
 $$0=M_t\subseteq M_{t-1}\subseteq \cdots \subseteq M_1\subseteq M_0=P$$
 with $P/M_1\cong \Delta_{\sigma(P)}$, for some index $\sigma(P)\in{\cal P}$. So, there is a short exact sequence 
 $$0\rightmap{}M_1\rightmap{}P\rightmap{g}\Delta_{\sigma(P)}\rightmap{}0,$$
 so $\top(P)\cong \top(\Delta_{\sigma(P)})$ and $g$ is the projective cover of $\Delta_{\sigma(P)}$. So, given two non-isomorphic indecomposable projectives $P$ and $P'$, their tops $\top(P)$ and $\top(P')$ are not isomorphic, so the modules $\Delta_{\sigma(P)}$ and $\Delta_{\sigma(P')}$ are not isomorphic. Since the cardinality of the set of isomorphism classes of the indecomposable projectives coincides with the cardinality of ${\cal P}$, the map $P\mapsto \Delta_{\sigma(P)}$ determines a bijection between $\proj_\Lambda$ 
 and the set $\{\Delta_i\mid i\in {\cal P}\}$. Then, we can index with ${\cal P}$ the set $\proj_\Lambda$ in such a way that we have short exact sequences 
 $$0\rightmap{}M_i\rightmap{f_i}P_i\rightmap{g_i}\Delta_i\rightmap{}0,$$
 with $P_i$ indecomposable projective, for all $i\in {\cal P}$. 
 
 From \cite{MSX}(3.12) and \cite{MSX}(3.4), for each $i\in {\cal P}$, there is an exact sequence 
 $$0\rightmap{}U_i\rightmap{u_i}X_i\rightmap{v_i}\Delta_i\rightmap{}0$$
 such that:
  the module $X_i\in {\cal F}(\Delta)$ is indecomposable and satisfies  $\Ext^1_{\Lambda}(X_i,Y)=0$, for all $Y\in {\cal F}(\Delta)$; and 
   the module $U_i\in {\cal F}(\Delta)$ is filtered by modules in $\{\Delta_j \mid j\geq i \hbox{ and } j\not\sim i\}$.
 
 Since $P_i$ is projective, there is a  morphism $s_i:P_i\rightmap{}X_i$ such that $v_is_i=g_i$. Moreover, we have a pull-back diagram 
 $$\begin{matrix}
    0&\rightmap{}&M_i&\rightmap{}&E_i&\rightmap{\pi_i}&X_i&\rightmap{}&0\\
    &&\rmapdown{id_{M_i}}&&\rmapdown{h_i}&&\rmapdown{v_i}&&\\
     0&\rightmap{}&M_i&\rightmap{f_i}&P_i&\rightmap{g_i}&\Delta_i&\rightmap{}&0.\\  
   \end{matrix}$$
Since $\Ext_\Lambda^1(X_i,M_i)=0$, the exact sequence of the first row in the preceding diagram splits, and there is a morphism $\sigma_i:X_i\rightmap{}E_i$ with $\pi_i\sigma_i=id_{X_i}$. The morphism $t_i:=h_i\sigma_i:X_i\rightmap{}P_i$ satisfies $g_it_i=g_ih_i\sigma_i=v_i\pi_i\sigma_i=v_i$. Then, we have that 
$v_i=g_it_i=v_is_it_i$ and $g_i=v_i s_i=g_it_is_i$, or equivalently $v_i(id_{X_i}-s_it_i)=0$ and $g_i(id_{P_i}-t_is_i)=0$, so $id_{X_i}-s_it_i$ and $id_{P_i}-t_is_i$ are not isomorphisms. Since the endomorphism algebras of $P_i$ and $X_i$ are local, we get that $s_it_i$ and $t_is_i$ are isomorphisms. From this, we get that $s_i$ is an isomorphism and $P_i\cong X_i$. 
  \end{proof}

\begin{definition}\label{D: i-bounded} Assume that ${\cal H}=({\cal P},\leq,\{\Delta_i\}_{i\in {\cal P}})$ is an admissible homological system for a finite-dimensional $k$-algebra $\Lambda$ and keep the index assignation imposed on $\proj_\Lambda$ in (\ref{P: admis implies H2}).   
 Consider a non-zero $M\in \Lambda\g\mod$ and any projective resolution 
 $$P_M\hbox{ }:\hbox{ } \cdots\rightmap{}P_M^{-1}\rightmap{}P_M^0\rightmap{}M\rightmap{}0$$
 of $M$.  Then,  for $i\in {\cal P}$,  
 we will say that the projective resolution $P_M$ is \emph{$i$-bounded} iff it is finite and, 
 whenever $P_j$ is a direct summand of $P_M^{-t}$,  we have $\overline{j}\geq \overline{i}$ for $t\geq 0$, with  $\overline{j}> \overline{i}$ if $t>0$.
\end{definition}

 \begin{lemma}\label{L: admissible  implies P(Delta(i)) i-bounded}
Assume that ${\cal H}=({\cal P},\leq,\{\Delta_i\}_{i\in {\cal P}})$ is an admissible homological system for a finite-dimensional $k$-algebra $\Lambda$. Then, for each $i\in {\cal P}$, the $\Lambda$-module $\Delta_i$ admits an $i$-bounded minimal projective resolution $P_{\Delta_i}$ of the form 
 $$P_{\Delta_i}\hbox{ }:\hbox{ } \cdots\rightmap{}P_{\Delta_i}^{-1}\rightmap{}P_{\Delta_i}^0\rightmap{}\Delta_i\rightmap{}0.$$
\end{lemma}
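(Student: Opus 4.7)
The plan is to induct downward on the equivalence class $\overline{i}$ in the (finite) partially ordered set $\overline{{\cal P}}$. Proposition \ref{P: admis implies H2} supplies the first step of the resolution, and a repeated application of the Horseshoe lemma along a $\Delta$-filtration of $Q_i$ will assemble the higher syzygies from resolutions guaranteed by the inductive hypothesis.

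For the base case, when $\overline{i}$ is maximal in $\overline{{\cal P}}$, there is no $j$ with $\overline{j} > \overline{i}$, so the $\Delta$-filtration of $Q_i$ furnished by Proposition \ref{P: admis implies H2} is empty, forcing $Q_i = 0$ and $\Delta_i \cong P_i$; the two-term complex $0 \to P_i \to \Delta_i \to 0$ is an $i$-bounded minimal projective resolution. For the inductive step, assume the statement for every $k$ with $\overline{k} > \overline{i}$. Proposition \ref{P: admis implies H2} yields $0 \to Q_i \to P_i \to \Delta_i \to 0$ with $Q_i$ filtered by modules $\Delta_{j_1},\ldots,\Delta_{j_s}$ satisfying $\overline{j_l} > \overline{i}$. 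The inductive hypothesis supplies a finite $j_l$-bounded minimal projective resolution of each $\Delta_{j_l}$. Assembling these by induction on the filtration length, using the Horseshoe lemma at each step, produces a finite projective resolution of $Q_i$ whose $-t$-th term, for every $t\geq 0$, is a direct sum of indecomposable projectives $P_k$ satisfying $\overline{k} \geq \overline{j_l} > \overline{i}$ for some $l$. Prepending the projective cover $P_i \to \Delta_i$ yields a finite projective resolution of $\Delta_i$ whose zeroth term is $P_i$ and whose terms in strictly negative degree involve only $P_k$ with $\overline{k} > \overline{i}$; this resolution is $i$-bounded by construction.

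To obtain the \emph{minimal} projective resolution with the same property, I would invoke the standard fact that, over a finite-dimensional algebra, any projective resolution of a finite-dimensional module decomposes as the direct sum of the minimal projective resolution and a contractible complex of projectives. By Krull--Schmidt, the indecomposable summands appearing at each homological degree of the minimal resolution form a sub-multiset of those appearing in the explicit resolution constructed above, and so the minimal projective resolution of $\Delta_i$ is likewise finite and $i$-bounded.

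The main obstacle is the bookkeeping in the iterated Horseshoe step, in particular ensuring strict inequality $\overline{k} > \overline{i}$ at every homological degree (including degree $0$ of the resolution of $Q_i$). This is automatic from the hypothesis that each filtration factor $\Delta_{j_l}$ of $Q_i$ already satisfies $\overline{j_l} > \overline{i}$, combined with the $j_l$-bounded property of its inductively provided resolution; the jump from $\geq$ to $>$ for $Q_i$ is absorbed by the strict inequality at the filtration level, not at the resolution level.
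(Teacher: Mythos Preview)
Your proof is correct and follows essentially the same approach as the paper: downward induction on $\overline{i}$ in $\overline{\cal P}$, using Proposition~\ref{P: admis implies H2} for the base case and first syzygy, iterated Horseshoe along the $\Delta$-filtration of $Q_i$, and then passage to the minimal resolution as a direct summand. Your explicit remark that the strict inequality $\overline{j_l}>\overline{i}$ at the filtration level is what forces strict inequality in all degrees of the resolution of $Q_i$ matches the paper's handling of this point.
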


\begin{proof} Notice that if $i\in {\cal P}$ is such that $\overline{i}$ is maximal in $\overline{\cal P}$, in the sequence of (\ref{P: admis implies H2}),  we get $Q_i=0$, so $\Delta_i\cong P_i$ is projective and $\Delta_i$ has a trivial $i$-bounded projective resolution $P_{\Delta_i}$ with $P_{\Delta_i}^0=P_i$. 

Observe  that given $i\in {\cal P}$ and an exact sequence $0\rightmap{}M\rightmap{}E\rightmap{}N\rightmap{}0$ in $\Lambda\g\mod$, 
where $M$ admits an $i_1$-bounded projective resolution $P_M$ and $N$ admits an $i_2$-bounded projective resolution $P_N$, with $\overline{i_1},\overline{i_2}>\overline{i}$, then the horseshoe lemma yields an $i$-bounded projective resolution  $P_E$ of $E$ 
such that: $P_j$ direct summand of $P_E^0$ implies that $\overline{j}>\overline{i}$. 

An easy induction using the last observation, shows that whenever 
$E\in {\cal  F}(\Delta)$ has a $\Delta$-filtration with factors $\{\Delta_{i_u}\}_{u\in [1,s]}$ 
 with $\overline{i_u}>\overline{i}$ for all $u\in [1,s]$, which have $i_u$-bounded  projective resolutions $\{P_{\Delta_{i_u}}\}_{u\in [1,s]}$, with $P^0_{\Delta_{i_u}}=P_{i_u}$, we have an $i$-bounded projective resolution $P_E$ of $E$ such that: $P_j$ direct summand of $P_E^0$ implies that $\overline{j}>\overline{i}$. 

Finally, fix $i\in {\cal P}$ such that $\overline{i}$ is not maximal in $\overline{\cal P}$ and assume that $\Delta_j$ admits an $i$-bounded 
 projective resolution $P_{\Delta_j}$ with $P^0_{\Delta_j}=P_j$, for all $j\in {\cal P}$ with $\overline{j}>\overline{i}$. Consider the exact sequence of (\ref{P: admis implies H2}) for $\Delta_i$. Then the preceding paragraph yields an $i$-bounded projective resolution $P_{Q_i}$ for $Q_i$, such that:  $P_j$ direct summand of $P_{Q_i}^0$ implies that $\overline{j}>\overline{i}$. Then, splicing this exact sequence with the projective resolution $P_{Q_i}$, we obtain the wanted  
 $i$-bounded projective resolution of $\Delta_i$
$$P_{\Delta_i}\hbox{ }:\hbox{ }\cdots\rightmap{}P_{Q_i}^{-1}\rightmap{}P^0_{Q_i}\rightmap{}P_i\rightmap{}\Delta_i\rightmap{}0.$$
Finally, any minimal projective resolution of $\Delta_i$ is a direct summand of $P_{\Delta_i}$ and, therefore, is also $i$-bounded.   
\end{proof}

Recall from \cite{MSX} the following.

\begin{definition} Assume that $\Lambda$ is a finite dimensional $k$-algebra, equipped with a preordered set of indexes $({\cal P},\leq)$ for the family $\{P_i\}_{i\in {\cal P}}$ of representatives of the 
 non-isomorphic indecomposable projective $\Lambda$-modules. For 
 $i\in {\cal P}$, denote by $\Delta_i$ the $i^{th}$-\emph{standard $\Lambda$-module}, that is $\Delta_i:=P_i/T_i$, where 
 $$  T_i:=     
 \sum_{\scriptsize\begin{matrix} f\in\Hom_\Lambda(P_j,P_i)\\
 \overline{j}\not\leq \overline{i} \end{matrix}}\Im(f)
 $$ 
 Then, the algebra $\Lambda$ is called a 
 \emph{prestandardly stratified algebra} if,  for each $i\in {\cal P}$, 
 there is an exact   sequence 
 $$\begin{matrix}
   0&\rightmap{}&Q_i&\rightmap{}&P_i&\rightmap{}&\Delta_i&\rightmap{}&0,
   \end{matrix}$$ 
in $\Lambda\g\mod$ such that the module $Q_i$ admits a $\Delta$-filtration with factors of the form $\Delta_j$, with $\overline{j}>\overline{i}$.
   A prestandardly stratified algebra $\Lambda$, with preordered index set $({\cal P},\leq)$ is called \emph{standardly stratified} if $({\cal P},\leq)$ is a  partial order. A standardly stratified algebra  $\Lambda$ is \emph{quasi-hereditary} iff  $({\cal P},\leq)$ is linearly ordered and $\End_\Lambda(\Delta_i)\cong k$, for each $i\in {\cal P}$. 
\end{definition}

\begin{remark} If $\Lambda$ is a prestandardly stratified algebra with preorder $({\cal P},\leq)$ and standard modules $\{\Delta_i\}_{i\in I}$, then ${\cal H}=( {\cal P},\leq,\{\Delta_i\}_{i\in I})$ is an admissible homological system, see \cite{MSX}(2.3). If, furthermore,  the algebra $\Lambda$ is quasi-hereditary, then the homological system ${\cal H}$ is strict, see \cite{M}.
\end{remark}

The following statement is interesting because it assumes no requirement on the endomorphism algebras of the standard modules. 

\begin{corollary}\label{C: pss algebra Lambda with linearly ordered poset implies H3}
 Assume that $\Lambda$ is prestandardly stratified with homological system of standard modules ${\cal H}=({\cal P},\leq,\{\Delta_i\}_{i\in {\cal P}})$, where $({\cal P},\leq)$ is linearly ordered. Then the homological system ${\cal H}$ is strict. 
\end{corollary}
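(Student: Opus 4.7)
\medskip\noindent
\textit{Proof proposal.} The plan is to combine Lemma~\ref{L: admissible  implies P(Delta(i)) i-bounded}, applied to $\Delta_i$, with the explicit description $\Delta_j=P_j/T_j$ of the standard modules of a prestandardly stratified algebra, to control the indices that can appear when we compute Ext. By the preceding Remark, a prestandardly stratified $\Lambda$ carries an admissible homological system, so Lemma~\ref{L: admissible  implies P(Delta(i)) i-bounded} supplies, for each $i\in {\cal P}$, a minimal $i$-bounded projective resolution
$$\cdots\rightmap{}P_{\Delta_i}^{-1}\rightmap{}P_{\Delta_i}^0\rightmap{}\Delta_i\rightmap{}0,$$
with the feature that, for every $n\geq 1$, each indecomposable summand $P_k$ of $P_{\Delta_i}^{-n}$ satisfies $\overline{k}>\overline{i}$.

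Next I fix $n\geq 1$ and assume $\Ext^n_\Lambda(\Delta_i,\Delta_j)\neq 0$. Since this Ext group is a subquotient of $\Hom_\Lambda(P_{\Delta_i}^{-n},\Delta_j)$, which decomposes as a direct sum of the groups $\Hom_\Lambda(P_k,\Delta_j)$ over the indecomposable summands $P_k$ of $P_{\Delta_i}^{-n}$, at least one such summand must be nonzero; this yields an index $k$ with $\overline{k}>\overline{i}$ and $\Hom_\Lambda(P_k,\Delta_j)\neq 0$. The remaining ingredient is the implication
$$\Hom_\Lambda(P_k,\Delta_j)\neq 0\Longrightarrow \overline{k}\leq \overline{j}.$$
To get it, I take any non-zero $\varphi:P_k\to \Delta_j$ and lift it through the canonical surjection $P_j\to \Delta_j=P_j/T_j$ to some $\widetilde{\varphi}:P_k\to P_j$ by projectivity of $P_k$. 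If $\overline{k}\not\leq \overline{j}$, then $\widetilde{\varphi}$ is one of the morphisms whose images generate $T_j$, so $\Im(\widetilde{\varphi})\subseteq T_j$ and $\varphi=0$, contradicting the choice. Chaining the two inequalities I obtain $\overline{i}<\overline{k}\leq \overline{j}$ in $\overline{\cal P}$, which is precisely the strict condition $i\leq j$ and $i\not\sim j$.

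The only genuinely non-trivial step is controlling the indices that appear in a minimal projective resolution of $\Delta_i$, and this is exactly what Lemma~\ref{L: admissible  implies P(Delta(i)) i-bounded} packages. The linearly ordered hypothesis plays no essential role in the bounding argument itself; it is merely convenient for the statement, since it makes $\overline{\cal P}={\cal P}$ and allows one to rephrase the strict conclusion as $i<j$.
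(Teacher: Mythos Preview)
Your argument is correct and follows the same line as the paper's proof: use the $i$-bounded resolution from Lemma~\ref{L: admissible  implies P(Delta(i)) i-bounded}, pick an indecomposable summand $P_k$ of $P_{\Delta_i}^{-n}$ that maps nontrivially to $\Delta_j$, lift through the projection $P_j\to P_j/T_j$, and use the definition of $T_j$ to force $\overline{k}\leq\overline{j}$. The paper does exactly this, only it exploits the linear order from the start to write $T_j=\sum_{s>j}\Im(P_s\to P_j)$ and phrases everything in ${\cal P}$ rather than $\overline{\cal P}$.

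Your closing remark is in fact a genuine sharpening: the argument you wrote goes through verbatim for an arbitrary prestandardly stratified algebra, with no linearity assumption on $({\cal P},\leq)$, and yields $\overline{i}<\overline{j}$ directly. The paper's statement is thus weaker than what the proof method actually delivers; the linear order is used only to streamline notation, not in any essential way.
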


\begin{proof} In the exact sequence granted by (\ref{P: admis implies H2}) for 
the standard module $\Delta_i$, we can assume that the epimorphism $P_i\rightmap{}\Delta_i$ is the canonical projection from $P_i$ onto  $\Delta_i=P_i/T_i$, where $T_i=\sum_{f \in \Hom_\Lambda(P_j,P_i); j>i}\Im(f)$.
From (\ref{L: admissible  implies P(Delta(i)) i-bounded}),  we have an 
$i$-bounded projective resolution 
$$P_{\Delta_i}\hbox{ }:\hbox{ }\cdots\rightmap{}P_{\Delta_i}^{-2}\rightmap{}P^{-1}_{\Delta_i}\rightmap{}P_i\rightmap{\nu_i}\Delta_i\rightmap{}0,$$
where $\nu_i$ is the canonical projection. 
Fix $n\in \hueca{N}$ and assume that $\Ext^n_\Lambda(\Delta_i,\Delta_j)\not=0$. This implies that   
 $\Hom_\Lambda(P_{\Delta_i}^{-n},\Delta_j)\not=0$. So, there is an indecomposable direct summand $P_s$ of $P_{\Delta_i}^{-n}$, which must satisfy $i<s$, with $\Hom_\Lambda(P_s,\Delta_j)\not=0$. Take any non-zero morphism $h:P_s\rightmap{}\Delta_j$. Since $P_s$ is projective, there is some morphism $f:P_s\rightmap{}P_j$ with $\nu_jf=h$. If $s>j$, we have $\nu_j(\Im (f))=0$, and $h=0$, which is not the case. So we have 
 $i<s\leq j$. 
\end{proof}

  The study of the category ${\cal F}(\Delta)$ in the mentioned quasi-hereditary case, and in more general settings, has been of great 
  interest in representation theory of algebras, see \cite{MSX} and its references. 
  
  In this paper we  generalize the main results of S. Koenig, J. K\"ulshammer and S. Ovsienko proved in \cite{KKO} for the quasihereditary case. Here we extend their arguments to arbitrary strict admissible homological systems. In the following lines we give the layout of the contents of this article.

   After section 2, which contains technical remarks on graded duals and tensor products, we show in section 3 that the Yoneda algebra $A$ of the $\Lambda$-module $\Delta$, associated to a strict homological system ${\cal H}$ for a finite dimensional $k$-algebra $\Lambda$, admits a strict structure of $A_\infty$-algebra, see (\ref{T: Yoneda A-infinito algebra es estricta}). 
  For this, we proceed as in \cite{KKO}, we reelaborate Keller's  argument in \cite{K1} supported by Kadeishvili Theorem (see also Merkulov \cite{Mer}), in our more general context. 
  
  In section 4, we construct the interlaced weak ditalgebra $\underline{\cal A}(\Delta)=({\cal A}(\Delta),I)$, see (\ref{D: weak ditalg interlaced an ideal}) and (\ref{D: def del bocs A(Delta)}), associated to the Yoneda $A_\infty$-algebra $A$ of $\Delta$. Throughout the whole paper we use the language of ditalgebras (diferential tensor algebras) and their categories of modules, as in \cite{BSZ}, enriched with the possibility of considering ``ditalgebras with relations'' called here interlaced weak ditalgebras and their categories of modules denoted by $({\cal A},I)\g\mod$, see (\ref{D: la cat de mod's de cal A(Delta)}) and \cite{bpsqh}. We prefer this to the alternative language of bocses, used in \cite{KKO}, because it seems to ease many computations.   
  
  In section 5, 6 and 7, we review elementary properties of $A_\infty$-categories and review the $A_\infty$-categories $\ad(A)$, $\cv(A)$, and  $\tw(A)$ with some of their relationships. All this supports the work of section 8 which exhibits an equivalence of categories ${\cal F}(\Delta)\simeq H^0(\tw(A))\simeq \underline{\cal A}(\Delta)\g\mod$. This generalizes theorem (8.2) of \cite{KKO} to the case of an arbitrary strict homological system. In their domain of study, the homological system is of standard modules over a quasi-hereditary algebra, and they obtain a directed bocs. This is no longer true in our case of a general strict homological system, but we still obtain a manageable strict interlaced weak ditalgebra $\underline{\cal A}(\Delta)$, see (\ref{D: strict weak interlaced ditalgebras}). 
  
  We study in section 10 and 11 some special kind of interlaced weak ditalgebras $\underline{\cal A}$. We equip their categories of modules $\underline{\cal A}\g\mod$  with a natural exact structure and then show that Burt-Butler theory on their right and left algebras still works, see \cite{BuBt} and \cite{Bu}.  All this is applied later to the interlaced weak ditalgebra  $\underline{\cal A}(\Delta)$. So, its right algebra $\Gamma$ admits a natural strict  homological system ${\cal H}'=({\cal P},\leq,\{\Delta'_i\}_{i\in {\cal P}})$ and, moreover, the algebra  $\Gamma$ is Morita equivalent to $\Lambda$.

  In the last section, 
  we show  that the Yoneda algebra associated to  an admissible homological system ${\cal H}$ admits a strict structure of 
   $A_\infty$-algebra, as in (\ref{R: strictness of A}),  if and only if the homological system ${\cal H}$ is strict, see (\ref{T: charact of strict Yoneda algebras}). 
   
   Moreover, generalizing the argumentation of \cite{KKO}, 
   we prove that any finite dimensional $k$-algebra $\Lambda$ with a strict homological system ${\cal H}=({\cal P},\leq,\{\Delta_i\}_{i\in {\cal P}})$ is Morita equivalent to a finite-dimensional $k$-algebra $\Gamma$ which admits a strict homological system   
  ${\cal H}'=({\cal P},\leq,\{\Delta'\}_{i\in P})$ such that $\Gamma$ has an exact Borel subalgebra $B$, which is regular and homological, as in the following definition (\ref{D: exact Borel subalgebra}). Moreover, there is an equivalence $\Omega:\Gamma\g\mod\rightmap{}\Lambda\g\mod$ such that $\Omega(\Delta'_i)\cong \Delta_i$, for all $i\in {\cal P}$. In the next lines we  give some terminology required for this definition.

  \begin{remark}\label{R: on directed algebra}
  Let $B$ be a basic finite-dimensional $k$-algebra.  
  Let us recall that \emph{the (Gabriel) quiver $Q$ of the algebra $B$} is constructed as follows. Consider a splitting $B=S\oplus J$ of the algebra $B$ over its radical $J$, then $S$ is a finite product of copies of the field $k$ and we can write $S=\bigoplus_{i\in {\cal P}}ke_i$, where $1=\sum_{i\in {\cal P}}e_i$ is a decomposition of the unit of $B$ as a finite sum of primitive orthogonal idempotents. So, we get that $\{P_i\}_{i\in {\cal P}}$, where $P_i=Be_i$, is a complete family of representatives of the isoclasses of the indecomposable projective $B$-modules, and 
  the family $\{S_i\}_{i\in {\cal P}}$, where $S_i=P_i/JP_i$ is a complete family of representatives of the isoclasses of the indecomposable simple $B$-modules. 
   Then, by definition, the set of points of $Q$ is ${\cal P}$ and, given $i,j\in {\cal P}$, there are exactly $\dim_k\Ext^1_B(S_i,S_j)$ arrows from $i$ to $j$ in $Q$.   
   
   Consider the \emph{preorder of precedence} $\preceq$ in the set  ${\cal P}$ determined by \hbox{$i\prec j$}  if  there is an arrow $i\rightmap{}j$ in $Q$ or, equivalently, if $\Ext^1_B(S_i,S_j)\not=0$. 
   
    Recall that $B$ can be identified with a quotient $k(Q)/I$ of the path algebra $k(Q)$ by some admissible ideal $I$, where the idempotents $e_i$ are identified with the classes of the trivial paths of $Q$. Then, we have for different $i,j\in {\cal P}$ that $e_jBe_i\not=0$ implies $i\prec j$.
   
    If the algebra $B$ is \emph{directed}, that is if its quiver $Q$ has no oriented cycle, then $\preceq$ is a partial order on 
    ${\cal P}$. In this case, we have $e_iBe_i=ke_i$, for all $i\in {\cal P}$.  
  \end{remark}
  
   The following lemma illustrates the preceding notions, but it is convenient to make some precisions first. 
 
 \begin{remark}\label{R: def de height map}
For any finite poset $(X,\leq)$, its \emph{height map} $h:X\rightmap{}\hueca{N}\cup \{0\}$ is defined for minimal elements $x\in X$, as $h(x)=0$, and, inductively, for any non-minimal element $x\in X$, as  
$h(x):=\hbox{max}\{h(y)\mid y<x\}+1$. 

Clearly, whenever we have $y<x$ in $X$, we get $h(y)<h(x)$. 
\end{remark}
  
  \begin{remark}\label{R: linearizations}
  A linearization $(X,\leq^L)$ of a finite poset $(X,\leq)$ is defined as follows. Consider the subsets $X_t:=\{i\in X\mid h(i)=t\}$, so $X=\biguplus_{t=1}^sX_t$, for some finite $s$. Then, consider for each $t\in [1,s]$ any linear order $\leq^t$ in the set $X_t$. Then, by definition, for any elements $i,j\in X$, we make $i\leq^Lj$ if $h(i)< h(j)$ or ($h(i)=h(j)$ and $i\leq^t j$).  
  \end{remark}

  \begin{lemma}\label{L: homological systems on directed algebras}
   Assume that $B$ is a directed finite-dimensional $k$-algebra and adopt the notations of (\ref{R: on directed algebra}). Then, the following holds.
   \begin{enumerate}
    \item The triple ${\cal H}=({\cal P},\preceq,\{S_i\}_{i\in {\cal P}})$ is a strict homological system for the directed algebra $B$.
    \item For each $i\in {\cal P}$, the $i^{th}$-standard $B$-module associated to the partially ordered set $({\cal P},\preceq)$ is $\Delta_i=S_i$. Clearly, we have $\End_B(S_i)\cong k$. 
    \item For any linearization $\preceq^L$ of ${\cal P}$ with the  partial order of precedence $\preceq$, we have that $B$ is a quasi-hereditary algebra with homological system of standard modules ${\cal H}^L=({\cal P},\preceq^L,\{S_i\}_{i\in {\cal P}})$. 
   \end{enumerate}
  \end{lemma}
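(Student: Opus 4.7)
The plan is to combine standard facts about basic directed algebras with the setup of Remark~\ref{R: on directed algebra}. For (1), I first verify the homological system axioms for ${\cal H}=({\cal P},\preceq,\{S_i\}_{i\in{\cal P}})$. The $S_i$ are pairwise non-isomorphic finite-dimensional simple modules and hence indecomposable, so $\Hom_B(S_i,S_j)\neq 0$ forces $i=j$ and condition~(1) of Definition~\ref{D: homological system y F(Delta)} is immediate. Condition~(2) is the very definition of $\prec$. For admissibility, $B=\bigoplus_{i\in{\cal P}}P_i$ as a left $B$-module, and any composition series of each $P_i$ is a filtration by simples $S_j$, hence $B\in{\cal F}(\Delta)$ with $\Delta=\bigoplus_iS_i$. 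The number of isoclasses of indecomposable projectives equals $|{\cal P}|$ by construction.

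The strictness condition, $\Ext^n_B(S_i,S_j)\neq 0\Rightarrow i\prec j$ for all $n\geq 1$, is the main step. Using the minimal projective resolution of $S_i$ and the standard identification (valid for basic algebras over an algebraically closed field) of the multiplicity of $P_j$ in $P^{-n}$ with $\dim_k\Ext^n_B(S_i,S_j)$, it suffices to show by induction on $n\geq 1$ that every summand $P_j$ of $P^{-n}$ satisfies $i\prec j$. The case $n=1$ is tautological. For $n\geq 2$, by minimality the $n$-th syzygy $\Omega^nS_i$ is contained in $\rad P^{-n+1}$, so every composition factor of $\Omega^nS_i$, in particular each simple appearing in $\top(\Omega^nS_i)=\top(P^{-n})$, is a composition factor of $\rad P_k$ for some summand $P_k$ of $P^{-n+1}$. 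Since $B$ is directed, $e_iBe_i=ke_i$ and $\rad B=\bigoplus_{i\neq j}e_iBe_j$; the composition factors of $\rad P_k$ are precisely the $S_j$ reachable from $k$ by a path of positive length in $Q$, i.e.\ with $k\prec j$. Combined with the inductive hypothesis $i\prec k$, transitivity gives $i\prec j$.

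For (2), the decomposition above gives $\rad P_i=\bigoplus_{j\neq i}e_jBe_i$. By Remark~\ref{R: on directed algebra}, every nonzero morphism $f:P_j\rightmap{}P_i$ with $j\neq i$ corresponds to a nonzero element of $e_jBe_i$, forcing $i\prec j$, so $j\not\preceq i$ and the image of $f$ contributes to $T_i$; conversely every element of $\rad P_i$ is of the form $f(e_j)$ for such an $f$. Hence $T_i=\rad P_i$, $\Delta_i=S_i$, and $\End_B(S_i)\cong k$ because $B$ is basic over an algebraically closed field.

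For (3), any linearization $\preceq^L$ refines $\preceq$: Remark~\ref{R: def de height map} gives $i\prec j\Rightarrow h(i)<h(j)\Rightarrow i\prec^Lj$. The computation in (2) carries over verbatim, since a nonzero $f:P_j\rightmap{}P_i$ with $j\neq i$ still forces $i\prec j$ and so $j\not\preceq^Li$; hence $\Delta_i^L=S_i$ and $\End_B(\Delta_i^L)\cong k$. The short exact sequence $0\rightmap{}\rad P_i\rightmap{}P_i\rightmap{}S_i\rightmap{}0$ supplies the required filtration, with every composition factor of $\rad P_i$ being an $S_j$ with $j\succ^Li$; together with the linearity of $\preceq^L$ this makes $B$ quasi-hereditary with standard modules $\{S_i\}$ and homological system ${\cal H}^L$. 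The main obstacle is the inductive step in (1); everything else reduces quickly to the decomposition $\rad B=\bigoplus_{i\neq j}e_iBe_j$ specific to the directed setting.
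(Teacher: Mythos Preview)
Your proof is correct and, for parts~(2) and~(3), essentially identical to the paper's. For the strictness claim in part~(1) there is a minor difference worth noting: the paper first observes that ${\cal H}$ is admissible and then invokes Lemma~\ref{L: admissible  implies P(Delta(i)) i-bounded} to obtain an $i$-bounded minimal projective resolution of $S_i$, from which $\Ext^n_B(S_i,S_j)\neq 0\Rightarrow i\prec j$ follows immediately. You instead prove directly, by induction on $n$ using minimality and the description of $\rad P_k$, that every indecomposable summand $P_j$ of $P^{-n}$ satisfies $i\prec j$; this is exactly the content of $i$-boundedness in this special case. So your argument is a self-contained reproof of (the relevant instance of) Lemma~\ref{L: admissible  implies P(Delta(i)) i-bounded}, while the paper reuses the general lemma it has already established. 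Both routes are short; yours avoids the horseshoe-style construction behind Lemma~\ref{L: admissible  implies P(Delta(i)) i-bounded} at the cost of repeating a small induction that the paper has factored out.
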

  
  \begin{proof} (1): We already know that $({\cal P},\preceq)$ is a finite partially ordered set and that $\{S_i\}_{i\in {\cal P}}$ is a family of non-isomorphic indecomposable $B$-modules. Clearly, $\Hom_{B}(S_i,S_j)=0$, unless $i=j$. By definition of the order of precedence $\preceq$, we have   
that $\Ext^1_{B}(S_i,S_j)\not=0$ implies that $i\prec j$. Here, we have $\overline{\cal P}={\cal P}$ and $\Lambda\in {\cal F}(\oplus_{i\in {\cal P}}S_i)$, so  ${\cal H}$ 
 is an admissible homological system for the algebra $B$.

  Let us show that ${\cal H}$ is strict. Assume that $\Ext^n_B(S_i,S_j)\not=0$ and 
 consider the $i$-bounded projective resolution of $S_i$
$$\cdots \rightmap{} P_{S_i}^{-n-1}\rightmap{ \ d^{-n} \ }P_{S_i}^{-n}\rightmap{ \ d^{-n+1} \ } P_{S_i}^{-n+1}\rightmap{}\cdots\rightmap{}P_{S_i}^{-1}\rightmap{d^0}P_{S_i}^0\rightmap{\varepsilon}S_i\rightmap{}0$$
given by (\ref{L: admissible  implies P(Delta(i)) i-bounded}).  
Then, for each $n\geq 1$, we have 
$$P^{-n}_{S_i}\cong\bigoplus_{i\prec t}m_{i,t}P_t, \hbox{ for some } m_{i,t}\geq 0.$$
 In order to compute $\Ext^n_B(S_i,S_j)$, apply the functor $\Hom_B(-,S_j)$ to the given  projective resolution of $S_i$ and then look at the $n^{th}$-homology space, which is a quotient of the space $Z_n(P_{S_i}^{-n},S_j)\subseteq \Hom_B(P_{S_i}^{-n},S_j)$. From the given decomposition of $P_{S_i}^{-n}$, we see that this space is zero whenever $i\not\prec j$. Thus, $\Ext^n_B(S_i,S_j)\not=0$ implies $i\prec j$,  and ${\cal H}$ is a strict homological system for the algebra $B$.
   
   \medskip
   \noindent(2): Observe that if $i,j\in {\cal P}$ are incomparable
in the poset $({\cal P},\preceq)$, then we have $\Hom_B(P_j,P_i)=e_jBe_i=0$. Therefore, we get
$$ T_i=     
 \sum_{\scriptsize\begin{matrix} f\in\Hom_B(P_j,P_i)\\
 j\not\preceq i \end{matrix}}\Im(f)=\sum_{\scriptsize\begin{matrix} f\in\Hom_B(P_j,P_i)\\
 i\prec j \end{matrix}}\Im(f)=\rad(P_i),$$
 and we obtain $\Delta_i=P_i/T_i= S_i$.
 
  \medskip
   \noindent(3): By definition of the linearization $\preceq^L$, we have that $i\prec^Lj$ implies that $i\prec j$ or $i,j$ are $\preceq$-incomparable elements of ${\cal P}$.  Then, we obtain 
   $$ T_i=     
 \sum_{\scriptsize\begin{matrix} f\in\Hom_B(P_j,P_i)\\
 i\prec j \end{matrix}}\Im(f)=\sum_{\scriptsize\begin{matrix} f\in\Hom_B(P_j,P_i)\\
 i\prec^L j \end{matrix}}\Im(f)=\sum_{\scriptsize\begin{matrix} f\in\Hom_B(P_j,P_i)\\
 j\not\preceq^L i \end{matrix}}\Im(f).$$
  Then, if $\Delta'_i$ denotes the standard $B$-module associated to the linear order $\preceq^L$, we get $\Delta'_i=S_i$, 
  for all $i\in {\cal P}$.
  
  Since $i\prec j$ implies $i\prec^Lj$, the fact that ${\cal H}$ is a strict homological system for $B$ implies that ${\cal H}^L$ is a strict homological system for $B$. Hence, $B$ is a quasi-hereditary algebra. 
  \end{proof}

 The following definition adapts slightly the definition of exact Borel subalgebra given in \cite{BKK}(3.4).

  \begin{definition}\label{D: exact Borel subalgebra}
   Let $\Gamma$ be a finite-dimensional $k$-algebra with a strict homological system   
  ${\cal H}'=({\cal P},\leq,\{\Delta'_i\}_{i\in P})$. Then, a subalgebra $B$ of $\Gamma$ is an \emph{exact Borel subalgebra} if
  \begin{enumerate}
   \item The finite-dimensional $k$-algebra  $B$ is  directed  and 
   we can index a complete family $\{S_i\}_{i\in {\cal P}}$ of representatives of the isoclasses of the simple $B$-modules with the same index set ${\cal P}$. Thus (\ref{L: homological systems on directed algebras}) applies to $B$, in particular $B$ is quasi-hereditary with simple standard modules.
   \item The right $B$-module $\Gamma$ is projective.
   \item For each $i\in {\cal P}$, we have that $\Delta'_i\cong \Gamma\otimes_BS_i$. 
  \end{enumerate}
 The subalgebra $B$ is called an \emph{homological exact Borel subalgebra} if the morphisms 
 $\Ext^n_B(M,N)\rightmap{}\Ext^n_\Gamma(\Gamma\otimes_BM,\Gamma\otimes_BN)$
 induced by the tensor product functor $\Gamma\otimes_B-$ are epimorphisms for $n\geq 1$ and isomorphisms for $n\geq 2$.
 
 The subalgebra $B$ is called a  \emph{regular exact Borel subalgebra} if the  morphisms 
 $\Ext^n_B(S_i,S_j)\rightmap{}\Ext^n_\Gamma(\Gamma\otimes_BS_i,\Gamma\otimes_BS_j)$ induced by the tensor product functor $\Gamma\otimes_B-$ are isomorphisms for all $n\in \hueca{N}$ and $i,j\in {\cal P}$.
  \end{definition}

\section{Graded duals  and tensor product}  
  Let $S$ be a fixed basic finite-dimensional semisimple $k$-algebra.
  Recall the following  basic isomorphism involving left duals of $S$-$S$-bimodules. 
 
\begin{lemma}\label{L: iso de duales y tensores sin graduacion}
Given an $S\g S$-module $V$, we have the left dual $S$-$S$-bimodule $D(V):=\Hom_S(\,_SV,\,_SS)$. 
Then, given two $S$-$S$-bimodules  $V$ and $W$, where $W$ is finitely generated as a left $S$-module, 
there is a natural isomorphism of $S$-$S$-bimodules 
$$\theta_{V,W} :D(W)\otimes_S D(V)\rightmap{}D(V\otimes_SW)$$ 
satisfying that $\theta(\beta\otimes \alpha)[v\otimes w]=\alpha(v\beta(w))$,
for $\alpha\in D(V)$, $\beta\in D(W)$, $v\in V$, and $w\in W$.
Its inverse 
$\tau: D(V\otimes_S W)\rightmap{}D(W)\otimes_S D(V)$ can be described by 
$\tau(\gamma)=\sum_{i=1}^nw^*_i\otimes \gamma_i$, where $(w_i,w_i^*)_{i\in [1,n]}$ is a dual basis for the  projective left $S$-module $W$, $\gamma\in D(V\otimes_S W)$, and $\gamma_i\in D(V)$ is the morphism of left $S$-modules given by $\gamma_i(v)=\gamma(v\otimes w_i)$, for $v\in V$ 
and $i\in [1,n]$. 
\end{lemma}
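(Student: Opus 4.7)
\medskip

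The plan is to verify directly that the two explicit formulas define mutually inverse, natural bimodule homomorphisms. The key technical input is that since $S$ is basic and semisimple, every finitely generated left $S$-module is projective. Hence $W$ admits a finite dual basis $(w_i,w_i^*)_{i\in[1,n]}$, i.e.\ elements $w_i\in W$ and $w_i^*\in D(W)$ such that $w=\sum_i w_i^*(w)\,w_i$ for every $w\in W$. Throughout, I use the canonical $S$-$S$-bimodule structure on the graded dual, where the left action of $s\in S$ on $\alpha\in D(V)$ is $(s\cdot\alpha)(v):=\alpha(vs)$ and the right action is $(\alpha\cdot s)(v):=\alpha(v)s$, and similarly for $D(W)$.

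First I would check that the formula $\theta(\beta\otimes\alpha)[v\otimes w]=\alpha(v\beta(w))$ does define a left $S$-linear map $V\otimes_S W\to S$, and that the assignment $(\beta,\alpha)\mapsto\theta(\beta\otimes\alpha)$ is $S$-balanced: the condition $\theta((\beta\cdot t)\otimes\alpha)=\theta(\beta\otimes(t\cdot\alpha))$ reduces to the equality $\alpha(v\beta(w)t)=\alpha(v\beta(w)t)$, once both sides of the $S$-actions on $D(V)$ and $D(W)$ are unfolded. A direct computation then shows that $\theta$ is a morphism of $S$-$S$-bimodules.

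Next I would introduce $\tau(\gamma):=\sum_i w_i^*\otimes\gamma_i$ with $\gamma_i(v):=\gamma(v\otimes w_i)$, verify that each $\gamma_i$ lies in $D(V)$, and compute both compositions. For $\theta\circ\tau$: we have
\[
\theta(\tau(\gamma))[v\otimes w]=\sum_i\gamma_i\bigl(v\,w_i^*(w)\bigr)=\sum_i\gamma\bigl(v\otimes w_i^*(w)w_i\bigr)=\gamma\Bigl(v\otimes\sum_i w_i^*(w)w_i\Bigr)=\gamma(v\otimes w),
\]
using that $w_i^*(w)\in S$ moves across the tensor product and the dual basis identity $w=\sum_i w_i^*(w)w_i$. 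For $\tau\circ\theta$: unwinding gives $\gamma_i(v)=\alpha(v\beta(w_i))=(\beta(w_i)\cdot\alpha)(v)$, so
\[
\tau(\theta(\beta\otimes\alpha))=\sum_i w_i^*\otimes(\beta(w_i)\cdot\alpha)=\sum_i\bigl(w_i^*\cdot\beta(w_i)\bigr)\otimes\alpha,
\]
by the $S$-balance of the tensor product. Evaluating $\sum_i w_i^*\cdot\beta(w_i)\in D(W)$ on any $w\in W$ yields $\sum_i w_i^*(w)\beta(w_i)=\beta\bigl(\sum_i w_i^*(w)w_i\bigr)=\beta(w)$, so $\sum_i w_i^*\cdot\beta(w_i)=\beta$ and the composition is the identity.

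Naturality in $V$ and $W$ is immediate from the defining formula of $\theta$, and independence of $\tau$ from the choice of dual basis follows a posteriori from the fact that $\theta$ is an isomorphism and $\tau$ is its inverse. The main obstacle, if any, is simply the careful bookkeeping of the left and right $S$-actions on the graded duals, in particular recognising the dual-basis identity $\beta=\sum_i w_i^*\cdot\beta(w_i)$ in $D(W)$ as the correct form for the tensor-product balance relation to conclude $\tau\circ\theta=\mathrm{id}$.
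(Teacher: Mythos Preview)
Your proof is correct and follows essentially the same approach as the paper: verify directly that $\theta$ and $\tau$ are mutually inverse by computing $\theta\tau(\gamma)[v\otimes w]$ via the dual-basis identity $w=\sum_i w_i^*(w)w_i$, and $\tau\theta(\beta\otimes\alpha)$ by moving the scalar $\beta(w_i)\in S$ across the balanced tensor and recognising $\sum_i w_i^*\beta(w_i)=\beta$. You are somewhat more explicit than the paper about the bimodule actions and the final identity $\sum_i w_i^*\cdot\beta(w_i)=\beta$, but the argument is the same.
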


\begin{proof} It is easy to see that both, $\theta$ and $\tau$ are well defined maps and that $\theta$ is a morphism of $S$-$S$-bimodules. For each  $\gamma\in D(V\otimes_S W)$ we have that 
$\theta\tau(\gamma)[v\otimes w]
=\theta[\sum_iw^*_i\otimes \gamma_i](v\otimes w)
=\sum_i\gamma_i(vw_i^*(w))
=\sum_i\gamma(vw_i^*(w)\otimes w_i)
=\gamma(v\otimes\sum_iw_i^*(w) w_i)
=\gamma(v\otimes w)$, thus $\theta\tau=id$. Moreover, we have for each $i\in [1,n]$ that 
$\theta(\beta\otimes \alpha)_i[v]=\theta(\beta\otimes \alpha)[v\otimes w_i]=\alpha(v\beta(w_i))=\beta(w_i)\alpha[v]$ and so $\theta(\beta\otimes \alpha)_i=\beta(w_i)\alpha$. Therefore, we get 
$\tau\theta[\beta\otimes \alpha]=\sum_iw_i^*\otimes \theta(\beta\otimes \alpha)_i=\sum_i w_i^*\otimes \beta(w_i)\alpha
=\sum_i w_i^*\beta(w_i)\otimes \alpha=\beta\otimes \alpha$. Then, we also have that 
$\tau\theta=id$ and $\theta$ is an isomorphism. 

For the naturality of the isomorphism, it is clear that, given morphisms of $S$-$S$-bimodules $f:V\rightmap{}V'$ and $g:W\rightmap{}W'$, the following squares commute
$$\begin{matrix}
   D(W)\otimes D(V')&\rightmap{\theta}&D(V'\otimes W)\\
   \lmapdown{id_{D(W)}\otimes D(f)}&&\lmapdown{D(f\otimes id_W)}\\
     D(W)\otimes D(V)&\rightmap{\theta}&D(V\otimes W)\\
  \end{matrix}
  \hbox{and}
  \begin{matrix}
   D(W')\otimes D(V)&\rightmap{\theta}&D(V\otimes W')\\
   \rmapdown{D(g)\otimes id_{D(V)}}&&\rmapdown{D(id_V\otimes g)}\\
     D(W)\otimes D(V)&\rightmap{\theta}&D(V\otimes W).\\
  \end{matrix}$$
\end{proof}

\begin{notation}\label{N: direcciones en bimods}
 By assumption, the $k$-algebra  $S$  is a finite product of copies of the field $k$. Consider the 
 decomposition of its unit  $1=\sum_{i\in {\cal P}} e_i$ as a sum of primitive central orthogonal idempotents. 
 
 Given any $S$-$S$-bimodule $B$, an element $z\in B$ is called \emph{directed} iff $z\in e_jBe_i$, for some $i,j\in {\cal P}$. In this case, we write $s(z)=i$ for its \emph{source point}  and $t(z)=j$ for its \emph{terminal point}. 
 \end{notation}
 
 In the following, we consider the algebra $S$ 
  as a graded $S$-$S$-bimodule concentrated at degree zero.

  \begin{definition}\label{D: D(B)} Let $B=\bigoplus_{n\in \hueca{Z}}B_n$ be a graded $S$-$S$-bimodule. 
   Then, for each $n\in \hueca{Z}$, the \emph{graded (left) dual $S$-$S$-bimodule} $D(B)$ is defined as $$D(B)=\bigoplus_{n\in \hueca{Z}}D(B)_n,$$
   where $D(B)_n:=\Hom_S(B_{-n},S)=D(B_{-n})$, for $n\in \hueca{Z}$. Thus, we have 
   $$D(B)=\bigoplus_{n\in \hueca{Z}}\Hom_S(B_{-n},S)=\bigoplus_{n\in \hueca{Z}}\Hom_{\GM\g S}^n(B,S)=\Hom_{\GM\g S}(B,S).$$
   Given a homogeneous morphism of graded $S$-$S$-bimodules $f:A\rightmap{}B$ of degree $\vert f\vert=d$, 
   we have the restriction morphism $f_n:A_{-n-d}\rightmap{}B_{-n}$ which determines a morphism of $S$-$S$-bimodules 
   $$D(f_n):\Hom_S(B_{-n},S)\rightmap{}\Hom_S(A_{-n-d},S).$$
   Hence, we have the morphism $D(f)=\bigoplus_{n\in \hueca{Z}}D(f_n)$ of graded $S$-$S$-bimodules 
   $$D(B)=\bigoplus_{n\in \hueca{Z}}\Hom_S(B_{-n},S)\rightmap{}\bigoplus_{n\in \hueca{Z}}\Hom_S(A_{-n-d},S)=D(A),$$
   which is homogeneous with degree $\vert D(f)\vert=d=\vert f\vert$. 
  \end{definition}

  \begin{remark}\label{localmente finitos y acotados}  A graded $S$-$S$-bimodule $B=\bigoplus_{n\in \hueca{Z}}B_n$ is called \emph{locally finite} iff each $B_n$ is finite-dimensional over $k$. 
    We will say that a graded $S$-$S$-bimodule $B$  is \emph{bounded below} (resp. \emph{bounded above}) if there is some $\ell_B\in \hueca{Z}$ such that $B_n=0$, for  all $n<\ell_B$ (resp. for all $n>\ell_B$).

   Notice that $D(B)$    is a graded locally finite bounded above (resp. bounded below) $S$-$S$-bimodule, whenever $B$ is a graded locally finite bounded below (resp. bounded above) $S$-$S$-bimodule. 
   
   Moreover, if $A$ and $B$ are graded locally finite bounded below (resp. bounded above) $S$-$S$-bimodules, then the graded $S$-$S$-bimodule $A\otimes_SB$ is locally finite and bounded below (resp. bounded above). Indeed, if $A$ and $B$ are bounded below by $\ell_A$ and $\ell_B$, respectively then $A\otimes_SB$ is bounded below by $\ell_A+\ell_B$: 
    Each homogeneous component $[A\otimes_S B]_n=\bigoplus_{s+t=n}A_s\otimes_S B_t$ is a finite direct sum, because $A_{n-t}\otimes_SB_t\not=0$ implies that $\ell_B\leq t\leq n-\ell_A$. If $n<\ell_A+\ell_B$ and $A_{n-t}\otimes_SB_t\not=0$ we get a contradiction. Thus, we get $[A\otimes_SB]_n=0$, for $n<\ell_A+\ell_B$, and then $A\otimes_SB$ is bounded below.
   
    We have the graded category $\Gbb\g S$ (resp. $\Gba\g S$) of graded locally finite  bounded below (resp. bounded above) $S$-$S$-bimodules, with hom set 
    $$\Hom_{\Gbb\g S}(A,B)=\bigoplus_{n\in \hueca{Z}}\Hom_{S\g S}^n(A,B),$$
    where $\Hom_{S\g S}^n(A,B)$ is the set of homogeneous morphisms of $S$-$S$-bimodules of degree $n$. Then, we have 
    the contravariant equivalence of graded categories
   $$D:\Gbb\g S\rightmap{}\Gba\g S.$$
 \end{remark}  
  
\begin{lemma}\label{L: tensor products and isomorphism tau}
Let $A,B\in \Gbb\g S$ and consider the 
 bounded below locally finite graded $S$-$S$-bimodule $A\otimes_SB$. 
 Then, we have a canonical isomorphism of graded $S$-$S$-bimodules  
$$D(B)\otimes_S D(A)\rightmap{ \ \ \theta_{A,B}  \ \ }D(A\otimes_S B)$$
 such that $\theta_{A,B}(\beta\otimes \alpha)[a\otimes b]=\alpha(a\beta(b)),$
  for any homogeneous elements $\beta\in D(B)$, 
$\alpha\in D(A)$, $a\in A$, and  $b\in B$. Its inverse is again an isomorphism of graded $S$-$S$-bimodules
$$D(A\otimes_S B)\rightmap{ \ \ \tau_{A,B}  \ \ }D(B)\otimes_S D(A).$$
\end{lemma}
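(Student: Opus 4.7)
My plan is to reduce this statement to the ungraded isomorphism of Lemma \ref{L: iso de duales y tensores sin graduacion} by decomposing both sides into their homogeneous components and identifying $\theta_{A,B}$ as an assembly of the ungraded maps $\theta_{A_s,B_t}$.

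First I would write out the degree $n$ components of both sides. By Remark \ref{localmente finitos y acotados}, the bimodule $A\otimes_S B$ is locally finite and bounded below, so each $(A\otimes_S B)_{-n}=\bigoplus_{s+t=-n}A_s\otimes_S B_t$ is a finite direct sum; this gives
$$D(A\otimes_S B)_n=\bigoplus_{s+t=-n}D(A_s\otimes_S B_t).$$
On the other side, after reindexing $s=-q$ and $t=-p$,
$$[D(B)\otimes_S D(A)]_n=\bigoplus_{p+q=n}D(B_{-p})\otimes_S D(A_{-q})=\bigoplus_{s+t=-n}D(B_t)\otimes_S D(A_s).$$
So the two sides decompose into matching summands indexed by the same pairs $(s,t)$.

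Next I would verify that the formula defining $\theta_{A,B}$ respects the grading. For homogeneous $\beta\in D(B)_p=D(B_{-p})$ and $\alpha\in D(A)_q=D(A_{-q})$, the assignment $a\otimes b\mapsto \alpha(a\beta(b))$ vanishes unless $a\in A_{-q}$ and $b\in B_{-p}$, so it factors through $(A\otimes_S B)_{-(p+q)}\to S$ and therefore represents an element of $D(A\otimes_S B)_{p+q}$, which matches the degree of $\beta\otimes\alpha$. Hence $\theta_{A,B}$ is a well-defined homogeneous morphism of graded $S$-$S$-bimodules of degree zero.

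Finally, restricted to the summand $D(B_t)\otimes_S D(A_s)$, the formula for $\theta_{A,B}$ coincides with the ungraded map $\theta_{A_s,B_t}$ of Lemma \ref{L: iso de duales y tensores sin graduacion} applied to $V=A_s$, $W=B_t$; that lemma already provides the bimodule isomorphism together with the explicit inverse $\tau_{A_s,B_t}$ given by a finite dual basis for $W=B_t$. Assembling these componentwise isomorphisms over all pairs $(s,t)$ with $s+t=-n$, for every $n$, delivers both the graded isomorphism $\theta_{A,B}$ and its graded inverse $\tau_{A,B}$. There is no serious obstacle here; the only care needed is the index bookkeeping around the convention $D(X)_n=D(X_{-n})$ and the commutation of $D$ with the finite direct sums $\bigoplus_{s+t=-n}A_s\otimes_S B_t$, which is precisely what the locally finite bounded below hypothesis guarantees.
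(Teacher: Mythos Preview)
Your proposal is correct and follows essentially the same approach as the paper: decompose each degree-$n$ component of both sides into the finite direct sum over pairs of degrees, and identify the restriction of $\theta_{A,B}$ to each summand with the ungraded isomorphism $\theta_{A_s,B_t}$ of Lemma~\ref{L: iso de duales y tensores sin graduacion}, which applies because each $B_t$ is a finitely generated left $S$-module. The paper's proof is slightly more terse but the argument is the same.
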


\begin{proof} The morphism $\theta_{A,B}$ maps, for each $n\in \hueca{Z}$, the finite direct sum 
$$[D(B)\otimes_SD(A)]_n=\bigoplus_{r+s=n}D(B)_r\otimes_S D(A)_s=\bigoplus_{r+s=n}\Hom_S(B_{-r},S)\otimes_S\Hom_S(A_{-s},S)$$
 onto the direct sum 
 $$[D(A\otimes_SB)]_n=\Hom_S([A\otimes_SB]_{-n},S)=\bigoplus_{r+s=n}\Hom_S(A_{-s}\otimes_S B_{-r},S)$$
 with the direct sum of isomorphisms 
 $\bigoplus_{r+s=n}\theta_{A_{-s},B_{-r}}$ described in  (\ref{L: iso de duales y tensores sin graduacion}). 
The latter can be applied because $B_{-r}$ is a finitely generated left $S$-module.  
 \end{proof}

\begin{definition}\label{D: hatD}
Consider the contravariant equivalence $$\hat{D}:\Gbb\g S\rightmap{}\Gba\g S,$$
defined by $\hat{D}(A)=D(A)^{op}$, the opposite $S$-$S$-bimodule of $D(A)$. Thus, $D(A)^{op}=D(A)$ as vector spaces and, whenever $s,s'\in S$, the action of $S$ on $\hat{D}(A)$, denoted with a star $*$ is given by $s*\beta*s'=s'\beta s$, for  $\beta\in D(A)$. Moreover, $\hat{D}(f):\hat{D}(B)\rightmap{}\hat{D}(A)$ has the same recipe than $D(f)$. 

Then, for any $A,B\in \Gbb\g S$ we have an isomorphism 
of graded $S$-$S$-bimodules  
$$\hat{D}(A)\otimes_S \hat{D}(B)\rightmap{ \ \ \hat{\theta}_{A,B}  \ \ }\hat{D}(A\otimes_S B)$$
 such that $\hat{\theta}_{A,B}(\alpha\otimes \beta)[a\otimes b]=\alpha(a\beta(b)),$
  for any homogeneous elements $\beta\in \hat{D}(B)$, 
$\alpha\in \hat{D}(A)$, $a\in A$, and  $b\in B$. 

The map $\hat{\theta}_{A,B}$ is obtained as the composition
$$D(A)^{op}\otimes_S D(B)^{op}\rightmap{\zeta}(D(B)\otimes_S D(A))^{op}\rightmap{ \ \ \theta_{A,B}  \ \ }D(A\otimes_S B)^{op}$$
where $\zeta$ is the flip isomorphism $\alpha\otimes \beta\mapsto \beta\otimes \alpha$. 
\end{definition}

\begin{lemma}\label{L: naturalidad de hat(theta)}
Given two homogeneous morphisms $f:A\rightmap{}A'$ and $g:B\rightmap{}B'$  in $\Gbb\g S$, we have the homogeneous morphism $f\otimes g:A\otimes_S B\rightmap{}A'\otimes_SB'$ in $\Gbb\g S$.  
Then, the following diagram commutes in $\Gba\g S$:
$$\begin{matrix}
 \hat{D}(A')\otimes_S\hat{D}(B')&\rightmap{\hat{\theta}_{A',B'}}&\hat{D}(A'\otimes_SB')\\
 \lmapdown{\hat{D}(f)\otimes \hat{D}(g)}&&\rmapdown{(-1)^{\vert f\vert \vert g\vert}\hat{D}(f\otimes g)}\\
 \hat{D}(A)\otimes_S\hat{D}(B)&\rightmap{\hat{\theta}_{A,B}}&\hat{D}(A\otimes_SB).\\
\end{matrix}$$
\end{lemma}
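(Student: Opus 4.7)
I plan to verify the diagram pointwise, by evaluating both composites at a homogeneous simple tensor $\alpha'\otimes\beta'\in \hat{D}(A')\otimes_S\hat{D}(B')$ and then testing the resulting element of $\hat{D}(A\otimes_SB)$ on a homogeneous simple tensor $a\otimes b\in A\otimes_SB$. For the LHS $\hat{\theta}_{A,B}\circ(\hat{D}(f)\otimes \hat{D}(g))$, the Koszul sign rule applied to the tensor product of graded morphisms yields
\[(\hat{D}(f)\otimes \hat{D}(g))(\alpha'\otimes \beta')=(-1)^{\vert g\vert\vert\alpha'\vert}\,\hat{D}(f)(\alpha')\otimes \hat{D}(g)(\beta').\]
Since by (\ref{D: hatD}) and (\ref{D: D(B)}) one has $\hat{D}(f)(\alpha')=\alpha'\circ f$ and $\hat{D}(g)(\beta')=\beta'\circ g$, the formula for $\hat{\theta}_{A,B}$ together with the $S$-linearity of $f$ reduces the evaluation on $a\otimes b$ to $(-1)^{\vert g\vert\vert\alpha'\vert}\,\alpha'(f(a)\,\beta'(g(b)))$.

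For the RHS, $\hat{D}(f\otimes g)$ acts by precomposition with $f\otimes g$, and the Koszul rule gives $(f\otimes g)(a\otimes b)=(-1)^{\vert g\vert\vert a\vert}f(a)\otimes g(b)$, so evaluating $\hat{\theta}_{A',B'}(\alpha'\otimes\beta')$ on the result produces $(-1)^{\vert f\vert\vert g\vert+\vert g\vert\vert a\vert}\,\alpha'(f(a)\,\beta'(g(b)))$, the initial factor coming from the prescribed sign in the statement. To reconcile the two, I observe that $\hat{D}(f)(\alpha')\in \hat{D}(A)$ has degree $\vert\alpha'\vert+\vert f\vert$ and is therefore supported on $A_{-\vert\alpha'\vert-\vert f\vert}$; hence, for the scalar $\alpha'(f(a)\,\beta'(g(b)))$ not to vanish, $a$ must satisfy $\vert a\vert=-\vert \alpha'\vert-\vert f\vert$. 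Substituting this relation gives $(-1)^{\vert g\vert\vert a\vert}=(-1)^{\vert g\vert\vert\alpha'\vert+\vert g\vert\vert f\vert}$, so the RHS sign reduces to $(-1)^{\vert g\vert\vert\alpha'\vert}$, matching the LHS.

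The main obstacle is the careful bookkeeping of Koszul signs: the extra factor $(-1)^{\vert f\vert\vert g\vert}$ in the statement precisely absorbs the asymmetry between the sign produced on the LHS by pulling $\hat{D}(g)$ past $\alpha'$ and the sign produced on the RHS by pulling $g$ past $a$, once the support condition $\vert a\vert=-\vert\alpha'\vert-\vert f\vert$ is taken into account. A more structural alternative would be to split $\hat{\theta}_{A,B}=\theta_{A,B}\circ \zeta$ as in (\ref{D: hatD}) and combine a graded version of the naturality of $\theta$ from (\ref{L: iso de duales y tensores sin graduacion}) with the graded behavior of the flip $\zeta$; the factor $(-1)^{\vert f\vert\vert g\vert}$ would then appear as the sign incurred when one homogeneous morphism is passed over another, giving the same conclusion at a slightly more conceptual level.
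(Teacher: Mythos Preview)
Your proof is correct and follows essentially the same approach as the paper: both evaluate each composite at a homogeneous simple tensor $\alpha\otimes\beta$ and then test on $a\otimes b$, track the Koszul signs, and conclude by noting that the expression $\alpha(f(a)\beta(g(b)))$ can be nonzero only when $\vert a\vert=-\vert\alpha\vert-\vert f\vert$, which forces the two signs to agree. The only cosmetic difference is that you use the $S$-linearity of $f$ to write $\alpha'(f(a)\beta'(g(b)))$ directly, whereas the paper leaves the intermediate form $\alpha[f(a\,\beta(g(b)))]$; these are the same since $\beta(g(b))\in S$.
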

 
 \begin{proof}
 Take a typical homogeneous generator $\alpha\otimes \beta$ of  $\hat{D}(A')\otimes_S\hat{D}(B')$, so $\alpha\in \hat{D}(A')$ and $\beta\in \hat{D}(B')$ are homogeneous, and take $a\otimes b\in A\otimes_S B$ a typical homogeneous generator, so $a\in A$ and $b\in B$ are homogeneous. 
 Make $\Delta_1= (-1)^{\vert f\vert \vert g\vert}[\hat{D}(f\otimes g)\hat{\theta}_{A',B'}(\alpha\otimes \beta)](a\otimes b)$, then we have 
 $$\begin{matrix}
 \Delta_1&=&(-1)^{\vert f\vert \vert g\vert}[\hat{\theta}_{A',B'}(\alpha\otimes \beta)(f\otimes g)](a\otimes b)\hfill\\
 &=&
 (-1)^{\vert f\vert \vert g\vert+\vert g\vert \vert a\vert}\hat{\theta}_{A',B'}(\alpha\otimes \beta)(f(a)\otimes g(b))\hfill\\
 &=&
 (-1)^{\vert f\vert \vert g\vert+\vert g\vert \vert a\vert}\alpha[f(a)\beta(g(b))].\hfill\\
 \end{matrix}$$
 Now make $\Delta_2:=[\hat{\theta}_{A,B}(\hat{D}(f)\otimes \hat{D}(g))(\alpha\otimes \beta)](a\otimes b)$, then we get
 $$\begin{matrix}
   \Delta_2&=&  [\hat{\theta}_{A,B}(\hat{D}(f)\otimes \hat{D}(g))(\alpha\otimes \beta)](a\otimes b)\hfill\\
   &=&
   (-1)^{\vert D(g)\vert \vert \alpha\vert}\hat{\theta}_{A,B}(\hat{D}(f)(\alpha)\otimes \hat{D}(g)(\beta))(a\otimes b)\hfill\\
   &=&
   (-1)^{\vert g\vert \vert \alpha\vert}\hat{\theta}_{A,B}(\alpha f\otimes \beta g)(a\otimes b)\hfill\\
   &=&
    (-1)^{\vert g\vert \vert \alpha\vert}\alpha f(a \beta g(b))
    =
     (-1)^{\vert g\vert \vert \alpha\vert}\alpha[ f(a \beta(g(b))].
    \hfill\\
   \end{matrix}$$

If $-\vert \alpha\vert \not= \vert f(a)\vert$, since 
$\alpha\in \Hom_S^{\vert \alpha\vert}(A',S)= \Hom_S(A'_{-\vert \alpha\vert},S)$, we obtain that $\alpha[ f(a \beta(g(b))]=0$ and,
hence, $\Delta_1=\Delta_2$.   Otherwise, we have  $-\vert \alpha\vert=
\vert f\vert +\vert a\vert$ and hence 
$$(-1)^{\vert f\vert \vert g\vert+\vert g\vert \vert a\vert}=(-1)^{\vert g\vert \vert \alpha\vert}.$$
 \end{proof}

\begin{definition}\label{L: def recursiva de hat(theta)}
 Given $A_1,\ldots,A_n\in \Gbb\g S$, the isomorphism 
 $$\hat{D}(A_1)\otimes_S\cdots \otimes_S \hat{D}(A_n)\rightmap{ \ \ \hat{\theta}_{A_1,\ldots,A_n} \ \ }\hat{D}(A_1\otimes_S \cdots \otimes_S A_n)$$
 is defined recursively, for $n\geq 3$, as the composition 
 of 
 $$\hat{D}(A_1)\otimes_S\cdots \otimes_S \hat{D}(A_n)
 \rightmap{ \ \ id_{\hat{D}(A_1)}\otimes \hat{\theta}_{A_2,\ldots,A_n} \ \ }
 \hat{D}(A_1)\otimes_S \hat{D}(A_2\otimes_S \cdots \otimes_S A_n)$$
 with 
 $$\hat{D}(A_1)\otimes_S \hat{D}(A_2\otimes_S \cdots \otimes_S A_n)
 \rightmap{ \ \ \hat{\theta}_{A_1,A_2\otimes\cdots\otimes A_n} \ \ }
 \hat{D}(A_1\otimes_S \cdots \otimes_S A_n).$$
 Moreover, for any $A\in  \Gbb\g S$, we make $\hat{\theta}_{A}:=id_{\hat{D}(A)}$, by definition. 
 
 Then, given $n\geq 1$ and a typical homogeneous generator $\alpha_1\otimes\cdots\otimes \alpha_n$ of the graded tensor product $\hat{D}(A_1)\otimes_S\cdots \otimes_S \hat{D}(A_n)$ and a typical homogeneous generator $a_1\otimes\cdots\otimes a_n$ of the graded tensor product $A_1\otimes_S \cdots \otimes_S A_n$, we have 
 $$\hat{\theta}_{A_1,\ldots,A_n}(\alpha_1\otimes\cdots\otimes \alpha_n)(a_1\otimes\cdots\otimes a_n)=\alpha_1(a_1\alpha_2(a_2(\alpha_3\cdots a_{n-1}\alpha_n(a_n))\cdots).$$
 The inverse 
 $$\hat{\tau}_{A_1,\ldots,A_n}: \hat{D}(A_1\otimes_S\cdots\otimes_S A_n)\rightmap{}\hat{D}(A_1)\otimes_S\cdots\otimes_S\hat{D}(A_n)$$
 of $\hat{\theta}_{A_1,\ldots,A_n}$ will be relevant in the following. 
\end{definition}

\begin{lemma}\label{L: asociatividad de la theta}
Given $A_1,\ldots,A_n\in  \Gbb\g S$ and an increasing sequence natural numbers $0=i_0<i_1<\cdots <i_{\ell-1}<i_\ell=n$, $\ell\geq 2$, we have the following associativity property for the isomorphisms $\hat{\theta}$. 
Make $I_{j}:=(A_{i_{j-1}+1},A_{i_{j-1}+2},\ldots,A_{i_j})$ and 
$A^{\otimes I_j}:=A_{i_{j-1}+1}\otimes A_{i_{j-1}+2}\otimes\cdots\otimes A_{i_{j}}$, for $j\in [1,\ell]$. 
Then, we have 
$$\hat{\theta}_{A^{\otimes I_1},\ldots,A^{\otimes I_\ell}}(\hat{\theta}_{I_1}\otimes\hat{\theta}_{I_2}\otimes \cdots\otimes \hat{\theta}_{I_\ell})=\hat{\theta}_{A_1,\ldots,A_n}.$$
Taking inverses the formula becomes
$$(\hat{\tau}_{I_1}\otimes\hat{\tau}_{I_2}\otimes \cdots\otimes \hat{\tau}_{I_\ell})\hat{\tau}_{A^{\otimes I_1},\ldots,A^{\otimes I_\ell}}=\hat{\tau}_{A_1,\ldots,A_n}.$$
\end{lemma}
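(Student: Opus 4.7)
The plan is to verify both identities by evaluating the two sides on typical homogeneous generators, relying on the explicit closed formula for $\hat{\theta}_{A_1,\ldots,A_n}$ recorded in Definition \ref{L: def recursiva de hat(theta)}. Both sides are morphisms of graded $S$-$S$-bimodules between the same graded bimodules, and such morphisms into $\hat{D}(A_1\otimes_S\cdots\otimes_S A_n)$ are determined by the values taken after pairing with homogeneous generators of $A_1\otimes_S\cdots\otimes_S A_n$. So it suffices to fix a typical homogeneous generator $\alpha_1\otimes\cdots\otimes\alpha_n$ of $\hat{D}(A_1)\otimes_S\cdots\otimes_S\hat{D}(A_n)$, a typical homogeneous generator $a_1\otimes\cdots\otimes a_n$ of $A_1\otimes_S\cdots\otimes_S A_n$, and to check equality of the two resulting scalars in $S$.

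I would first focus on the $\hat{\theta}$ identity; the $\hat{\tau}$ identity will then follow formally. By the closed formula applied to each block, $\hat{\theta}_{I_j}(\alpha_{i_{j-1}+1}\otimes\cdots\otimes\alpha_{i_j})$ is the element of $\hat{D}(A^{\otimes I_j})$ whose value on $a_{i_{j-1}+1}\otimes\cdots\otimes a_{i_j}$ is the corresponding nested block expression. Applying $\hat{\theta}_{A^{\otimes I_1},\ldots,A^{\otimes I_\ell}}$ to $\hat{\theta}_{I_1}\otimes\cdots\otimes\hat{\theta}_{I_\ell}$ and evaluating on $a_1\otimes\cdots\otimes a_n$, again by the closed formula but now with the $A^{\otimes I_j}$ in the role of single letters and the $\hat{\theta}_{I_j}$ in the role of the $\alpha_k$, inserts these $\ell$ block expressions in order into one single iterated nesting running through all $\alpha_1,\ldots,\alpha_n$ and $a_1,\ldots,a_n$. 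This is exactly the expression given by the closed formula for $\hat{\theta}_{A_1,\ldots,A_n}(\alpha_1\otimes\cdots\otimes\alpha_n)(a_1\otimes\cdots\otimes a_n)$, proving the first identity. The second identity is obtained by taking inverses: since $\hat{\theta}_{I_1}\otimes\cdots\otimes\hat{\theta}_{I_\ell}$ is an isomorphism with inverse $\hat{\tau}_{I_1}\otimes\cdots\otimes\hat{\tau}_{I_\ell}$, and since $(gf)^{-1}=f^{-1}g^{-1}$, inverting the first formula yields exactly the second.

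The main obstacle, really a bookkeeping one, is to check carefully that the outer application of $\hat{\theta}_{A^{\otimes I_1},\ldots,A^{\otimes I_\ell}}$ splices the $\ell$ inner block expressions in the correct left-to-right order, and that the intermediate outputs of the $\alpha_k$'s — which lie in $S$ and hence act centrally on each $A_j$ — slide through the tensor products without producing signs or reorderings. As a variant of the above, one may replace the direct evaluation by a double induction (on $\ell\geq 2$, and within the base case $\ell=2$ on the position $m=i_1$ of the split), using the recursive definition of $\hat{\theta}_{A_1,\ldots,A_n}$ to peel off $A_1$, and invoking the naturality statement of Lemma \ref{L: naturalidad de hat(theta)} in the inductive step; both approaches amount to the same computation, but the explicit-formula approach is the more transparent.
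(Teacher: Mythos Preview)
Your proposal is correct and takes essentially the same approach as the paper: both verify the identity by evaluating on a typical homogeneous generator $\alpha_1\otimes\cdots\otimes\alpha_n$ and then pairing with a typical homogeneous generator $a_1\otimes\cdots\otimes a_n$, using the closed formula of Definition~\ref{L: def recursiva de hat(theta)}. The only organizational difference is that the paper carries this out as an explicit base case $\ell=2$ followed by induction on $\ell$, whereas you argue directly for general $\ell$ (and mention the inductive variant); these are the same computation packaged differently.
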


\begin{proof} For $\ell=2$, if  $\alpha_1\otimes\cdots\otimes\alpha_n\in \hat{D}(A_1)\otimes\cdots\otimes\hat{D}(A_n)$ is a typical homogeneous generator, we have 
$$\hat{\theta}_{A^{\otimes I_1},A^{\otimes I_2}}(\hat{\theta}_{I_1}\otimes \hat{\theta}_{I_2})(\alpha_1\otimes\cdots\otimes\alpha_n)=
\hat{\theta}_{A^{\otimes I_1},A^{\otimes I_2}}(\hat{\theta}_{I_1}(\alpha_1\otimes\cdots\otimes\alpha_{i_1})\otimes \hat{\theta}_{I_2}(\alpha_{i_1+1}\otimes\cdots\otimes\alpha_n)).
$$
Evaluating at a typical homogeneous generator $a_1\otimes\cdots\otimes a_n$ we get 
$$\hat{\theta}_{I_1}(\alpha_1\otimes\cdots\otimes\alpha_{i_1})[a_1\otimes\cdots\otimes a_{i_1}
\hat{\theta}_{I_2}(\alpha_{i_1+1}\otimes\cdots\otimes\alpha_n)(a_{i_1+1}\otimes\cdots\otimes a_n)]$$
which clearly coincides with $\hat{\theta}_{A_1,\ldots,A_n}(\alpha_1\otimes\cdots\otimes\alpha_n)(a_1\otimes\cdots\otimes a_n)$.
So we can proceed by induction. Assume that the lemma holds for sequences 
with  length $\ell-1$ and let us show it for a sequence of length $\ell$.

We have that 
$\hat{\theta}_{A^{\otimes I_1},\ldots,A^{\otimes I_\ell}}(\hat{\theta}_{I_1}\otimes\hat{\theta}_{I_2}\otimes \cdots\otimes \hat{\theta}_{I_\ell})(\alpha_1\otimes\cdots\otimes\alpha_n)$ 
equals 
$$\hat{\theta}_{A^{\otimes I_1},\ldots,A^{\otimes I_\ell}}[\hat{\theta}_{I_1}(\alpha_1\otimes\cdots\otimes\alpha_{i_1})\otimes\hat{\theta}_{I_2}(\alpha_{i_1+1}\otimes\cdots\otimes\alpha_{i_2})\otimes \cdots\otimes \hat{\theta}_{I_\ell}(\alpha_{i_{\ell-1}+1}\otimes\cdots\otimes\alpha_n)].$$
Evaluating at a typical homogeneous generator $a_1\otimes\cdots\otimes a_n$, we obtain 
$$\hat{\theta}_{I_1}(\alpha_1\otimes\cdots\otimes\alpha_{i_1})[a_1\otimes\cdots\otimes a_{i_1}H]$$
where 
$H=\hat{\theta}_{A^{\otimes I_2},\ldots,A^{\otimes I_\ell}}(\hat{\theta}_{I_2}\otimes \cdots\otimes \hat{\theta}_{I_\ell})(\alpha_{i_1+1}\otimes\cdots\otimes\alpha_n)(a_{i_1+1}\otimes\cdots\otimes a_n)$. By induction hypothesis, we have   
$$H=\hat{\theta}_{A_{i_1+1},\ldots,A_n}(\alpha_{i_1+1}\otimes\cdots\otimes\alpha_n)(a_{i_1+1}\otimes\cdots\otimes a_n).$$
Then, we have the wanted formula. 
\end{proof}

\begin{remark}\label{D: de lambda n} For any natural number $n\geq 2$, and $d_1,\ldots,d_n\in \hueca{Z}$, we can define  
$$\lambda_n(d_1,\ldots,d_n)=\sum_{1\leq i< j\leq n}d_id_j.$$
So, we have $\lambda_n(d_1,\ldots,d_n)=\lambda_{n-1}(d_2,\ldots,d_n)+d_1(d_2+\cdots+d_n)$. Then, from (\ref{L: naturalidad de hat(theta)}), we obtain the following lemma by induction on the natural number $n$. 
\end{remark}

\begin{lemma}\label{L: naturalidad iterada de theta}
Given homogeneous morphisms $A_1\rightmap{f_1}B_1,\ldots,A_n\rightmap{f_n}B_n$ in $\Gbb\g S$ we have a commutative diagram 
$$\begin{matrix}
 \hat{D}(B_1)\otimes_S\cdots \otimes_S \hat{D}(B_n)&\rightmap{ \ \ \hat{\theta}_{B_1,\ldots,B_n} \ \ }&\hat{D}(B_1\otimes_S \cdots \otimes_S B_n)\\
 \lmapdown{\hat{D}(f_1)\otimes\cdots \otimes \hat{D}(f_n)}&&\lmapdown{(-1)^{\lambda_n(\vert f_1\vert,\ldots,\vert f_n\vert)}\hat{D}(f_1\otimes\cdots\otimes f_n)}\\
 \hat{D}(A_1)\otimes_S\cdots \otimes_S \hat{D}(A_n)&\rightmap{ \ \ \hat{\theta}_{A_1,\ldots,A_n} \ \ }&\hat{D}(A_1\otimes_S \cdots \otimes_S A_n).\\
\end{matrix}$$
Taking inverses, we obtain the formula:
$$(\hat{D}(f_1)\otimes\cdots \otimes \hat{D}(f_n))\hat{\tau}_{B_1,\ldots,B_n}=
(-1)^{\lambda_n(\vert f_1\vert,\ldots,\vert f_n\vert)}\hat{\tau}_{A_1,\ldots,A_n}\hat{D}(f_1\otimes\cdots\otimes f_n).$$
\end{lemma}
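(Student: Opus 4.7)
The plan is to prove the commutativity of the square by induction on $n$, using Lemma \ref{L: naturalidad de hat(theta)} as the base case and the recursive definition of $\hat{\theta}_{A_1,\ldots,A_n}$ given in \ref{L: def recursiva de hat(theta)}. Note that for $n=2$ the sign $\lambda_2(|f_1|,|f_2|)=|f_1||f_2|$ matches exactly the sign appearing in \ref{L: naturalidad de hat(theta)}, so the base case is already done.

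For the inductive step, assume the formula holds for any $n-1$ homogeneous morphisms and consider the given morphisms $f_1,\ldots,f_n$. By \ref{L: def recursiva de hat(theta)},
$$\hat{\theta}_{A_1,\ldots,A_n}=\hat{\theta}_{A_1,A_2\otimes\cdots\otimes A_n}\circ\bigl(\mathrm{id}_{\hat{D}(A_1)}\otimes\hat{\theta}_{A_2,\ldots,A_n}\bigr),$$
and similarly for the $B_i$. The first step is to precompose with $\hat{D}(f_1)\otimes\cdots\otimes\hat{D}(f_n)$ and push the factor $\hat{D}(f_1)$ out of the inner tensor, obtaining $\hat{D}(f_1)\otimes\bigl[\hat{\theta}_{A_2,\ldots,A_n}\circ(\hat{D}(f_2)\otimes\cdots\otimes\hat{D}(f_n))\bigr]$. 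The induction hypothesis applies to the bracketed expression, contributing the sign $(-1)^{\lambda_{n-1}(|f_2|,\ldots,|f_n|)}$ and replacing it by $\hat{D}(f_2\otimes\cdots\otimes f_n)\circ\hat{\theta}_{B_2,\ldots,B_n}$.

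Next, one applies the base case (Lemma \ref{L: naturalidad de hat(theta)}) to the pair of morphisms $f_1$ and $f_2\otimes\cdots\otimes f_n$, whose degrees are $|f_1|$ and $|f_2|+\cdots+|f_n|$. This produces the additional sign $(-1)^{|f_1|(|f_2|+\cdots+|f_n|)}$ and converts $\hat{\theta}_{A_1,A_2\otimes\cdots\otimes A_n}\circ(\hat{D}(f_1)\otimes\hat{D}(f_2\otimes\cdots\otimes f_n))$ into $\hat{D}(f_1\otimes f_2\otimes\cdots\otimes f_n)\circ\hat{\theta}_{B_1,B_2\otimes\cdots\otimes B_n}$. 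Reassembling, the result equals
$$(-1)^{\lambda_{n-1}(|f_2|,\ldots,|f_n|)+|f_1|(|f_2|+\cdots+|f_n|)}\hat{D}(f_1\otimes\cdots\otimes f_n)\circ\hat{\theta}_{B_1,\ldots,B_n},$$
where we have used the recursive expression for $\hat{\theta}_{B_1,\ldots,B_n}$ again. The sign combinatorics closes the induction thanks to the identity $\lambda_n(d_1,\ldots,d_n)=\lambda_{n-1}(d_2,\ldots,d_n)+d_1(d_2+\cdots+d_n)$ from Remark \ref{D: de lambda n}. The second formula of the statement is immediate upon taking inverses. The only genuinely delicate point is the sign bookkeeping, which is fully controlled by the recursion for $\lambda_n$; the rest is the standard functorial pasting of two commutative squares.
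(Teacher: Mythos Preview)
Your proof is correct and follows exactly the approach indicated in the paper: induction on $n$ using Lemma~\ref{L: naturalidad de hat(theta)} for the base case and the recursive definition of $\hat{\theta}_{A_1,\ldots,A_n}$ from (\ref{L: def recursiva de hat(theta)}), with the sign bookkeeping closed by the recursion $\lambda_n(d_1,\ldots,d_n)=\lambda_{n-1}(d_2,\ldots,d_n)+d_1(d_2+\cdots+d_n)$ from Remark~\ref{D: de lambda n}. The paper merely states that the lemma follows by induction from (\ref{L: naturalidad de hat(theta)}); you have simply written that induction out in full.
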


\begin{definition}\label{D: tau iterada n veces}
Once we fix some  $B\in  \Gbb\g S$, we can consider the following sequence of 
canonical isomorphisms:
\begin{enumerate}
\item For $n=1$, we make  $\hat{\theta}_1:=id_{\hat{D}(B)}:\hat{D}(B)\rightmap{}\hat{D}(B)$; and, 
for $n\geq 2$, we make $\hat{\theta}_n:=\hat{\theta}_{B,\ldots,B}:\hat{D}(B)^{\otimes n}\rightmap{}\hat{D}(B^{\otimes n})$.

and their inverses:

\item For $n=1$, we make  $\hat{\tau}_1:=id_{\hat{D}(B)}:\hat{D}(B)\rightmap{}\hat{D}(B)$; and, 
for $n\geq 2$, we make $\hat{\tau}_n:=\hat{\tau}_{B,\ldots,B}:\hat{D}(B^{\otimes n})\rightmap{}\hat{D}(B)^{\otimes n}$.
\end{enumerate}
\end{definition}

\begin{lemma}\label{L: la formula para manipular la diferencial hat d} 
Given  $B\in  \Gbb\g S$, non-negative integers $r,s,t,n$ with $r+s+t=n$ and $s\geq 1$, and any homogeneous morphism of graded $S$-$S$-bimodules $b_s: B^{\otimes s}\rightmap{}B$ of degree $\vert b_s\vert=1$, we have  
 the following equality of morphisms from $\hat{D}(B^{\otimes (r+1+t)})$ to $\hat{D}(B)^{\otimes n}$ 
$$(id_{\hat{D}(B)}^{\otimes r}\otimes  \hat{\tau}_s\hat{D}(b_s)\otimes id^{\otimes t}_{\hat{D}(B)})\hat{\tau}_{r+1+t}
=
\hat{\tau}_n\hat{D}(id_B^{\otimes r}\otimes b_s\otimes id_B^{\otimes t}).$$
\end{lemma}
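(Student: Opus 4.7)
The identity combines two kinds of structural data for the isomorphisms $\hat{\tau}$: naturality in the arguments and associativity of the partition of the tensor factors. My plan is to derive it by chaining the iterated naturality lemma (\ref{L: naturalidad iterada de theta}) with the associativity lemma (\ref{L: asociatividad de la theta}), with the key observation that the degree sequence involved makes the sign in naturality trivial.

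First, I would apply (\ref{L: naturalidad iterada de theta}) to the $(r+1+t)$-tuple of morphisms $f_1=\cdots=f_r=id_B$, $f_{r+1}=b_s:B^{\otimes s}\to B$, $f_{r+2}=\cdots=f_{r+1+t}=id_B$. Their sources form the sequence $(B,\ldots,B,B^{\otimes s},B,\ldots,B)$, whose tensor product is $B^{\otimes n}$, and their targets are $(r+1+t)$ copies of $B$. The only nonzero degree in the tuple is $|b_s|=1$, so every cross term in $\lambda_{r+1+t}(0,\ldots,0,1,0,\ldots,0)$ vanishes and the sign is trivial. Naturality then yields
$$(id_{\hat{D}(B)}^{\otimes r}\otimes \hat{D}(b_s)\otimes id_{\hat{D}(B)}^{\otimes t})\,\hat{\tau}_{r+1+t}=\hat{\tau}_{B,\ldots,B^{\otimes s},\ldots,B}\,\hat{D}(id_B^{\otimes r}\otimes b_s\otimes id_B^{\otimes t}).$$

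Next, I would invoke the inverse version of (\ref{L: asociatividad de la theta}) applied to the sequence of $n$ copies of $B$, with the partition consisting of $r$ initial singletons, a single middle block of length $s$, and $t$ trailing singletons (so $\ell=r+1+t$). Since $\hat{\tau}_1=id_{\hat{D}(B)}$, the only nontrivial local factor is $\hat{\tau}_s$ in the middle, and the formula becomes
$$(id_{\hat{D}(B)}^{\otimes r}\otimes \hat{\tau}_s\otimes id_{\hat{D}(B)}^{\otimes t})\,\hat{\tau}_{B,\ldots,B^{\otimes s},\ldots,B}=\hat{\tau}_n.$$

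Finally, I would compose the first identity on the left with $(id_{\hat{D}(B)}^{\otimes r}\otimes \hat{\tau}_s\otimes id_{\hat{D}(B)}^{\otimes t})$. On the left-hand side, bifunctoriality of $\otimes_S$ collapses the two middle tensorands into $\hat{\tau}_s\hat{D}(b_s)$, yielding exactly $(id^{\otimes r}\otimes \hat{\tau}_s\hat{D}(b_s)\otimes id^{\otimes t})\hat{\tau}_{r+1+t}$; on the right-hand side, the second identity converts the prefix $(id^{\otimes r}\otimes \hat{\tau}_s\otimes id^{\otimes t})\hat{\tau}_{B,\ldots,B^{\otimes s},\ldots,B}$ into $\hat{\tau}_n$, giving $\hat{\tau}_n\hat{D}(id_B^{\otimes r}\otimes b_s\otimes id_B^{\otimes t})$. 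This is the required equality. The only place where something nontrivial could go wrong is the sign from (\ref{L: naturalidad iterada de theta}), but, as noted, the degree sequence kills it; the remainder is bookkeeping with the partition used in associativity.
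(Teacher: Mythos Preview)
Your proposal is correct and follows essentially the same approach as the paper: both invoke the associativity formula (\ref{L: asociatividad de la theta}) for the partition of $n$ into $r$ singletons, one block of size $s$, and $t$ singletons, together with the iterated naturality formula (\ref{L: naturalidad iterada de theta}) applied to $(id_B,\ldots,id_B,b_s,id_B,\ldots,id_B)$, with the same observation that the sign $\lambda_{r+1+t}(0,\ldots,0,1,0,\ldots,0)$ vanishes. The only cosmetic difference is the order of presentation; the paper uses the notation $\hat{\tau}_{J_r,B^{\otimes s},J_t}$ for what you write as $\hat{\tau}_{B,\ldots,B^{\otimes s},\ldots,B}$.
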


\begin{proof} Make $A_i=B$, for all $i\in [1,n]$; let $\ell:=r+1+t$ and consider the sequence $0=i_0<i_1<\cdots <i_\ell=n$ defined by $i_j:=j$, for $j\in [1,r]$; $i_{r+1}:=r+s$; and $i_j:=j$, for $j\in [r+2,\ell]$. Then, from the associativity formula in 
(\ref{L: asociatividad de la theta}), we have 
$$(id_{\hat{D}(B)}^{\otimes r}\otimes\hat{\tau}_s\otimes  
id_{\hat{D}(B)}^{\otimes t})\hat{\tau}_{J_r,B^{\otimes s},J_t}
=\hat{\tau}_n,$$
where, for $m\geq 0$, we denote by $J_m:=(B,B,\ldots,B)$, a sequence with $m$ terms.

Now, apply the formula in (\ref{L: naturalidad iterada de theta}) to the family $f_1,\ldots,f_{r+1+t}$ defined by $f_i:=id_B$, for $i\in [1,r]\cup [r+2,r+1+t]$, and $f_{r+1}:=b_s:B^{\otimes s}\rightmap{}B$.
Thus $\vert f_{r+1}\vert=1$ 
 and $\vert f_i\vert=0$ for $i\not=r+1$. Hence $\lambda_{r+1+t}(\vert f_1\vert,\ldots,\vert f_{r+1+t}\vert)=0$, and we get 
 the formula
 $$(id_{\hat{D}(B)}^{\otimes r}\otimes\hat{D}(b_s)\otimes 
id^{\otimes t}_{\hat{D}(B)})
\hat{\tau}_{r+1+t}
=
\hat{\tau}_{J_r,B^{\otimes s},J_t}
\hat{D}(id_B^{\otimes r}\otimes b_s\otimes id_B^{\otimes t}).$$

Then, if $\Delta=(id_{\hat{D}(B)}^{\otimes r}\otimes  \hat{\tau}_s\hat{D}(b_s)\otimes id^{\otimes t}_{\hat{D}(B)})\hat{\tau}_{r+1+t}$, 
we have 
$$\begin{matrix}
\Delta
&=&
(id_{\hat{D}(B)}^{\otimes r}\otimes  \hat{\tau}_s
\otimes id^{\otimes t}_{\hat{D}(B)})  
(id_{\hat{D}(B)}^{\otimes r}\otimes \hat{D}(b_s)\otimes id^{\otimes t}_{\hat{D}(B)})
\hat{\tau}_{r+1+t}\hfill\\
&=&
 (id_{\hat{D}(B)}^{\otimes r}\otimes  \hat{\tau}_s
\otimes id^{\otimes t}_{\hat{D}(B)})  
\hat{\tau}_{J_r,B^{\otimes s},J_t}
\hat{D}(id_B^{\otimes r}\otimes b_s\otimes id_B^{\otimes t})
\hfill\\
&=&  
\hat{\tau}_n
\hat{D}(id_B^{\otimes r}\otimes b_s\otimes id_B^{\otimes t}).
\hfill\\
  \end{matrix}$$
\end{proof}

\section{The Yoneda $A_\infty$-algebra}

Assume that $S$ is a finite product of copies of the field $k$ 
and $1=\sum_{i\in {\cal P}}e_i$ is a decomposition of the unit element of $S$ as a sum of orthogonal idempotents.
We recall the following basic definition. 

 \begin{definition}\label{A-infinite algebra y idempes estrictos} 
    An \emph{$A_{\infty}$-algebra} $A$ is a graded $S$-$S$-bimodule  
   $A$, equipped with a sequence of homogeneous 
   morphisms of $S$-$S$-bimodules  
   $$\{m_n:A^{\otimes n}\rightmap{}A\}_{n\in \hueca{N}},$$
   where each $m_n$ has degree $\vert m_n\vert=2-n$, such that, for each 
   $n\in \hueca{N}$, the following \emph{Stasheff identity} holds.
   $$\begin{matrix}S_n:=\sum_{\scriptsize\begin{matrix}r+s+t=n\\ s\geq 1;r,t\geq 0\end{matrix}} 
   (-1)^{r+st}m_{r+1+t}(id^{\otimes r}\otimes m_s\otimes id^{\otimes t})=0.\end{matrix}$$  
   \end{definition}
   
   \begin{lemma}\label{L: extension  A-infinite algebra structure}
   Let $\widehat{A}$ be an \emph{augmented graded $S$-algebra}, that is the $S$-$S$-bimodule $\widehat{A}$ admits a decomposition of graded $S$-algebras $\widehat{A}=S\oplus A$ and  has $1_S$ as unit element; thus the product of $\widehat{A}$ restricts to a product on the graded $S$-algebra $A$. Then, whenever $A$ admits a structure of $A_\infty$-algebra 
   $\{m_n:A^{\otimes n}\rightmap{}A\}_{n\in \hueca{N}}$ such that $m_1=0$ and $m_2$ is induced by the product of $A$, we have that $\widehat{A}$ admits a unique structure of $A_\infty$-algebra 
   $\{\widehat{m}_n:\widehat{A}^{\otimes n}\rightmap{}\widehat{A}\}_{n\in \hueca{N}}$ such that $\widehat{m}_1=0$ and $\widehat{m}_2$ is induced by the product of $\widehat{A}$ and, moreover:   $\widehat{m}_n(z_1\otimes z_2\otimes\cdots \otimes z_n)=0$, for all $n\geq 3$ and $z_1,\ldots,z_n\in \widehat{A}$ with $z_u\in \{e_i\mid i\in {\cal P}\}$, for some $u\in [1,n]$. 
   \end{lemma}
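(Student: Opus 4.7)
The plan is to construct the extension explicitly and then verify it satisfies the Stasheff identities, from which uniqueness will follow. Let $\pi:\widehat{A}\to A$ denote the canonical projection associated with the decomposition $\widehat{A}=S\oplus A$. I will set $\widehat{m}_1:=0$, let $\widehat{m}_2$ be the map induced by the product of $\widehat{A}$, and, for $n\geq 3$, define
$$\widehat{m}_n:=m_n\circ \pi^{\otimes n}.$$
Since $\pi$ is a homogeneous $S$-$S$-bimodule morphism of degree $0$, each $\widehat{m}_n$ is a homogeneous $S$-$S$-bimodule morphism of degree $2-n$; the vanishing clause $\widehat{m}_n(z_1\otimes\cdots\otimes z_n)=0$ whenever some $z_u=e_i$ (for $n\geq 3$) is automatic, because $\pi(e_i)=0$.

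Then the Stasheff identities $\widehat{S}_n=0$ must be checked on every pure tensor. By multilinearity and the splitting $\widehat{A}=S\oplus A$, it suffices to evaluate $\widehat{S}_n$ on tensors $z_1\otimes\cdots\otimes z_n$ in which each $z_u$ is either in $A$ or equals some idempotent $e_{i_u}$. If every $z_u\in A$, I use that $A$ is the augmentation ideal of $\widehat{A}$, so $\widehat{m}_2$ restricted to $A\otimes A$ agrees with $m_2$ and takes values in $A$; together with $\widehat{m}_n|_{A^{\otimes n}}=m_n$ for $n\geq 3$, every inner map lands in $A$ and the outer map is evaluated on $A^{\otimes (r+1+t)}$, so that $\widehat{S}_n$ collapses onto $S_n$ for $\{m_n\}$, which vanishes by hypothesis.

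If some $z_u=e_i$, I perform a case analysis on the summands of $\widehat{S}_n$ indexed by $(r,s,t)$: those with $s\geq 3$ and $u\in[r+1,r+s]$ vanish by the defining vanishing clause; those with $s=1$ vanish because $\widehat{m}_1=0$; and those with $r+1+t\geq 3$ and $u\notin [r+1,r+s]$ vanish because the outer $\widehat{m}_{r+1+t}$ still has the idempotent factor in its input. The only surviving terms are those in which an $\widehat{m}_2$ (either inner or outer) absorbs $e_i$ as a unit. By the sign convention $(-1)^{r+st}$, these surviving terms pair off and cancel, exactly as in the classical verification that a strict unit can be adjoined to any non-unital $A_\infty$-algebra. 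For uniqueness, any other family $\{\widetilde{m}_n\}$ satisfying the three conditions must, by the vanishing clause and $k$-multilinearity, also satisfy $\widetilde{m}_n(z_1\otimes\cdots\otimes z_n)=\widetilde{m}_n(\pi(z_1)\otimes\cdots\otimes \pi(z_n))$ for $n\geq 3$, hence is determined by its restriction to $A^{\otimes n}$; the requirement that this restriction reproduce the given structure $\{m_n\}$ on $A$ (forced once the strict-unit and product conditions are imposed, by examining the Stasheff identities applied to tensors involving a single idempotent) pins down $\widetilde{m}_n=\widehat{m}_n$.

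The main obstacle I foresee is the sign-tracking in the second case above: the idempotent $e_i$ may occupy any one of the $n$ positions, and I must carefully pair the surviving summands where $e_i$ is absorbed by an inner $\widehat{m}_2$ with the complementary summands where it is absorbed by the outer $\widehat{m}_2$, verifying the signs cancel. This is a tedious but standard bookkeeping argument, well documented in the literature on strict-unit extensions of $A_\infty$-algebras.
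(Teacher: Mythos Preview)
Your proposal is correct and follows essentially the same approach as the paper: the definition $\widehat{m}_n=m_n\circ\pi^{\otimes n}$ for $n\geq 3$ is exactly the paper's ``extension of $m_n$ which maps $E_n$ to zero'' (where $E_n$ is the subbimodule generated by tensors containing some $e_i$), and the paper then carries out precisely the case analysis you outline, splitting into the subcases $u=1$, $u=n$, and $1<u<n$ and verifying in each that exactly two terms survive and cancel with the signs $(-1)^{r+st}$. One minor refinement: in the interior case $1<u<n$ the two surviving terms are \emph{both} of the ``inner $\widehat{m}_2$ absorbs $e_i$'' type (at $(r,s,t)=(u-1,2,n-u-1)$ and $(u-2,2,n-u)$), not an inner/outer pair, so your description of the pairing should be adjusted accordingly --- but the cancellation still goes through.
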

 
   \begin{proof}  Make  $\widehat{m}_1=0$ and let  $\widehat{m}_2:\widehat{A}\otimes_S\widehat{A}\rightmap{}\widehat{A}$ be the morphism of $S$-$S$-bimodules induced by the product of $\widehat{A}$. For $n\geq 3$, we have  an $S$-$S$-bimodule decomposition $\widehat{A}^{\otimes n}=A^{\otimes n}\oplus E_n$, where $E_n$ denotes the $S$-$S$-subbimodule generated by tensors with some factor in $\{e_i\mid i\in {\cal P}\}$. Then, consider the extension  $\widehat{m}_n:\widehat{A}^{\otimes n}\rightmap{}\widehat{A}$ of $m_n$ which maps $E_n$ to zero.  Thus $\vert m_n\vert=\vert\widehat{m}_n\vert$ for all $n$.

We claim that 
$\widehat{S}_n=0$, for all $n\geq 1$, where 
$$\widehat{S}_n=\sum_{\scriptsize\begin{matrix}r+s+t=n\\ s\geq 1;r,t\geq 0\end{matrix}} 
   (-1)^{r+st}\widehat{m}_{r+1+t}(id^{\otimes r}\otimes \widehat{m}_s\otimes id^{\otimes t}).$$

Since $\widehat{m}_1=0$, we clearly have that $\widehat{S}_1=\widehat{m}_1^2=0$ and  $$\widehat{S}_2=-\widehat{m}_2(\widehat{m}_1\otimes id)-\widehat{m}_2(id\otimes \widehat{m}_1)+\widehat{m}_1\widehat{m_2}=0.$$ 

For $n\geq 3$, we consider a typical non-zero generator 
$z=z_1\otimes\cdots\otimes z_n\in \widehat{A}^{\otimes n}$, where we assume that all the elements 
$z_1,\ldots,z_n\in \widehat{A}$ are directed, see (\ref{N: direcciones en bimods}). 
Let us examine $\widehat{S}_n(z)$. If all $z_i\in A$, we have $\widehat{S}_n(z)=S_n(z)=0$. So, assume that $z_u\in \{e_i\mid i\in {\cal P}\}$, for some $u\in [1,n]$. 
Consider the index set $I=\{(r,s,t)\mid r+s+t=n; s\geq 1; r,t\geq 0\}$. Then, $\widehat{S}_n=\sum_{(r,s,t)\in I}\widehat{S}_n(r,s,t)$, where 
 $$\widehat{S}_n(r,s,t):=(-1)^{r+st}\widehat{m}_{r+1+t}(id^{\otimes r}\otimes \widehat{m}_s\otimes id^{\otimes t}).$$

We prove that $\widehat{S}_n(z)=0$ in each one of the following three possible cases. 

 \medskip
\noindent\emph{Case 1:} $u=1$. 
 \medskip
 
 In each one of the following situations, we have that $\widehat{S}_n(r,s,t)(z)=0$.
 \begin{enumerate}
  \item[1.1] If we have  $r>1$;
  \item[1.2] If we have $r=1$ and $t\not=0$;
  \item[1.3] If we have $r=0$ and ($s=1$ or $s\geq 3)$;  
 \end{enumerate}
 There are only two other possible situations:
 \begin{enumerate}
  \item[1.4] If we have $r=0$ and $s=2$. Thus, $t=n-2$ and 
 \end{enumerate}
$$\begin{matrix}
\widehat{S}_n(r,s,t)(z)&=&(-1)^{r+st}\widehat{m}_{r+1+t}(\widehat{m}_2(e_i\otimes z_2)\otimes z_3\otimes\cdots\otimes z_n)\hfill\\   
&=&
\widehat{m}_{n-1}(z_2\otimes z_3\otimes\cdots\otimes z_n).\hfill\\
 \end{matrix}$$
 \begin{enumerate}
  \item[1.5] If we have $r=1$ and $t=0$. Thus, $s=n-1$ and 
  $$\begin{matrix}
\widehat{S}_n(r,s,t)(z)&=&(-1)^{r+st}\widehat{m}_{r+1+t}(e_i\otimes\widehat{m}_s(z_2\otimes\cdots \otimes z_n))\hfill\\   
&=&
-\widehat{m}_2(e_i\otimes\widehat{m}_s(z_2\otimes z_3\otimes\cdots\otimes z_n))\hfill\\
&=&
-\widehat{m}_{n-1}(z_2\otimes z_3\otimes\cdots\otimes z_n)\hfill\\
 \end{matrix}$$
 \end{enumerate}
Therefore, we get $\widehat{S}_n(z)=\widehat{S}_n(0,2,n-2)(z)+\widehat{S}_n(1,n-1,0)(z)=0$.

 \medskip
\noindent\emph{Case 2:} $u=n$. 
 \medskip

  This case is similar to the preceding one. Indeed, in each one of the following situations, we have that $\widehat{S}_n(r,s,t)(z)=0$:
 \begin{enumerate}
  \item[2.1] If we have  $t>1$;
  \item[2.2] If we have $t=1$ and $r\not=0$;
  \item[2.3] If we have $t=0$ and ($s=1$ or $s\geq 3)$;  
 \end{enumerate}
 There are only two other possible situations:
 \begin{enumerate}
  \item[2.4] If we have $t=0$ and $s=2$. Thus, $r=n-2$ and 
 \end{enumerate}
$$\begin{matrix}
\widehat{S}_n(r,s,t)(z)&=&(-1)^{r+st}\widehat{m}_{r+1+t}(z_1\otimes \cdots\otimes z_r\otimes \widehat{m}_2(z_{n-1}\otimes e_i))\hfill\\   
&=&
(-1)^{n-2}\widehat{m}_{n-1}(z_1\otimes z_2\otimes\cdots\otimes z_{n-1}).\hfill\\
 \end{matrix}$$
 \begin{enumerate}
  \item[2.5] If we have $t=1$ and $r=0$. Thus, $s=n-1$ and 
  $$\begin{matrix}
\widehat{S}_n(r,s,t)(z)&=&(-1)^{r+st}\widehat{m}_{r+1+t}(\widehat{m}_s(z_1\otimes\cdots \otimes z_{n-1})\otimes e_i)\hfill\\   
&=&
(-1)^{n-1}\widehat{m}_2(\widehat{m}_{n-1}(z_1\otimes\cdots\otimes z_{n-1})\otimes e_i)\hfill\\
&=&
(-1)^{n-1}\widehat{m}_{n-1}(z_1\otimes z_2\otimes\cdots\otimes z_{n-1}).\hfill\\
 \end{matrix}$$
 \end{enumerate}
Therefore, we get $\widehat{S}_n(z)=\widehat{S}_n(n-2,2,0)(z)+\widehat{S}_n(0,n-1,1)(z)=0$. 

 \medskip
\noindent\emph{Case 3:} $1<u<n$. 
 \medskip
 
  Again, in each one of the following situations, we have that $\widehat{S}_n(r,s,t)(z)=0$:
 \begin{enumerate}
  \item[3.1] If we have  $r\geq u$;
  \item[3.2] If we have $t\geq n-u+1$ (so $u\in [n-t+1,n-1]$);
  \item[3.3] If we have $r<u$ and $t<n-u+1$, and ($s=1$ or $s\geq 3)$;  
 \end{enumerate}
 If we are not in these situations, we have $r<u$, $t<n-u+1$ and $s=2$. Thus, we are left with two cases: 
 \begin{enumerate}
  \item[3.4] We have $r=u-1$, and $s=2$, thus $t=n-u-1$; or  
  \item[3.5] We have $r=u-2$, and $s=2$, thus $t=n-u$.
 \end{enumerate}
 
 In case 3.4, we have that $\widehat{S}_n(r,s,t)(z)=\widehat{S}_n(u-1,2,n-u-1)(z)$ equals 
 $$(-1)^{u-1}\widehat{m}_{n-1}(z_1\otimes\cdots\otimes z_{u-1}\otimes \widehat{m}_2(e_i\otimes z_{u+1})\otimes\cdots\otimes z_n)$$
 that is 
 $(-1)^{u-1}\widehat{m}_{n-1}(z_1\otimes\cdots\otimes z_{u-1}\otimes z_{u+1}\otimes\cdots\otimes z_n)$. 

 In case 3.5, we have that $\widehat{S}_n(r,s,t)(z)=\widehat{S}_n(u-2,2,n-u)(z)$ equals 
$$
(-1)^{u-2}\widehat{m}_{n-1}(z_1\otimes \cdots\otimes z_r\otimes \widehat{m}_2(z_{u-1}\otimes e_i)\otimes z_{n-t+1}\otimes\cdots\otimes z_n)$$
that is 
$(-1)^{u-2}\widehat{m}_{n-1}(z_1\otimes\cdots\otimes z_{u-1}\otimes z_{u+1}\otimes\cdots\otimes z_n)$.
Therefore, we get 
$$\widehat{S}_n(z)=\widehat{S}_n(u-1,2,n-u-1)(z)+\widehat{S}_n(u-2,2,n-u)(z)=0.$$ 
\end{proof}

\begin{definition}\label{D: el algebra de Yoneda y el algebra cal E}
 Assume that $\{\Delta_i\}_{i\in {\cal P}}$ is a family of non-isomorphic indecomposable $\Lambda$-modules, where $\Lambda$ is a finite-dimensional $k$-algebra. Make $\Delta=\bigoplus_{i\in {\cal P}}\Delta_i$. Then,  
since $k$ is algebraically closed, there is a splitting of the endomorphism algebra $\End_\Lambda(\Delta)=S\bigoplus J$ over its radical $J$. Moreover, the semisimple subalgebra $S$ of  $\End_\Lambda(\Delta)$ can be described as $S=\bigoplus_{i\in {\cal P}} ke_i$, where  $e_i=\sigma_i\pi_i\in \End_\Lambda(\Delta)$ is the idempotent defined by the composition of the canonical projection $\pi_i:\Delta\rightmap{}\Delta_i$ and the canonical injection $\sigma_i:\Delta_i\rightmap{}\Delta$.

\emph{The Yoneda algebra of $\Delta$} is the graded $S$-algebra $$A=\bigoplus_{n=0}^\infty A_n, \hbox{ where } A_n=\Ext_\Lambda^n(\Delta,\Delta), \hbox{ for }n\geq 0.$$ 
\end{definition}

\begin{definition}\label{D: the graded endomorphism algebra of PDelta}
Assume, as before, that $\{\Delta_i\}_{i\in {\cal P}}$ is a family of non-isomorphic indecomposable $\Lambda$-modules, where $\Lambda$ is a finite-dimensional $k$-algebra. Make $\Delta=\bigoplus_{i\in {\cal P}}\Delta_i$.
Let $P_{\Delta_i}$ be a fixed projective resolution of $\Delta_i$, for each $i\in {\cal P}$, and consider them as graded differential $\Lambda$-modules, where $\Lambda$ is considered as a differential graded $k$-algebra concentrated at degree 0. Make $P_\Delta:=\bigoplus_{i\in {\cal P}}P_{\Delta_i}$ and 
$${\cal E}={\cal E}(P_\Delta):=\bigoplus_{n\in \hueca{Z}}\End^n_\Lambda(P_\Delta),$$
where $\End_\Lambda^n(P_\Delta)$ is the space of homogeneous morphisms $f:P_\Delta\rightmap{}P_\Delta$ of graded $\Lambda$-modules with degree $\vert f\vert=n$; it can be identified with $\Hom^0_\Lambda(P_\Delta,P_\Delta[n])$. 

The $k$-algebra ${\cal E}$ admits a differential $d:{\cal E}\rightmap{}{\cal E}$ given on homogeneous elements $f\in {\cal E}$ by the formula $d(f)=d_{P_\Delta}f-(-1)^{\vert f\vert}fd_{P_\Delta}$. So we have a differential graded $k$-algebra ${\cal E}$. 

For each $i\in {\cal P}$, denote by $\pi^{*}_i:P_\Delta\rightmap{}P_{\Delta_i}$ and 
$\sigma^{*}_i:P_{\Delta_i}\rightmap{}P_\Delta$ the projection and the injection, respectively, associated to the direct sum $P_\Delta=\bigoplus_{i\in {\cal P}}P_{\Delta_i}$, and make $e^{*}_i:=\sigma^{*}_i\pi_i^{*}$. Then, we obtain a decomposition $id_{P_\Delta}=\sum_{i\in {\cal P}}e_i^{*}$ of the unit of the algebra ${\cal E}$ as a sum of orthogonal idempotents. 

There is a canonical isomorphism from $S$ onto  
$\bigoplus_{i\in {\cal P}}ke_i^*$, and since $d(e_i^*)=0$, for all $i\in {\cal P}$, the \emph{differential graded endomorphism algebra ${\cal E}$ of $P_\Delta$} is in fact a differential graded  $S$-algebra. 
\end{definition}

\begin{remark}\label{R: el iso canónico cal E --> Yoneda} 
For any family $\{\Delta_i\}_{i\in {\cal P}}$ as above, 
 a well known result of homological algebra states that there is an isomorphism of graded $k$-algebras 
$$\Phi:H({\cal E}(P_\Delta))\rightmap{} \bigoplus_{n\in \hueca{Z}}\Ext_\Lambda^n(\Delta,\Delta),$$
where we take $\Ext^n_\Lambda(\Delta,\Delta):=0$, for $n<0$. 

Moreover, it is easy to see that $\Phi(\overline{e_i^{*}})=e_i$, for all $i\in {\cal P}$, where $\overline{e_i^{*}}$ denotes the class of the $0$-cycle $e_i^{*}$ in the homology graded $S$-algebra $H({\cal E}(P_\Delta))$.  
\end{remark}

\begin{proposition}\label{D: descompos homologica para cal E}
Assume that ${\cal H}=({\cal P},\leq,\{\Delta_i\}_{i\in {\cal P}})$ is a strict homological system for the $k$-algebra $\Lambda$. 
Consider a fixed $i$-bounded projective resolution $P_{\Delta_i}$ of $\Delta_i$, for each $i\in {\cal P}$, as in (\ref{L: admissible  implies P(Delta(i)) i-bounded}), and adopt the notation of the preceding definition (\ref{D: the graded endomorphism algebra of PDelta}). The following holds.
\begin{enumerate}
 \item We have a $d$-invariant decomposition ${\cal E}={\cal E}'\oplus {\cal E}''$ of graded $S$-$S$-bimodules, 
  where 
$${\cal E}'=\bigoplus_{\scriptsize\begin{matrix}i,j\in {\cal P}\\
\overline{i}\leq \overline{j}\\ \end{matrix}}
 e_j^*{\cal E}e_i^*\hbox{  \ and  \ } 
{\cal E}''=\bigoplus_{\scriptsize\begin{matrix}i,j\in {\cal P}\\
\overline{i}\not\leq \overline{j}\\ \end{matrix}}e_j^*{\cal E}e_i^*.$$
 Moreover, ${\cal E}'$ is a differential graded $S$-subalgebra of ${\cal E}$ and $H({\cal E}'')=0$. 
\item We have a $d$-invariant decomposition
${\cal E}'=[\bigoplus_{i\in {\cal P}}ke_i^{*}]\oplus {\cal R}'$ of graded $S$-$S$-bimodules, 
where ${\cal R}'$ is a differential graded $S$-subalgebra of  ${\cal E}'$. Therefore, $$H({\cal E})=H({\cal E}')=[\bigoplus_{i\in {\cal P}}k\overline{e_i^*}]\oplus H({\cal R}').$$
\end{enumerate}
\end{proposition}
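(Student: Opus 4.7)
The common framework is the $d$-invariant $S$-$S$-bimodule decomposition ${\cal E} = \bigoplus_{i,j\in {\cal P}}e_j^*{\cal E}e_i^*$ induced by $1_{\cal E} = \sum_i e_i^*$, where $d$-invariance follows from $d(e_i^*) = 0$. Under the canonical identification $e_j^*{\cal E}^\bullet e_i^* \cong \Hom^\bullet_\Lambda(P_{\Delta_i},P_{\Delta_j})$ of Hom-complexes, we have $H^n(e_j^*{\cal E}e_i^*) \cong \Ext^n_\Lambda(\Delta_i,\Delta_j)$ for $n\geq 0$ (and zero for $n<0$). For part (1), the decomposition ${\cal E} = {\cal E}'\oplus {\cal E}''$ is then a $d$-invariant regrouping; transitivity of $\leq$ on $\overline{\cal P}$ ensures that ${\cal E}'$ is closed under composition: given $f\in e_j^*{\cal E}e_i^*$ and $g\in e_m^*{\cal E}e_l^*$ in ${\cal E}'$ (so $\overline{i}\leq\overline{j}$, $\overline{l}\leq\overline{m}$), any non-zero product $fg$ forces $i=m$ and hence lies in $e_j^*{\cal E}e_l^*$ with $\overline{l}\leq\overline{j}$. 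Since $1_{\cal E}\in {\cal E}'$, it is a $d$-invariant graded $S$-subalgebra. The vanishing $H({\cal E}'')=0$ follows because the defining conditions of a strict homological system give $\Hom_\Lambda(\Delta_i,\Delta_j)=0$ when $\overline{i}\not\leq\overline{j}$ and $\Ext^n_\Lambda(\Delta_i,\Delta_j)=0$ for $n\geq 1$ when $\overline{i}\not\leq\overline{j}$.

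For part (2), pick the $i$-bounded resolution so that $P_{\Delta_i}^0 = P_i$, making $\End_\Lambda(P_{\Delta_i}^0) = \End_\Lambda(P_i)$ a local $k$-algebra while the summands $P_l$ of each $P_{\Delta_i}^{-t}$ (for $t>0$) satisfy $\overline{l}>\overline{i}$. Define ${\cal R}'^n := {\cal E}'^n$ for $n\neq 0$, and let ${\cal R}'^0$ be the sum of all off-diagonal pieces $e_j^*{\cal E}^0 e_i^*$ (with $i\neq j$, $\overline{i}\leq\overline{j}$) together with, inside each diagonal $\End^0_\Lambda(P_{\Delta_i})$, the preimage of $\rad\End_\Lambda(P_i)$ under the level-zero restriction $f\mapsto f^0$. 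This preimage is a codimension-one complement of $k\cdot e_i^*$, so ${\cal E}'= [\bigoplus_{i\in{\cal P}} k e_i^*]\oplus {\cal R}'$; $d$-invariance is clear from $d(e_i^*)=0$ and the fact that $d$ raises degree. To verify ${\cal R}'$ is closed under composition, the non-trivial case is $f,g\in {\cal R}'$ with $|f|+|g|=0$ and $fg\in e_i^*{\cal E}^0 e_i^*$; writing $g\in e_k^*{\cal E}e_i^*$ and $f\in e_i^*{\cal E}e_k^*$ (so $\overline{k}=\overline{i}$), we verify $(fg)^0\in\rad\End_\Lambda(P_i)$ in four subcases: (a) $|f|=|g|=0$, $k=i$: then $f^0,g^0\in\rad$ and $\rad$ is an ideal; (b) $|f|=|g|=0$, $k\neq i$: then $(fg)^0:P_i\to P_k\to P_i$ cannot be invertible, for otherwise $P_k$ would be a direct summand of $P_i$, contradicting $P_k\not\cong P_i$; in the local algebra $\End_\Lambda(P_i)$ non-invertibility means lying in $\rad$; (c) $|g|>0$: then $g|_{P_{\Delta_i}^0}$ lands in $P_{\Delta_k}^{|g|}=0$, so $(fg)^0=0$; (d) $|g|<0$: then $g(P_{\Delta_i}^0)\subseteq P_{\Delta_k}^{|g|}$, a direct sum of $P_l$ with $\overline{l}>\overline{k}=\overline{i}$, and the argument of (b) applies to each composition through $P_l\not\cong P_i$. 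The final equality $H({\cal E})=H({\cal E}')=[\bigoplus_{i\in{\cal P}}k\overline{e_i^*}]\oplus H({\cal R}')$ then follows from part (1), the $d$-invariant direct sum decomposition, and the fact that each $e_i^*$ is a $0$-cycle representing a non-zero homology class.

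The main obstacle is the multiplicative closure of ${\cal R}'$, which hinges on the interplay of three elementary ingredients: the ideal property of $\rad\End_\Lambda(P_i)$ in the local algebra $\End_\Lambda(P_i)$; the classical observation that a composition $P_i\to P_k\to P_i$ through a non-isomorphic indecomposable projective is never invertible; and the $i$-boundedness of $P_{\Delta_i}$, which guarantees that at every non-zero level only $P_l$ with $\overline{l}>\overline{i}$ can appear. Together these force every a priori dangerous cross-degree or cross-index contribution to land in $\rad\End_\Lambda(P_i)$, ruling out any spurious identity component in the diagonal pieces.
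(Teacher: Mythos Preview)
Your proof is essentially correct and follows the same strategy as the paper: the same definition of ${\cal R}'$ (the paper presents it via the component-level isomorphism $\psi\phi:{\cal E}'\to\underline{\cal E}'$, but unwinding this gives exactly your degree-by-degree description), and the same three ingredients for multiplicative closure (locality of $\End_\Lambda(P_i)$, non-invertibility of compositions through non-isomorphic indecomposable projectives, and $i$-boundedness). The paper actually proves the stronger statement that ${\cal R}'$ is a two-sided \emph{ideal} of ${\cal E}'$, but your verification that it is a subalgebra is all the proposition requires.

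There is, however, one genuine gap in your $d$-invariance argument. You write that it ``is clear from $d(e_i^*)=0$ and the fact that $d$ raises degree,'' but these two facts only give $d({\cal R}'^n)\subseteq {\cal E}'^{n+1}={\cal R}'^{n+1}$ when $n+1\neq 0$. The case $n=-1$ is not covered: you must still check that $d({\cal E}'^{-1})\subseteq {\cal R}'^0$, i.e.\ that for $x\in e_i^*{\cal E}'^{-1}e_i^*$ the level-zero component $(d(x))^0$ lies in $\rad\End_\Lambda(P_i)$. This is exactly the computation the paper carries out explicitly: one finds $(d(x))^0 = d_{P_{\Delta_i}}^{-1}\circ x|_{P^0_{\Delta_i}}:P_i\to P_{\Delta_i}^{-1}\to P_i$, which factors through $P_{\Delta_i}^{-1}$, a direct sum of $P_l$ with $\overline{l}>\overline{i}$ by $i$-boundedness, hence lands in $\rad$. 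This is precisely the same mechanism as your case~(d) for multiplicative closure, so you already have the tool in hand --- you just need to invoke it once more here.
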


\begin{proof}(1): We will use the graded isomorphism $\Phi$ of (\ref{R: el iso canónico cal E --> Yoneda}). Clearly, there is a decomposition as in (1)  of graded vector spaces. 
Since $e_j\Ext_\Lambda^n(\Delta,\Delta)e_i=\Ext_\Lambda^n(\Delta_i,\Delta_j)=0$, for all $n\geq 0$ and all $\overline{i}\not\leq\overline{j}$, the restriction of the isomorphism $\Phi$ gives  
$$H({\cal E}'')\cong \bigoplus_{\scriptsize\begin{matrix}i,j\in {\cal P}\\
\overline{i}\not\leq \overline{j}\\ \end{matrix}}e_j\Ext^n_\Lambda(\Delta,\Delta)e_i=0.$$
Moreover, the spaces ${\cal E}'$ and ${\cal E}''$ are invariant under the differential, because $d(e_i^{*})=0$, for all $i\in {\cal P}$.

(2) 
In order to describe the ideal ${\cal R}'$ of ${\cal E}'$, 
observe that we have the following isomorphisms for $i,j\in {\cal P}$:
$$ \phi_{i,j}^n:e^*_j\Hom^n_\Lambda(P_\Delta,P_\Delta)e^*_i\rightmap{}  \Hom^n_\Lambda(P_{\Delta_i},P_{\Delta_j}),\hbox{ given by }f\mapsto f_{i,j}:=\pi_j^*f\sigma^*_i,$$
and
$$\psi_{i,j}:\bigoplus_{n\in \hueca{Z}}\Hom^n_\Lambda(P_{\Delta_i},P_{\Delta_j})\cong \bigoplus_{s,n\in \hueca{Z}}\Hom_\Lambda(P^s_{\Delta_i},P^{s+n}_{\Delta_j})=
\bigoplus_{s,t\in \hueca{Z}}\Hom_\Lambda(P^s_{\Delta_i},P^t_{\Delta_j})$$
given, for $f_{i,j}\in \Hom^n_\Lambda(P_{\Delta_i},P_{\Delta_j})$, by $\psi_{i,j}(f_{i,j})=\sum_{s}f_{i,j,s,s+n}$,   
where $f_{i,j,s,s+n}:P^s_{\Delta_i}\rightmap{}P^{s+n}_{\Delta_j}$ is the restriction of $f_{i,j}$. Then, we obtain the following
sequence of isomorphisms
$$\begin{matrix}
  {\cal E}'&=&
  \bigoplus_{\scriptsize\begin{matrix}i,j\in {\cal P}\\
\overline{i}\leq \overline{j}\\ \end{matrix}}
  e_j^*[ \bigoplus_{n\in \hueca{Z}}\Hom^n_\Lambda(P_\Delta,P_\Delta)]e_i^*\hfill\\
  &=&
  \bigoplus_{\scriptsize\begin{matrix}i,j\in {\cal P}\\
\overline{i}\leq \overline{j}\\ \end{matrix}}
  \bigoplus_{n\in \hueca{Z}}e_j^*\Hom^n_\Lambda(P_\Delta,P_\Delta)e_i^*\hfill\\
   &\cong&
  \bigoplus_{\scriptsize\begin{matrix}i,j\in {\cal P}\\
\overline{i}\leq \overline{j}\\ \end{matrix}}
  \bigoplus_{n\in \hueca{Z}}\Hom^n_\Lambda(P_{\Delta_i},P_{\Delta_j}),\hfill\\
  &\cong&
  \bigoplus_{\scriptsize\begin{matrix}i,j\in {\cal P}\\
\overline{i}\leq \overline{j}\\ \end{matrix}}
  \bigoplus_{s,t\in \hueca{Z}}\Hom_\Lambda(P^s_{\Delta_i},P^t_{\Delta_j})=:\underline{\cal E}'.\hfill\\
  \end{matrix}$$
 Denote by $\phi=\bigoplus_{i,j,n} \phi^n_{i,j}$ the first isomorphism and by $\psi=\bigoplus_{i,j}\psi_{i,j}$ the second one. Then, we have that 
 $\phi(e_i^*)=id_{P_{\Delta_i}}\in \Hom^0_\Lambda(P_{\Delta_i},P_{\Delta_i})$, thus we have 
 $\psi\phi(e_i^*)=\sum_s id_{P^s_{\Delta_i}}\in \bigoplus_{s\in \hueca{Z}}\Hom_\Lambda(P^s_{\Delta_i},P^s_{\Delta_i})$. 
 
 The space $\underline{\cal E}'$ has a natural structure of $k$-algebra, given typical generators 
 $$P_{\Delta_i}^s\rightmap{ \ f_{i,u,s,r} \ }P_{\Delta_u}^r 
 \hbox{ and } 
 P^l_{\Delta_v}\rightmap{ \ g_{v,j,l,t} \ }P^t_{\Delta_j},$$
 the product $g_{v,j,l,t}\cdot f_{i,u,s,r}$ is $g_{u,j,r,t}\circ f_{i,u,s,r}$ if $v=u$ and $l=r$, and it is zero otherwise. 
 The morphism $\psi\phi:{\cal E}'\rightmap{} \underline{\cal E}'$ is in fact an isomorphism of $k$-algebras. Indeed, 
 given $f\in e^*_t\Hom^n_\Lambda(P_\Delta,P_\Delta)e^*_i$
and $g\in e^*_j \Hom^m_\Lambda(P_\Delta,P_\Delta)e^*_t$, we have 
$(gf)_{i,j}=\sum_{t'}g_{t',j}f_{i,t'}=g_{t,j}f_{i,t}\in \Hom^{n+m}_\Lambda(P_{\Delta_i},P_{\Delta_j})$, so  
$$\begin{matrix}
   \psi\phi(gf)=\psi((gf)_{i,j})
   &=&
   \psi(g_{t,j}f_{i,t})\hfill\\
   &=&
   \sum_s(g_{t,j}f_{i,t})_{s,s+n+m}\hfill\\
   &=&
   \sum_sg_{t,j,s+n,s+n+m}f_{i,t,s,s+n}.\hfill\\
  \end{matrix}$$
Therefore, 
$$\begin{matrix}\psi\phi(g)\psi\phi(f)
  &=&
  \psi(g_{t,j})\psi(f_{i,t})\hfill\\
  &=&
  (\sum_rg_{t,j,r,r+m})(\sum_sf_{i,t,s,s+n})\hfill\\
  &=&
  \sum_sg_{t,j,s+n,s+n+m}f_{i,t,s,s+n}
  =
  \psi\phi(gf).\hfill\\
  
  \end{matrix}$$

 Since $k$ is algebraically closed and $P^0_{\Delta_i}\cong P_i$, its endomorphism algebra splits over its radical, so we obtain  $$\Hom_\Lambda(P^0_{\Delta_i},P^0_{\Delta_i})=kid_{P^0_{\Delta_i}}\oplus \rad_\Lambda(P^0_{\Delta_i},P^0_{\Delta_i}),$$ where $\rad_\Lambda(P^0_{\Delta_i},P^0_{\Delta_i})$ consists of the non-isomorphisms of 
 $\Hom_\Lambda(P^0_{\Delta_i},P^0_{\Delta_i})$. 

  Now consider the linear subspace ${\cal R}'=(\psi\phi)^{-1}(\underline{\cal R}')$ of ${\cal E}'$, where 
  $\underline{\cal R}'$ is the linear subspace of $\underline{\cal E}'$ generated by the morphisms of $\Lambda$-modules  
  $f:P^s_{\Delta_i}\rightmap{}P^t_{\Delta_j}$ such that $s\not=0$ or $t\not=0$, or, if $s=0=t$, then $f$ is not an isomorphism. 
  
 In order to show that ${\cal R}'$ is an ideal of ${\cal E}'$, take any typical linear generator $g:P^s_{\Delta_i}\rightmap{}P^t_{\Delta_j}$ of $\underline{\cal R}'$ and any typical generator $f:P^t_{\Delta_j}\rightmap{}P_{\Delta_u}^r$ of $\underline{\cal E}'$ with possible non-zero product $fg:P_{\Delta_i}^s\rightmap{}P_{\Delta_u}^r$.  If $s\not=0$ or $r\not=0$, we get $fg\in \underline{\cal R}'$. So assume that $s=0=r$ and that $fg$ is an isomorphism. Then, $P_i=P^0_{\Delta_i}$ is a direct summand of 
 $P^t_{\Delta_j}$. Since $P_{\Delta_j}$ is a $j$-bounded minimal projective resolution, if $t<0$, we get that 
 $\overline{i}>\overline{j}$, which is not the case because $g\in \underline{\cal E}'$ implies that $\overline{i}\leq \overline{j}$. Thus $t=0$, so 
 $P_i\cong P_j$, and we obtain $i=j$. Then, the composition $fg$ belongs to the radical 
 $\rad_\Lambda(P_i,P_i)$, because $g$ does, a contradiction.  
  Hence $fg$ is not an isomorphism and, so, $fg\in \underline{\cal R}'$. 
 This shows that ${\cal R}'$ is a left ideal of ${\cal E}'$. Similarly, one shows that ${\cal R}'$ is a right ideal of ${\cal E}'$. 
 
 Let us show that ${\cal E}'=[\sum_{i\in {\cal P}}ke^*_i]+{\cal R}'$. 
 Any $f=\sum_{i,j,s,t}f_{i,j,s,t}\in \underline{\cal E}'$, can be written 
 as follows:
 $$f=\sum_{s\not=0 \hbox{ \scriptsize{or} } t\not=0}f_{i,j,s,t}+\sum_{i\not=j}f_{i,j,0,0}+\sum_ic_i^fid_{P^0_{\Delta_i}}+\sum_i\rho_i^f,$$
 where $f_{i,i,0,0}=c^f_iid_{P_{\Delta_i}^0}+\rho_i^f\in kid_{P^0_{\Delta_i}}\oplus\rad_\Lambda(P^0_{\Delta_i},P^0_{\Delta_i})$. Moreover, we have 
 $$\psi\phi(e_i^*)=\sum_sid_{P^s_{\Delta_i}}=id_{P^0_{\Delta_i}}-\rho_i,\hbox{ \ where \  }\rho_i=\sum_{s<0}id_{P^s_{\Delta_i}}.$$
 Then, in the preceding expression for $f$, we can replace 
 $$\sum_ic_i^fid_{P^0_{\Delta_i}} \hbox{ \ \ by \ \ } \sum_ic_i^f \psi\phi(e^*_i)+\sum_ic_i^f\rho_i,$$
 to obtain that $f=\sum_ic_i^f \psi\phi(e^*_i)+\sum_ic_i^f\widehat{\rho}_i$, for some $\widehat{\rho}_i\in \underline{\cal R}'$. Therefore, we obtain ${\cal E}'=[\sum_{i\in {\cal P}}ke^*_i]+{\cal R}'$. 
 
 If $\sum_ic_ie_i^*=0$, we get $\psi\phi(\sum_ic_ie_i^*)=\sum_i\sum_sc_iid_{P^s_{\Delta_i}}$, a  combination of linearly independent elements in $\underline{\cal E}'$, thus all the coefficient are zero and 
 $\sum_{i\in {\cal P}}ke^*_i=\bigoplus_{i\in {\cal P}}ke^*_i$. 
 Moreover, it is clear that $[\bigoplus_{i\in {\cal P}}kid_{P^0_{\Delta_i}}]\cap \underline{\cal R}'=0$, thus if $\sum_ic_ie_i^*\in {\cal R}'$, we obtain that 
 $\sum_ic_iid_{P^0_{\Delta_i}}-\sum_ic_i\rho_i=\psi\phi(\sum_ic_ie_i^*)\in \underline{\cal R}'$, thus $\sum_ic_iid_{P^0_{\Delta_i}}\in \underline{\cal R}'$ and all the coefficients are zero, so we finally get ${\cal E}'=[\bigoplus_{i\in {\cal P}}ke^*_i]\oplus {\cal R}'$ as we wanted. 
 
 We claim that ${\cal R}'$ is a $d$-invariant subalgebra of ${\cal E}'$. Notice first that $\phi(d_{P_\Delta})=\sum_id_{P_{\Delta_i}}$, thus 
 $\psi\phi(d_{P_\Delta})=\sum_i\psi(d_{P_{\Delta_i}})=\sum_{i,s}d^s_{P_{\Delta_i}}$. Now, take any typical generator $f=f_{i,j,r,s}\in 
 \underline{\cal E}'$, so $f'=(\psi\phi)^{-1}(f)$ is a typical generator of ${\cal E}'$, which has degree $n:=s-r$. Then we have 
 $$\begin{matrix}
  \psi\phi(d(f'))&=&
  \psi\phi(d_{P_\Delta}f'-(-1)^nf'd_{P_\Delta})\hfill\\
  &=&
   \psi\phi(d_{P_\Delta})f-(-1)^nf\psi\phi(d_{P_\Delta})\hfill\\
  &=&
  \sum_{u,a}d^a_{P_{\Delta_u}}f-(-1)^n\sum_{v,b}fd^b_{P_{\Delta_v}}\hfill\\
  &=&
  d^s_{P_{\Delta_j}}f-(-1)^nfd^{r-1}_{P_{\Delta_i}}.\hfill\\
 \end{matrix}$$
 These maps are disposed as follows
 $$\begin{matrix}P_{\Delta_i}^{r-1}&
\rightmap{ \ d^{r-1}_{P_{\Delta_i}} \ }&P^r_{\Delta_i}&\rightmap{ \ f \ }&P_{\Delta_j}^s&
\rightmap{ \ d^s_{P_{\Delta_j}} \ }&P^{s+1}_{\Delta_j}.\end{matrix}$$
Let us examine $d^s_{P_{\Delta_j}}f$. If $r\not=0$ or $s+1\not=0$, then $d^s_{P_{\Delta_j}}f\in \underline{\cal R}'$. If $r=0$ and $s=-1$, and $d^s_{P_{\Delta_j}}f$ is an isomorphism, we get that $P_i$ is a direct summand of $P^{-1}_{\Delta_j}$ and, since $P_{\Delta_j}$ is a $j$-bounded 
 projective resolution, we obtain $\overline{i}>\overline{j}$, which is not the case. Thus $d^s_{P_{\Delta_j}}f$ is not an isomorphism and belongs to $\underline{\cal R}'$. Similarly, one shows that $ f d^{r-1}_{P_{\Delta_i}}\in \underline{\cal R}'$. Therefore, we get $d(f')\in {\cal R}'$. So, we finally obtain that $d({\cal R}')\subseteq {\cal R}'$.  

Then, we have a $d$-invariant decomposition ${\cal E}'=[\bigoplus_{i\in {\cal P}}ke_i^*]\oplus {\cal R}'$, where ${\cal R'}$ is a differential graded $k$-algebra and then $Z({\cal E}')=[\bigoplus_{i\in {\cal P}}ke_i^*]\oplus Z({\cal R}')$ and $B({\cal E}')=B({\cal R'})$, thus 
$$H({\cal E})=H({\cal E}')=[\bigoplus_{i\in {\cal P}}k\overline{e_i^*}]\oplus H({\cal R}').$$
Notice that if we identify $S$ with $\bigoplus_{i\in {\cal P}}ke_i^*$, we obtain that ${\cal R}'$ is differential graded (without unit) $S$-algebra. 
\end{proof}

The following theorem generalizes the argument of \cite{K1}(3.5), which relies on a theorem by Kadeishvili, see  \cite{K1}(3.3), \cite{Kad} and \cite{Mer}.

\begin{theorem}\label{T: Yoneda A-infinito algebra es estricta}
 Assume that ${\cal H}=({\cal P},\leq,\{\Delta_i\}_{i\in {\cal P}})$ is a strict homological system for the $k$-algebra $\Lambda$. Then, the Yoneda graded $S$-algebra $A$ 
 of the $\Lambda$-module $\Delta=\bigoplus_{i\in {\cal P}}\Delta_i$ admits a structure of $A_\infty$-algebra with $S$-$S$-bimodule higher multiplications $\{m_n:A^{\otimes n}\rightmap{}A\}_{n\in \hueca{N}}$ such that $m_1=0$, $m_2$ is induced by the product of the Yoneda algebra and, if $n\geq 3$, they satisfy that $m_n(z_1\otimes z_2\otimes\cdots \otimes z_n)=0$, whenever  $z_i\in \{e_j\mid j\in {\cal P}\}$, for some $i\in [1,n]$. 
\end{theorem}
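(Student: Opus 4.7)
The plan is to use the dg structure of $\mathcal{E} = \mathcal{E}(P_\Delta)$ together with the two decompositions established in Proposition \ref{D: descompos homologica para cal E}, apply Kadeishvili's theorem (via the explicit Merkulov/Keller construction) to transport an $A_\infty$-structure to $H(\mathcal{R}')$, then extend it to $H(\mathcal{E})$ by Lemma \ref{L: extension  A-infinite algebra structure}, and finally transfer it to $A$ via the graded algebra isomorphism $\Phi$ of Remark \ref{R: el iso canónico cal E --> Yoneda}. The three decomposition and transport steps produce the required properties essentially for free.

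More precisely, Proposition \ref{D: descompos homologica para cal E} identifies the nonunital dg $S$-algebra $\mathcal{R}'$ inside $\mathcal{E}'$ with $H(\mathcal{E}) = [\bigoplus_{i\in{\cal P}} k\overline{e_i^*}] \oplus H(\mathcal{R}')$. First I would choose splittings of graded $S$-$S$-bimodules $\mathcal{R}' = B \oplus H \oplus L$ with $Z(\mathcal{R}') = B \oplus H$ and $H \cong H(\mathcal{R}')$, together with a contracting homotopy $G:\mathcal{R}'\rightmap{}\mathcal{R}'$ of degree $-1$ for the inclusion $H \hookrightarrow \mathcal{R}'$; these choices are available because $S\otimes_k S^{op}$ is semisimple, so every short exact sequence in $\Gbb\g S$ splits. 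The Merkulov recursion, exactly as reviewed in \cite{K1}(3.5), then produces homogeneous morphisms of $S$-$S$-bimodules $\{n_i: H(\mathcal{R}')^{\otimes i}\rightmap{} H(\mathcal{R}')\}_{i\geq 1}$ with $\vert n_i \vert = 2-i$, $n_1 = 0$, and $n_2$ induced by the product of $\mathcal{R}'$, satisfying the Stasheff identities by Kadeishvili's theorem. Now apply Lemma \ref{L: extension  A-infinite algebra structure} to the augmented graded $S$-algebra $H(\mathcal{E}) = S \oplus H(\mathcal{R}')$ to extend $\{n_i\}$ to an $A_\infty$-structure $\{\widehat{n}_i\}$ on $H(\mathcal{E})$ with $\widehat{n}_1 = 0$, $\widehat{n}_2$ induced by the product of $H(\mathcal{E})$, and the strict vanishing on tensors having some factor $\overline{e_j^*}$ for $i\geq 3$. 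Finally, define $m_n := \Phi \circ \widehat{n}_n \circ (\Phi^{-1})^{\otimes n}$; since $\Phi$ is an isomorphism of graded $k$-algebras with $\Phi(\overline{e_j^*}) = e_j$, the family $\{m_n\}$ is an $A_\infty$-structure on $A$ with the required properties.

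The main technical obstacle is the first step: Keller's argument in \cite{K1}(3.5) is written for a dg algebra over a base field, whereas here we need the output of Merkulov's construction to consist of morphisms of $S$-$S$-bimodules, not merely $k$-linear maps. This reduces to verifying that each building block of the recursion, in particular the contracting homotopy $G$ and the projection/inclusion pair for $H(\mathcal{R}') \hookrightarrow \mathcal{R}'$, can be chosen in $\Gbb\g S$. As noted above, the semisimplicity of $S\otimes_k S^{op}$ makes this automatic, so $G$ lifts to an $S$-$S$-bilinear endomorphism; once this is granted, all the iterated formulas built from $n_2$ and $G$ remain $S$-$S$-bilinear, and the verification of the Stasheff identities is formally identical to the field case. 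The vanishing condition for $m_n$ on tensors with an $e_j$ factor is not produced by the Merkulov recursion itself but is imposed afterwards via Lemma \ref{L: extension  A-infinite algebra structure}, which is exactly why the preparatory decomposition $\mathcal{E}' = S \oplus \mathcal{R}'$ was set up in Proposition \ref{D: descompos homologica para cal E}.
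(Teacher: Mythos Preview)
Your proposal is correct and follows essentially the same route as the paper: apply Kadeishvili's theorem to the nonunital dg $S$-algebra $\mathcal{R}'$, use Lemma~\ref{L: extension  A-infinite algebra structure} to extend the resulting $A_\infty$-structure from $H(\mathcal{R}')$ to the augmented algebra $H(\mathcal{E}')=S\oplus H(\mathcal{R}')$, and then transport across the isomorphism $\Phi$. Your additional discussion of why the Merkulov homotopy and splittings can be taken $S$-$S$-bilinearly (via semisimplicity of $S\otimes_k S^{op}$) is a useful elaboration that the paper leaves implicit, but the overall architecture is the same.
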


\begin{proof} Adopt the notation of the preceding Proposition. Apply Kadeishvili theorem to the differential graded $S$-algebra ${\cal R}'$, to obtain a structure $\{m_n\}_{n\in \hueca{N}}$ of $A_\infty$-algebra on the graded homology $S$-algebra $H({\cal R}')$, such that $m_1=0$ and $m_2$ is induced by the product of $H({\cal R}')$. 

Then, apply (\ref{L: extension  A-infinite algebra structure}) to the augmented graded $S$-algebra 
$H({\cal E}')=[\oplus_{i\in {\cal P}}k\overline{e_i^*}]\oplus H({\cal R}')$, to obtain a structure  of $A_\infty$-algebra $\{\widehat{m}_n\}_{n\in \hueca{N}}$ on $H({\cal E}')$ such that $\widehat{m}_1=0$, $\widehat{m}_2$ is induced by the product of $H({\cal E}')$, and $\widehat{m}_n(z_1\otimes\cdots\otimes z_n)=0$, for $n\geq 3$ and $z_1,\ldots,z_n\in H({\cal E}')$, whenever 
$z_i\in \{\overline{e_j^*}\mid j\in {\cal P}\}$, for some $i\in [1,n]$. 

  Finally, we can derive the statement of the theorem by transfering the structure of $A_\infty$-algebra of $H({\cal E})=H({\cal E}')$  onto the Yoneda 
   algebra $A$ with the help of the graded isomorphism $\Phi:H({\cal E})\rightmap{}A$ mentioned in (\ref{R: el iso canónico cal E --> Yoneda}). 
\end{proof}

 \begin{remark}\label{R: strictness of A}
We can consider the Yoneda algebra $A$ associated to any admissible homological system  ${\cal H}=({\cal P},\leq,\{\Delta_i\}_{i\in {\cal P}})$  for a finite-dimensional $k$-algebra $\Lambda$. Assume that $A$ admits a structure of $A_\infty$-algebra with higher multiplications $\{m_n:A^{\otimes n}\rightmap{}A\}_{n\in \hueca{N}}$. 
 Notice that, for $n\geq 3$, the following are equivalent:
\begin{enumerate}
 \item $m_n(z_1\otimes z_2\otimes\cdots \otimes z_n)=0$, whenever  $z_i\in \{e_j\mid j\in {\cal P}\}$, for some $i\in [1,n]$;
 \item $m_n(z_1\otimes z_2\otimes\cdots \otimes z_n)=0$, whenever  $z_i=id_\Delta$, for some $i\in [1,n]$.
\end{enumerate}
We will say that the $A_\infty$-algebra $A$ \emph{has a strict unit} or that it \emph{has a strict structure of $A_\infty$-algebra} (or even that $A$ is \emph{strict}) if $m_1=0$, $m_2$ is induced by the product of the Yoneda algebra, and  the preceding property of  the higher multiplications of $A$ is satisfied. 
\end{remark}

\section{Algebras linked to strict Yoneda $A_\infty$-algebras}
From now on, unless otherwise specified, we will keep the notation of the following remarks.

\begin{remark}\label{R: merkulov}
 We denote by  ${\cal H}=({\cal P},\leq,\{\Delta_i\}_{i\in {\cal P}})$
 a fixed admissible homological system for  a given finite-dimensional $k$-algebra $\Lambda$, as in (\ref{D: admissible hs and strict hs}). Consider the $\Lambda$-module $\Delta=\bigoplus_{i\in {\cal P}}\Delta_i$. Then, we have a decomposition $\End_\Lambda(\Delta)
 =S\bigoplus J$, where $J$ is the Jacobson radical of $\End_{\Lambda}(\Delta)$ and $S=\bigoplus_{i\in {\cal P}}ke_i$, where $\{e_i\}_{i\in {\cal P}}$ are the canonical central idempotents of $S$ considered in (\ref{D: el algebra de Yoneda y el algebra cal E}).
 
 For the study of the category ${\cal F}(\Delta)$, the Yoneda algebra  
 $$A=\bigoplus_{n\geq 0}A_n,\hbox{  \ where  \ } A_n=\Ext_\Lambda^n(\Delta,\Delta), \hbox{ for } n\geq 0,$$ 
 of $\Delta$ is an important tool.
 From now on, we assume that the Yoneda algebra $A$  
  admits a strict structure $\{m_n:A^{\otimes n}\rightmap{}A\}_{n\in \hueca{N}}$ of $A_\infty$-algebra as in (\ref{R: strictness of A}), thus $m_1=0$, $m_2$ is induced by the product of $A$, and, for $n\geq 3$, we have $m_n(z_1\otimes\cdots\otimes z_n)=0$, whenever $z_u\in \{e_i\mid i\in {\cal P}\}$ for some $u\in [1,n]$. 
  
  From (\ref{T: Yoneda A-infinito algebra es estricta}), we know that this is the case whenever ${\cal H}$ is a strict homological system. We will prove in  (\ref{T: charact of strict Yoneda algebras}) that this is the only possible case. 
\end{remark}

\begin{definition}\label{D: construccion barra} We refer to the preceding $A_\infty$-algebra $A$ as \emph{the Yoneda $A_\infty$-algebra of $\Delta$}.  
Now, we consider the \emph{bar construction $B$} of this $A_\infty$-algebra. That is we consider  the shifted graded $S$-$S$-bimodule $B:=A[1]$,  endowed with the sequence $\{b_n:B^{\otimes n}\rightmap{}B\}_{n\in \hueca{N}}$ of $S$-$S$-bimodule morphisms given by the following commutative squares
 $$\begin{matrix}A^{\otimes n}&\rightmap{\sigma^{\otimes n}}&B^{\otimes n}\\
     \lmapdown{m_n}&&\rmapdown{b_n}\\
  A&\rightmap{\sigma}&B,\\
  \end{matrix}$$
 where $\sigma_n:A\rightmap{}A[1]=B$ is the identity map, considered as a homogeneous morphism of graded $S$-$S$-bimodules of degree $-1$. Thus, each $b_n:B^{\otimes n}\rightmap{}B$ is a homogeneous morphism of degree $\vert b_n\vert=1$, and $b_1=0$. Then, the  
 family $\{b_n:B^{\otimes n}\rightmap{}B\}_{n\in \hueca{N}}$
satisfies the basic relations 
 $$\sum_{\scriptsize\begin{matrix}r+s+t=n\\ r,t\geq 0; s\geq 1\end{matrix}}
 b_{r+1+t}(id^{\otimes r}\otimes b_{s}\otimes id^{\otimes t})=0, 
 \hbox{ for all } n\in  \hueca{N}.$$
\end{definition}

\begin{remark}\label{R: B y D(B) homogeneos} Adopt the preceding notation.
\begin{enumerate}
 \item The homogeneous summands of the graded $S$-$S$-bimodule $B\in  \Gbb\g S$  are
 $$\begin{matrix}
  B_j=A_{j+1}=0, \hbox{ for } j<-1;\hfill\\
  B_{-1}=A_0=\End_\Lambda(\Delta);\hfill\\
  B_0=A_1=\Ext^1_\Lambda(\Delta,\Delta);\hfill\\
  B_j=A_{j+1}=\Ext^{j+1}_\Lambda(\Delta,\Delta), \hbox{ for } j\geq 1.\hfill\\
 \end{matrix}$$
 \item For the graded dual $S$-$S$-bimodule $\hat{D}(B)\in \Gba\g S$ we have: 
 $$\begin{matrix}
  \hat{D}(B)_j=\hat{D}(B_{-j})=0, \hbox{ for } j>1;\hfill\\
  \hat{D}(B)_1=\hat{D}(B_{-1})=\hat{D}(\End_\Lambda(\Delta));\hfill\\
  \hat{D}(B)_0=\hat{D}(B_0)=\hat{D}(\Ext^1_\Lambda(\Delta,\Delta));\hfill\\
  \hat{D}(B)_{-j}=\hat{D}(B_j)=\hat{D}(\Ext^{j+1}_\Lambda(\Delta,\Delta)), \hbox{ for } j> 0.\hfill\\
 \end{matrix}$$
 \item For $i,j\in {\cal P}$, we have  
 $e_jB_{-1}e_i=e_j\Hom_\Lambda(\Delta,\Delta)e_i\cong\Hom_\Lambda(\Delta_i,\Delta_j)$, hence 
 $$e_j*\hat{D}(B)_1*e_i=e_iD(B)_1e_j=e_iD(B_{-1})e_j\cong D(e_jB_{-1}e_i)=D\Hom_\Lambda(\Delta_i,\Delta_j).$$
 Similarly, we have 
 $$\begin{matrix}e_j*\hat{D}(B)_0*e_i&=&e_iD(B)_0e_j\hfill
 &=&D(e_jB_0e_i)\hfill\\
 &=& D(e_j\Ext^1_\Lambda(\Delta,\Delta)e_i)&\cong& D(\Ext^1_\Lambda(\Delta_i,\Delta_j)).\hfill\\
\end{matrix}$$  
 From the definition of homological system (\ref{D: homological system y F(Delta)}), we have 
 $$\begin{cases}0\not=z\in e_j*\hat{D}(B)_1*e_i\hbox{ implies }s(z)=i\leq j=t(z),\hbox{ and }&\\
0\not=z\in e_j*\hat{D}(B)_0*e_i\hbox{ implies }s(z)=i\leq j=t(z) \hbox{ and } s(z)=i\not\sim j=t(z).
\end{cases}$$
\end{enumerate}
\end{remark}

\begin{definition}\label{L: completion of tensor algebra}
Given any $S$-$S$-bimodule $B$, the 
\emph{formal tensor series $S$-algebra $\widehat{T}_S(B)$ over $B$} is defined as an $S$-$S$-bimodule by 
$$\widehat{T}_S(B)=\prod_{i=0}^\infty B^{\otimes i}.$$
We write the elements $f\in \widehat{T}_S(B)$ as formal series 
$f=\sum_{i=0}^\infty f_i$, where $f_i\in B^{\otimes i}$.
The product of the $S$-algebra $\widehat{T}_S(B)$ is given by the formula:
$$\left(\sum_{i=0}^\infty f_i\right)\left(\sum_{j=0}^\infty g_j\right)=\sum_{t=0}^\infty \left(\sum_{i+j=t} f_ig_j\right),$$
where $f_ig_j$ is the usual product in the tensor algebra $T_S(B)$. 
\end{definition}
 
 \begin{remark}\label{R: sobre el TS[[B]]}
 The algebra defined above is indeed a unital associative $S$-algebra. Moreover:
 \begin{enumerate}
  \item If, for $0\not=f\in \widehat{T}_S(B)$,    
 we denote by $\nu(f)$   
 the minimal integer $i\geq 0$ such that $f_i\not=0$, then 
the map $\nu$ determines a metric 
$$\depth{}:\widehat{T}_S(B)\times \widehat{T}_S(B)\rightmap{}\hueca{R}$$
 such that  $\depth{(f,g)}=2^{-\nu(f-g)}$,  if  $f\not=g$ and 
 $\depth(f,f)=0$. With the metric $\depth{}$, the algebra  $\widehat{T}_S(B)$ becomes a topological algebra 
\item If we denote by $\langle B\rangle$ the ideal of $T_S(B)$ generated by $B$, then $\widehat{T}_S(B)$ is the $\langle B\rangle$-adic completion of the tensor algebra $T_S(B)$. 
 \end{enumerate}
 \end{remark}
 
When $B$ is a graded $S$-$S$-bimodule, although the algebra $\widehat{T}_S(B)$ is not in general, in a natural way, a graded $S$-algebra, we can introduce the following terminology.

 \begin{definition}\label{D: homog y diferential en T_S[[B]]}
  Let $B$ be a graded $S$-$S$-bimodule. Given $n\in \hueca{Z}$, an element $f=\sum_{i=0}^\infty f_i\in \widehat{T}_S(B)$ is called \emph{homogeneous of degree $n$} iff  
 $\vert f_i\vert=n$, for all $i\geq 0$. Then, the degree of such an homogeneous $f$ is denoted by $\vert f\vert$. Clearly, the product of two homogeneous elements $f,g\in \widehat{T}_S(B)$ is again homogeneous and $\vert fg\vert=\vert f\vert +\vert g\vert$. 
 
 A morphism of $S$-$S$-bimodules 
 $d:\widehat{T}_S(B)\rightmap{}\widehat{T}_S(B)$ will be called \emph{a differential on $\widehat{T}_S(B)$} if the following holds:
 \begin{enumerate}
  \item Whenever $f\in \widehat{T}_S(B)$ is homogeneus, we have that $d(f)$ is homogeneous of degree $\vert f\vert+1$.
  \item Given homogeneous elements $f,g\in \widehat{T}_S(B)$, the Leibniz formula holds:
  $$ d(fg)=d(f)g+(-1)^{\vert f\vert}fd(g).$$
  \item We have $d^2=0$.
 \end{enumerate}
 \end{definition}

 Let us come back now to the notation used before (\ref{L: completion of tensor algebra}). We want to construct a differential on $\widehat{T}_S(\hat{D}(B))$ using the family of maps $\{b_n:B^{\otimes n}\rightmap{}B\}_{n\in \hueca{N}}$.  
 
 \begin{proposition}\label{P: diferencial en T[[B]]}
 With the preceding notation, consider for $s \geq 1$, the homogeneous morphism of $S$-$S$-bimodules of degree $1$
 $$\hat{d}(b_s):=(\hat{D}(B)\rightmap{ \ \hat{D}(b_s) \ }\hat{D}(B^{\otimes s})\rightmap{ \ \hat{\tau}_s \ }\hat{D}(B)^{\otimes s}).$$
 The family $\{\hat{d}(b_j)\}_{j\geq 1}$ determines a  
 morphism of $S$-$S$-bimodules 
 $$d_1:\hat{D}(B)\rightmap{}\widehat{T}_S(\hat{D}(B))\hbox{ such that }d_1(h)=\sum_{s=1}^\infty \hat{d}(b_s)(h), \hbox{ for } h\in \hat{D}(B).$$
 For each $j\geq 2$, using Leibniz formula for homogeneous elements,  the morphism $d_1$ induces a morphism of $S$-$S$-bimodules 
 $$d_j:\hat{D}(B)^{\otimes j}\rightmap{}\widehat{T}_S(\hat{D}(B)).$$ 
 Thus, $d_j$ has the following association recipe: 
 $$d_j=\sum_{\scriptsize\begin{matrix} r+1+t=j\\
 r,t\geq 0 \end{matrix}}id^{\otimes r}\otimes d_1\otimes id^{\otimes t}
 =
 \sum_{s=1}^\infty\hbox{ }
 \sum_{\scriptsize\begin{matrix} r+1+t=j\\
 r,t\geq 0 \end{matrix}}id^{\otimes r}\otimes \hat{d}(b_s)\otimes id^{\otimes t}.$$
 More explicitely,  given $j\geq 1$ and a homogeneous generator $h_1\otimes\cdots\otimes h_j\in \hat{D}(B)_{i_1}\otimes\cdots\otimes \hat{D}(B)_{i_j}\subseteq \hat{D}(B)^{\otimes j}$ of degree $m$, with $i_1+\cdots+ i_j=m$, the recipe for computing $d_j(h_1\otimes\cdots \otimes h_j)$ 
 is 
 $$\sum_{r=1}^j(-1)^{\sum_{u=1}^{r-1}\vert h_u\vert}
 h_1\otimes\cdots\otimes h_{r-1}\otimes d_1(h_r)\otimes h_{r+1}\otimes \cdots\otimes h_j,$$
 which coincides with
 $$\sum_{s=1}^{\infty}\sum_{r=1}^j(-1)^{\sum_{u=1}^{r-1}\vert h_u\vert}
 h_1\otimes\cdots\otimes h_{r-1}\otimes \hat{d}(b_s)(h_r)\otimes h_{r+1}\otimes \cdots\otimes h_j.$$
 The family $\{d_j:\hat{D}(B)^{\otimes j}\rightmap{}\widehat{T}_S(\hat{D}(B))\}_{j\geq 0}$, where $d_0=0:S\rightmap{}\widehat{T}_S(\hat{D}(B))$,  determines a morphism of $S$-$S$-bimodules $d:T_S(\hat{D}(B))\rightmap{}\widehat{T}_S(\hat{D}(B))$ by the universal property of the direct sums. Then,  we can extend $d$ to a new 
  map $d:\widehat{T}_S(\hat{D}(B))\rightmap{}\widehat{T}_S(\hat{D}(B))$, which we denote with the same symbol $d$, given by 
 $$d(\sum_{j=0}^\infty f_j)=\sum_{j=0}^\infty d_j(f_j).$$
 Moreover, we have that $d(S)=0$ and  $d$ is a differential on $\widehat{T}_S(\hat{D}(B))$. Thus, given $f=\sum_{j=0}^\infty f_j\in \widehat{T}_S(\hat{D}(B))$, the element $d(f)=\sum_{n=0}^\infty d(f)_n\in \widehat{T}_S(\hat{D}(B))$ has components $d(f)_n\in \hat{D}(B)^{\otimes n}$, for each $n\geq 0$,  given by
 $$d(f)_n=\sum_{j=1}^n\sum_{r+1+t=j}(id^{\otimes r}\otimes\hat{d}(b_{n-j+1})\otimes id^{\otimes t})(f_j).$$
 \end{proposition}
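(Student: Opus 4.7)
The plan is to verify the construction in five stages: (i) $d_1$ is a well-defined degree-$1$ homogeneous map into $\widehat{T}_S(\hat{D}(B))$; (ii) the Leibniz recipe unambiguously extends $d_1$ to each $d_j$; (iii) $d$ is well-defined on the completion, with the claimed explicit formula for $d(f)_n$; (iv) $d$ is a graded derivation; and (v) $d^2 = 0$. The first four items are essentially bookkeeping; the real content sits in (v).

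For (i), each $\hat{d}(b_s)(h)$ lies in the distinct factor $\hat{D}(B)^{\otimes s}$ of the Cartesian product $\prod_{i\geq 0}\hat{D}(B)^{\otimes i}$, so the formal sum $d_1(h)=\sum_{s\geq 1}\hat{d}(b_s)(h)$ belongs to $\widehat{T}_S(\hat{D}(B))$ with no convergence issue. Since $\hat{d}(b_s)=\hat{\tau}_s\hat{D}(b_s)$ and $|b_s|=1$, while $\hat{D}$ and $\hat{\tau}_s$ preserve degree, $\hat{d}(b_s)$ raises degree by exactly $1$. For (ii) and (iii), extending by the graded Leibniz rule with Koszul signs forced by $|d_1|=1$ yields the stated recipe for $d_j$; setting $d(f)=\sum_j d_j(f_j)$ is legitimate because, for fixed $n$, the $n$-th component of $d_j(f_j)$ comes only from the summand with $s=n-j+1\geq 1$, forcing $1\leq j\leq n$. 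This finiteness simultaneously yields the explicit formula for $d(f)_n$, shows that $d$ is continuous in the $\langle\hat{D}(B)\rangle$-adic topology of Remark~\ref{R: sobre el TS[[B]]}, and makes $d(S)=0$ immediate from $d_0=0$. Property (iv) follows from the construction: splitting the Leibniz sum on a product $f_i\cdot g_j$ according to whether $d_1$ hits a factor of $f_i$ or of $g_j$ reproduces the expected sign $(-1)^{|f_i|}$.

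The main obstacle is (v). Since $d$ is a continuous derivation of odd degree, $d^2$ is a continuous derivation of degree $2$, and it suffices to show it kills the topological generators $\hat{D}(B)$. For a homogeneous $h\in\hat{D}(B)$, the formula of step (iii) gives
\[
 d^2(h)_n \;=\; \sum_{j=1}^{n}\,\sum_{r+1+t=j}\bigl(id^{\otimes r}\otimes \hat{d}(b_{n-j+1})\otimes id^{\otimes t}\bigr)\,\hat{\tau}_j\hat{D}(b_j)(h).
\]
Now Lemma~\ref{L: la formula para manipular la diferencial hat d}, applied with $s:=n-j+1$, rewrites each inner composition as $\hat{\tau}_n\,\hat{D}\bigl(b_{r+1+t}(id^{\otimes r}\otimes b_s\otimes id^{\otimes t})\bigr)(h)$. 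Factoring $\hat{\tau}_n\hat{D}(-)$ outside the sum and reindexing over triples $(r,s,t)$ with $r+s+t=n$, $s\geq 1$, one obtains
\[
 d^2(h)_n \;=\; \hat{\tau}_n\,\hat{D}\!\left(\sum_{\substack{r+s+t=n\\ s\geq 1,\,r,t\geq 0}} b_{r+1+t}\,(id^{\otimes r}\otimes b_s\otimes id^{\otimes t})\right)(h),
\]
and the inner expression is precisely the $n$-th Stasheff-type relation for $\{b_n\}$ recorded at the end of Definition~\ref{D: construccion barra}; it vanishes. Hence $d^2=0$ on generators, and so everywhere by continuity and the derivation property.
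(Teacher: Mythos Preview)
Your proof is correct and follows essentially the same approach as the paper: both reduce $d^2=0$ to generators via the derivation property, then invoke Lemma~\ref{L: la formula para manipular la diferencial hat d} and the bar Stasheff relations of Definition~\ref{D: construccion barra}. The only notable difference is that you dispatch the Leibniz rule by verifying it on $T_S(\hat{D}(B))$ and appealing to continuity, whereas the paper carries out a direct componentwise verification on the completion; your route is shorter and equally valid.
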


\begin{proof} Since each $b_i$ is homogeneous with $\vert b_i\vert=1$, as remarked in (\ref{D: D(B)}), we have $\hat{d}(b_i)=\hat{\tau}_i \hat{D}(b_i)$ is homogeneous of degree $\vert \hat{d}(b_i)\vert=\vert \hat{\tau}_i\vert+\vert \hat{D}(b_i)\vert=\vert b_i\vert=1$.
It follows that $d(f_i)$ is homogeneous of degree $\vert f_i\vert+1$, for all $i\geq 0$ and $f_i\in \hat{D}(B)^{\otimes i}$ homogeneous. Hence we obtain that $d(f)$ is homogeneous with $\vert d(f)\vert =\vert f\vert+1$, for all homogeneous $f\in \widehat{T}_S(\hat{D}(B))$. 

\medskip
\noindent\emph{Step 1: The morphism  $d$ satisfies Leibniz formula.}
\medskip

Assume that $f,g\in \widehat{T}_S(\hat{D}(B))$ are homogeneous. We want to show that $$d(fg)=d(f)g+(-1)^{\vert f\vert}fd(g).$$
Fix $n\geq 0$, and let us show that their $n$-components coincide 
$$d(fg)_n=[d(f)g+(-1)^{\vert f\vert}fd(g)]_n.$$
So, we have to show that 
$$d(fg)_n=\sum_{u+v=n}d(f)_ug_v+(-1)^{\vert f\vert}f_ud(g)_v.$$

If $n=0$, then $u=0=v$ and the equality is clear. So, from now on, we assume that $n\geq 1$. 
By definition, for $j\geq 0$, we have 
$$(fg)_j=\sum_{a+c=j}f_ag_c=\sum_{\scriptsize\begin{matrix} a+c=j\\
 a,c\geq 1 \end{matrix}}f_a\otimes g_c+ f_0g_j+f_jg_0,$$
then $d(fg)_n=S+S_0+S^0$, where 
$$S=\sum_{j=1}^n \sum_{r+1+t=j}\sum_{\scriptsize\begin{matrix} a+c=j\\
 a,c\geq 1 \end{matrix}}(id^{\otimes r}\otimes\hat{d}(b_{n-j+1})\otimes id^{\otimes t})(f_a\otimes g_c)$$
$$S_0=
\sum_{j=1}^n \sum_{r+1+t=j}(id^{\otimes r}\otimes\hat{d}(b_{n-j+1})\otimes id^{\otimes t})(f_0g_j)$$
and
$$S^0=\sum_{j=1}^n \sum_{r+1+t=j}(id^{\otimes r}\otimes\hat{d}(b_{n-j+1})\otimes id^{\otimes t})(f_jg_0).$$
Moreover, we have that 
$$\sum_{u+v=n}d(f)_ug_v+(-1)^{\vert f\vert}f_ud(g)_v= 
S'+(-1)^{\vert f\vert}f_0d(g)_n+d(f)_ng_0,$$
where 
$S'=\sum_{\scriptsize\begin{matrix} u+v=n\\
 u,v\geq 1 \end{matrix}}d(f)_ug_v+(-1)^{\vert f\vert}f_ud(g)_v$.
 
Since $id^{\otimes r}\otimes\hat{d}(b_{n-j+1})\otimes id^{\otimes t}$ is a morphism of $S$-$S$-bimodules, we have $d(f)_ng_0=S^0$. Similarly, we have that $(-1)^{\vert f\vert}f_0d(g)_n=S_0$: indeed, if $f_0\not=0$, then $\vert f\vert=\vert f_0\vert=0$, thus we can use that $id^{\otimes r}\otimes\hat{d}(b_{n-j+1})\otimes id^{\otimes t}$ is a morphism of $S$-$S$-bimodules; if $f_0=0$, the equality is clear. 

So, we are reduced to prove that $S=S'$. We have $S'=S'_g+S'_f$, where 
$$ S_g'=
   \sum_{\scriptsize\begin{matrix} u+v=n\\
 u,v\geq 1 \end{matrix}}\sum_{j_1=1}^u \sum_{r_1+1+t_1=j_1}(id^{\otimes r_1}\otimes\hat{d}(b_{n-j_1+1})\otimes id^{\otimes t_1})(f_{j_1})\otimes g_v$$
 and
 $$S'_f= \sum_{\scriptsize\begin{matrix} u+v=n\\
 u,v\geq 1 \end{matrix}}\sum_{j_2=1}^v \sum_{r_2+1+t_2=j_2}(-1)^{\vert f_u\vert}f_u\otimes(id^{\otimes r_2}\otimes\hat{d}(b_{n-j_2+1})\otimes id^{\otimes t_2})(g_{j_2}),$$
 because $\vert f\vert=\vert f_u\vert$, for each $u\geq 1$.

 Moreover, we have  $S=S_g+S_f$, where 
$$S_g=\sum_{j=1}^n \sum_{r+1+t=j}\sum_{\scriptsize\begin{matrix} a+c=j\\
 a,c\geq 1; c\leq t \end{matrix}}(id^{\otimes r}\otimes\hat{d}(b_{n-j+1})\otimes id^{\otimes t})(f_a\otimes g_c)$$
 and
 $$S_f=\sum_{j=1}^n \sum_{r+1+t=j}\sum_{\scriptsize\begin{matrix} a+c=j\\
 a,c\geq 1; a\leq r \end{matrix}}(id^{\otimes r}\otimes\hat{d}(b_{n-j+1})\otimes id^{\otimes t})(f_a\otimes g_c).$$
Indeed, we have $a\leq r$ iff $j-c=a\leq r=j-1-t$ iff $c\geq t+1$. So, $a\not\leq r$ iff $c\leq t$. It will be enough to show that
$S_f=S'_f$ and $S_g=S'_g$. Consider the sets of indices of the sum $S_g$ 
$$I_g=\left\{ (j,r,t,a,c)\in \hueca{Z}^5\mid \begin{matrix}1\leq j\leq n; r+1+t=j; a+c=j;\\  a,c\geq 1; r,t\geq 0; c\leq t\end{matrix}\right\},$$
and the set of indices of the sum $S'_g$ 
$$I'_g=\left\{ (j_1,r_1,t_1,u,v)\in \hueca{Z}^5\mid \begin{matrix}u+v=n; 1\leq j_1\leq u; r_1+1+t_1=j_1;\\  u,v\geq 1; r_1,t_1\geq 0\end{matrix}\right\}.$$
Since we have 
$$S_g=\sum_{j=1}^n \sum_{r+1+t=j}\sum_{\scriptsize\begin{matrix} a+c=j\\
 a,c\geq 1; c\leq t \end{matrix}}(id^{\otimes r}\otimes\hat{d}(b_{n-j+1})\otimes id^{\otimes (t-c)})(f_a)\otimes g_c,$$
we obtain $S_g=S'_g$, because we have the bijection $\phi_g:I_g\rightmap{}I'_g$ of indices given by $(j,r,t,a,c)\mapsto(j_1,r_1,t_1,u,v)=(j-c,r,t-c,n-c,c)$. Similarly, if we consider the set of indices of the sum $S_f$
$$I_f=\left\{ (j,r,t,a,c)\in \hueca{Z}^5\mid \begin{matrix}1\leq j\leq n; r+1+t=j; a+c=j;\\  a,c\geq 1; r,t\geq 0; a\leq r\end{matrix}\right\},$$
and the set of indices of the sum $S'_f$ 
$$I'_f=\left\{ (j_2,r_2,t_2,u,v)\in \hueca{Z}^5\mid \begin{matrix}u+v=n; 1\leq j_2\leq v; r_2+1+t_2=j_2;\\  u,v\geq 1; r_2,t_2\geq 0\end{matrix}\right\}.$$
Since we have 
$$S_f=\sum_{j=1}^n \sum_{r+1+t=j}\sum_{\scriptsize\begin{matrix} a+c=j\\
 a,c\geq 1; a\leq r \end{matrix}}(-1)^{\vert f_a\vert}f_a\otimes(id^{\otimes (r-a)}\otimes\hat{d}(b_{n-j+1})\otimes id^{\otimes t})(g_c),$$
we obtain $S_f=S'_f$, because we have the bijection $\phi_f:I_f\rightmap{}I'_f$ of indices given by $(j,r,t,a,c)\mapsto(j_2,r_2,t_2,u,v)=(j-a,r-a,t,a,n-a)$.

\medskip
\noindent\emph{Step 2: We have $d^2=0$.}
\medskip

Since $d^2:\widehat{T}_S(\hat{D}(B))\rightmap{}  \widehat{T}_S(\hat{D}(B))$ is an $S$-$S$-bimodule morphism satisfying  $d^2(fg)=d^2(f)g+fd^2(g)$, for any homogeneous $f,g\in \widehat{T}_S(\hat{D}(B))$, it is determined by its values on $\hat{D}(B)$. So, it will be enough to show that its restriction to $\hat{D}(B)$ is zero. This restriction is the composition 
$$\hat{D}(B)\rightmap{d_1}\widehat{T}_S(\hat{D}(B))\rightmap{d}\widehat{T}_S(\hat{D}(B)).$$
From (\ref{L: la formula para manipular la diferencial hat d}) and (\ref{D: construccion barra}), we have 
$$\begin{matrix}dd_1
&=&
\sum_{j=1}^\infty d_j\hat{d}(b_j)\hfill\\
&=&
\sum_{j=1}^\infty \sum_{s=1}^\infty\hbox{ }
 \sum_{\scriptsize\begin{matrix} r+1+t=j\\
 r,t\geq 0 \end{matrix}}(id^{\otimes r}\otimes \hat{d}(b_s)\otimes id^{\otimes t})\hat{d}(b_j)\hfill\\
 &=&
 \sum_{j,s=1}^\infty \sum_{\scriptsize\begin{matrix} r+1+t=j\\
 r,t\geq 0 \end{matrix}}(id_{\hat{D}(B)}^{\otimes r}\otimes \hat{\tau}_s\hat{D}(b_s)\otimes id_{\hat{D}(B)}^{\otimes t})\hat{\tau}_j\hat{D}(b_j)\hfill\\
 &=&
 \sum_{n=1}^\infty\sum_{\scriptsize\begin{matrix} r+s+t=n\\
 r,t\geq 0;s\geq 1 \end{matrix}}(id_{\hat{D}(B)}^{\otimes r}\otimes \hat{\tau}_s\hat{D}(b_s)\otimes id_{\hat{D}(B)}^{\otimes t})\hat{\tau}_{r+1+t}\hat{D}(b_{r+1+t})
 \hfill\\
 &=&
 \sum_{n=1}^\infty\sum_{\scriptsize\begin{matrix} r+s+t=n\\
 r,t\geq 0;s\geq 1 \end{matrix}}
 \hat{\tau}_n\hat{D}(id_B^{\otimes r}\otimes b_s\otimes id_B^{\otimes t})\hat{D}(b_{r+1+t})\hfill\\
 &=&
 \sum_{n=1}^\infty\hat{\tau}_n\hat{D}[\sum_{\scriptsize\begin{matrix} r+s+t=n\\
 r,t\geq 0;s\geq 1 \end{matrix}}b_{r+1+t}(id_B^{\otimes r}\otimes b_s\otimes id_B^{\otimes t})]=0.\hfill\\
\end{matrix}$$ 
\end{proof}

\begin{lemma}\label{L: sobre elementos homogeneos de TS(DB)} Consider the quotient poset $\overline{\cal P}$ of the preordered set ${\cal P}$, as in (\ref{R: P sobre sim}), and make 
$\ell:=\vert \overline{\cal P}\vert$. Then, with the preceding notation,  we have:
\begin{enumerate}
 \item If $0\not= h$ is a homogeneous element in $(\hat{D}(B)_0\oplus \hat{D}(B)_1)^{\otimes n}$, for some $n\geq 0$, then 
 $\vert h\vert\geq n-\ell.$
 \item If $h=\sum_{n\geq 0}h_n\in \widehat{T}_S(\hat{D}(B)_0\oplus \hat{D}(B)_1)$ is a homogeneous element, then $h_n=0$ for 
 $n> \vert h\vert+\ell$, thus 
 $h\in T_S(\hat{D}(B)_0\oplus \hat{D}(B)_1)$. 
\end{enumerate}
\end{lemma}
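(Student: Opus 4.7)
The plan is to reduce part (1) to a single nonzero directed pure tensor and exploit the source--target inequalities encoded in Remark~\ref{R: B y D(B) homogeneos}(3). Recall from that remark that any nonzero directed $z\in \hat{D}(B)_1$ satisfies $\overline{s(z)}\leq \overline{t(z)}$ in the quotient poset $\overline{\cal P}$, while any nonzero directed $z\in \hat{D}(B)_0$ satisfies the \emph{strict} inequality $\overline{s(z)}<\overline{t(z)}$ (since then $s(z)\not\sim t(z)$). This asymmetry is what drives the argument.

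For (1), I decompose $h\in (\hat{D}(B)_0\oplus\hat{D}(B)_1)^{\otimes n}$ as a sum of directed pure tensors $h_1\otimes\cdots\otimes h_n$ with each $h_r$ homogeneous of degree in $\{0,1\}$. Since $h$ is homogeneous, every such nonzero pure tensor contributes in the same degree $\vert h\vert=\sum_r\vert h_r\vert$, which is exactly the number of factors of degree $1$; hence it suffices to bound $\vert h\vert$ for a single nonzero pure directed tensor. The matching condition for nonzero tensoring over $S$ in $\hat{D}(B)^{\otimes n}$ forces $s(h_r)=t(h_{r+1})$ for $1\leq r<n$, so setting $i_r:=s(h_r)$ and $j_r:=t(h_r)$ produces the chain
\[
\overline{i_n}\leq \overline{j_n}\leq \overline{j_{n-1}}\leq\cdots\leq \overline{j_1}
\]
of $n+1$ elements in $\overline{\cal P}$, in which the inequality contributed by $h_r$ is strict precisely when $\vert h_r\vert=0$.

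Let $m$ denote the number of factors of degree $0$. A routine argument (for instance, by picking one element immediately after each strict jump together with a starting point) shows that the $m$ strict inequalities in the chain above produce at least $m+1$ pairwise distinct elements of $\overline{\cal P}$; hence $m+1\leq \ell$, whence $\vert h\vert=n-m\geq n-\ell+1\geq n-\ell$, proving~(1).

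For (2), any homogeneous $h=\sum_{n\geq 0}h_n\in \widehat{T}_S(\hat{D}(B)_0\oplus\hat{D}(B)_1)$ has every nonzero component $h_n$ homogeneous of the same degree $\vert h\vert$ in $(\hat{D}(B)_0\oplus\hat{D}(B)_1)^{\otimes n}$. By (1), $\vert h\vert\geq n-\ell$, so $h_n=0$ whenever $n>\vert h\vert+\ell$. Thus the formal series $\sum_{n\geq 0}h_n$ is actually a finite sum and $h\in T_S(\hat{D}(B)_0\oplus\hat{D}(B)_1)$. No serious obstacle is anticipated; the only delicate point is verifying that the source--target conventions really yield the displayed $\overline{\cal P}$-chain, after which the poset combinatorics finishes the argument.
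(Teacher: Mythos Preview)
Your proof is correct and follows essentially the same approach as the paper: reduce to a single nonzero directed pure tensor, use Remark~\ref{R: B y D(B) homogeneos}(3) to get a weakly increasing chain in $\overline{\cal P}$ with strict jumps exactly at the degree-$0$ factors, and bound the number of such factors by the size of $\overline{\cal P}$. The only cosmetic difference is that the paper groups the degree-$0$ factors into blocks between consecutive degree-$1$ factors and counts their sources, obtaining $m\leq\ell$, whereas your direct count of strict jumps in the chain yields the slightly sharper $m+1\leq\ell$; either suffices for the stated inequality.
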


\begin{proof} (1): If $r:=\vert h\vert$, we can assume that 
$$h=\alpha_{t_r}^r\otimes\cdots\otimes\alpha_{1}^r\otimes x_r\otimes\cdots \otimes x_2\otimes \alpha_{t_1}^1\otimes\cdots\otimes\alpha_1^1\otimes x_1\otimes \alpha_{t_0}^0\otimes\cdots\otimes \alpha_1^0,$$
 where  $\alpha_i^j$ are directed elements in $\hat{D}(B)_0$ and $x_1,\ldots,x_r$ are directed elements in $\hat{D}(B)_1$;  
 $t_0,t_1,\ldots,t_r\geq 0$ and $r+\sum_{j=0}^rt_j=n$. 
 Then, from (\ref{R: B y D(B) homogeneos})(3), we have in the poset $\overline{\cal P}$
  $$\overline{s(\alpha_1^0)}<\overline{t(\alpha_1^0)}=\overline{s(\alpha_2^0)}<\cdots <\overline{t(\alpha_{t_0}^0)}=\overline{s(x_1)}\leq \overline{t(x_1)}=\overline{s(\alpha_1^1)},$$
  $$\overline{s(\alpha_1^1)}<\overline{t(\alpha_1^1)}=\overline{s(\alpha_2^1)}<\cdots <\overline{t(\alpha_{t_1}^1)}=\overline{s(x_2)}\leq \overline{t(x_2)}=\overline{s(\alpha_1^2)},$$
  $$\cdots$$
   $$\overline{s(\alpha_1^r)}<\overline{t(\alpha_1^r)}=\overline{s(\alpha_2^r)}<\cdots <\overline{t(\alpha_{t_r}^r)}.$$
   Thus, 
  $\overline{s(\alpha_1^0)}<\overline{s(\alpha_2^0)}<\cdots<\overline{s(\alpha_{t_0}^0)}<\cdots<\overline{s(\alpha^r_1)}<
  \overline{s(\alpha_2^r)}<\cdots<\overline{s(\alpha_{t_r}^r)}$. This implies that 
  $\sum_{j=0}^rt_j\leq  \ell$. Therefore, we have $n=r+\sum_{j=0}^rt_j\leq r+\ell$, and (1) follows. (2) is clearly follows from (1). 
\end{proof}

\begin{remark}\label{N: la base de B}
It is convenient to fix a special \emph{directed basis} for the graded $S$-$S$-bimodules $B$ and $\hat{D}(B)$. They are chosen as follows. For each $t\geq 0$ and  $i,j\in {\cal P}$, we choose a $k$-basis $\hueca{B}_t(i,j)$ for the space $e_jB_te_i$; then, we consider the basis  $\hueca{B}_t=\bigcup_{i,j}\hueca{B}_t(i,j)$ of $B_t$; for $t=-1$, we have $B_{-1}=\End_{\Lambda}(\Delta)=S\bigoplus J=\bigoplus_{i\in {\cal P}} ke_i\bigoplus J$, so we choose, for any indices $i,j\in {\cal P}$, a $k$-basis  $\hueca{J}(i,j)$ for $e_jJe_i$; then, make $\hueca{J}=\bigcup_{i,j\in {\cal P}}\hueca{J}(i,j)$; for $i\in {\cal P}$, we consider the $k$-basis $\hueca{B}_{-1}(i,i)=\hueca{J}(i,i)\cup\{e_i\}$ of $e_iB_{-1}e_i$; then take $\hueca{B}_{-1}=\{e_i\mid i\in {\cal P}\}\cup \hueca{J}$. Finally, we can consider the $k$-basis $\hueca{B}=\bigcup_{t\geq -1}\hueca{B}_t$ of $B$.  

Then, we can consider for each finite-dimensional $B_t$, the dual basis $(\hueca{B}^*)_{-t}:=(\hueca{B}_t)^*$ of $\hat{D}(B_t)=\hat{D}(B)_{-t}\subseteq \hat{D}(B)$, with elements $x^*\in \hat{D}(B_t)=\hat{D}(B)_{-t}$, and then consider the union $\hueca{B}^*:=\bigcup_{t\leq 1}(\hueca{B}^*)_t$.  
By definition, if $x\in \hueca{B}_t(i,j)$, the directed element $x^*\in (\hueca{B}^*)_{-t}(i,j)\subseteq e_j*\hat{D}(B_t)*e_i\subseteq\hat{D}(B)_{-t}$ satisfies, for $y\in \hueca{B}$, that $x^*(y)=0$, unless $y\in \hueca{B}_t(i,j)$ and, in this case, we have 
$x^*(y)=\delta_{x,y}e_{j}=\delta_{x,y}e_{t(y)}$. 

For instance, we have 
$e_i^*\in (\hueca{B}^*)_1\subseteq \hat{D}(B)_1\subseteq\hat{D}(B)$ such that $e_i^*(B_t)=0$, for $t\not= -1$, $e_i^*(J)=0$, and $e_i^*(e_j)=\delta_{i,j}e_j$. 

Notice that each tensor power $\hat{D}(B)^{\otimes n}$ admits as $k$-basis the set of elements of the form $z=x^*_{i_n}\otimes\cdots\otimes x^*_{i_2}\otimes x^*_{i_1}$, where  $x_{i_1},\ldots,x_{i_n}$ are directed basic elements in $\hueca{B}_{i_1},\ldots,\hueca{B}_{i_n}$, respectively, such that 
$t(x^*_{i_1})=s(x^*_{i_2}),\ldots, t(x^*_{i_{n-1}})=s(x^*_{i_n})$, or equivalently, such that 
$t(x_{i_1})=s(x_{i_2}),\ldots, t(x_{i_{n-1}})=s(x_{i_n})$
. We denote this basis by $\widehat{\hueca{T}}_n$. Therefore, for $\gamma\in\hat{D}(B)^{\otimes n}$, we can write 
$$\gamma=\sum_{z\in \widehat{\hueca{T}}_n} c_z^\gamma z, \hbox{ for some   scalars } c_z^\gamma\in k.$$
\end{remark}

 \begin{lemma}\label{L: desc del algebra completada} Consider the following linear spaces:
 \begin{enumerate}
  \item ${\cal N}$ is the linear subspace of $\widehat{T}_S(\hat{D}(B))$ generated by the elements $g=\sum_{n\geq 1}g_n$, such that each $g_n\in \hat{D}(B)_{i_1}\otimes\cdots\otimes\hat{D}(B)_{i_n}$ with
  $i_j<0$, for some $j\in [1,n]$;
 \item ${\cal E}$ is the linear subspace of $\widehat{T}_S(\hat{D}(B)_0\oplus \hat{D}(B)_1)\subseteq \widehat{T}_S(\hat{D}(B))$ generated by the elements $g=\sum_{n\geq 1}g_n$, such that each $g_n$ is a sum of elements of the form $\alpha_n\otimes\cdots\otimes\alpha_1$ with $\alpha_i\in \hat{D}(B)_0\cup \hat{D}(B)_1$ and $\alpha_i=e_s^*$, for some $i\in [1,n]$ and $s\in {\cal P}$. 
 \end{enumerate}
 Then, we have the vector space direct sum decomposition 
 $$\widehat{T}_S(\hat{D}(B))=\widehat{T}_S(\hat{D}(B)_0\oplus \hat{D}(J))\oplus {\cal E}\oplus {\cal N}.$$  
 \end{lemma}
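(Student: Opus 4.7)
The plan is to establish the decomposition at the level of each tensor power $\hat{D}(B)^{\otimes n}$ and then pass to formal series. The starting point is the $S$-$S$-bimodule splitting $\hat{D}(B)_1 = \hat{D}(\End_\Lambda(\Delta)) = \hat{D}(S)\oplus \hat{D}(J)$, where $\hat{D}(S) = \bigoplus_{i\in{\cal P}}ke_i^*$. This refines the graded decomposition of $\hat{D}(B)$ into
$$\hat{D}(B) = V_1\oplus V_2\oplus V_3, \quad V_1:=\hat{D}(B)_0\oplus \hat{D}(J),\quad V_2:=\bigoplus_{i\in{\cal P}}ke_i^*,\quad V_3:=\bigoplus_{j\geq 1}\hat{D}(B)_{-j}.$$

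Distributing the tensor product over this direct sum gives, for each $n\geq 0$,
$$\hat{D}(B)^{\otimes n} = \bigoplus_{(\epsilon_1,\ldots,\epsilon_n)\in\{1,2,3\}^n}V_{\epsilon_1}\otimes_S\cdots\otimes_S V_{\epsilon_n},$$
which I would partition into three disjoint exhaustive blocks: $V_1^{\otimes n}$ (the unique tuple with every $\epsilon_j=1$); $E_n$, collecting tuples with each $\epsilon_j\in\{1,2\}$ and some $\epsilon_j=2$; and $N_n$, collecting tuples with at least one $\epsilon_j=3$. Thus $\hat{D}(B)^{\otimes n} = V_1^{\otimes n}\oplus E_n\oplus N_n$ as an internal direct sum. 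Since $\widehat{T}_S(W) = \prod_{n\geq 0}W^{\otimes n}$ and such a componentwise direct sum persists under the product (the sum is exhaustive because each component decomposes, and pairwise intersections vanish because they do componentwise), it suffices to identify the three subspaces of the statement with $\prod_n V_1^{\otimes n}$, $\prod_{n\geq 1}E_n$, and $\prod_{n\geq 1}N_n$ respectively.

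The identifications $\widehat{T}_S(\hat{D}(B)_0\oplus\hat{D}(J)) = \prod_n V_1^{\otimes n}$ and ${\cal N} = \prod_{n\geq 1}N_n$ are immediate from the definitions. The delicate point, which I expect to be the main bookkeeping step, is the identification ${\cal E} = \prod_{n\geq 1}E_n$. To show ${\cal E}\subseteq \prod_{n\geq 1}E_n$, a typical pure tensor $\alpha_n\otimes\cdots\otimes\alpha_1$ with each $\alpha_i\in V_1\cup V_2$ and some $\alpha_{i_0}=e_s^*\in V_2$ lies in $V_{\epsilon_1}\otimes_S\cdots\otimes_S V_{\epsilon_n}$ with $\epsilon_{i_0}=2$, hence in $E_n$. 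For the reverse inclusion, using $V_2=\bigoplus_s ke_s^*$, any pure tensor drawn from a summand with at least one $\epsilon_j=2$ expands as a finite linear combination of tensors whose $j$-th factor is some $e_s^*$ and whose remaining factors lie in $V_1\cup V_2$, so $E_n$ is spanned by the generators used to define $\cal E$. This yields the componentwise decomposition and, by the previous paragraph, the stated direct sum decomposition of $\widehat{T}_S(\hat{D}(B))$.
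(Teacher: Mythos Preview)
Your proof is correct and follows essentially the same strategy as the paper: decompose each tensor power $\hat{D}(B)^{\otimes n}$ via the distributivity of the tensor product over the splitting of $\hat{D}(B)$, and pass to the product. The only cosmetic difference is that the paper carries this out in two successive two-way splits (first peeling off ${\cal N}$ via the decomposition $\hat{D}(B)=(\hat{D}(B)_0\oplus\hat{D}(B)_1)\oplus\bigoplus_{j<0}\hat{D}(B)_j$, then peeling off ${\cal E}$ via $\hat{D}(B)_1=\hat{D}(J)\oplus\bigoplus_s ke_s^*$), whereas you perform a single three-way split $\hat{D}(B)=V_1\oplus V_2\oplus V_3$ from the outset; the content is identical.
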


 \begin{proof} We first remark that 
 $\widehat{T}_S(\hat{D}(B))=\widehat{T}_S(\hat{D}(B)_0\oplus \hat{D}(B)_1)\oplus {\cal N}$. Indeed,   
 given $f=\sum_{n\geq 0}f_n\in\widehat{T}_S(\hat{D}(B))$,
 so $f_n\in \hat{D}(B)^{\otimes n}$, for each $n$. Then, $f_n=\sum_j h_j$ is a finite sum with $h_j\in [\hat{D}(B)^{\otimes n}]_j=\bigoplus_{i_1+\cdots +i_n=j}\hat{D}(B)_{i_1}\otimes\cdots\otimes\hat{D}(B)_{i_n}$ homogeneous for each $j$. Then, $h_j=h'_j+h''_j$, where 
 $$h'_j\in\bigoplus_{\scriptsize\begin{matrix} i_1+\cdots +i_n=j\\
 0\leq i_1,\ldots, i_n\leq 1\end{matrix}}\hat{D}(B)_{i_1}\otimes\cdots\otimes\hat{D}(B)_{i_n}\subseteq \widehat{T}_S(\hat{D}(B)_0\oplus \hat{D}(B)_1)$$
 $$h''_j\in\bigoplus_{\scriptsize\begin{matrix} i_1+\cdots +i_n=j\\
 \hbox{ some } i_t< 0\end{matrix}}\hat{D}(B)_{i_1}\otimes\cdots\otimes\hat{D}(B)_{i_n}\subseteq {\cal N}.$$
 Thus, we get $f=f'+f''$, with $f'_n=\sum_jh'_j$ and $f''_n=\sum_jh''_j$, thus $f'\in \widehat{T}_S(\hat{D}(B)_0\oplus \hat{D}(B)_1)$ and 
 $f''\in {\cal N}$. 
 
 Now, we remark that $\widehat{T}_S(\hat{D}(B)_0\oplus \hat{D}(B)_1)=\widehat{T}_S(\hat{D}(B)_0\oplus \hat{D}(J))\oplus {\cal E}$. Indeed, given $f=\sum_{n\geq 0}f_n\in\widehat{T}_S(\hat{D}(B)_0\oplus\hat{D}(B)_1)$,
 so $f_n\in (\hat{D}(B)_0\oplus \hat{D}(B)_1)^{\otimes n}$, for each $n$. 
  Hence, we have a finite sum of homogeneous components $f_n=\sum_jh_j$, with 
 $$h_j\in\bigoplus_{\scriptsize\begin{matrix} i_1+\cdots +i_n=j\\
 0\leq i_1,\ldots, i_n\leq 1\end{matrix}}\hat{D}(B)_{i_1}\otimes\cdots\otimes\hat{D}(B)_{i_n}.$$
 But $\hat{D}(B)_1=\bigoplus_{s\in {\cal P}} ke_s^*\oplus \hat{D}(J)$. Thus, each component decomposes as $h_j=h'_j+h''_j$, with 
 $h'_j\in \widehat{T}_S(\hat{D}(B)_0\oplus \hat{D}(J))$ and $h''_j\in {\cal E}$.
 \end{proof}
 
In the following lemmas, given a homogeneous morphism $f:M\rightmap{}N$ with degree $m$ of graded $S$-$S$-bimodules, we denote by $f^t:M_t\rightmap{}N_{t+m}$ the restriction of the morphism $f$ to the homogeneous components of $M$ and $N$, thus $f=\bigoplus_{t\in \hueca{Z}}f^t:M\rightmap{}N$. 

 \begin{lemma}\label{L: theta vs componentes homogeneas}
  Given $n\geq 2$ and $t\in \hueca{Z}$,  consider the homogeneous component of degree $t$ of $B^{\otimes n}$ thus, we have a finite direct sum 
 $$(B^{\otimes n})_t=\bigoplus_{i_1+\cdots+ i_n=t}B_{i_1}\otimes B_{i_2}\otimes\cdots\otimes B_{i_n}.$$ 
 We have canonical projections  
 $\pi_{i_1,\ldots,i_n}:(B^{\otimes n})_t\rightmap{}B_{i_1}\otimes B_{i_2}\otimes\cdots\otimes B_{i_n}$ and the injections 
 $\sigma_{i_1,\ldots,i_n}:B_{i_1}\otimes B_{i_2}\otimes\cdots\otimes B_{i_n}\rightmap{}(B^{\otimes n})_t$. Similarly, we have the finite direct sum 
 $$\begin{matrix}(\hat{D}(B)^{\otimes n})_{-t}
 &=&
 \bigoplus_{-i_1-\cdots- i_n=-t}\hat{D}(B)_{-i_1}\otimes \hat{D}(B)_{-i_2}\otimes\cdots\otimes \hat{D}(B)_{-i_n}\hfill\\
 &=&
 \bigoplus_{i_1+\cdots+ i_n=t}\hat{D}(B_{i_1})\otimes \hat{D}(B_{i_2})\otimes\cdots\otimes \hat{D}(B_{i_n})\hfill\\   
   \end{matrix}$$ 
 and the injections 
 $\hat{\sigma}_{i_1,\ldots,i_n}:\hat{D}(B_{i_1})\otimes \hat{D}(B_{i_2})\otimes\cdots\otimes \hat{D}(B_{i_n}) \rightmap{}(\hat{D}(B)^{\otimes n})_{-t}$ and projections $\hat{\pi}_{i_1,\ldots,i_n}: (\hat{D}(B)^{\otimes n})_{-t}\rightmap{}\hat{D}(B_{i_1})\otimes \hat{D}(B_{i_2})\otimes\cdots\otimes \hat{D}(B_{i_n})$.
 Moreover, we have 
 $$\hat{D}((B^{\otimes n})_t)=\bigoplus_{j_1+\cdots+ j_n=t}\hat{D}(B_{j_1}\otimes B_{j_2}\otimes\cdots\otimes B_{j_n}).$$ 
 Consider $\hat{\tau}_{i_1,\ldots,i_n}:=\hat{\tau}_{B_{i_1},\ldots,B_{i_n}}$ and the restriction of 
 $\hat{\tau}_n:\hat{D}(B^{\otimes n})\rightmap{}\hat{D}(B)^{\otimes n}$ to the components of degree $-t$ 
 $$\hat{\tau}_n^{-t}:\hat{D}(B^{\otimes n})_{-t}\rightmap{}(\hat{D}(B)^{\otimes n})_{-t}.$$
 Then, the following squares commute
 $$\begin{matrix}
 \hat{D}(B_{i_1}\otimes\cdots\otimes B_{i_n})&\rightmap{ \ \hat{D}(\pi_{i_1,\ldots,i_n}) \ }&\hat{D}(\bigoplus_{j_1+\cdots+j_n=t}B_{j_1}\otimes\cdots\otimes B_{j_n})\\
\lmapdown{ \hat{\tau}_{i_1,\ldots,i_n} }&&\rmapdown{ \hat{\tau}_n^{-t} }\\
\hat{D}(B_{i_1})\otimes\cdots\otimes \hat{D}(B_{i_n}) &
 \rightmap{ \ \hat{\sigma}_{i_1,\ldots,i_n} \ }
 &\bigoplus_{j_1+\cdots+j_n=t}\hat{D}(B_{j_1})\otimes\cdots\otimes \hat{D}(B_{j_n})\\\end{matrix}$$
 $$\begin{matrix}
 \hat{D}(B_{i_1}\otimes\cdots\otimes B_{i_n})&\leftmap{ \ \hat{D}(\sigma_{i_1,\ldots,i_n}) \ }&\hat{D}(\bigoplus_{j_1+\cdots+j_n=t}B_{j_1}\otimes\cdots\otimes B_{j_n})\\
\lmapdown{ \hat{\tau}_{i_1,\ldots,i_n} }&&\rmapdown{ \hat{\tau}_n^{-t} }\\
\hat{D}(B_{i_1})\otimes\cdots\otimes \hat{D}(B_{i_n}) &
 \leftmap{ \ \hat{\pi}_{i_1,\ldots,i_n} \ }
 &\bigoplus_{j_1+\cdots+j_n=t}\hat{D}(B_{j_1})\otimes\cdots\otimes \hat{D}(B_{j_n})\\\end{matrix}$$
 \end{lemma}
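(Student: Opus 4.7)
The plan is to deduce both squares as instances of the iterated naturality formula in Lemma~\ref{L: naturalidad iterada de theta}, applied to degree-zero structure maps between $B$ and its homogeneous components. First I would view each component $B_{i_j}$ as an object of $\Gbb\g S$ concentrated in degree $i_j$; with this convention the inclusion $\iota_{i_j}: B_{i_j}\rightmap{}B$ and the projection $\mathrm{pr}_{i_j}: B\rightmap{}B_{i_j}$ are both homogeneous morphisms of degree $0$ in $\Gbb\g S$. Under the contravariant equivalence $\hat{D}$, one checks directly from the description of $\hat{D}$ in (\ref{D: hatD}) that $\hat{D}(\iota_{i_j})$ coincides with the canonical projection $\hat{D}(B)\rightmap{}\hat{D}(B_{i_j})$, and $\hat{D}(\mathrm{pr}_{i_j})$ coincides with the canonical injection $\hat{D}(B_{i_j})\rightmap{}\hat{D}(B)$.

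Next I would record the factorizations at the level of the homogeneous component of degree $t$. Since only summands indexed by tuples with $j_1+\cdots+j_n=t$ contribute to $(B^{\otimes n})_t$, the restriction of $\mathrm{pr}_{i_1}\otimes\cdots\otimes\mathrm{pr}_{i_n}$ to $(B^{\otimes n})_t$ is exactly $\pi_{i_1,\ldots,i_n}$ (when $i_1+\cdots+i_n=t$, and zero otherwise). Dually, the composition of $\iota_{i_1}\otimes\cdots\otimes\iota_{i_n}$ with the inclusion into $(B^{\otimes n})_t$ is exactly $\sigma_{i_1,\ldots,i_n}$, and analogous factorizations hold for $\hat{\sigma}_{i_1,\ldots,i_n}$ and $\hat{\pi}_{i_1,\ldots,i_n}$ at the level of the component of degree $-t$ of $\hat{D}(B)^{\otimes n}$.

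With those identifications in place I would invoke Lemma~\ref{L: naturalidad iterada de theta}. For the first square, apply it to the family $f_j:=\mathrm{pr}_{i_j}$; since all $|f_j|=0$, the sign factor $(-1)^{\lambda_n(0,\ldots,0)}$ is trivial, giving
$$(\hat{D}(\mathrm{pr}_{i_1})\otimes\cdots\otimes\hat{D}(\mathrm{pr}_{i_n}))\,\hat{\tau}_n=\hat{\tau}_{i_1,\ldots,i_n}\,\hat{D}(\mathrm{pr}_{i_1}\otimes\cdots\otimes\mathrm{pr}_{i_n}).$$
Restricting this equality of morphisms in $\Gba\g S$ to the component of degree $-t$ and using the factorizations above translates the left side into $\hat{\sigma}_{i_1,\ldots,i_n}\hat{\tau}_n^{-t}$ precomposed with $\hat{D}(\pi_{i_1,\ldots,i_n})$ at the target, which delivers the first commutative square. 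For the second square I would repeat the argument with $f_j:=\iota_{i_j}$, again of degree $0$, and take the corresponding restriction.

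The only real obstacle is bookkeeping: keeping straight which direct summand of $\hat{D}(B)^{\otimes n}$ corresponds to which summand of $\hat{D}(B^{\otimes n})$ under $\hat{\tau}_n$, and verifying that $\hat{D}$ of an inclusion between graded bimodules concentrated in a single degree really does produce the dual projection (and vice versa). Both are essentially tautological from the definitions in (\ref{D: D(B)}) and (\ref{D: hatD}), and once they are in place the two squares drop out as homogeneous restrictions of a single naturality identity.
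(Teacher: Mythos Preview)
Your approach via Lemma~\ref{L: naturalidad iterada de theta} is sound and is genuinely different from the paper's proof. The paper does not invoke naturality at all: it passes to the inverse isomorphisms $\hat{\theta}$, writes down the corresponding squares with $\hat{\theta}_{i_1,\ldots,i_n}$ and $\hat{\theta}_n^{-t}$, and verifies each one by evaluating on a typical generator $\alpha_{i_1}\otimes\cdots\otimes\alpha_{i_n}$ (respectively $\sum\alpha_{j_1}\otimes\cdots\otimes\alpha_{j_n}$) at a typical generator $a_1\otimes\cdots\otimes a_n$, using the explicit description of $\hat{\theta}$ from Definition~\ref{L: def recursiva de hat(theta)} and the fact that each $\alpha_{i_u}$, viewed inside $\hat{D}(B)$, kills all $B_{j_v}$ with $j_v\neq i_u$. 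Your argument packages all of that bookkeeping into a single application of the naturality formula, which is cleaner and explains \emph{why} the squares commute rather than just checking it.

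One correction: your displayed identity has the two $\hat{\tau}$'s on the wrong sides and does not type-check as written. With $f_j=\mathrm{pr}_{i_j}:B\to B_{i_j}$ one has $A_j=B$ and $B_j=B_{i_j}$ in Lemma~\ref{L: naturalidad iterada de theta}, so the formula reads
\[
(\hat{D}(\mathrm{pr}_{i_1})\otimes\cdots\otimes\hat{D}(\mathrm{pr}_{i_n}))\,\hat{\tau}_{i_1,\ldots,i_n}
=\hat{\tau}_n\,\hat{D}(\mathrm{pr}_{i_1}\otimes\cdots\otimes\mathrm{pr}_{i_n}),
\]
both sides being maps $\hat{D}(B_{i_1}\otimes\cdots\otimes B_{i_n})\to\hat{D}(B)^{\otimes n}$. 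Since the source is concentrated in degree $-t$, the image lies in degree $-t$; identifying the left-hand tensor of injections with $\hat{\sigma}_{i_1,\ldots,i_n}$ and the right-hand $\hat{D}(\mathrm{pr}_{i_1}\otimes\cdots\otimes\mathrm{pr}_{i_n})$ with $\hat{D}(\pi_{i_1,\ldots,i_n})$ (followed by the inclusion $\hat{D}((B^{\otimes n})_t)\hookrightarrow\hat{D}(B^{\otimes n})$) gives exactly the first square. The second square follows the same way with $f_j=\iota_{i_j}$. With this fix your argument goes through.
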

 
 \begin{proof} In order to show the commutativity of the first square, it will be enough to show that the following square involving the inverses commutes
 $$\begin{matrix}
 \hat{D}(B_{i_1})\otimes\cdots\otimes \hat{D}(B_{i_n}) &
 \rightmap{ \ \hat{\sigma}_{i_1,\ldots,i_n} \ }
 &\bigoplus_{j_1+\cdots+j_n=t}\hat{D}(B_{j_1})\otimes\cdots\otimes \hat{D}(B_{j_n})\\
\lmapdown{ \hat{\theta}_{i_1,\ldots,i_n} }&&\rmapdown{ \hat{\theta}_n^{-t} }\\
\hat{D}(B_{i_1}\otimes\cdots\otimes B_{i_n})&\rightmap{ \ \hat{D}(\pi_{i_1,\ldots,i_n}) \ }&\hat{D}(\bigoplus_{j_1+\cdots+j_n=t}B_{j_1}\otimes\cdots\otimes B_{j_n}).\\
\end{matrix}$$
For this, take a typical generator $\alpha_{i_1}\otimes\cdots\otimes\alpha_{i_n}\in \hat{D}(B_{i_1})\otimes\cdots\otimes \hat{D}(B_{i_n})$ and a typical generator  
$$\sum_{j_1+\cdots+j_n=t}a_{j_1}\otimes\cdots\otimes a_{j_n}\in\bigoplus_{j_1+\cdots+j_n=t}B_{j_1}\otimes\cdots\otimes B_{j_n}.$$
Then, if we make $\Delta= \hat{D}(\pi_{i_1,\ldots,i_n})\hat{\theta}_{i_1,\ldots,i_n}(\alpha_{i_1}\otimes\cdots\otimes\alpha_{i_n})(\sum a_{j_1}\otimes\cdots\otimes a_{j_n})$, we get
$$\begin{matrix}\Delta&=&\hat{D}(\pi_{i_1,\ldots,i_n})\hat{\theta}_{i_1,\ldots,i_n}(\alpha_{i_1}\otimes\cdots\otimes\alpha_{i_n})(\sum a_{j_1}\otimes\cdots\otimes a_{j_n})\hfill\\
&=&
[\hat{\theta}_{i_1,\ldots,i_n}(\alpha_{i_1}\otimes\cdots\otimes\alpha_{i_n})\pi_{i_1,\ldots,i_n}](\sum a_{j_1}\otimes\cdots\otimes a_{j_n})\hfill\\
&=&\hat{\theta}_{i_1,\ldots,i_n}(\alpha_{i_1}\otimes\cdots\otimes\alpha_{i_n})(a_{i_1}\otimes\cdots\otimes a_{i_n})\hfill\\
&=&
\hat{\theta}_n(\alpha_{i_1}\otimes\cdots\otimes\alpha_{i_n})(\sum a_{j_1}\otimes\cdots\otimes a_{j_n})\hfill\\
&=&
\hat{\theta}_n^{-t}\hat{\sigma}_{i_1,\ldots,i_n}(\alpha_{i_1}\otimes\cdots\otimes\alpha_{i_n})(\sum a_{j_1}\otimes\cdots\otimes a_{j_n}).\hfill\\
\end{matrix}$$
Recall that in the definition of $\hat{D}(B)$, we identified 
$\hat{D}(B_{i_u})$ with $\hat{D}(B)_{-i_u}$ mapping each $\alpha_{i_u}:B_{i_u}\rightmap{}S$ onto its extension $\alpha_{i_u}:B\rightmap{}S$ such that 
 $\alpha_{i_u}(B_{j_v})=0$ whenever $i_u\not=j_v$. 

 The commutativity of the second square of the lemma is equivalent to the commutativity of the square
 $$\begin{matrix}
 \hat{D}(B_{i_1})\otimes\cdots\otimes \hat{D}(B_{i_n}) &
 \leftmap{ \ \hat{\pi}_{i_1,\ldots,i_n} \ }
 &\bigoplus_{j_1+\cdots+j_n=t}\hat{D}(B_{j_1})\otimes\cdots\otimes \hat{D}(B_{j_n})\\
\lmapdown{ \hat{\theta}_{i_1,\ldots,i_n} }&&\rmapdown{ \hat{\theta}_n^{-t} }\\
 \hat{D}(B_{i_1}\otimes\cdots\otimes B_{i_n})&\leftmap{ \ \hat{D}(\sigma_{i_1,\ldots,i_n}) \ }&\hat{D}(\bigoplus_{j_1+\cdots+j_n=t}B_{j_1}\otimes\cdots\otimes B_{j_n}).\\
 \end{matrix}$$
 In order to show that this square commutes, take a typical generator  $$\sum_{j_1+\cdots+j_n=t}\alpha_{j_1}\otimes\cdots\otimes \alpha_{j_n}\in 
 \bigoplus_{j_1+\cdots+j_n=t}\hat{D}(B_{j_1})\otimes\cdots\otimes \hat{D}(B_{j_n})$$ 
 and a typical generator $a_{i_1}\otimes\cdots\otimes a_{i_n}\in B_{i_1}\otimes\cdots\otimes B_{i_n}$. Then, if we make 
 $\Delta'=\hat{D}(\sigma_{i_1,\ldots,i_n}) 
 \hat{\theta}_n^{-t}
 (\sum \alpha_{j_1}\otimes\cdots\otimes \alpha_{j_n})
 (a_{i_1}\otimes\cdots\otimes a_{i_n})$, we obtain 
 $$\begin{matrix}
    \Delta'&=&\hat{D}(\sigma_{i_1,\ldots,i_n}) 
 \hat{\theta}_n
 (\sum \alpha_{j_1}\otimes\cdots\otimes \alpha_{j_n})
 (a_{i_1}\otimes\cdots\otimes a_{i_n})\hfill\\
 &=&
 \hat{\theta}_n
 (\sum \alpha_{j_1}\otimes\cdots\otimes \alpha_{j_n})\sigma_{i_1,\ldots,i_n}
 (a_{i_1}\otimes\cdots\otimes a_{i_n})\hfill\\
 &=&
  \hat{\theta}_n
 (\sum \alpha_{j_1}\otimes\cdots\otimes \alpha_{j_n})
 (a_{i_1}\otimes\cdots\otimes a_{i_n})\hfill\\
 &=&
 \hat{\theta}_{i_1,\ldots,i_n}(\alpha_{i_1}\otimes\cdots\otimes \alpha_{i_n})(a_{i_1}\otimes\cdots\otimes a_{i_n})\hfill\\
 &=&
 \hat{\theta}_{i_1,\ldots,i_n}\hat{\pi}_{i_1,\ldots,i_n}(\sum \alpha_{j_1}\otimes\cdots\otimes \alpha_{j_n})(a_{i_1}\otimes\cdots\otimes a_{i_n}).\hfill\\
   \end{matrix}$$
 \end{proof}

 \begin{lemma}\label{L: formula para hat(d)(bn)} 
 Given $n\geq 2$ and $t\in \hueca{Z}$. With the notation of
 (\ref{L: theta vs componentes homogeneas}), consider the compositions 
 $$b^t_{i_1,\ldots,i_n}=(B_{i_1}\otimes B_{i_2}\otimes\cdots\otimes B_{i_n}\rightmap{ \ \sigma_{i_1,\ldots,i_n} \ }(B^{\otimes n})_t\rightmap{ \ b_n^t \ }B_{t+1}).$$ 
 Then, the components of the morphism 
 $$\hat{d}(b_n)=\bigoplus_{t\in \hueca{Z}}\hat{d}(b_n)^{-t-1}:\bigoplus_{t\in \hueca{Z}}\hat{D}(B)_{-t-1}\rightmap{}\bigoplus_{t\in \hueca{Z}}(\hat{D}(B)^{\otimes n})_{-t}$$ satisfies the following  formula for $t\in \hueca{Z}$,
 $$\hat{d}(b_n)^{-t-1}=\sum_{i_1+\cdots +i_n=t}\hat{\sigma}_{i_1,\ldots,i_n}\hat{d}(b_{i_1,\ldots,i_n}^t),$$
 where $\hat{d}(b_{i_1,\ldots,i_n}^t)$ is by definition the composition $\hat{\tau}_{i_1,\ldots,i_n}\hat{D}(b^t_{i_1,\ldots,i_n})$.
 \end{lemma}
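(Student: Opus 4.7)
The plan is to compute $\hat{d}(b_n)^{-t-1}$ directly from the definition $\hat{d}(b_n)=\hat{\tau}_n\hat{D}(b_n)$, using the direct sum decomposition of $(B^{\otimes n})_t$ and the compatibility square between $\hat{\tau}_n^{-t}$ and the coordinate projections/injections established in the previous lemma.

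First, I would record that since $b_n$ is homogeneous of degree $1$, its restriction $b_n^t:(B^{\otimes n})_t\rightmap{}B_{t+1}$ is a well-defined morphism of $S$-$S$-bimodules, and the dual morphism $\hat{D}(b_n)$ has degree $1$. Hence its homogeneous component landing in $(\hat{D}(B)^{\otimes n})_{-t}$ starts from $\hat{D}(B)_{-t-1}=\hat{D}(B_{t+1})$, giving
$$\hat{d}(b_n)^{-t-1}=\hat{\tau}_n^{-t}\hat{D}(b_n^t):\hat{D}(B_{t+1})\rightmap{}(\hat{D}(B)^{\otimes n})_{-t}.$$
Note that $(B^{\otimes n})_t$ is a \emph{finite} direct sum (only finitely many tuples $(i_1,\ldots,i_n)$ with $i_1+\cdots+i_n=t$ satisfy $i_j\geq -1$ for all $j$, by the bounded-below property of $B$), so no convergence issues arise.

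Next, I would decompose $b_n^t$ along the direct sum $(B^{\otimes n})_t=\bigoplus_{i_1+\cdots+i_n=t}B_{i_1}\otimes\cdots\otimes B_{i_n}$. By the definition of $b^t_{i_1,\ldots,i_n}=b_n^t\sigma_{i_1,\ldots,i_n}$ and the fact that $\sum_{i_1+\cdots+i_n=t}\sigma_{i_1,\ldots,i_n}\pi_{i_1,\ldots,i_n}=id$, we obtain
$$b_n^t=\sum_{i_1+\cdots+i_n=t}b^t_{i_1,\ldots,i_n}\pi_{i_1,\ldots,i_n}.$$
Applying the contravariant additive functor $\hat{D}$ and the equality $\hat{D}(b^t_{i_1,\ldots,i_n}\pi_{i_1,\ldots,i_n})=\hat{D}(\pi_{i_1,\ldots,i_n})\hat{D}(b^t_{i_1,\ldots,i_n})$ yields
$$\hat{D}(b_n^t)=\sum_{i_1+\cdots+i_n=t}\hat{D}(\pi_{i_1,\ldots,i_n})\hat{D}(b^t_{i_1,\ldots,i_n}).$$

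Finally, I would invoke the first commutative square of Lemma \ref{L: theta vs componentes homogeneas}, which reads $\hat{\tau}_n^{-t}\hat{D}(\pi_{i_1,\ldots,i_n})=\hat{\sigma}_{i_1,\ldots,i_n}\hat{\tau}_{i_1,\ldots,i_n}$, to compute
$$\hat{d}(b_n)^{-t-1}=\hat{\tau}_n^{-t}\hat{D}(b_n^t)=\sum_{i_1+\cdots+i_n=t}\hat{\tau}_n^{-t}\hat{D}(\pi_{i_1,\ldots,i_n})\hat{D}(b^t_{i_1,\ldots,i_n})=\sum_{i_1+\cdots+i_n=t}\hat{\sigma}_{i_1,\ldots,i_n}\hat{\tau}_{i_1,\ldots,i_n}\hat{D}(b^t_{i_1,\ldots,i_n}),$$
and the last expression is exactly $\sum_{i_1+\cdots+i_n=t}\hat{\sigma}_{i_1,\ldots,i_n}\hat{d}(b^t_{i_1,\ldots,i_n})$ by the definition introduced in the lemma statement.

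I do not expect a genuine obstacle: every step is a direct consequence of the definitions or of the preceding lemma. The only point requiring mild care is matching the degree indices correctly (that the restriction of $\hat{\tau}_n$ relevant here is $\hat{\tau}_n^{-t}$, not $\hat{\tau}_n^{-t-1}$), and confirming that no signs are introduced because we are composing with structure morphisms $\pi_{i_1,\ldots,i_n}$ and $\sigma_{i_1,\ldots,i_n}$ which are of degree $0$ (so Lemma \ref{L: naturalidad iterada de theta} gives a trivial sign $(-1)^0=1$).
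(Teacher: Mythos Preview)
Your proof is correct and follows essentially the same approach as the paper's own proof: both write $\hat{d}(b_n)^{-t-1}=\hat{\tau}_n^{-t}\hat{D}(b_n^t)$, insert the identity $id_{(B^{\otimes n})_t}=\sum\sigma_{i_1,\ldots,i_n}\pi_{i_1,\ldots,i_n}$ (you do this on the domain of $b_n^t$ before dualizing, the paper does it after dualizing as $id=\sum\hat{D}(\pi_{i_1,\ldots,i_n})\hat{D}(\sigma_{i_1,\ldots,i_n})$), and then apply the first commutative square of Lemma~\ref{L: theta vs componentes homogeneas} to conclude. Your remarks on degree-matching and the absence of signs are correct and pertinent.
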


 \begin{proof} We have the morphisms $b_n:B^{\otimes n}\rightmap{}B$ and $\hat{D}(b_n):\hat{D}(B)\rightmap{}\hat{D}(B^{\otimes n})$. Therefore, we have the restriction morphism  
 $\hat{D}(b_n)^{-t}:\hat{D}(B)_{-t}\rightmap{}\hat{D}(B^{\otimes n})_{-t+1}.$
 
 We also have  $\hat{\tau}_n:\hat{D}(B^{\otimes n})\rightmap{}\hat{D}(B)^{\otimes n}$, so we have the restriction morphism 
 $\hat{\tau}^{-t}_n:\hat{D}(B^{\otimes n})_{-t}\rightmap{}(\hat{D}(B)^{\otimes n})_{-t}.$
 
 Recall that $\hat{d}(b_n)=\hat{\tau}_n\hat{D}(b_n)$, hence 
 $\hat{d}(b_n)^{-t-1}=\hat{\tau}_n^{-t}\hat{D}(b_n)^{-t-1}$. 
We have the following diagram
  $$\begin{matrix}
     \hat{D}(B)_{-t-1}&\rightmap{ \ \hat{D}(b_n)^{-t-1} \ }&
     \hat{D}(B^{\otimes n})_{-t}&\rightmap{ \ \hat{D}(\sigma_{i_1,\ldots,i_n})^{-t} \ }&\hat{D}(B_{i_1}\otimes\cdots\otimes B_{i_n})\\
     \rmapdown{\hat{d}(b_n)^{-t-1}}&&\rmapdown{\hat{\tau}_n^{-t}}&&\lmapdown{\hat{\tau}_{i_1,\ldots,i_n}}\\
     (\hat{D}(B)^{\otimes n})_{-t}&\longequal&(\hat{D}(B)^{\otimes n})_{-t}&\leftmap{ \ \hat{\sigma}_{i_1,\ldots,i_n} \ }&\hat{D}(B_{i_1})\otimes\cdots\otimes \hat{D}(B_{i_n}).\\
    \end{matrix}$$
Since $id_{(B^{\otimes n})_t}=\sum_{j_1+\cdots+j_n=t}\sigma_{j_1,\ldots,j_n}\pi_{j_1,\ldots,j_n}$, we get $id_{\hat{D}((B^{\otimes n})_t)}=id_{\hat{D}(B^{\otimes n})_{-t}}=\sum_{j_1+\cdots+j_n=t}\hat{D}(\pi_{j_1,\ldots,j_n})\hat{D}(\sigma_{j_1,\ldots,j_n})$. Therefore, from (\ref{L: theta vs componentes homogeneas}),
we obtain,
$$\begin{matrix}
  \hat{d}(b_n)^{-t-1}
  &=&
  \hat{\tau}_n^{-t}\hat{D}(b_n)^{-t-1}\hfill\\
  &=&
  \sum_{j_1+\cdots+j_n=t}\hat{\tau}_n^{-t}\hat{D}(\pi_{j_1,\ldots,j_n})\hat{D}(\sigma_{j_1,\ldots,j_n})\hat{D}(b_n)^{-t-1}\hfill\\
  &=&
  \sum_{j_1+\cdots+j_n=t}\hat{\sigma}_{j_1,\ldots,j_n}\hat{\tau}_{j_1,\ldots,j_n}\hat{D}(\sigma_{j_1,\ldots,j_n})\hat{D}(b_n)^{-t-1}\hfill\\
  &=&
  \sum_{j_1+\cdots+j_n=t}\hat{\sigma}_{j_1,\ldots,j_n}\hat{\tau}_{j_1,\ldots,j_n}\hat{D}(b^t_{j_1,\ldots,j_n}),\hfill\\
  \end{matrix}$$
because $\hat{D}(b_n^t)=\hat{D}(b_n)^{-t-1}$. 
  \end{proof}
 
\begin{proposition}\label{P: donde se define beta} 
For any $n\geq 2$,  consider the restriction 
$$b_{n,0}=(B_0^{\otimes n}\subseteq (B^{\otimes n})_0\rightmap{b_n^0}B_1)$$
of the morphism 
 $b_n:B^{\otimes n}\rightmap{}B$. 
 So, $b_{n,0}=b^0_{i_1,\ldots,i_n}$, where $i_1=\cdots=i_n=0$ and, by definition, we have 
 $\hat{d}(b_{n,0})=\hat{\tau}_{i_1,\ldots,i_n}\hat{D}(b_{n,0}):\hat{D}(B_1)\rightmap{}\hat{D}(B_0)^{\otimes n}$. Then, we have  a morphism of $S$-$S$-bimodules 
$$\beta:\hat{D}(B_1)\rightmap{}T_S(\hat{D}(B)_0)\subseteq \widehat{T}_S(\hat{D}(B)),$$
 defined by $\beta(x)=\sum_{n\geq 2}\hat{d}(b_{n,0})(x)$,  for $x\in \hat{D}(B_1)$. Moreover, for $x\in \hat{D}(B_1)$, we have 
 $$d(x)=\beta(x)+h(x), \hbox{ with } h(x)\in {\cal N}.$$
\end{proposition}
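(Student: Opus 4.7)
The plan is to apply the explicit component formula from Lemma (\ref{L: formula para hat(d)(bn)}) to each summand $\hat{d}(b_s)(x)$ appearing in $d(x)=\sum_{s\geq 1}\hat{d}(b_s)(x)$, and then to sort the resulting tensors into the ``diagonal'' piece that defines $\beta(x)$ and a ``non-diagonal'' remainder that lands in $\mathcal{N}$. The starting input is that, under the identification $\hat{D}(B_1)=\hat{D}(B)_{-1}$, the element $x$ is homogeneous of degree $-1$ in $\hat{D}(B)$, so by the recipe for $d$ on single tensors given in (\ref{P: diferencial en T[[B]]}) we have $d(x)=d_1(x)=\sum_{s\geq 1}\hat{d}(b_s)(x)$, and the $s=1$ term vanishes because $b_1=0$.

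I would then invoke Lemma (\ref{L: formula para hat(d)(bn)}) with $-t-1=-1$, i.e.\ $t=0$, to rewrite each remaining summand as
$$\hat{d}(b_s)(x)=\sum_{\substack{i_1+\cdots+i_s=0\\ i_u\geq -1}}\hat{\sigma}_{i_1,\ldots,i_s}\hat{d}(b^{0}_{i_1,\ldots,i_s})(x),$$
where the bound $i_u\geq -1$ is forced by $B_j=0$ for $j<-1$ (see (\ref{R: B y D(B) homogeneos})). The diagonal tuple $(0,\ldots,0)$ contributes precisely $\hat{\sigma}_{0,\ldots,0}\hat{d}(b_{s,0})(x)\in\hat{D}(B)_0^{\otimes s}$; summing these over $s\geq 2$ produces $\beta(x)$. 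For any non-diagonal tuple, the constraint $\sum_u i_u=0$ combined with the presence of some $i_u=-1$ forces the existence of some $i_v>0$, so the corresponding tensor has a factor in $\hat{D}(B)_{-i_v}$ with $-i_v<0$, and therefore belongs to $\mathcal{N}$ by its definition in (\ref{L: desc del algebra completada}). Gathering these remainder terms over all $s\geq 2$ and all non-diagonal tuples yields an element $h(x)\in\mathcal{N}$, which gives the decomposition $d(x)=\beta(x)+h(x)$.

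It remains to verify that $\beta(x)$ lies in $T_S(\hat{D}(B)_0)$ rather than only in the completion $\widehat{T}_S(\hat{D}(B)_0)$. Since each $\hat{d}(b_{s,0})(x)$ sits in $\hat{D}(B)_0^{\otimes s}$, which is concentrated in degree $0$, the formal series $\beta(x)$ is a homogeneous element of degree $0$ in $\widehat{T}_S(\hat{D}(B)_0\oplus\hat{D}(B)_1)$, and Lemma (\ref{L: sobre elementos homogeneos de TS(DB)}) then forces its tensor-length-$s$ component to vanish for $s>\ell:=|\overline{\cal P}|$. The overall proof is essentially degree bookkeeping; the only step that requires real care is this last finiteness statement, which crucially uses the directedness restrictions on non-zero elements of $\hat{D}(B)_0$ and $\hat{D}(B)_1$ recorded in (\ref{R: B y D(B) homogeneos})(3) and is the only way the finite poset $\overline{\cal P}$ enters the argument.
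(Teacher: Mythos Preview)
Your proof is correct and follows essentially the same approach as the paper: both apply Lemma (\ref{L: formula para hat(d)(bn)}) at $t=0$ to split $d(x)$ into the diagonal tuple $(0,\ldots,0)$ yielding $\beta(x)$ and the non-diagonal tuples landing in $\mathcal{N}$, and both invoke Lemma (\ref{L: sobre elementos homogeneos de TS(DB)}) for the finiteness of $\beta(x)$. The only cosmetic difference is that the paper establishes the finiteness first and the decomposition second, while you do the reverse.
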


\begin{proof}  For each $x\in \hat{D}(B_1)$, we have that $\hat{d}(b_{n,0})(x)\in \hat{D}(B_0)^{\otimes n}$ is homogeneous of degree $0$. Moreover, from (\ref{L: sobre elementos homogeneos de TS(DB)})(1), we get that $\hat{d}(b_{n,0})(x)=0$, for all $n>\ell$. Thus we get $\beta(x)=\sum_{n=2}^\ell \hat{d}(b_{n,0})(x)\in T_S(\hat{D}(B)_0)$. 
 
From (\ref{L: formula para hat(d)(bn)})  we have 
$$d(x)=\sum_{n\geq 2}\hat{d}(b_n)(x)=\sum_{n\geq 2}\sum_{i_1+\cdots+i_n=0}\hat{\sigma}_{i_1,\ldots,i_n}\hat{d}(b^0_{i_1,\ldots,i_n})(x).$$
Here, for each index $(i_1,\ldots,i_n)$ with $i_1+\cdots+i_n=0$, we have   $\hat{d}(b^0_{i_1,\ldots,i_n})(x)\in \hat{D}(B_{i_1})\otimes\cdots\otimes\hat{D}(B_{i_n})=\hat{D}(B)_{-i_1}\otimes\cdots\otimes\hat{D}(B)_{-i_n}$. If some $i_s>0$, then 
$\hat{d}(b_{i_1,\ldots,i_n}^0)(x)\in {\cal N}$. If all $i_1,\ldots,i_n\leq 0$, having  zero sum implies that they are all zero and $\hat{d}(b^0_{i_1,\ldots,i_n})(x)=\hat{d}(b_{n,0})(x)$. Thus, we get the desired formula.
\end{proof}

\begin{lemma}\label{L: coefs de hat(d)(bn)} 
From (\ref{N: la base de B}),  for $\alpha\in \hat{D}(B)$ and $n\geq 2$, we have 
  $$\hat{d}(b_n)(\alpha)=\sum_{z\in \widehat{\hueca{T}}_n} c_z^\alpha z\in \hat{D}(B)^{\otimes n}, \hbox{ for some   scalars } c_z^\alpha\in k,$$
  where the elements  $z\in\widehat{\hueca{T}}_n$ have the form $z=x_{i_n}^*\otimes\cdots\otimes x_{i_1}^*$,  for some directed basic elements $x_{i_1},\ldots,x_{i_n}\in \hueca{B}$, with each $x_{i_j}\in \hueca{B}_{-i_j}$. Moreover, we  have that
  $$e_{t(x_{i_n})}c^\alpha_{x_{i_n}^*\otimes\cdots\otimes x_{i_1}^*}=\alpha(b_n(x_{i_n}\otimes\cdots\otimes x_{i_1}))$$
  for each $x_{i_n}^*\otimes\cdots\otimes x_{i_1}^*\in \widehat{\hueca{T}}_n$. 
\end{lemma}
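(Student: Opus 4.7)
The plan is to unfold the definition $\hat{d}(b_n)=\hat{\tau}_n\hat{D}(b_n)$ and use that $\widehat{\hueca{T}}_n$ is a $k$-basis of $\hat{D}(B)^{\otimes n}$, so the coefficients $c_z^\alpha\in k$ in the expansion $\hat{d}(b_n)(\alpha)=\sum_{z\in\widehat{\hueca{T}}_n}c_z^\alpha z$ are uniquely determined. To isolate $c_z^\alpha$ for a specific $z=x_{i_n}^*\otimes\cdots\otimes x_{i_1}^*$, I will apply $\hat{\theta}_n$ (the inverse of $\hat{\tau}_n$) to both sides to bring the equality to $\hat{D}(B^{\otimes n})$, obtaining
$$\hat{D}(b_n)(\alpha)=\sum_{z'\in\widehat{\hueca{T}}_n}c_{z'}^\alpha\,\hat{\theta}_n(z')\in \hat{D}(B^{\otimes n}),$$
and then evaluate both sides on the particular homogeneous generator $x_{i_n}\otimes\cdots\otimes x_{i_1}\in B^{\otimes n}$. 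The left side is immediate from the very definition of $\hat{D}(b_n)$: it equals $\alpha(b_n(x_{i_n}\otimes\cdots\otimes x_{i_1}))$.

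The main computation is to evaluate $\hat{\theta}_n(z')(x_{i_n}\otimes\cdots\otimes x_{i_1})$ for an arbitrary basis element $z'=x_{j_n}^*\otimes\cdots\otimes x_{j_1}^*\in\widehat{\hueca{T}}_n$, and to show that it equals $\delta_{x_{j_1},x_{i_1}}\cdots\delta_{x_{j_n},x_{i_n}}\, e_{t(x_{i_n})}$. For this I would use the explicit recursive formula in (\ref{L: def recursiva de hat(theta)}), namely
$$\hat{\theta}_n(x_{j_n}^*\otimes\cdots\otimes x_{j_1}^*)(x_{i_n}\otimes\cdots\otimes x_{i_1})=x_{j_n}^*\bigl(x_{i_n}\,x_{j_{n-1}}^*(x_{i_{n-1}}\,(x_{j_{n-2}}^*\cdots\,x_{i_2}\,x_{j_1}^*(x_{i_1}))\cdots)\bigr),$$
and evaluate inductively from the innermost expression outward. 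At the innermost step, the dual-basis identity $x_{j_1}^*(x_{i_1})=\delta_{x_{j_1},x_{i_1}}\,e_{t(x_{i_1})}$ from (\ref{N: la base de B}) applies, and the composability conditions $s(x_{i_{r+1}})=t(x_{i_r})$ and $s(x_{j_{r+1}})=t(x_{j_r})$ built into the definition of $\widehat{\hueca{T}}_n$ guarantee that each factor $x_{i_r}\cdot e_{t(x_{i_{r-1}})}$ simplifies to $x_{i_r}$, so each successive evaluation produces a $\delta_{x_{j_r},x_{i_r}}\,e_{t(x_{i_r})}$. A straightforward induction on $n$ then yields the displayed product of Kronecker deltas times $e_{t(x_{i_n})}$.

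Substituting this back into $\sum_{z'}c_{z'}^\alpha\hat{\theta}_n(z')(x_{i_n}\otimes\cdots\otimes x_{i_1})$ collapses the sum to the single term $z'=z$, giving
$$c_z^\alpha\,e_{t(x_{i_n})}=\alpha(b_n(x_{i_n}\otimes\cdots\otimes x_{i_1})),$$
which, since scalars commute with $e_{t(x_{i_n})}$, is the claimed identity $e_{t(x_{i_n})}\,c_z^\alpha=\alpha(b_n(x_{i_n}\otimes\cdots\otimes x_{i_1}))$. I expect the principal delicate point to be keeping track of the nested evaluation in the iterated definition of $\hat{\theta}_n$ and verifying that the idempotents $e_{t(x_{i_r})}$ introduced after each dual-basis step get absorbed correctly by the next basis element thanks to the connectedness condition on $\widehat{\hueca{T}}_n$; this is essentially bookkeeping, and no new structural input beyond (\ref{L: def recursiva de hat(theta)}) and (\ref{N: la base de B}) is needed.
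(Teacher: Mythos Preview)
Your proposal is correct and follows essentially the same approach as the paper's proof: both apply $\hat{\theta}_n$ to pass to $\hat{D}(B^{\otimes n})$, evaluate at a basis element $x_{j_n}\otimes\cdots\otimes x_{j_1}$ of $B^{\otimes n}$, and use the iterated formula from (\ref{L: def recursiva de hat(theta)}) together with the dual-basis identity $x^*(y)=\delta_{x,y}e_{t(y)}$ from (\ref{N: la base de B}) to collapse the sum to a single term.
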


\begin{proof} From (\ref{N: la base de B}), we have the expression described for $\hat{d}(b_n)(\alpha)$ in the statement of our lemma. Thus, 
applying the inverse $\hat{\theta}$ of $\hat{\tau}$, we obtain 
$$\hat{D}(b_n)(\alpha)=\sum_{z\in \widehat{\hueca{T}}_n} \hat{\theta}(c_z^\alpha z)\in \hat{D}(B^{\otimes n}).$$
Evaluating both sides in the preceding equality at a fixed homogeneous basic element $x_{j_n}\otimes\cdots\otimes x_{j_1}\in B^{\otimes n}$,   we get
$$\begin{matrix}\alpha(b_n(x_{j_n}\otimes\cdots\otimes x_{j_1}))
&=&
\sum_z c^\alpha_zx^*_{i_n}(x_{j_n}x^*_{i_{n-1}}(x_{j_{n-1}}\cdots x^*_{i_1}(x_{j_1})\cdots )\hfill\\
&=&
c^\alpha_{z_0}x^*_{j_n}(x_{j_n}x^*_{j_{n-1}}(x_{j_{n-1}}\cdots x^*_{j_1}(x_{j_1})\cdots )\hfill\\
&=& c_{z_0}^\alpha e_{t(x_{j_n})},\hfill\\\end{matrix}$$
where $z_0=x^*_{j_n}\otimes\cdots\otimes x^*_{j_1}\in \widehat{\hueca{T}}_n$.
Indeed, we have  $x_{i_1}^*(x_{j_1})=\delta_{i_1,j_1}e_{t(x_{j_1})}$; $x_{i_2}^*(x_{j_2}e_{t(x_{j_1})})=x_{i_2}^*(x_{j_2})=\delta_{i_2,j_2}e_{t(x_{j_2})}$ and, in a finite number of steps, we have the last preceding equality.  
\end{proof}

 \begin{proposition}\label{P: diferencial de basicos}
  There are morphisms of $S$-$S$-bimodules 
  $$\delta_0:\hat{D}(B)_0\rightmap{}[T_S(\hat{D}(B)_0\oplus \hat{D}(J))]_1$$ 
 $$\delta_1:\hat{D}(J)\rightmap{}[T_S(\hat{D}(B)_0\oplus \hat{D}(J))]_2$$
 where $[T_S(\hat{D}(B)_0\oplus \hat{D}(J))]_1$ and 
 $[T_S(\hat{D}(B)_0\oplus \hat{D}(J))]_2$ denote the homogeneous components of the graded tensor algebra $T_S(\hat{D}(B)_0\oplus \hat{D}(J))$ of degree $1$ and $2$, respectively, 
 such that the differential $d$ of the algebra $\widehat{T}_S(\hat{D}(B))$ satisfies the following.
 \begin{enumerate}
  \item For any $i\in {\cal P}$, we have 
  $$d(e_i^*)=e_i^*\otimes e_i^*+h(e_i^*), 
  \hbox{ \ with \ } h(e_i^*)\in {\cal N}.$$
  \item For any directed basic element $x\in e_j\hueca{B}_0e_i$, hence with $i\not=j$, we have 
  $$d(x^*)=e_j^*\otimes x^*-x^*\otimes e_i^*+\delta_0(x^*)+h(x^*), \hbox{ \ with \ } h(x^*)\in {\cal N}.$$ 
  \item For any directed basic element $x\in e_j\hueca{J}e_i$, we have 
  $$d(x^*)=e_j^*\otimes x^*+x^*\otimes e^*_i+\delta_1(x^*)+h(x^*), \hbox{ \ with \ } h(x^*)\in {\cal N}.$$ 
 \end{enumerate}
 \end{proposition}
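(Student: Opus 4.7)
The plan is to compute $d(\alpha)$ for each type of generator $\alpha$ by specializing the explicit formula of (\ref{L: coefs de hat(d)(bn)}) and sorting the output according to the direct sum decomposition of (\ref{L: desc del algebra completada}). Since $m_1=0$ we have $b_1=0$, so for any $\alpha\in\hat{D}(B)$, $d(\alpha)=d_1(\alpha)=\sum_{n\geq 2}\hat{d}(b_n)(\alpha)$, and by (\ref{L: coefs de hat(d)(bn)}) each summand expands as $\hat{d}(b_n)(\alpha)=\sum_{z\in\widehat{\hueca{T}}_n}c^\alpha_z\,z$ with $e_{t(x_{i_n})}c^\alpha_z=\alpha(b_n(x_{i_n}\otimes\cdots\otimes x_{i_1}))$ for $z=x_{i_n}^*\otimes\cdots\otimes x_{i_1}^*$.

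By homogeneity, $\hat{d}(b_n)(\alpha)$ has degree $\vert\alpha\vert+1$, so the factor degrees of every contributing tensor $z$ sum to $\vert\alpha\vert+1$. Since $\hat{D}(B)_j=0$ for $j>1$, the contributions lying outside ${\cal N}$ are exactly those with every factor in $\hat{D}(B)_0\oplus\hat{D}(B)_1$, or equivalently, those in which every $x_{i_u}$ lies in $\hueca{B}_{-1}\cup\hueca{B}_0$. In this regime exactly $\vert\alpha\vert+1$ factors lie in $\hat{D}(B)_1$ and the remaining $n-(\vert\alpha\vert+1)$ in $\hat{D}(B)_0$; such a tensor lies in ${\cal E}$ iff at least one of its $\hat{D}(B)_1$-factors is an $e_s^*$, and in $T_S(\hat{D}(B)_0\oplus\hat{D}(J))$ iff none of them is.

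For part (1), with $\alpha=e_i^*$ of degree $1$, two factors must lie in $\hat{D}(B)_1$. For $n=2$, $e_i^*(x_{i_2}\cdot x_{i_1})\neq 0$ forces $x_{i_1}=x_{i_2}=e_i$ (any other basic product in $B_{-1}$ lies in $J$ or in $ke_s$ for $s\neq i$, which $e_i^*$ annihilates), producing the term $e_i^*\otimes e_i^*$. For $n\geq 3$, strictness from (\ref{R: merkulov}) kills every contribution with an idempotent input; and when both $B_{-1}$-inputs lie in $\hueca{J}$, the image of $m_n$ is contained in the complement of $S=\bigoplus_i ke_i$ in $A$, so is annihilated by $e_i^*$. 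This last property is apparent from the construction of the $A_\infty$-structure in (\ref{T: Yoneda A-infinito algebra es estricta}) as a transfer of the Kadeishvili structure on ${\cal R}'$ of (\ref{D: descompos homologica para cal E}), whose multiplications land in ${\cal R}'$ and hence avoid the idempotent classes $\overline{e_s^*}$. Thus $d(e_i^*)=e_i^*\otimes e_i^*+h(e_i^*)$ with $h(e_i^*)\in{\cal N}$. For part (2), with $\alpha=x^*$ and $x\in e_j\hueca{B}_0 e_i$ of degree $0$, exactly one factor lies in $\hat{D}(B)_1$. The ${\cal E}$-contributions reduce by strictness to $n=2$: $x^*(e_sy)\neq 0$ forces $(s,y)=(j,x)$, and $x^*(ye_s)\neq 0$ forces $(s,y)=(i,x)$, producing $e_j^*\otimes x^*$ and $-x^*\otimes e_i^*$ after bookkeeping the Koszul signs coming from $b_n=\sigma m_n(\sigma^{-1})^{\otimes n}$ and $\hat{d}(b_n)=\hat{\tau}_n\hat{D}(b_n)$; the residual non-${\cal N}$ contributions, whose unique $\hat{D}(B)_1$-factor lies in $\hat{D}(J)$, define $\delta_0(x^*)\in[T_S(\hat{D}(B)_0\oplus\hat{D}(J))]_1$. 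For part (3), with $\alpha=x^*$ and $x\in e_j\hueca{J} e_i$ of degree $1$, two factors lie in $\hat{D}(B)_1$: the case of two $e_s^*$-type factors contributes nothing since $x^*$ vanishes on $S$; the mixed sub-cases are again confined to $n=2$ by strictness and yield $e_j^*\otimes x^*+x^*\otimes e_i^*$ with the signs of (3); and the remaining contributions, with both factors in $\hat{D}(J)$, define $\delta_1(x^*)\in[T_S(\hat{D}(B)_0\oplus\hat{D}(J))]_2$.

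The main difficulty will be the careful tracking of Koszul signs in $b_n=\sigma m_n(\sigma^{-1})^{\otimes n}$ and $\hat{d}(b_n)=\hat{\tau}_n\hat{D}(b_n)$, which in particular must account for the sign discrepancy between $-x^*\otimes e_i^*$ in (2) and $+x^*\otimes e_i^*$ in (3); the $S$-bilinearity of $\delta_0,\delta_1$ will follow automatically from the $S$-$S$-bimodule nature of these signed formulas. A secondary but essential point is the image-avoidance step in part (1), for which one genuinely uses the explicit construction of the strict $A_\infty$-structure recalled in (\ref{T: Yoneda A-infinito algebra es estricta}) and not merely the vanishing condition of (\ref{R: strictness of A}).
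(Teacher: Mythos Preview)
Your approach is essentially the paper's: expand $d(\alpha)=\sum_{n\geq 2}\hat d(b_n)(\alpha)$ via (\ref{L: coefs de hat(d)(bn)}), sort the basis tensors according to the decomposition of (\ref{L: desc del algebra completada}), and use strictness to isolate the $n=2$ contributions that produce the $e_j^*\otimes x^*\pm x^*\otimes e_i^*$ terms. Your sign bookkeeping plan matches the paper's computations.

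There is one point where you diverge, and it introduces a gap in generality. For part (1) with $n\geq 3$, you split into ``some $B_{-1}$-input is an idempotent'' (killed by strictness) versus ``both $B_{-1}$-inputs lie in $\hueca{J}$'', and for the latter you invoke that the image of $m_n$ avoids $S$, citing the explicit Kadeishvili transfer in (\ref{T: Yoneda A-infinito algebra es estricta}). But the standing hypothesis at this stage is only (\ref{R: merkulov}): \emph{some} strict $A_\infty$-structure on the Yoneda algebra, not necessarily the one built in (\ref{T: Yoneda A-infinito algebra es estricta}). Strictness alone does not force $m_n(A^{\otimes n})\subseteq J\oplus\bigoplus_{t\geq 1}A_t$, so your argument proves the proposition only for that particular construction. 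The paper avoids this by a simpler observation that uses only the homological-system axioms: since $n\geq 3$ and the total degree is $2$, at least one factor $x_{i_s}^*$ has degree $0$, hence $x_{i_s}\in B_0$, and by (\ref{R: B y D(B) homogeneos})(3) we get $\overline{s(x_{i_s}^*)}<\overline{t(x_{i_s}^*)}$; chaining along the path gives $\overline{s(x_{i_1})}<\overline{t(x_{i_n})}$, so $b_n(x_{i_n}\otimes\cdots\otimes x_{i_1})$ lies off the diagonal and is annihilated by $e_i^*$. Swapping in this directedness argument closes the gap and removes your dependence on the specific construction.
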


 \begin{proof}   By definition of the differential $d$, for any homogeneous element $\alpha\in \hat{D}(B)$,  we have $d(\alpha)=\sum_{n\geq 2}\hat{d}(b_n)(\alpha)$. 
 If  $\vert\alpha\vert\in \{ 0,1\}$,  from (\ref{L: coefs de hat(d)(bn)}), we have
 $$\hat{d}(b_n)(\alpha)=\sum_{z\in \widehat{\hueca{T}}_n}c_z^{\alpha}z.$$
 Let $\widehat{\hueca{T}}_n(\alpha)$ be the subset of $\widehat{\hueca{T}}_n$ formed by the basic elements $z=x^*_{i_n}\otimes\cdots\otimes x^*_{i_1}\in \widehat{\hueca{T}}_n$ with $\sum_{s=1}^n\vert x_{i_s}^*\vert=\vert x^*_{i_n}\otimes\cdots\otimes x_{i_1}^*\vert=\vert \alpha\vert+1$ and 
 $\vert x_{i_1}^*\vert,\ldots,\vert x^*_{i_n}\vert\in \{0,1\}$. 
 Then, we have 
 $$\hat{d}(b_n)(\alpha)=g_n(\alpha)+h_n(\alpha),$$
 where 
 $h_n(\alpha)=\sum_{z\in \widehat{\hueca{T}}_n\setminus{\widehat{\hueca{T}}_n(\alpha)}}c_z^{\alpha}z\in {\cal N}$ and $g_n(\alpha)=\sum_{z\in \widehat{\hueca{T}}_n(\alpha)}c_z^{\alpha}z$.
 If we make $h(\alpha)=\sum_{n\geq 2}h_n(\alpha)$ and $g(\alpha)=\sum_{n\geq 2}g_n(\alpha)$, we get $h(\alpha)\in {\cal N}$ and we are reduced to the  computation of $g(\alpha)=d(\alpha)-h(\alpha)$, that is of  $g_n(\alpha)$, for $n\geq 2$, 
 in each one of the  items of the proposition.

\medskip
 \noindent{(1):} Assume that $\alpha=e_i^*$. 
 \medskip

 We have an element with $\vert e_i^*\vert=1$. The subset $\widehat{\hueca{T}}_n(e_i^*)$ of $\widehat{\hueca{T}}_n$ consists of the basic elements $z=x^*_{i_n}\otimes\cdots\otimes x^*_{i_1}\in \widehat{\hueca{T}}_n$ with $\sum_{s=1}^n\vert x_{i_s}^*\vert=\vert x^*_{i_n}\otimes\cdots\otimes x_{i_1}^*\vert=2$ and 
 $\vert x_{i_1}^*\vert,\ldots,\vert x^*_{i_n}\vert\in \{0,1\}$. 
 
 For $n=2$,  we want to compute 
 $g_2(e_i^*)=\sum_{z\in \widehat{\hueca{T}}_2(e_i^*)}c_z^{e_i^*}z$. If $z=x^*_{i_2}\otimes x^*_{i_1}\in \widehat{\hueca{T}}_2(e_i^*)$, we have $\vert x_{i_1}^*\vert=1=\vert x^*_{i_2}\vert$. So, $x_{i_1},x_{i_2}\in B_{-1}=A_0=\End_\Lambda(\Delta)$. So they 
 belong to  $\{e_j\mid j\in {\cal P}\}$  or they belong to the radical
 $J$. Having in mind (\ref{L: coefs de hat(d)(bn)}) and the definition of $b_2$ in (\ref{D: construccion barra}), we have 
 $$e_{t(x_{i_2})}c^{e_i^*}_{z}=e_i^*(b_2(x_{i_2}\otimes x_{i_1}))=-(-1)^{\vert x_{i_2}\vert}e_i^*\sigma(\sigma^{-1}(x_{i_2})\sigma^{-1}(x_{i_1})).$$ 
 The element $e_i^*\sigma(\sigma^{-1}(x_{i_2})\sigma^{-1}(x_{i_1}))$ is not zero iff $x_{i_2}=e_i=x_{i_1}$. Therefore, we obtain 
 $g_2(e_i^*)=\sum_{z\in \widehat{\hueca{T}}_2(e_i^*)}c_z^{e_i^*}z=e_i^*\otimes e_i^*$. 
 
 For $n\geq 3$, we want to compute 
 $g_n(e_i^*)=\sum_{z\in \widehat{\hueca{T}}_n(e_i^*)}c_z^{e_i^*}z$.
 If we consider an element   $z=x_{i_n}^*\otimes\cdots\otimes x^*_{i_1}\in \widehat{\hueca{T}}_n(e_i^*)$, we must have some  $\vert x_{i_s}\vert=0$. This implies, by (\ref{R: B y D(B) homogeneos}), that $\overline{s(x_{i_s}^*)}<\overline{t(x^*_{i_s})}$. Then, $\overline{s(x^*_{i_1})}<\overline{t(x^*_{i_n})}$,  and $\overline{s(b_n(x_{i_n}\otimes\cdots\otimes x_{i_1}))}<\overline{t(b_n(x_{i_n}\otimes\cdots\otimes x_{i_1}))}$.  Hence, $e_{t(x_{i_n})}c_z^{e_i^*}=e_i^*(b_n(x_{i_n}\otimes\cdots\otimes x_{i_1}))=0$, and $c_z^{e_i^*}=0$. 
 Thus, $g_n(e_i^*)=\sum_{z\in \widehat{\hueca{T}}_n(e_i^*)}c_z^{e_i^*}z=0$.
 
 As a consequence, we obtain $d(e_i^*)-h(e_i^*)=\sum_{n\geq 2}g_n(e_i^*)=e_i^*\otimes e_i^*$. 
 
 \medskip
 \noindent{(2):} Suppose $x$ is a directed basic element in $e_j\hueca{B}_0e_i$. 
 \medskip
  
  We know that $i\not=j$, due to (\ref{R: B y D(B) homogeneos})(3).
  
  Here $\alpha=x^*$ has degree $\vert x^*\vert=0$ and $\widehat{\hueca{T}}_n(x^*)$  consists   
 of the basic elements $z=x^*_{i_n}\otimes\cdots\otimes x^*_{i_1}\in \widehat{\hueca{T}}_n$ with $\sum_{s=1}^n\vert x_{i_s}^*\vert=\vert x^*_{i_n}\otimes\cdots\otimes x_{i_1}^*\vert=1$ and 
 $\vert x_{i_1}^*\vert,\ldots,\vert x^*_{i_n}\vert\in \{0,1\}$.
 
 For $n=2$,  we want to compute 
 $g_2(x^*)=\sum_{z\in \widehat{\hueca{T}}_2(x^*)}c_z^{x^*}z$. Given any element  $z=x^*_{i_2}\otimes x^*_{i_1}\in \widehat{\hueca{T}}_2(x^*)$, we have 
  $$e_{t(x_{i_2})}c^{x^*}_{z}=
  x^*(b_2(x_{i_2}\otimes x_{i_1}))=-(-1)^{\vert x_{i_2}\vert}x^*(\sigma(\sigma^{-1}(x_{i_2})\sigma^{-1}(x_{i_1}))).$$
 There are the following two possibilities: 
 \begin{enumerate}
  \item[(a)] If $\vert x_{i_2}^*\vert=1$ and $\vert x^*_{i_1}\vert=0$. 
  Then, we have $x_{i_2}\in B_{-1}=A_0=\End_\Lambda(\Delta)$. Thus, 
  if moreover, $x_{i_2}\in J$ we have $z=x^*_{i_2}\otimes x^*_{i_1}\in T_S(\hat{D}(B)_0\oplus \hat{D}(J))$. If $x_{i_2}\not\in J$, then $x_{i_2}=e_s$, for some $s\in {\cal P}$. Then, 
  $$e_{t(x_{i_2})}c^{x^*}_{z}=
  x^*(\sigma(\sigma^{-1}(e_s)\sigma^{-1}(x_{i_1}))).$$
  As before, if this expression is not zero, we must have 
  $x_{i_1}=x$ and $e_s=e_j$. Therefore, $z=e^*_j\otimes x^*$ and   $c^{x^*}_{z}=1$. 
 \item[(b)] If $\vert x_{i_2}^*\vert=0$ and $\vert x^*_{i_1}\vert=1$. Here we have $x_{i_1}\in B_{-1}=A_0=\End_\Lambda(\Delta)$. Thus, 
  if moreover, $x_{i_1}\in J$ we have $z=x^*_{i_2}\otimes x^*_{i_1}\in T_S(\hat{D}(B)_0\oplus \hat{D}(J))$. If $x_{i_1}\not\in J$, then $x_{i_1}=e_s$, for some $s\in {\cal P}$. Then, 
  $$e_{t(x_{i_2})}c^{x^*}_{z}=-x^*(\sigma(\sigma^{-1}(x_{i_2})\sigma^{-1}(e_s))).$$
  As before, if this element is not zero, we must have 
  $x_{i_2}=x$ and $e_s=e_i$. Therefore, $z=x^*\otimes e^*_i$ and   $c^{x^*}_{z}=-1$. 
 \end{enumerate}
 From the preceding argument, we obtain 
 $$g_2(x^*)=e_j^*\otimes x^*-x^*\otimes e_i^*+\delta_0(x^*),
 \hbox{ with }\delta_0(x^*)\in [T_S(\hat{D}(B)_0\oplus \hat{D}(J))]_1.$$ 
 
 For $n\geq 3$,  we want to compute 
 $g_n(x^*)=\sum_{z\in \widehat{\hueca{T}}_n(x^*)}c_z^{x^*}z$. Here, the basic elements  $z=x^*_{i_n}\otimes \cdots\otimes x^*_{i_1}\in \widehat{\hueca{T}}_n(x^*)$  have degree $\vert z\vert=1$, thus only one of $x^*_{i_1},\ldots,x^*_{i_n}$ has degree 1 and the others have degree 0. If $\vert x_{i_s}\vert=1$, then  $x_{i_s}=e_r$, for some $r\in {\cal P}$, or $x_{i_s}\in J$. The first possibility implies that we have $b_n(x_{i_n}\otimes\cdots\otimes x_{i_1})=0$, see (\ref{R: merkulov}); therefore, $c_z^{x^*}=0$. Thus, if $c_z^{x^*}\not=0$, 
 we get $g_n(x^*)\in [T_S(\hat{D}(B)_0\oplus \hat{D}(J))]_1$.
  It follows that
   $$d(x^*)-h(x^*)=\sum_{n\geq 2}g_n(x)=e_j^*\otimes x^*-x^*\otimes e_i^*+\delta_0(x^*),$$
  where $\delta_0(x^*)\in [T_S(\hat{D}(B)_0\oplus \hat{D}(J))]_1$. 
 
 \medskip
 \noindent{(3):} Take any directed basic element $x\in e_j\hueca{J}e_i$. 
 \medskip
 
  Here $\alpha=x^*$ has degree $\vert x^*\vert=1$ and the set  $\widehat{\hueca{T}}_n(x^*)$  consists   
 of the basic elements $z=x^*_{i_n}\otimes\cdots\otimes x^*_{i_1}\in \widehat{\hueca{T}}_n$ with $\sum_{s=1}^n\vert x_{i_s}^*\vert=\vert x^*_{i_n}\otimes\cdots\otimes x_{i_1}^*\vert=2$ and 
 $\vert x_{i_1}^*\vert,\ldots,\vert x^*_{i_n}\vert\in \{0,1\}$.
 
 For $n=2$,  we want to compute 
 $g_2(x^*)=\sum_{z\in \widehat{\hueca{T}}_2(x^*)}c_z^{x^*}z$. Given any element  $z=x^*_{i_2}\otimes x^*_{i_1}\in \widehat{\hueca{T}}_2(x^*)$, we have that $x_{i_1}$ and $x_{i_2}$ both have degree 1 and 
  $$e_{t(x_{i_2})}c^{x^*}_{z}=
  x^*(b_2(x_{i_2}\otimes x_{i_1}))=x^*(\sigma(\sigma^{-1}(x_{i_2})\sigma^{-1}(x_{i_1}))).$$
 There are the following three possibilities: 
 \begin{enumerate}
  \item[(a)] We have $x_{i_2}=e_s$, for some $s\in {\cal P}$.   
   Then, 
  $$e_sc^{x^*}_{z}=
  x^*(\sigma(\sigma^{-1}(e_s)\sigma^{-1}(x_{i_1}))).$$
  As before, if this expression is not zero, we must have 
  $x_{i_1}=x$ and $e_s=e_j$. Therefore, $z=e^*_j\otimes x^*$ has coefficient    $c^{x^*}_{z}=1$. 
 \item[(b)] We have $x_{i_1}=e_s$, for some $s\in {\cal P}$. Then, 
  $$e_{t(x_{i_2})}c^{x^*}_{z}=x^*(\sigma(\sigma^{-1}(x_{i_2})\sigma^{-1}(e_s))),$$
  and, if this element is not zero, we must have 
  $x_{i_2}=x$ and $e_s=e_i$. Therefore, $z=x^*\otimes e^*_i$ has coefficient    $c^{x^*}_{z}=1$. 
  \item[(c)] We have  $x_{i_1},x_{i_2}\in J$. Here, we get $z=x^*_{i_2}\otimes x^*_{i_1}\in \hat{D}(J)^{\otimes 2}$. 
 \end{enumerate}
 From the preceding argument, we obtain 
 $$g_2(x^*)=e_j^*\otimes x^*+x^*\otimes e_i+u_2(x^*),
 \hbox{ with }u_2(x^*)\in [T_S(\hat{D}(B)_0\oplus \hat{D}(J))]_2.$$ 
 
 For $n\geq 3$,  we want to compute 
 $g_n(x^*)=\sum_{z\in \widehat{\hueca{T}}_n(x^*)}c_z^{x^*}z$. Here, the basic elements  $z=x^*_{i_n}\otimes \cdots\otimes x^*_{i_1}\in \widehat{\hueca{T}}_n(x^*)$  have degree $\vert z\vert=2$, thus only two of $x^*_{i_1},\ldots,x^*_{i_n}$ have  degree 1, say $x_{i_u}$ and $x_{i_v}$ have degree 1, and the others have degree 0. From (\ref{R: merkulov}), we know that if  $e_{t(x_{i_n})}c_z^{x^*}=x^*(b_n(x_{i_n}\otimes\cdots\otimes x_{i_1}))$   is not zero, we have  $x_{i_u},x_{i_v}\in J$ and, hence, 
 $z\in [T_S(\hat{D}(B)_0\oplus \hat{D}(J))]_2$. 
 Therefore,  
 $g_n(x^*)\in [T_S(\hat{D}(B)_0\oplus \hat{D}(J))]_2$.
 
 It follows that
   $$d(x^*)-h(x^*)=\sum_{n\geq 2}g_n(x^*)=e_j^*\otimes x^*+x^*\otimes e_i^*+\delta_1(x^*),$$
  where $\delta_1(x^*)\in [T_S(\hat{D}(B)_0\oplus \hat{D}(J))]_2$. 
 \end{proof}

 \begin{corollary}\label{C: calculo de dif d en grados 0 y 1 usando la  diferencial delta}
 Consider the differential $\delta$, with $\delta(S)=0$, on the graded tensor algebra $T_S(\hat{D}(B)_0\oplus \hat{D}(J))$ induced by the morphisms $\delta_0$ and $\delta_1$ of the preceding proposition, see (\cite{BSZ}(4.4)). The following holds. 
 
 \begin{enumerate}
  \item For any $i\in {\cal P}$, we have 
  $$d(e_i^*)=e_i^*\otimes e_i^*+h(e_i^*), \hbox{ where } h(e^*_i)\in {\cal N}.$$
  \item For any directed element $\gamma\in e_j*T_S(\hat{D}(B)_0)*e_i\setminus S$, we have 
  $$d(\gamma)=e_j^*\otimes \gamma-\gamma\otimes e_i^*+\delta(\gamma)+h(\gamma), \hbox{ where }  h(\gamma)\in {\cal N}.$$
  \item For any directed  element $\gamma\in e_j*[T_S(\hat{D}(B)_0\oplus \hat{D}(J))]_1*e_i$, we have 
  $$d(\gamma)=e_j^*\otimes \gamma+\gamma\otimes e^*_i+\delta(\gamma)+h(\gamma), \hbox{ where }
   h(\gamma)\in {\cal N}.$$
 \end{enumerate}
 \end{corollary}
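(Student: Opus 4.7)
\medskip
\noindent\textbf{Proof plan for the Corollary.}

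Item (1) is immediate: it is nothing more than Proposition \ref{P: diferencial de basicos}(1) restated with the new notation.

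For item (2), my plan is to argue by induction on the tensor length $n$ of a homogeneous directed generator $\gamma=x_n^*\otimes\cdots\otimes x_1^*\in e_j*\hat{D}(B)_0^{\otimes n}*e_i$. The base case $n=1$ is exactly Proposition \ref{P: diferencial de basicos}(2), using that by construction $\delta$ restricted to $\hat{D}(B)_0$ coincides with $\delta_0$. For the inductive step, write $\gamma=\gamma_1\otimes\gamma_2$ with $\gamma_1\in e_j*T_S(\hat{D}(B)_0)*e_r$ and $\gamma_2\in e_r*T_S(\hat{D}(B)_0)*e_i$ of strictly smaller tensor length, both of degree $0$. Leibniz for $d$ (no signs appear, since $|\gamma_1|=0$) together with the induction hypothesis yields
$$d(\gamma)=e_j^*\otimes\gamma_1\otimes\gamma_2-\gamma_1\otimes e_r^*\otimes\gamma_2+\gamma_1\otimes e_r^*\otimes\gamma_2-\gamma_1\otimes\gamma_2\otimes e_i^*+\Sigma_\delta+\Sigma_h,$$
where $\Sigma_\delta=\delta(\gamma_1)\otimes\gamma_2+\gamma_1\otimes\delta(\gamma_2)$ and $\Sigma_h=h(\gamma_1)\otimes\gamma_2+\gamma_1\otimes h(\gamma_2)$. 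The middle pair cancels. Since $\delta$ is the derivation on $T_S(\hat{D}(B)_0\oplus\hat{D}(J))$ induced by $\delta_0,\delta_1$, Leibniz for $\delta$ (again with $|\gamma_1|=0$) gives $\Sigma_\delta=\delta(\gamma)$. Finally, $\Sigma_h\in{\cal N}$ because ${\cal N}$ is visibly a two-sided ideal of $\widehat{T}_S(\hat{D}(B))$ (a factor of negative degree survives any tensor multiplication on either side). This proves (2).

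For item (3), I would reduce to the case where the degree-$1$ generator is of the form $\gamma=\gamma_1\otimes y^*\otimes\gamma_2$ with $\gamma_1\in e_j*T_S(\hat{D}(B)_0)*e_r$, $y^*\in e_r*\hat{D}(J)*e_s$, and $\gamma_2\in e_s*T_S(\hat{D}(B)_0)*e_i$. Two applications of Leibniz for $d$, using Proposition \ref{P: diferencial de basicos}(3) for $d(y^*)$ and part (2) already proved for $d(\gamma_1)$ and $d(\gamma_2)$, give
$$d(\gamma)=d(\gamma_1)\otimes y^*\otimes\gamma_2+\gamma_1\otimes d(y^*)\otimes\gamma_2-(\gamma_1\otimes y^*)\otimes d(\gamma_2),$$
where the crucial signs come from $|\gamma_1|=0$ and $|\gamma_1\otimes y^*|=1$. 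Expanding, the terms $-\gamma_1\otimes e_r^*\otimes y^*\otimes\gamma_2$ from $d(\gamma_1)$ cancel the $+\gamma_1\otimes e_r^*\otimes y^*\otimes\gamma_2$ coming from $d(y^*)$, and the $+\gamma_1\otimes y^*\otimes e_s^*\otimes\gamma_2$ from $d(y^*)$ cancels the $-\gamma_1\otimes y^*\otimes e_s^*\otimes\gamma_2$ from $-(\gamma_1\otimes y^*)\otimes d(\gamma_2)$. What remains is $e_j^*\otimes\gamma+\gamma\otimes e_i^*$ plus a $\delta$-contribution $\delta(\gamma_1)\otimes y^*\otimes\gamma_2+\gamma_1\otimes\delta_1(y^*)\otimes\gamma_2-\gamma_1\otimes y^*\otimes\delta(\gamma_2)$, which matches exactly the expansion of $\delta(\gamma)$ by Leibniz for $\delta$ (the minus sign in front of the last summand being precisely $(-1)^{|y^*|}$ obtained at the outer step $\delta(y^*\otimes\gamma_2)$), plus terms in ${\cal N}$.

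The main obstacle I anticipate is nothing conceptual but rather the careful bookkeeping of the signs under Leibniz for $d$ and for $\delta$, and of the matching of the cross terms involving the ``trivial'' idempotents $e_r^*,e_s^*$. Once the decomposition $\gamma=\gamma_1\otimes(y^*)\otimes\gamma_2$ is fixed and one checks that ${\cal N}$ is stable under left and right multiplication by arbitrary elements of $\widehat{T}_S(\hat{D}(B))$, the two inductive arguments proceed uniformly and the identity $d\mid_{\hat{D}(B)_0}=\delta_0$, $d\mid_{\hat{D}(J)}=\delta_1$ modulo the extra ``boundary'' contributions $\pm e^*\otimes(-)\pm(-)\otimes e^*$ and an ${\cal N}$-remainder propagates to the whole subalgebras $T_S(\hat{D}(B)_0)$ and $[T_S(\hat{D}(B)_0\oplus\hat{D}(J))]_1$ as required.
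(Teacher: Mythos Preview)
Your plan is correct and follows essentially the same route as the paper: induction on tensor length via the Leibniz rule for both $d$ and $\delta$, with the intermediate idempotent terms cancelling and the remainder landing in the ideal ${\cal N}$. The only organizational difference is that for item (3) you split $\gamma=\gamma_1\otimes y^*\otimes\gamma_2$ into three factors at once, whereas the paper peels off one factor at a time (two cases, according to whether the degree-$1$ piece is on the left or the right); both are valid. One small point to tidy up: when $\gamma_1$ or $\gamma_2$ lies in $S$ (i.e.\ is a trivial idempotent), part (2) does not literally apply, so you should note separately that then $d$ and $\delta$ vanish on that factor and the computation reduces to the shorter decomposition---this is immediate but should be said.
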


 \begin{proof} (1):  This is clear. 
 
 (2): If the statement (2) holds for $\gamma_1=e_j*\gamma_1*e_i$ and 
 $\gamma_2=e_t*\gamma_2*e_j$ in $T_S(\hat{D}(B)_0)$, we have 
 $$\begin{matrix}
   d(\gamma_2\otimes\gamma_1)&=&d(\gamma_2)\otimes\gamma_1+ \gamma_2\otimes d(\gamma_1)\hfill\\
   &=&
   e_t^*\otimes \gamma_2\otimes \gamma_1-\gamma_2\otimes e_j^*\otimes\gamma_1+\delta(\gamma_2)\otimes\gamma_1+h(\gamma_2)\otimes\gamma_1\hfill\\
   &&+\,
   \gamma_2\otimes e_j^*\otimes \gamma_1-\gamma_2\otimes \gamma_1\otimes e_i^*+\gamma_2\otimes \delta(\gamma_1)+\gamma_2\otimes h(\gamma_1)\hfill\\
   &=&
   e_t^*\otimes (\gamma_2\otimes \gamma_1)-
   (\gamma_2\otimes \gamma_1)\otimes e_i^* +
   \delta(\gamma_2\otimes\gamma_1)+h(\gamma_2\otimes\gamma_1),\hfill\\
   \end{matrix}$$
where $h(\gamma_2\otimes\gamma_1)=\gamma_2\otimes h(\gamma_1)+h(\gamma_2)\otimes\gamma_1\in {\cal N}$. 
The elements in  the vector space $e_j*T_S(\hat{D}(B)_0)*e_i\setminus S$ are linear combinations of tensors of the form $\gamma=x_{i_n}^*\otimes\cdots\otimes x_{i_1}^*$, where $n\geq 1$ and each $x_{i_1},\ldots,x_{i_n}\in \hueca{B}_0$ are directed elements with distinct source and target points by (\ref{R: B y D(B) homogeneos}). Hence, by (\ref{P: diferencial de basicos})(2), we have  $d(x^*)=e_j^*\otimes x-x\otimes e_i^*+\delta(x^*)+h(x^*)$, with $h(x^*)\in {\cal N}$, for each 
$x\in \{x_{i_1},\ldots,x_{i_n}\}$. Then, we can use induction and the preceding argument to obtain (2).  

\medskip
(3) If $\gamma=\gamma_2\otimes\gamma_1$ has degree $\vert\gamma\vert=1$, 
with $\gamma_1=e_j*\gamma_1*e_i$ and 
 $\gamma_2=e_t*\gamma_2*e_j$  in $T_S(\hat{D}(B)_0\oplus \hat{D}(J))$, one of them has degree 0, so item (2) holds for it and we assume that (3) holds for the other one. We have the following two cases.
 
 \medskip
 \noindent\emph{Case 1:  $\vert \gamma_2\vert=1$ and $\vert \gamma_1\vert=0$.}
 \medskip
 
 We have 
 $$\begin{matrix}
   d(\gamma_2\otimes\gamma_1)&=&d(\gamma_2)\otimes\gamma_1- \gamma_2\otimes d(\gamma_1)\hfill\\
   &=&
   e_t^*\otimes \gamma_2\otimes \gamma_1+\gamma_2\otimes e^*_j\otimes \gamma_1+\delta(\gamma_2)\otimes \gamma_1+h(\gamma_2)\otimes \gamma_1\hfill\\
   &&-\,
   \gamma_2\otimes e_j^*\otimes \gamma_1+\gamma_2\otimes \gamma_1\otimes e_i^*-\gamma_2\otimes \delta(\gamma_1)-\gamma_2\otimes h(\gamma_1)\hfill\\
   &=&
   e_t^*\otimes (\gamma_2\otimes \gamma_1)+
   (\gamma_2\otimes \gamma_1)\otimes e_i^* +\delta(\gamma_2\otimes\gamma_1)+h(\gamma_2\otimes\gamma_1),\hfill\\
   \end{matrix}$$
 where $h(\gamma_2\otimes\gamma_1)=-\gamma_2\otimes h(\gamma_1)+h(\gamma_2)\otimes\gamma_1\in {\cal N}$.  
 
 \medskip 
 \noindent\emph{Case 2:  $\vert \gamma_2\vert=0$ and $\vert \gamma_1\vert=1$.}
 \medskip
 
We have 
 $$\begin{matrix}
   d(\gamma_2\otimes\gamma_1)&=&d(\gamma_2)\otimes\gamma_1+ \gamma_2\otimes d(\gamma_1)\hfill\\
   &=&
   e_t^*\otimes \gamma_2\otimes \gamma_1-\gamma_2\otimes e^*_j\otimes \gamma_1+\delta(\gamma_2)\otimes \gamma_1+h(\gamma_2)\otimes \gamma_1\hfill\\
   &&+\,
   \gamma_2\otimes e_j^*\otimes \gamma_1+\gamma_2\otimes \gamma_1\otimes e_i^*+\gamma_2\otimes \delta(\gamma_1)+\gamma_2\otimes h(\gamma_1)\hfill\\
   &=&
   e_t^*\otimes (\gamma_2\otimes \gamma_1)+
   (\gamma_2\otimes \gamma_1)\otimes e_i^* +\delta(\gamma_2\otimes\gamma_1)+h(\gamma_2\otimes\gamma_1),\hfill\\
   \end{matrix}$$
 where $h(\gamma_2\otimes\gamma_1)=\gamma_2\otimes h(\gamma_1)+h(\gamma_2)\otimes\gamma_1\in {\cal N}$.  

 As before, the elements in  the vector space $e_j*[T_S(\hat{D}(B)_0\oplus \hat{D}(J))]_1*e_i$ are linear combinations of tensors of the form $\gamma=x_{i_n}^*\otimes\cdots\otimes x_{i_1}^*$, where $n\geq 1$ and, with only one exception, each $x_{i_1},\ldots,x_{i_n}\in \hueca{B}_0$ are directed elements with distinct source and target points by (\ref{R: B y D(B) homogeneos}). If $x=x_{i_u}$ is the exception, 
 we have $x_{i_u}\in e_tJe_s$ and hence, by (\ref{P: diferencial de basicos})(3), we get  $d(x^*)=e_t^*\otimes x^*+x^*\otimes e_s^*+\delta(x^*)+h(x^*)$, with $h(x^*)\in {\cal N}$. Then, we can use induction and the preceding argument to obtain (3).  
 \end{proof}

 \begin{proposition}\label{P: I entrelazado}
 Consider the ideal $I$ of the algebra $T_S(\hat{D}(B)_0)$ generated by $\Im\beta$, where $\beta$ is the map introduced in (\ref{P: donde se define beta}). Consider also the $S$-$S$-subbimodule $V:= [T_S(\hat{D}(B)_0\oplus \hat{D}(J))]_1$ of $T_S(\hat{D}(B)_0\oplus \hat{D}(J))$ and the differential $\delta$ on this algebra introduced in (\ref{C: calculo de dif d en grados 0 y 1 usando la  diferencial delta}). Then the ideal $I$ has  the following properties. 
 \begin{enumerate}
  \item $\delta(I)\subseteq IV+VI$
  \item $\delta^2(T_S(\hat{D}(B)_0))\subseteq IV^2+VIV+V^2I$, and
  \item $\delta^2(V)\subseteq IV^3+VIV^2+V^2IV+V^3I$.
 \end{enumerate}
 \end{proposition}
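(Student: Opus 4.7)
The plan is to exploit the identity $d^2=0$ on $\widehat{T}_S(\hat{D}(B))$ together with the direct sum decomposition
$$\widehat{T}_S(\hat{D}(B)) = \widehat{T}_S(W) \oplus {\cal E} \oplus {\cal N}$$
of (\ref{L: desc del algebra completada}), where $W := \hat{D}(B)_0 \oplus \hat{D}(J)$. Let $\pi$ denote the projection onto $\widehat{T}_S(W)$. The key preliminary observation is that $\pi \circ d$ vanishes on every $e_s^*$-factor, since $d(e_s^*) \in {\cal E} + {\cal N}$ by (\ref{P: diferencial de basicos})(1), and also vanishes on every factor lying in $\hat{D}(B_m)$ for $m \geq 2$, since $\widehat{T}_S(W)$ is non-negatively graded while $d$ of such a factor has degree $1-m<0$. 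Consequently, for any $f \in {\cal N}$, only monomials of $f$ of the form $a_L \otimes y \otimes a_R$ with $a_L, a_R \in T_S(W)$ and $y \in \hat{D}(B_1)$ contribute to $\pi(d(f))$, and each contributes $\pm a_L \otimes \beta(y) \otimes a_R$ by (\ref{P: donde se define beta}). Writing $T_0 := T_S(\hat{D}(B)_0)$ and noting that $[T_S(W)]_n = V^n$ for $n \geq 1$ while $[T_S(W)]_0 = T_0$, we conclude that $\pi(d(f))$ lies in $\sum_{p+q = |f|+1} V^p I V^q$, with the boundary convention $V^0 I = I V^0 = I$.

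To prove item (1), by Leibniz for the derivation $\delta$ together with the ideal property of $I$ in $T_0$, it suffices to show $\delta(\beta(x)) \in IV+VI$ for each $x \in \hat{D}(B_1)$. Applying (\ref{C: calculo de dif d en grados 0 y 1 usando la  diferencial delta})(2) to the directed components of $\beta(x) \in T_0$ gives $\pi(d(\beta(x))) = \delta(\beta(x))$, since the boundary terms $e_j^* \otimes (\cdot)$ and $(\cdot) \otimes e_i^*$ lie in ${\cal E}$ while the $h$-terms lie in ${\cal N}$. On the other hand, from $d(x) = \beta(x) + h(x)$ in (\ref{P: donde se define beta}) and $d^2(x)=0$ we obtain $d(\beta(x)) = -d(h(x))$, whence $\delta(\beta(x)) = -\pi(d(h(x)))$, which by the preliminary observation lies in $\sum_{p+q=1} V^p I V^q = IV+VI$. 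Extending by Leibniz to a general element $\gamma = a \otimes \beta(x) \otimes b \in I$ with $a,b \in T_0$ is then routine: the two Leibniz boundary terms land in $VI$ and $IV$ respectively, while the middle term lies in $T_0 \cdot (IV+VI) \cdot T_0 \subseteq IV+VI$.

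For items (2) and (3), we first extend (\ref{C: calculo de dif d en grados 0 y 1 usando la  diferencial delta})(3) to all homogeneous degrees: an induction on $d \geq 0$ using Leibniz for $d$ along a factorization into degree-$1$ pieces shows that for every directed $\gamma' \in e_j V^d e_i$,
$$d(\gamma') = e_j^* \otimes \gamma' + (-1)^{d+1}\gamma' \otimes e_i^* + \delta(\gamma') + h(\gamma'),$$
with $h(\gamma') \in {\cal N}$; in particular $\pi(d(\gamma')) = \delta(\gamma')$. Now fix a directed $\gamma \in T_0$ for item (2), or $\gamma \in V$ for item (3), and expand $d^2(\gamma)=0$ using the above formula. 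A direct check using (\ref{P: diferencial de basicos})(1) and Leibniz shows that $\pi(d(e_j^* \otimes \gamma))=0=\pi(d(\gamma \otimes e_i^*))$ (every summand has a surviving $e^*$- or $\hat{D}(B)_{<0}$-factor), while the extended formula applied to the directed components of $\delta(\gamma)$ yields $\pi(d(\delta(\gamma)))=\delta^2(\gamma)$. Hence $\delta^2(\gamma) = -\pi(d(h(\gamma)))$, which by the preliminary observation lies in $\sum_{p+q=|\gamma|+2} V^p I V^q$. For $|\gamma|=0$ this is $IV^2+VIV+V^2I$ (item (2)), and for $|\gamma|=1$ it is $IV^3+VIV^2+V^2IV+V^3I$ (item (3)). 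The main obstacle throughout is purely bookkeeping: one must verify carefully that every $e_s^*$-term and every ${\cal N}$-remainder term produced by (\ref{C: calculo de dif d en grados 0 y 1 usando la  diferencial delta}) and (\ref{P: diferencial de basicos}) projects to zero under $\pi$, so that only the $\beta$-contributions survive and land in the stated bimodules.
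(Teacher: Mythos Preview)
Your argument is correct and follows essentially the same strategy as the paper's own proof: both introduce the projection $\pi$ onto $\widehat{T}_S(\hat{D}(B)_0\oplus\hat{D}(J))$, establish the key fact that $\pi(d(h))\in\sum_{p+q=\vert h\vert+1}V^pIV^q$ for homogeneous $h\in{\cal N}$ (the paper's Step~1, your ``preliminary observation''), and then combine $d^2=0$ with the formulas of (\ref{C: calculo de dif d en grados 0 y 1 usando la  diferencial delta}) to reduce each of the three items to that fact. Your presentation is somewhat more streamlined---you state a single degree-$d$ formula $d(\gamma')=e_j^*\otimes\gamma'+(-1)^{d+1}\gamma'\otimes e_i^*+\delta(\gamma')+h(\gamma')$ by induction, whereas the paper writes out $\delta(\gamma)$ and $\delta(\xi)$ explicitly as sums of elementary tensors and applies Leibniz term by term---but the content is the same.
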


 \begin{proof} Consider the linear projection 
 $$\pi:\widehat{T}_S(\hat{D}(B))\rightmap{}\widehat{T}_S(\hat{D}(B)_0\oplus \hat{D}(J))$$
 associated to the vector space decomposition of (\ref{L: desc del algebra completada}). We proceed in four  steps.
 
 \medskip
 \noindent\emph{Step 1: For any homogeneous element $h\in {\cal N}$ we have 
 $$\pi d(h)\in \sum_{\scriptsize\begin{matrix}r+s=\vert h\vert+1\\ r,s\geq 0\end{matrix}}V^rIV^s.$$}
  \medskip
  
  We have $h=\sum_{s\geq 1}h_s$, with $h_s\in \hat{D}(B)^{\otimes s}$ and $\vert h_s\vert=\vert h\vert$, for all $s$. 
 By definition, $d(h)=\sum_{s\geq 1} d(h_s)$. Each $h_s$ is a finite $k$-linear combination  
 of elements of the form $x^*_{i_s}\otimes\cdots\otimes x_{i_1}^*$, with $x_{i_1},\ldots,x_{i_s}\in \hueca{B}$. Then, $d(h_s)$ is a finite $k$-linear combination of elements of the form 
 $$x^*_{i_s}\otimes\cdots\otimes x^*_{i_{r+1}}\otimes d(x^*_{i_r})\otimes x^*_{i_{r-1}}\otimes\cdots\otimes x^*_{i_1}, \hbox{ with } r\in [1,s].$$
 If $\vert x^*_{i_r}\vert<-1$, then $\vert d(x^*_{i_r})\vert<0$ and this element belongs to ${\cal N}$. 
 If $\vert x^*_{i_r}\vert=-1$, then, by (\ref{P: donde se define beta}), we have  $d(x^*_{i_r})=\beta(x^*_{i_r})+h(x^*_{i_r})$, with $h(x^*_{i_r})\in {\cal N}$. This implies that $\pi d(h)$ is a finite linear combination of elements with degree $\vert h\vert +1$ of the form
  $$x^*_{i_s}\otimes\cdots\otimes x^*_{i_{r+1}}\otimes \beta(x^*_{i_r})\otimes x^*_{i_{r-1}}\otimes\cdots\otimes x^*_{i_1},$$
 such that, for $j\not=r$, we have $\vert x_{i_j}\vert\geq 0$ and if $\vert x_{i_j}\vert =1$, we have $x_{i_j}\in J$.

  \medskip
 \noindent\emph{Step 2: We have $\delta(I)\subseteq VI+IV$.}
  \medskip
  
 By Leibniz rule, it will be enough to show that $\delta(\Im\beta)\subseteq VI+IV$. 
 Take any $x^*\in e_j*\hat{D}(B_1)*e_i$, then from (\ref{P: donde se define beta}), we have  
 $$d(x^*)=\beta(x^*)+h(x^*), \hbox{ with } h(x^*)\in {\cal N}.$$
 Then, from (\ref{C: calculo de dif d en grados 0 y 1 usando la  diferencial delta}), we have  
 $$\begin{matrix}
    0=d^2(x^*)&=&d(\beta(x^*))+d(h(x^*))\hfill\\                                                                                              
&=&
e_j^*\otimes \beta(x^*)-\beta(x^*)\otimes e_i^*+\delta(\beta(x^*))+h(\beta(x^*))+d(h(x^*)).\hfill\\
    \end{matrix}$$
Applying $\pi$ to this equality we obtain $0=\delta(\beta(x^*))+\pi d(h(x^*))$. Therefore, we obtain 
$\delta(\beta(x^*))=-\pi d(h(x^*))\in VI+IV$, by the first step of this proof.

\medskip
\noindent\emph{Step 3: We have $\delta^2(T_S(\hat{D}(B)_0))\subseteq IV^2+VIV+V^2I$.}
\medskip
 
 It will be enough to prove that $\delta^2(\gamma)\in IV^2+VIV+V^2I$, for any directed element $\gamma\in e_j*\hat{D}(B)_0*e_i$. 
 From (\ref{C: calculo de dif d en grados 0 y 1 usando la  diferencial delta}), we have  
  $$d(\gamma)=e_j^*\otimes \gamma-\gamma\otimes e_i^*+\delta(\gamma)+h(\gamma), \hbox{ where }  h(\gamma)\in {\cal N}.$$
  Then, we get 
 $0=d^2(\gamma)=
d(e_j^*\otimes \gamma)-d(\gamma\otimes e_i^*)+d(\delta(\gamma))+d(h(\gamma)).$
 Using Leibniz rule on the first two summands, we obtain 
 $$\begin{matrix}d(e_j^*\otimes \gamma)&=&
    d(e^*_j)\otimes \gamma-e^*_j\otimes d(\gamma)\hfill\\
    &=&
    e^*_j\otimes e_j^*\otimes \gamma+h(e_j^*)\otimes\gamma-e_j^*\otimes d(\gamma), 
    \hbox{ and }\hfill\\
   \end{matrix}$$
 $$\begin{matrix}
 d(\gamma\otimes e_i^*)&=&
 d(\gamma)\otimes e_i^*+\gamma\otimes d(e_i^*)\hfill\\
 &=&
 d(\gamma)\otimes e_i^*+\gamma\otimes e_i^*\otimes e_i^*+\gamma\otimes h(e_i^*), \hfill\\
 \end{matrix}$$
 where $h(e_i^*), h(e_j^*)\in {\cal N}$. It follows that 
 $\pi d(\delta(\gamma))=-\pi d(h(\gamma))$. 
 
 We can write $\delta(\gamma)=\sum_s\gamma_s^2\otimes x_s\otimes \gamma^1_s$, where $\gamma_s^1,\gamma_s^2$
are directed elements in $T_S(\hat{D}(B)_0)$ and $x_s$ are directed elements in $\hat{D}(J)$,  then, 
$$d(\delta(\gamma))=
\sum_sd(\gamma_s^2)\otimes x_s\otimes \gamma_s^1+\gamma_s^2\otimes d(x_s)\otimes \gamma_s^1-\gamma_s^2\otimes x_s\otimes d(\gamma_s^1).$$
If we substitute in this formula the expressions for $d(\gamma_s^2)$, $d(x_s)$ and $d(\gamma_s^1)$, described in (\ref{C: calculo de dif d en grados 0 y 1 usando la  diferencial delta}), and then apply the morphism $\pi$, we obtain  
$$\pi d(\delta(\gamma))=\sum_s\delta(\gamma_s^2)\otimes x_s\otimes \gamma_s^1+\gamma_s^2\otimes\delta(x_s)\otimes \gamma_s^1-\gamma_s^2\otimes x_s\otimes \delta(\gamma_s^1)=\delta^2(\gamma).$$
Then, we get $\delta^2(\gamma)=\pi d(\delta(\gamma))=-\pi d(h(\gamma))\in V^2I+VIV+IV^2$, 
because, $h(\gamma)\in {\cal N}$ is homogeneous with degree 
$\vert h(\gamma)\vert=1$ and we can apply Step 1.

\medskip
\noindent\emph{Step 4: We have $\delta^2(V)\subseteq IV^3+VIV^2+V^2IV+V^3I$.}
\medskip

Having in mind Step 3, we see that it will be enough to show that 
$\delta^2(\xi)\in IV^3+VIV^2+V^2IV+V^3I$, for any directed element $\xi\in e_j*\hat{D}(J)*e_i$.  From (\ref{C: calculo de dif d en grados 0 y 1 usando la  diferencial delta}), we have  
  $$d(\xi)=e_j^*\otimes \xi+\xi\otimes e_i^*+\delta(\xi)+h(\xi), \hbox{ where }  h(\xi)\in {\cal N}.$$
  As before, this implies that $0=\pi d^2(\xi)=\pi d(\delta(\xi))+\pi d(h(\xi))$. We can write 
  $\delta(\xi)=\sum_t\gamma_t^3\otimes z^2_t\otimes \gamma_t^2\otimes z^1_t\otimes\gamma_t^1$, 
  where $\gamma_t^1,\gamma_t^2,\gamma_t^3$ are directed elements in $T_S(\hat{D}(B)_0)$ and $z_t^1,z_t^2$ are directed elements in $\hat{D}(J)$. Then, (\ref{C: calculo de dif d en grados 0 y 1 usando la  diferencial delta}) implies that 
  $$\begin{matrix}
  \pi d \delta(\xi)&=&\sum_t \delta(\gamma_t^3)\otimes z_t^2\otimes \gamma_t^2\otimes z_t^1\otimes\gamma_t^1
  +
  \gamma_t^3\otimes \delta(z_t^2)\otimes \gamma_t^2\otimes z_t^1\otimes\gamma_t^1
  \hfill\\   
  &&\,
  \sum_t -\gamma_t^3\otimes z_t^2\otimes \delta(\gamma_t^2)\otimes z_t^1\otimes\gamma_t^1
  -
  \gamma_t^3\otimes z_t^2\otimes \gamma_t^2\otimes \delta(z_t^1)\otimes\gamma_t^1
  \hfill\\   
  &&\,
  \sum_t \gamma_t^3\otimes z_t^2\otimes \gamma_t^2\otimes z_t^1\otimes\delta(\gamma_t^1)=\delta^2(\xi).\hfill\\
    \end{matrix}$$
Then, we get $\delta^2(\xi)=\pi d(\delta(\xi))=-\pi d(h(\xi))\in IV^3+VIV^2+V^2IV+V^3I$, 
because, $h(\xi)\in {\cal N}$ is homogeneous with degree 
$\vert h(\xi)\vert=2$ and we can apply Step 1.  
\end{proof}

The following statement  gives an explicit description of the differential $\delta$ of the tensor algebra $T_S(\hat{D}(B)_0\oplus \hat{D}(J))$ in terms of the operations $b_1,b_2,\ldots $ of $B$. 

\begin{lemma}\label{L: coeficientes de delta(x*)}
 Given any directed basic element $x\in \hueca{B}_0\cup \hueca{J}$, the value of the differential $\delta(x^*)$ is a finite sum of the form 
 $$\delta(x^*)=\sum_{n=2}^{\ell+1}\sum_{z\in \hueca{T}_n} c_z^{x^*} z\in T_S(\hat{D}(B)_0\oplus \hat{D}(J)), \hbox{ for some   scalars } c_z^{x^*}\in k,$$
  where the elements  $z\in\hueca{T}_n$ have the form $z=x_{i_n}^*\otimes\cdots\otimes x_{i_1}^*$,  for some directed basic elements $x_{i_1},\ldots,x_{i_n}\in \hueca{B}_0\cup \hueca{J}$, and $\ell=\vert \overline{\cal P}\vert$. Moreover, we  have that
  $$e_{t(x_{i_n})}c^{x^*}_{x_{i_n}^*\otimes\cdots\otimes x_{i_1}^*}=x^*(b_n(x_{i_n}\otimes\cdots\otimes x_{i_1}))$$
  for each $x_{i_n}^*\otimes\cdots\otimes x_{i_1}^*\in \hueca{T}_n$. 
\end{lemma}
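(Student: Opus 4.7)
The plan is to identify $\delta(x^*)$ with the projection of $d(x^*)$ onto $T_S(\hat{D}(B)_0\oplus \hat{D}(J))$ modulo the ``trivial'' summands $e_j^*\otimes x^*\mp x^*\otimes e_i^*$, and then use the explicit formula for the coefficients of $\hat{d}(b_n)(x^*)$ obtained in (\ref{L: coefs de hat(d)(bn)}). Concretely, by the construction in (\ref{C: calculo de dif d en grados 0 y 1 usando la  diferencial delta}), the differential $\delta$ on $T_S(\hat{D}(B)_0\oplus \hat{D}(J))$ is induced by the $S$-$S$-bimodule maps $\delta_0,\delta_1$ of (\ref{P: diferencial de basicos}); so for a directed basic $x$ of degree $0$ or $-1$, $\delta(x^*)=\delta_0(x^*)$ or $\delta_1(x^*)$, respectively. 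Recall that, in the proof of (\ref{P: diferencial de basicos}), these were defined by $\delta_0(x^*)=\sum_{n\geq 2}g_n(x^*)-(e_j^*\otimes x^*-x^*\otimes e_i^*)$ and $\delta_1(x^*)=\sum_{n\geq 2}g_n(x^*)-(e_j^*\otimes x^*+x^*\otimes e_i^*)$, where $g_n(x^*)$ collects those summands of $\hat{d}(b_n)(x^*)$ whose tensor factors all have degree $0$ or $1$.

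Next, expand $\hat{d}(b_n)(x^*)$ using (\ref{L: coefs de hat(d)(bn)}):
$$\hat{d}(b_n)(x^*)=\sum_{z\in \widehat{\hueca{T}}_n} c_z^{x^*}z, \qquad e_{t(x_{i_n})}c_z^{x^*}=x^*(b_n(x_{i_n}\otimes\cdots\otimes x_{i_1})).$$
One must check that, after the subtraction of the two ``trivial'' terms, only the basic tensors $z=x^*_{i_n}\otimes\cdots\otimes x^*_{i_1}$ with every factor in $\hueca{B}_0^*\cup \hueca{J}^*$ survive. Tensors containing some $e_s=x_{i_u}$ contribute to $\hat{d}(b_n)(x^*)$ only through $x^*(b_n(\cdots\otimes e_s\otimes\cdots))$; the strictness assumption on the $A_\infty$-algebra $A$ (see (\ref{R: strictness of A})) forces $b_n(\cdots\otimes e_s\otimes\cdots)=0$ for $n\geq 3$, while for $n=2$ a direct computation (already carried out in the proof of (\ref{P: diferencial de basicos})) shows that such tensors produce exactly the trivial summands $e_j^*\otimes x^*\mp x^*\otimes e_i^*$. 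This gives the claimed formula
$$\delta(x^*)=\sum_{n\geq 2}\sum_{z\in \hueca{T}_n}c_z^{x^*}z$$
with the coefficient identity $e_{t(x_{i_n})}c_z^{x^*}=x^*(b_n(x_{i_n}\otimes\cdots\otimes x_{i_1}))$ inherited from (\ref{L: coefs de hat(d)(bn)}).

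Finally, the sum is finite because $\delta(x^*)$ is homogeneous of degree $|x^*|+1\in\{1,2\}$ and sits inside the space $T_S(\hat{D}(B)_0\oplus\hat{D}(J))\subseteq T_S(\hat{D}(B)_0\oplus\hat{D}(B)_1)$. By (\ref{L: sobre elementos homogeneos de TS(DB)}), any nonzero homogeneous element in $(\hat{D}(B)_0\oplus\hat{D}(B)_1)^{\otimes n}$ of degree $r$ must satisfy $n\leq r+\ell$, so the $n$-th summand vanishes as soon as $n$ exceeds the stated bound $\ell+1$ (using the chain-of-points argument of that lemma, which here is sharpened by the fact that the two tensor factors of degree $1$ are forced into $\hat{D}(J)$ and are sandwiched between maximal strictly increasing chains in $\overline{\cal P}$).

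The routine part is the bookkeeping of terms; the only delicate point is the third paragraph, namely pinning down the sharp bound $n\leq \ell+1$ from (\ref{L: sobre elementos homogeneos de TS(DB)}) — here one has to verify that the presence of factors in $\hat{D}(J)$ (rather than arbitrary elements of $\hat{D}(B)_1$) does not allow an extra jump in the chain of source-target classes, which is what I expect to be the main technical obstacle.
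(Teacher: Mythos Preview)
Your argument is correct and follows the paper's own route: both recognize $\delta_n(x^*)$ as the component of $\hat d(b_n)(x^*)$ surviving the projection onto $(\hat D(B)_0\oplus\hat D(J))^{\otimes n}$ (killing exactly the basis elements of $\widehat{\hueca T}_n\setminus\hueca T_n$, i.e.\ those with some factor in $\mathcal N$ or equal to an $e_s^*$), and then read off the coefficients from (\ref{L: coefs de hat(d)(bn)}). One small correction on your flagged ``delicate point'': the sharpening to $n\le\ell+1$ has nothing to do with the distinction $\hat D(J)$ versus $\hat D(B)_1$ --- both contribute only weak increases in $\overline{\cal P}$ --- but rather with the fact that the chain argument in the proof of (\ref{L: sobre elementos homogeneos de TS(DB)}) is off by one (it records only the $\sum t_j$ source classes of the degree-$0$ factors; tracking instead the full weakly increasing sequence of all $n+1$ source/target classes, which has at most $|x^*|+1$ repetitions, yields $n-|x^*|\le\ell$ directly).
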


\begin{proof} We have $\delta(x^*)=\sum_{n\geq 1}\delta_n(x^*)$, where $\delta_n(x^*)$ denotes the component of $\delta(x^*)$ in $ (\hat{D}(B)_0\oplus\hat{D}(J))^{\otimes n}$, which is zero for all $n> \vert x^*\vert+\ell$, by (\ref{L: sobre elementos homogeneos de TS(DB)}). From (\ref{L: coefs de hat(d)(bn)}), we also know that $d(x^*)=\sum_{n\geq 1}\hat{d}(b_n)(x^*)$ with $\hat{d}(b_1)=0$, and  
$$\hat{d}(b_n)(x^*)=\sum_{z\in \widehat{\hueca{T}}_n} c_z^{x^*} z\in \hat{D}(B)^{\otimes n}, \hbox{ for some   scalars } c_z^{x^*}\in k,$$
  where the elements  $z\in\widehat{\hueca{T}}_n$ have the form $z=x_{i_n}^*\otimes\cdots\otimes x_{i_1}^*$,  for some directed basic elements $x_{i_1},\ldots,x_{i_n}\in \hueca{B}$. Moreover, we  have that
  $$e_{t(x_{i_n})}c^{x^*}_{x_{i_n}^*\otimes\cdots\otimes x_{i_1}^*}=x^*(b_n(x_{i_n}\otimes\cdots\otimes x_{i_1}))$$
  for each $x_{i_n}^*\otimes\cdots\otimes x_{i_1}^*\in \widehat{\hueca{T}}_n$. 
  
  By (\ref{C: calculo de dif d en grados 0 y 1 usando la  diferencial delta}),  the projection  $\pi:\hat{T}_S(\hat{D}(B))\rightmap{}\widehat{T}_S(\hat{D}(B)_0\oplus \hat{D}(J))$ maps $\hat{d}(b_n)(x^*)$ on $\delta_n(x^*)$ and it maps to zero every element in $\widehat{\hueca{T}}_n\setminus\hueca{T}_n$.  
\end{proof}

 \begin{definition}\label{D: def del bocs A(Delta)}
 Consider the graded tensor  algebras $A(\Delta):=T_S(\hat{D}(B)_0)$ and  $T(\Delta):=T_S(\hat{D}(B)_0\oplus \hat{D}(J))$. 
 There is a canonical isomorphism of graded $S$-algebras  
 $$T(\Delta)\cong T_{A(\Delta)}(V(\Delta)),$$
 where $V(\Delta):=A(\Delta)\otimes_S\hat{D}(J)\otimes_SA(\Delta)$.  
 Consider the homogeneous ideal $I(\Delta)$ of $T(\Delta)$ generated by its subsets $I$ and $I_V:=IV+VI$. From (\ref{P: I entrelazado})(1) and \cite{bpsqh}\S3 we know that there is a canonical isomorphism 
 $$T(\Delta)/I(\Delta)\cong T_{A(\Delta)/I}(V/I_V).$$ 
 Moreover, from \cite{bpsqh}(3.7), we know that the differential 
 $\delta$ of $T(\Delta)$, see (\ref{C: calculo de dif d en grados 0 y 1 usando la  diferencial delta}), induces a differential $\overline{\delta}$ on the quotient tensor algebra $T(\Delta)/I(\Delta)$ with $\overline{\delta}^2=0$. Hence, we have a ditalgebra $(T(\Delta)/I(\Delta),\overline{\delta})$ and we can consider its category of modules as in  \cite{bpsqh}\S3.  We use the notation ${\cal A}(\Delta):=(T(\Delta),\delta)$ and 
 recall that there is an equivalence of categories 
 $$\begin{matrix}(T(\Delta)/I(\Delta),\overline{\delta})\g\Mod&\rightmap{\simeq}&({\cal A}(\Delta),I)\g\Mod\end{matrix},$$
 see (\ref{D: la cat de mod's de cal A(Delta)}). 
  We refer to the properties relating the differential $\delta$ with the ideal $I$ described in (\ref{P: I entrelazado}), by saying that the ditalgebra ${\cal A}(\Delta)$   is \emph{interlaced with the ideal $I$}.  
 \end{definition}

 In the following sections, we review the construction of an equivalence from the category ${\cal F}(\Delta)$ to the preceding module category. 
 
 \section{$A_\infty$-categories and $b$-categories}
 
 We need to recall the notion of an $A_\infty$-category
and a $b$-category, which are (in some sense equivalent) generalizations of the notion of category. 

 \begin{definition}\label{D: A-infinito category}
  An \emph{$A_\infty$-category} ${\cal A}$ consists of the following: a class of objects $\Ob({\cal A})$;
  for each pair of objects $X,Y\in \Ob({\cal A})$, there is a graded $k$-vector space 
   $${\cal A}(X,Y)=\bigoplus_{s\in \hueca{Z}}{\cal A}(X,Y)_s;$$ 
  and for each finite sequence $X_0,X_1,\ldots,X_n\in \Ob({\cal A})$, with $n\geq 1$, there is a homogeneous morphism $m_n$  of graded $k$-vector spaces with degree $\vert m_n\vert=2-n$: 
   $$\begin{matrix}{\cal A}(X_{n-1},X_n)\otimes_k\cdots \otimes_k{\cal A}(X_1,X_2)\otimes_k{\cal A}(X_0,X_1)&\rightmap{m_n}&{\cal A}(X_0,X_n)\end{matrix}.$$
  It is required that the maps $m_n$, called the \emph{higher composition maps}, satisfy the following relation for each $n\in \hueca{N}$ and each finite sequence of objects $X_0,X_1,\ldots,X_n$ of ${\cal A}$:
   $$S^\infty_n:\sum_{\scriptsize\begin{matrix}r+s+t=n\\ s\geq 1;r,t\geq 0\end{matrix}} 
   (-1)^{r+st}m_{r+1+t}(id^{\otimes r}\otimes m_s\otimes id^{\otimes t})=0.$$
   These are called the \emph{Stasheff identities}.  
 \end{definition}

 \begin{definition}\label{D: b-category}
 A \emph{$b$-category ${\cal B}$} consists  of the following: 
 a class of objects $\Ob({\cal B})$; for each pair of objects $X,Y\in \Ob({\cal B})$ a graded $k$-vector space 
 $${\cal B}(X,Y)=\bigoplus_{s\in \hueca{Z}}{\cal B}(X,Y)_s;$$ 
  and for each finite sequence $X_0,X_1,\ldots,X_n\in \Ob({\cal B})$, with $n\geq 1$, there is a homogeneous morphism $b_n$  of graded $k$-vector spaces with degree $\vert b_n\vert=1$: 
   $$\begin{matrix}{\cal B}(X_{n-1},X_n)\otimes_k\cdots \otimes_k{\cal B}(X_1,X_2)\otimes_k{\cal B}(X_0,X_1)&\rightmap{b_n}&{\cal B}(X_0,X_n)\end{matrix}.$$
  It is required that the maps $b_n$ satisfy the following relation for each $n\in \hueca{N}$ and each finite sequence of objects $X_0,X_1,\ldots,X_n$ of ${\cal B}$:
   $$S_n^b:\sum_{\scriptsize\begin{matrix}r+s+t=n\\ s\geq 1;r,t\geq 0\end{matrix}} 
   b_{r+1+t}(id^{\otimes r}\otimes b_s\otimes id^{\otimes t})=0.$$
 \end{definition}

These notions are related by the \emph{bar construction} detailed  in the following statement.

\begin{proposition}\label{P: A-infinite cats vs b-cats}
Given an $A_\infty$-category ${\cal A}$, a $b$-category ${\cal B}$ associated to ${\cal A}$ is constructed as follows. 
By definition, $\Ob({\cal B})=\Ob({\cal A})$. For $X,Y\in \Ob({\cal B})$, we consider ${\cal B}(X,Y):={\cal A}(X,Y)[1]$, the shifted graded $k$-vector space of ${\cal A}(X,Y)$, and the canonical isomorphism 
$\sigma_{X,Y}:{\cal A}(X,Y)\rightmap{}{\cal B}(X,Y)$ determined by the identity map. Given a finite sequence of objects $X_0,X_1\ldots,X_n$ of ${\cal B}$, the morphism $b_n$ is defined by the commutativity of the following diagram
$$\begin{matrix}{\cal A}(X_{n-1},X_n)\otimes_k\cdots \otimes_k{\cal A}(X_1,X_2)\otimes_k{\cal A}(X_0,X_1)&\rightmap{m_n}&{\cal A}(X_0,X_n)\\
\lmapdown{\sigma_{X_{n-1},X_n}\otimes\cdots\otimes\sigma_{X_0,X_1}}&&\rmapdown{\sigma_{X_0,X_n}}\\   
 {\cal B}(X_{n-1},X_n)\otimes_k\cdots \otimes_k{\cal B}(X_1,X_2)\otimes_k{\cal B}(X_0,X_1)&\rightmap{b_n}&{\cal B}(X_0,X_n).  
  \end{matrix}$$
The preceding construction is reversible, given a $b$-category ${\cal B}$, we can construct an $A_\infty$-category with the same objects and hom graded vector spaces with higher composition maps determined by the preceding commutative square.
\end{proposition}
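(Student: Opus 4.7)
The plan is to take the stated commutative square as the definition of $b_n$, namely
\[
b_n := \sigma_{X_0,X_n}\circ m_n\circ\bigl(\sigma^{-1}_{X_{n-1},X_n}\otimes\sigma^{-1}_{X_{n-2},X_{n-1}}\otimes\cdots\otimes\sigma^{-1}_{X_0,X_1}\bigr),
\]
and then to verify the two required properties by direct computation. First, since each $\sigma_{X,Y}$ has degree $-1$ and $|m_n|=2-n$, a degree count gives $|b_n|=-1+(2-n)+n\cdot 1=1$, as required. So the substance of the proof is to deduce the unsigned identities $S^b_n=0$ from the signed Stasheff identities $S^\infty_n=0$.

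The core idea is that passing between $m_n$ and $b_n$ is exactly conjugation by the shift, and that the Koszul sign rule produces precisely the factor $(-1)^{r+st}$ that relates the two identities. I would first record an auxiliary sign lemma computing
\[
\sigma^{\otimes n}(a_1\otimes\cdots\otimes a_n)=(-1)^{\epsilon(a_1,\ldots,a_n)}\sigma(a_1)\otimes\cdots\otimes\sigma(a_n),
\]
with $\epsilon(a_1,\ldots,a_n):=\sum_{1\le i<j\le n}|\sigma|\,|a_i|=-\sum_{i<j}|a_i|$, together with the dual formula for $(\sigma^{-1})^{\otimes n}$. Substituting the defining formula for each $b_{r+1+t}$ and each $b_s$ into the expression
\[
\sum_{\substack{r+s+t=n\\ s\ge 1,\, r,t\ge 0}} b_{r+1+t}\bigl(\mathrm{id}^{\otimes r}\otimes b_s\otimes \mathrm{id}^{\otimes t}\bigr),
\]
the plan is to push all the shift operators $\sigma^{\pm 1}$ through the tensor identities and composition arrows using the Koszul rule. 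The outer $\sigma$'s and the trailing $(\sigma^{-1})^{\otimes n}$'s factor out, leaving a sum of terms of the form
\[
\sigma\circ m_{r+1+t}\bigl(\mathrm{id}^{\otimes r}\otimes m_s\otimes \mathrm{id}^{\otimes t}\bigr)\circ(\sigma^{-1})^{\otimes n},
\]
each multiplied by a Koszul sign coming from moving $\sigma^{-1}$'s past the internal $\sigma$'s needed to convert $b_s$ to $m_s$. The delicate bookkeeping amounts to checking that the total accumulated sign equals $(-1)^{r+st}$, at which point the inner bracketed sum becomes $S^\infty_n$, hence vanishes.

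The main obstacle is precisely this sign bookkeeping, which is standard but easy to get wrong; I would verify it by induction on $n$ using the recursive formula $\epsilon(a_1,\ldots,a_n)=\epsilon(a_2,\ldots,a_n)-|a_1|\cdot(n-1)$ and isolating the contribution of the $\sigma$'s inserted at positions $r+1,\ldots,r+s$. For the reverse construction, one simply reads the same square the other way: given $b$-category data $\{b_n\}$, set $m_n:=\sigma^{-1}\circ b_n\circ\sigma^{\otimes n}$, check $|m_n|=2-n$, and repeat the sign calculation, now producing the Stasheff signs $(-1)^{r+st}$ from the Koszul rule and thereby converting $S^b_n=0$ into $S^\infty_n=0$. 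Since the two assignments $\{m_n\}\leftrightarrow\{b_n\}$ are inverse to one another as operations on structure maps between fixed shifted hom-spaces, they establish the claimed equivalence between $A_\infty$-category structures and $b$-category structures on a common class of objects.
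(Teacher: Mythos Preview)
Your proposal is correct and follows essentially the same approach as the paper: both reduce the claim to the termwise identity
\[
(-1)^{r+st}\,\sigma\, m_{r+1+t}\bigl(\mathrm{id}^{\otimes r}\otimes m_s\otimes \mathrm{id}^{\otimes t}\bigr)
= b_{r+1+t}\bigl(\mathrm{id}^{\otimes r}\otimes b_s\otimes \mathrm{id}^{\otimes t}\bigr)\,\sigma^{\otimes n},
\]
equivalently $\sigma\,S_n^\infty = S_n^b\,\sigma^{\otimes n}$, so that $S_n^\infty=0$ iff $S_n^b=0$. The paper simply asserts this termwise identity, whereas you spell out the Koszul bookkeeping (the auxiliary sign lemma for $\sigma^{\otimes n}$) that produces the factor $(-1)^{r+st}$; this extra detail is fine but does not change the argument.
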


\begin{proof} It is enough to show that for any $n\in \hueca{N}$ and for any sequence of objects $X_0,X_1,\ldots,X_n$ we have 
$$\sigma_{X_0,X_n}S_n^{\infty}=S_n^b(\sigma_{X_{n-1},X_n}\otimes\cdots\otimes\sigma_{X_0,X_1}).$$
This holds because for each summand, whenever $n=r+s+t$ with $s\geq 1$ and $s,t\geq 0$, we have 
$$(-1)^{r+st}\sigma m_{r+1+t}(id^{\otimes r}\otimes m_s\otimes id^{\otimes t})=b_{r+1+t}(id^{\otimes r}\otimes b_s\otimes id^{\otimes t})\sigma^{\otimes n},$$
where we avoided subindices for notational simplicity. 
\end{proof}
 
 The advantage of working with $b$-categories instead of $A_\infty$-categories is that we do not have to deal with the signs appearing in Stasheff identities. 
 
 \begin{proposition}\label{P: ad(A)}
  Consider the Yoneda algebra $A$ associated to the $\Lambda$-module $\Delta$, 
  with a strict structure of $A_\infty$-algebra, as specified in (\ref{R: merkulov}). 
  Then an $A_\infty$-category, denoted by $\ad(A)$, is defined by the following. The objects of $\ad(A)$ are the right $S$-modules; the spaces of morphisms are given by 
  $$\ad(A)(X,Y):=\bigoplus_{i,j\in {\cal P}}\Hom_k(Xe_i,Ye_j)\otimes_ke_jAe_i,$$
  with the canonical grading of the tensor product where $\Hom_k(Xe_i,Xe_j)$ is considered as a graded vector space concentrated at degree 0; finally the higher multiplications $m_n^{ad}$ are defined, for $n\in \hueca{N}$ and a sequence of right $S$-modules $X_0,X_1,\ldots,X_n$, on typical generators by 
  $$\begin{matrix}\ad(A)(X_{n-1},X_n)\otimes_k\cdots \otimes_k\ad(A)(X_1,X_2)\otimes_k\ad(A)(X_0,X_1)\hbox{\,}\rightmap{m^{ad}_n}\hbox{\,}\ad(A)(X_0,X_n)\end{matrix}$$ 
  $$\begin{matrix}(f_n\otimes a_n)\otimes\cdots\otimes(f_2\otimes a_2)\otimes (f_1\otimes a_1)&\longmapsto&
  f_n\cdots f_2f_1\otimes m_n(a_n\otimes\cdots\otimes a_1).\end{matrix}$$ 
   In the preceding recipe, since for given right $S$-modules $X$ and $Y$, we have  $$\Hom_k(X,Y)=\bigoplus_{i,j\in {\cal P}}e_i\Hom_k(X,Y)e_j=\bigoplus_{i,j\in{\cal P}}\Hom_k(Xe_i,Ye_j),$$
  we identify the elements of $\Hom_k(Xe_i,Ye_j)$ with the corresponding elements in $\Hom_k(X,Y)$, so the composition $f_n\cdots f_2f_1$ makes sense.  
 \end{proposition}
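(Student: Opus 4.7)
The plan is to verify that the data defining $\ad(A)$ satisfy the axioms of an $A_\infty$-category, the substance of which reduces to the Stasheff identities for the underlying $A_\infty$-algebra $A$. Before tackling those I would dispatch the routine checks: each $\ad(A)(X,Y)$ is manifestly a graded $k$-vector space with degree inherited from the tensor factors, the $\Hom_k$-factor sitting in degree zero; the composition recipe extends bilinearly from simple tensors to the whole $k$-tensor product, with the idempotent decorations of the direct-sum decomposition forcing $f_n\cdots f_1$ to be a well-defined element of $\Hom_k(X_0,X_n)$ and $m_n(a_n\otimes\cdots\otimes a_1)$ to factor through the canonical projection $A^{\otimes_k n}\twoheadrightarrow A^{\otimes_S n}$ on which $m_n$ is actually defined; and $m_n^{\ad}$ has the correct degree $|m_n|=2-n$ since it acts by ordinary composition on the degree-zero $\Hom_k$-factors and by $m_n$ on the $A$-factors.

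For the Stasheff identities themselves, I would apply
\[
\sum_{\substack{r+s+t=n\\ s\geq 1;\,r,t\geq 0}}(-1)^{r+st}\,m_{r+1+t}^{\ad}\!\bigl(id^{\otimes r}\otimes m_s^{\ad}\otimes id^{\otimes t}\bigr)
\]
to a typical homogeneous generator $(f_n\otimes a_n)\otimes\cdots\otimes(f_1\otimes a_1)$ and show that the result vanishes. Expanding a single summand, the inner $m_s^{\ad}$ produces $f_{n-r}\cdots f_{n-r-s+1}\otimes m_s(a_{n-r}\otimes\cdots\otimes a_{n-r-s+1})$, and the outer $m_{r+1+t}^{\ad}$ assembles the $f$-factors by ordinary associative composition into $f_n\cdots f_1$ (the very same product for every choice of $(r,s,t)$) and the $a$-factors into $m_{r+1+t}(id^{\otimes r}\otimes m_s\otimes id^{\otimes t})(a_n\otimes\cdots\otimes a_1)$. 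Because each tensor $f_j\otimes a_j$ has degree exactly $|a_j|$, the Koszul sign incurred when sliding the graded operator $m_s^{\ad}$ past the leftmost $r$ tensor slots in $\ad(A)$ coincides with the Koszul sign incurred by sliding $m_s$ past the corresponding $a$-slots in $A$. Summing over $(r,s,t)$, the entire expression therefore factors as $f_n\cdots f_1\otimes S_n^\infty(a_n\otimes\cdots\otimes a_1)$, where $S_n^\infty$ is the $n$-th Stasheff identity for $A$; this vanishes because $A$ is an $A_\infty$-algebra by (\ref{R: merkulov}), completing the verification.

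The main obstacle is purely bookkeeping: one has to keep track of three intertwined conventions — the idempotent decorations that govern when $f_n\cdots f_1$ and $m_n(a_n\otimes\cdots\otimes a_1)$ are nonzero, the passage between the $k$-tensor products used to define $\ad(A)$ and the $S$-tensor products underlying the operations $m_n$ of $A$, and the Koszul signs produced when passing graded operators past graded elements. All three are ultimately harmless because the $\Hom_k$-factors live in degree zero, so they neither contribute to grading nor to Koszul signs, and composition in $\Hom_k$ is strictly associative irrespective of idempotent bookkeeping. Note that the strict-unit property of the Yoneda algebra from (\ref{R: strictness of A}) plays no role in this proposition; it will only become relevant later, when units in $\ad(A)$ are exploited.
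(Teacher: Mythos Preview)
Your direct verification is correct: the Koszul-sign bookkeeping you describe works precisely because the $\Hom_k$-factors sit in degree zero, so each $f_j\otimes a_j$ carries the same degree as $a_j$ and the signed Stasheff sum collapses to $f_n\cdots f_1\otimes S_n^\infty(a_n\otimes\cdots\otimes a_1)=0$.

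The paper, however, takes a different route. Rather than verifying $S_n^\infty$ for $\ad(A)$ head-on, it defers the proof to Proposition~\ref{P: ad(B)}: it first shows that the analogously defined $\ad(B)$ (with $b_n$ in place of $m_n$) is a $b$-category, where the relevant identities carry \emph{no} signs and the computation is a one-line unwinding of $b_n^{ad}$; then it checks that $\ad(B)$ is precisely the bar construction of $\ad(A)$ in the sense of Proposition~\ref{P: A-infinite cats vs b-cats}, which automatically transfers the $b$-category property back to the $A_\infty$-category property. Your approach is more self-contained and shows concretely why the signs cause no trouble; the paper's approach trades that sign analysis for a structural detour through the bar construction, which pays off later since most of the subsequent work in the paper is done on the $b$-side anyway.
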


 The proof  of this proposition follows from the next result, where we consider the  bar category of $\ad(A)$.

 \begin{proposition}\label{P: ad(B)}
  Consider the Yoneda algebra $A$ associated to the $\Lambda$-module $\Delta$, 
  with the structure of $A_\infty$-algebra specified in (\ref{R: merkulov}). Then, consider its bar construction $B$, as in (\ref{D: construccion barra}).  
  Then a $b$-category, denoted by $\ad(B)$, is defined by the following. The objects of $\ad(B)$ are the right $S$-modules; the spaces of morphisms are given by 
  $$\ad(B)(X,Y):=\bigoplus_{i,j\in {\cal P}}\Hom_k(Xe_i,Ye_j)\otimes_ke_jBe_i,$$
  with the canonical grading of the tensor product where $\Hom_k(Xe_i,Xe_j)$ is considered as a graded vector space concentrated at degree 0; finally the morphisms $b^{ad}_n$ are defined, for $n\in \hueca{N}$ and a sequence of right $S$-modules $X_0,X_1,\ldots,X_n$, on typical generators by 
  $$\begin{matrix}\ad(B)(X_{n-1},X_n)\otimes_k\cdots \otimes_k\ad(B)(X_1,X_2)\otimes_k\ad(B)(X_0,X_1)\hbox{\,}\rightmap{b^{ad}_n}\hbox{\,}\ad(B)(X_0,X_n)\end{matrix}$$ 
  $$\begin{matrix}(f_n\otimes a_n)\otimes\cdots\otimes(f_2\otimes a_2)\otimes (f_1\otimes a_1)&\longmapsto&
  f_n\cdots f_2f_1\otimes b_n(a_n\otimes\cdots\otimes a_1).\end{matrix}$$
  Moreover, the $b$-category $\ad(B)$ is the bar construction of $\ad(A)$. 
 \end{proposition}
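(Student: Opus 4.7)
The plan is to verify the two assertions of the proposition in order: first that $\ad(B)$ satisfies the axioms of a $b$-category as in Definition~\ref{D: b-category}, and then that it coincides with the bar construction of $\ad(A)$ in the sense of Proposition~\ref{P: A-infinite cats vs b-cats}; the latter simultaneously establishes the preceding Proposition~\ref{P: ad(A)}.

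For the first part, since each $\Hom_k(Xe_i,Ye_j)$ sits in degree zero, on a homogeneous generator $(f_n\otimes a_n)\otimes\cdots\otimes(f_1\otimes a_1)$ of degree $\sum_k|a_k|$ the image $f_n\circ\cdots\circ f_1\otimes b_n(a_n\otimes\cdots\otimes a_1)$ has degree $\sum_k|a_k|+1$ since $|b_n|=1$, so $|b_n^{ad}|=1$. To verify $S_n^b=0$ on $\ad(B)$, I would evaluate on such a typical generator and observe that each summand $b_{r+1+t}^{ad}(id^{\otimes r}\otimes b_s^{ad}\otimes id^{\otimes t})$ unfolds directly from the defining recipe to $f_n\circ\cdots\circ f_1\,\otimes\, b_{r+1+t}(id^{\otimes r}\otimes b_s\otimes id^{\otimes t})(a_n\otimes\cdots\otimes a_1)$. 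The crucial observation is that the $\Hom_k$-factor collapses to the same composition $f_n\circ\cdots\circ f_1$ in every summand, so the total sum factors as $f_n\circ\cdots\circ f_1$ tensored with the $b$-relation for $B$ applied to $a_n\otimes\cdots\otimes a_1$, which vanishes by Definition~\ref{D: construccion barra}.

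For the second part, the shift identification $e_jBe_i=(e_jAe_i)[1]$ via the identity map (of degree $-1$) induces shift isomorphisms $\sigma_{X,Y}:=id_{\Hom_k}\otimes \sigma:\ad(A)(X,Y)\to \ad(B)(X,Y)$, each of degree $-1$. I would then verify the compatibility diagram of Proposition~\ref{P: A-infinite cats vs b-cats}: on a tensor $(f_n\otimes a_n)\otimes\cdots\otimes(f_1\otimes a_1)$ both sides produce the common composition $f_n\circ\cdots\circ f_1$ in the $\Hom_k$-factor, so after cancelling this factor the required identity reduces to the defining compatibility between $m_n$ and $b_n$ on $A$ and $B$, which is built into Definition~\ref{D: construccion barra}. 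This simultaneously shows that $\ad(B)$ is the bar construction of $\ad(A)$ and promotes Proposition~\ref{P: ad(A)} to a consequence of Proposition~\ref{P: A-infinite cats vs b-cats}.

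The only potentially delicate point is the sign bookkeeping in the bar-construction compatibility, but it is made routine by the fact that the $\Hom_k$-factors are concentrated in degree zero, so all Koszul signs arise solely from commuting the shift maps past elements of $A$ or $B$, exactly as in the proof of Proposition~\ref{P: A-infinite cats vs b-cats}; no new sign subtleties appear from the $\ad$-construction itself.
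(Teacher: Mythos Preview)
Your proposal is correct and follows essentially the same approach as the paper's own proof: verify the $b$-relation on generators by factoring out the common composition $f_n\cdots f_1$ and reducing to the $b$-relation for $B$, then identify $\ad(B)$ with the bar construction of $\ad(A)$ via the shift maps. The paper carries out the sign computation in the second part explicitly (writing $b_n^{\cal B}=\sigma m_n^{ad}(\sigma^{\otimes n})^{-1}$ and tracking the Koszul signs produced by $(\sigma^{\otimes n})^{-1}$ against those in $b_n=\sigma m_n(\sigma^{\otimes n})^{-1}$), whereas you summarize this as routine; your observation that the $\Hom_k$-factors contribute no signs is exactly what makes the two sign patterns coincide, so the summary is justified.
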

  
 \begin{proof} For $n\in \hueca{N}$ and a sequence of right $S$-modules $X_0,X_1,\ldots,X_n$, consider a typical homogeneous generator  
  $(f_n\otimes a_n)\otimes\cdots\otimes(f_2\otimes a_2)\otimes (f_1\otimes a_1)$ in the space $\ad(B)(X_{n-1},X_n)\otimes_k\cdots \otimes_k\ad(B)(X_1,X_2)\otimes_k\ad(B)(X_0,X_1)$. We want to show that 
  $$ 0=\sum_{\scriptsize\begin{matrix}r+s+t=n\\ s\geq 1;r,t\geq 0\end{matrix}} 
   b^{ad}_{r+1+t}(id^{\otimes r}\otimes b^{ad}_s\otimes id^{\otimes t})[(f_n\otimes a_n)\otimes\cdots\otimes (f_1\otimes a_1)].$$
   But this equals 
 $$(f_n\cdots f_2f_1)\otimes \sum_{\scriptsize\begin{matrix}r+s+t=n\\ s\geq 1;r,t\geq 0\end{matrix}} 
   b_{r+1+t}(id^{\otimes r}\otimes b_s\otimes id^{\otimes t})[a_n\otimes\cdots \otimes a_1]=0.$$
Hence, we get that $\ad(B)$ is a $b$-category. 

If we denote by ${\cal A}$  the $A_\infty$-category $\ad(A)$ and by ${\cal B}$ its associated $b$-category, we have, for any right $S$-modules $X$ and $Y$, that 
$$\ad(B)(X,Y)=\bigoplus_{i,j\in {\cal P}}\Hom_k(Xe_i,Ye_j)\otimes e_jBe_i
=
{\cal A}(X,Y)[1]= {\cal B}(X,Y).$$ 
For each $n\geq 1$, we have  $b^{\cal B}_n=\sigma m_n^{ad}(\sigma^{\otimes n})^{-1}$. Take a typical homogeneous generator 
$(f_n\otimes a_n)\otimes\cdots\otimes (f_1\otimes a_1)$ in $\ad(B)(X_{n-1},X_n)\otimes_k\cdots \otimes_k\ad(B)(X_0,X_1)$ and make 
$E_n=b_n^{\cal B}[(f_n\otimes a_n)\otimes\cdots\otimes (f_1\otimes a_1)]$, then 
$$\begin{matrix}
   E_n&=&\sigma m_n^{ad}(\sigma^{\otimes n})^{-1}[(f_n\otimes a_n)\otimes\cdots\otimes (f_1\otimes a_1)]\hfill\\
   &=&
   (-1)^{sgn}\sigma m_n^{ad}[(f_n\otimes \sigma^{-1}(a_n))\otimes\cdots\otimes (f_1\otimes \sigma^{-1}(a_1))]\hfill\\
   &=&
   (-1)^{sgn}\sigma[f_n\cdots f_2f_1\otimes m_n(\sigma^{-1}(a_n)\otimes\cdots\otimes\sigma^{-1}(a_1))]\hfill\\
   &=&
   f_n\cdots f_2f_1\otimes \sigma m_n(\sigma^{\otimes n})^{-1}(a_n\otimes\cdots\otimes a_1)\hfill\\
   &=&
   f_n\cdots f_2f_1\otimes b_n(a_n\otimes\cdots\otimes a_1)\hfill\\
   &=&
   b_n^{ad}[(f_n\otimes a_n)\otimes\cdots\otimes (f_1\otimes a_1)],\hfill\\
  \end{matrix}$$
where the number $sgn$ is determined by the degrees of the homogeneous elements $a_1,\ldots,a_n$. Thus, we get $b_n^{\cal B}=b_n^{ad}$, for each $n\geq 1$. 
So $\ad(B)$ is the $b$-category of the $A_\infty$-category $\ad(A)$. 
 \end{proof}

 \begin{definition}\label{D: H0(cal A)} 
  Given an $A_\infty$-category ${\cal A}$, for each pair of objects $X,Y$
  of ${\cal A}$, the homogeneous map $m_1:{\cal A}(X,Y)\rightmap{}{\cal A}(X,Y)$ has degree $1$ and satisfies $m_1^2=0$.
  Then, we have a cochain complex ${\cal A}(X,Y)$ with boundary operator $m_1$ and the space of $0$-cocycles $Z_0({\cal A})(X,Y)\subseteq {\cal A}(X,Y)_0$ and $0$-coboundaries $I_0({\cal A})(X,Y)\subseteq Z_0({\cal A})(X,Y)$. 
  Then, we can consider the cohomology space $$H^0({\cal A})(X,Y)=Z_0({\cal A})(X,Y)/I_0({\cal A})(X,Y).$$ 
  The morphism $m_2$ induces composition maps  
  $$H^0({\cal A})(Y,Z)\times H^0({\cal A})(X,Y)\rightmap{}H^0({\cal A})(X,Z),$$
  If $\zeta_1\in Z_0({\cal A})(X,Y)$ and $\zeta_2\in Z_0({\cal A})(Y,Z)$, by definition, $\overline{\zeta_2}\,\overline{\zeta_1}=\overline{m_2(\zeta_2\otimes \zeta_1)}$.  The preceding composition is well defined because $S_2^\infty=0$, and it is associative because $S^\infty_3=0$.  
  
  The $A_\infty$-category ${\cal A}$ is called \emph{homologically unitary}  if, for each object $X$ in ${\cal A}$, there is $1_X\in  H^0({\cal A})(X,X)$, such that $1_Y\overline{\zeta}=\overline{\zeta}$ and $\overline{\zeta}1_X=\overline{\zeta}$, for any $\zeta\in  
  Z_0({\cal A})(X,Y)$. For such homologically unitary  $A_\infty$-category ${\cal A}$, we can consider the category $H^0({\cal A})$  with the same objects than ${\cal A}$ and the hom spaces  $H^0({\cal A})(X,Y)$, for any pair of objects $X, Y$. 
  \end{definition}
  
  Similarly, we have the following. 
  
  \begin{definition}\label{D: H-1(cal B)}
  Given a $b$-category ${\cal B}$, for each pair of objects $X,Y$
  of ${\cal B}$, the homogeneous map $b_1:{\cal B}(X,Y)\rightmap{}{\cal B}(X,Y)$ has degree $1$ and satisfies $b_1^2=0$.
  Then, we have a cochain complex ${\cal B}(X,Y)$ with boundary operator $b_1$ and the space of $-1$-cocycles $Z_{-1}({\cal B})(X,Y)\subseteq {\cal B}(X,Y)_{-1}$ and $-1$-coboundaries $I_{-1}({\cal B})(X,Y)$. 
  Then, we can consider the cohomology space 
  $$H^{-1}({\cal B})(X,Y)=Z_{-1}({\cal B})(X,Y)/I_{-1}({\cal B})(X,Y).$$ 
  The morphism $b_2$ induces composition maps  
  $$H^{-1}({\cal B})(Y,Z)\times H^{-1}({\cal B})(X,Y)\rightmap{}H^{-1}({\cal B})(X,Z),$$
  If $\zeta_1\in Z_{-1}({\cal B})(X,Y)$ and $\zeta_2\in Z_{-1}({\cal B})(Y,Z)$, by definition, $\overline{\zeta_2}\,\overline{\zeta_1}=\overline{b_2(\zeta_2\otimes \zeta_1)}$.  
  
  The $b$-category ${\cal B}$ is called \emph{homologically unitary}  if, for each object $X$ in ${\cal B}$, there is $1_X\in  H^{-1}({\cal B})(X,X)$, such that $1_Y\overline{\zeta}=\overline{\zeta}$ and $\overline{\zeta}1_X=\overline{\zeta}$, for any $\zeta\in  
  Z_{-1}({\cal B})(X,Y)$. For such an homologically unitary  $b$-category ${\cal B}$, we can consider the category $H^{-1}({\cal B})$  with the same objects than ${\cal B}$ and the hom spaces  $H^{-1}({\cal B})(X,Y)$, for any pair of objects $X, Y$. 
  \end{definition}
  
  \begin{lemma}\label{L: H0(cal A) equiv H-1(cal B) si cal B es la barra de cal A}
  Let  ${\cal A}$ be an $A_\infty$-category and ${\cal B}$ the $b$-category obtained from ${\cal A}$ by the bar construction. Then, ${\cal A}$ is homologically unitary iff so is ${\cal B}$ and we have an isomorphism of categories  
  $$H^0({\cal A})\simeq H^{-1}({\cal B}).$$
 \end{lemma}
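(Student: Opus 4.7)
The plan is to show that the degree shift $\sigma_{X,Y}\colon \mathcal{A}(X,Y)\rightmap{}\mathcal{B}(X,Y)$ induces the desired isomorphism of categories. Since $\sigma$ has degree $-1$, its restriction to degree zero is a linear isomorphism $\sigma\colon \mathcal{A}(X,Y)_0\rightmap{}\mathcal{B}(X,Y)_{-1}$, and likewise $\sigma\colon \mathcal{A}(X,Y)_{-1}\rightmap{}\mathcal{B}(X,Y)_{-2}$.

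First I would check on cochain complexes. The Stasheff identity $S_1$ gives $\sigma m_1 = b_1\sigma$, by specializing the formula $\sigma_{X_0,X_n}S_n^\infty = S_n^b(\sigma_{X_{n-1},X_n}\otimes\cdots\otimes\sigma_{X_0,X_1})$ from (\ref{P: A-infinite cats vs b-cats}) to $n=1$ (no Koszul signs appear). Hence $\sigma$ restricts to a bijection
\[Z_0(\mathcal{A})(X,Y)=\ker(m_1|_{\mathcal{A}(X,Y)_0})\rightmap{\ \cong\ }\ker(b_1|_{\mathcal{B}(X,Y)_{-1}})=Z_{-1}(\mathcal{B})(X,Y),\]
and applying the same identity one degree down, $\sigma$ restricts to a bijection between images
\[I_0(\mathcal{A})(X,Y)=\Im(m_1|_{\mathcal{A}(X,Y)_{-1}})\rightmap{\ \cong\ }\Im(b_1|_{\mathcal{B}(X,Y)_{-2}})=I_{-1}(\mathcal{B})(X,Y).\]
Taking quotients, $\sigma$ descends to a natural linear isomorphism $\overline{\sigma}\colon H^0(\mathcal{A})(X,Y)\rightmap{}H^{-1}(\mathcal{B})(X,Y)$ for each pair of objects.

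Next, I would verify compatibility with composition. The $n=2$ specialization of the naturality formula involves three summands labelled by $(r,s,t)\in\{(0,1,1),(1,1,0),(0,2,0)\}$, and in particular the summand $(0,2,0)$ yields $\sigma m_2 = b_2(\sigma\otimes\sigma)$ when both inputs lie in degree zero (the Koszul sign $(-1)^{|\sigma|\,|a_2|}$ is trivial since $|a_2|=0$). Therefore, for $\zeta_1\in Z_0(\mathcal{A})(X,Y)$ and $\zeta_2\in Z_0(\mathcal{A})(Y,Z)$ we have
\[\overline{\sigma}(\overline{\zeta_2}\,\overline{\zeta_1})
=\overline{\sigma(m_2(\zeta_2\otimes\zeta_1))}
=\overline{b_2(\sigma(\zeta_2)\otimes\sigma(\zeta_1))}
=\overline{\sigma(\zeta_2)}\,\overline{\sigma(\zeta_1)}
=\overline{\sigma}(\overline{\zeta_2})\,\overline{\sigma}(\overline{\zeta_1}).\]
This already shows the equivalence between homological unitarity of $\mathcal{A}$ and of $\mathcal{B}$: if $1_X\in H^0(\mathcal{A})(X,X)$ is a unit, then $\overline{\sigma}(1_X)\in H^{-1}(\mathcal{B})(X,X)$ satisfies $\overline{\sigma}(1_X)\cdot \overline{\sigma}(\overline{\zeta})=\overline{\sigma}(\overline{\zeta})$ and symmetrically on the right, and the converse is analogous using $\sigma^{-1}$.

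Assembling everything, the assignment that is the identity on objects together with the family of linear isomorphisms $\overline{\sigma}$ on hom spaces defines a functor $H^0(\mathcal{A})\rightmap{}H^{-1}(\mathcal{B})$; it is an isomorphism of categories because $\overline{\sigma}$ is bijective on each hom set and $\sigma$ is defined uniformly (so $\overline{\sigma}$ preserves identities). I do not expect any serious obstacle: the argument is essentially a bookkeeping of the degree shift together with the two low-arity Stasheff identities, and the only potentially delicate point — Koszul signs in $\sigma m_2 = b_2(\sigma\otimes\sigma)$ — trivializes because we restrict to degree-zero inputs, which is precisely the setting of $H^0(\mathcal{A})$.
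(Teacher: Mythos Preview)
Your proof is correct and follows essentially the same approach as the paper: both use that $\sigma$ is a degree $-1$ isomorphism of complexes (from $b_1\sigma=\sigma m_1$) to induce $\overline{\sigma}$ on cohomology, then verify compatibility with composition via $\sigma m_2=b_2\sigma^{\otimes 2}$, and deduce the correspondence of units from the fact that $\overline{\sigma}$ is a composition-preserving bijection on hom spaces. One minor remark: the identity $\sigma m_2=b_2\sigma^{\otimes 2}$ is simply the defining square for $b_2$ in (\ref{P: A-infinite cats vs b-cats}) (equivalently, the $(0,2,0)$ summand of the displayed identity in its proof), so there is no need to invoke the full three-summand Stasheff identity at $n=2$.
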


 \begin{proof} The maps $\sigma=\sigma_{X,Y}:{\cal A}(X,Y)\rightmap{}{\cal B}(X,Y)$ are isomorphisms of complexes of degree $-1$,
 so they  induce linear isomorphisms on the cohomologies 
 $$\overline{\sigma}:H^0({\cal A})(X,Y)\rightmap{}H^{-1}({\cal B})(X,Y).$$  
 We consider the associaton rule $E:H^0({\cal A})\rightmap{}H^{-1}({\cal B})$ which maps every object $X$ to the same object $X$ and the action on morphisms is given by these $\overline{\sigma}$'s. 
 Then,  for $\zeta_1\in Z_0({\cal A})(X,Y)$ and  $\zeta_2\in Z_0({\cal A})(Y,Z)$, 
 we have 
 $$\sigma m_2(\zeta_2\otimes \zeta_1)=b_2\sigma^{\otimes 2}(\zeta_2\otimes\zeta_1)= b_2(\sigma(\zeta_2)\otimes\sigma(\zeta_1)).$$
 Taking classes in the homology and applying $\overline{\sigma}$, we obtain 
 $$E(\overline{\zeta_1}\,\overline{\zeta_2})=
 \overline{\sigma(m_2(\zeta_2\otimes \zeta_1))}=
 \overline{b_2(\sigma(\zeta_2)\otimes\sigma(\zeta_1))}=
 E(\overline{\zeta_2})E(\overline{\zeta_1}).$$
 Thus, $E$ preserves composition. 
 
 Since $E$ is surjective on hom spaces and preserves composition, we obtain that $1_X$ is the identity of the object $X$ in $H^0({\cal A})$ iff $E(1_X)$ is the identity of the object $X$ in $H^{-1}({\cal B})$. 
  Hence $E$ is an isomorphism of categories. 
 \end{proof}

The preceding statement allows us to work with the $b$-category ${\cal B}$ associated to an $A_\infty$-category ${\cal A}$, instead of working directly with ${\cal A}$, if we are ultimately interested in $H^0({\cal A})$.

\section{The $b$-category $\tw(B)$ and ${\cal F}(\Delta)$}

From now on, we retake our preceding terminology where $A$ is the Yoneda $A_\infty$-algebra associated to $\Delta$ and $B$ is its bar construction, as in (\ref{R: merkulov}) and (\ref{D: construccion barra}). Moreover, we keep the directed vector space basis $\hueca{B}$ for $B$ chosen  in   (\ref{N: la base de B}).
 
 \begin{proposition}\label{P: tw(B)}
 There is a $b$-category $\tw(B)$ described by the following. The objects of $\tw(B)$ are  the pairs $\underline{X}=(X,\delta_X)$ where $X$ is a right $S$-module and 
 $\delta_X\in \ad(B)(X,X)_0$. Moreover we ask the following conditions on $\delta_X$: 
 \begin{enumerate}
  \item There is a finite filtration   
  $0=X_0\subseteq X_1\subseteq\cdots\subseteq X_{\ell(X)}=X$ of right $S$-modules
  such that if we express $\delta_X=\sum_{x\in \hueca{B}}f_x\otimes x$, where the maps $f_x\in \Hom_k(X,X)$ are uniquely determined, we have $f_x(X_r)\subseteq X_{r-1}$, for all $r\in [1,\ell(X)]$.
  \item We have $\sum_{s=1}^\infty b_s^{ad}((\delta_X)^{\otimes s})=0$, where we notice that the preceding condition $\it{1}$ implies that $b_s^{ad}((\delta_X)^{\otimes s})=0$ for $s\geq \ell(X)$, so we are dealing with a finite sum.  
 \end{enumerate}
 Given  $\underline{X},\underline{Y}\in\Ob(\tw(B))$, we have the hom 
 graded $k$-vector space 
 $$\tw(B)(\underline{X},\underline{Y})=\ad(B)(X,Y)=\bigoplus_{i,j\in {\cal P}}\Hom_k(Xe_i,Ye_j)\otimes_k e_jBe_i.$$
 If $n\geq 1$ and $\underline{X}_0,\underline{X}_1,\ldots,\underline{X}_n\in \Ob(\tw(B))$,  
 we have the following homogeneous linear map of degree 1  
 $$\begin{matrix}\tw(B)(\underline{X}_{n-1},\underline{X}_n)\otimes_k\cdots \otimes_k\tw(B)(\underline{X}_1,\underline{X}_2)\otimes_k\tw(B)(\underline{X}_0,\underline{X}_1)\hbox{\,}\rightmap{b^{tw}_n}\hbox{\,}\tw(B)(\underline{X}_0,\underline{X}_n)\end{matrix}$$ 
 which maps each homogeneous generator  $t_n\otimes\cdots\otimes t_2\otimes t_1$ on 
  $$\sum_{\scriptsize\begin{matrix}
  i_0,\ldots,i_n\geq 0\end{matrix}} 
   b^{ad}_{i_0+\cdots+i_n+n}(\delta_{X_n}^{\otimes i_n}\otimes t_n\otimes \delta_{X_{n-1}}^{\otimes i_{n-1}}\otimes t_{n-1}\otimes \cdots\otimes \delta_{X_1}^{\otimes i_1}\otimes t_1\otimes\delta_{X_0}^{\otimes i_0}),$$
   which is a finite sum. 
 \end{proposition}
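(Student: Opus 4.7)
The plan is to verify two distinct facts for $\tw(B)$: first, that each defining sum for $b_n^{tw}$ is actually finite, and second, that the Stasheff identities $S_n^b$ hold. Both will be deduced from Stasheff's identity for the ambient $b$-category $\ad(B)$ (Proposition \ref{P: ad(B)}) combined with the Maurer--Cartan equation $\sum_{s \geq 1} b_s^{ad}(\delta_X^{\otimes s})=0$ required of each twisted object $\underline{X}$.

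For finiteness, I would exploit the filtration condition on $\delta_X$: writing $\delta_X = \sum_x f_x\otimes x$, every $f_x$ strictly decreases the filtration $0=X_0\subseteq\cdots\subseteq X_{\ell(X)}=X$, so any composition of $\ell(X)$ or more such factors vanishes. Applying the formula of $b_N^{ad}$ to a tape $\delta_{X_n}^{\otimes i_n}\otimes t_n\otimes\cdots\otimes t_1\otimes \delta_{X_0}^{\otimes i_0}$ yields a $\Hom_k$-coefficient that contains, for each $k$, a block of $i_k$ composable filtration-decreasing factors coming from $\delta_{X_k}$; hence the summand vanishes as soon as some $i_k \geq \ell(X_k)$. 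This makes both the Maurer--Cartan sum and the defining series of $b_n^{tw}$ finite, and shows $b_n^{tw}$ lands in $\ad(B)(X_0,X_n)=\tw(B)(\underline{X}_0,\underline{X}_n)$.

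For the Stasheff identity, I would fix $t_n\otimes\cdots\otimes t_1$, then fix an arbitrary tape configuration $\vec{\imath}=(i_0,\ldots,i_n)$, and study the tape $T_{\vec{\imath}}=\delta_{X_n}^{\otimes i_n}\otimes t_n\otimes\cdots\otimes t_1\otimes\delta_{X_0}^{\otimes i_0}$, viewed as an input of length $M=n+\sum_k i_k$ for $\ad(B)$. Stasheff's identity $S_M^b$ for $\ad(B)$ applied to $T_{\vec{\imath}}$ expresses $0$ as a sum over all cuts of the tape of length $s'\geq 1$. I would split those cuts into \emph{type (i)}, where the inner segment contains at least one $t_j$, and \emph{type (ii)}, where the inner segment lies inside a single $\delta_{X_j}$-block. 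The key matching: a type (i) cut enclosing $t_{k_1},\ldots,t_{k_2}$ together with $j_0$ copies of $\delta_{X_{k_1-1}}$ from the right end of that block and $j_s$ copies of $\delta_{X_{k_2}}$ from the left end corresponds precisely to the summand of $b_{r+1+t}^{tw}(id^{\otimes r}\otimes b_s^{tw}\otimes id^{\otimes t})(t_n\otimes\cdots\otimes t_1)$ with $r=k_1-1$, $s=k_2-k_1+1$, $t=n-k_2$, inner insertion counts $(j_0,i_{k_1},\ldots,i_{k_2-1},j_s)$, and outer insertion counts obtained from $\vec{\imath}$ by replacing $i_{k_1-1}$ and $i_{k_2}$ by their complements. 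Summing over $\vec{\imath}$, the total type (i) contribution is exactly the left-hand side of $S_n^b$ for $\tw(B)$; so it suffices to show the total type (ii) contribution vanishes.

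At a fixed position $j$, reparametrize a type (ii) cut of width $s'$ inside the $\delta_{X_j}$-block by $(\ell, s', r_{\mathrm{right}})$ with $\ell, r_{\mathrm{right}}\geq 0$, $s'\geq 1$, encoding the number of $\delta_{X_j}$-copies to each side of the cut (so $i_j=\ell+s'+r_{\mathrm{right}}$); this kills the multiplicity factor $i_j-s'+1$ that would otherwise arise. The crucial observation is that the outer arity $N=M+1-s'=n+1+\sum_{k\neq j}i_k+\ell+r_{\mathrm{right}}$ and the outer sequence of inputs are both independent of $s'$. By multilinearity of $b^{ad}_N$, the inner sum over $s'\geq 1$ extracts the factor $\sum_{s'\geq 1} b^{ad}_{s'}(\delta_{X_j}^{\otimes s'})$, which vanishes by the Maurer--Cartan equation at $\underline{X}_j$. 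Summing over $j$ and over the remaining free parameters, all type (ii) contributions cancel, yielding $S_n^b=0$. The main obstacle will be the combinatorial bookkeeping of the type (i) bijection and the $s'$-independence in the type (ii) reparametrization; once these are confirmed, the rest of the argument is a routine application of linearity together with Stasheff for $\ad(B)$ and the Maurer--Cartan equations.
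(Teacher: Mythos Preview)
Your proposal is correct and follows the same overall strategy as the paper: expand the Stasheff sum for $\tw(B)$ in terms of the $b^{ad}$'s and match it against the Stasheff identities for $\ad(B)$ applied to the ``tapes'' $\widehat{\tau}^{{\bf i}_n}=\delta_{X_n}^{\otimes i_n}\otimes t_n\otimes\cdots\otimes t_1\otimes\delta_{X_0}^{\otimes i_0}$.

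The one real difference is that you explicitly separate the cuts into type~(i) (inner segment contains at least one $t_j$) and type~(ii) (inner segment lies inside a single $\delta_{X_j}$-block), and you kill the type~(ii) contributions via the Maurer--Cartan equation $\sum_{s'\geq 1} b^{ad}_{s'}(\delta_{X_j}^{\otimes s'})=0$. The paper instead asserts the identity
\[
\sum_{r+s+t=n,\;s\geq 1} S^{tw}_{r,s,t}=\sum_{{\bf i}_n}\sum_{r'+s'+t'=n+|{\bf i}_n|,\;s'\geq 1} S^{{\bf i}_n}_{r',s',t'}
\]
by exhibiting, for each $(r',s',t')$ and each ${\bf i}_n$, ``unique integers $r,s,t$ with $s\geq 1$'' and unique sequences ${\bf j}^1,{\bf j}^2,{\bf j}^3$. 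Strictly speaking, that correspondence only accounts for the type~(i) cuts: when the inner segment of length $s'$ sits entirely inside some $\delta_{X_j}^{\otimes i_j}$, the associated $s$ would be $0$, which is excluded. So the paper's displayed identity tacitly relies on the type~(ii) terms summing to zero, and your Maurer--Cartan step is exactly what justifies that. In this sense your write-up is more transparent than the paper's at this point, though the underlying argument is the same.
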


 \begin{proof} Given any object $(X,\delta_X)$ of $\tw(B)$, we can write 
 $\delta_X=\sum_{x_i\in \hueca{B}}f_{x_i}\otimes x_i$, for any mute variable $x_i$. Then, $$\delta_X^{\otimes s}=\sum_{x_1,\ldots,x_s\in \hueca{B}}
  (f_{x_s}\otimes x_s)\otimes\cdots\otimes(f_{x_2}\otimes x_2)\otimes(f_{x_1}\otimes x_1)$$
  and
  $$b_s^{ad}(\delta_X^{\otimes s})=\sum_{x_1,\ldots,x_s\in \hueca{B}}
  f_{x_s}\cdots f_{x_2}f_{x_1}\otimes b_s(x_s\otimes\cdots\otimes x_2\otimes x_1).$$
  By condition (1), we see that any composition of more than $\ell(X)$ linear operators $f_{x_i}$ of $X$ is zero, thus for $s\geq \ell(X)$, we get 
  $f_{x_s}\cdots f_{x_2}f_{x_1}=0$, hence $b_s^{ad}(\delta_X^{\otimes s})=0$. Hence, the sum in (2) is finite. For the same reason, the sum defining $b_n^{tw}(t_n\otimes\cdots\otimes t_1)$ is finite. 
  
  In order to show that 
   $$\sum_{\scriptsize\begin{matrix}r+s+t=n\\ s\geq 1;r,t\geq 0\end{matrix}} 
   b^{tw}_{r+1+t}(id^{\otimes r}\otimes b^{tw}_s\otimes id^{\otimes t})(t_n\otimes\cdots\otimes t_1)=0,$$
   we compute each summand $S_{r,s,t}^{tw}:=b^{tw}_{r+1+t}(id^{\otimes r}\otimes b^{tw}_s\otimes id^{\otimes t})(t_n\otimes\cdots\otimes t_1),$ where $n=r+s+t$ with $s\geq 1$ and $t,r\geq 0$. We have 
   $$S_{r,s,t}^{tw}=(-1)^{\vert \tau_3^r\vert}b_{r+1+t}^{tw}(\tau_3^r\otimes b_s^{tw}(\tau_2^s)\otimes \tau_1^t),$$
  where 
  $$\begin{matrix}
\tau_1^t&=&t_t\otimes\cdots\otimes t_1,\hfill\\  
\tau_2^s&=&t_{n-r}\otimes\cdots\otimes t_{t+1}, \hfill\\
\tau^r_3&=&t_n\otimes\cdots\otimes t_{n-r+1}.\hfill\\ 
 \end{matrix}$$
  Consider sequences of non-negative integers 
  $$\begin{matrix}
     {\bf{j}}_t^1&=&(j_t,\ldots,j_0)&\hbox{ and } &
     {\bf{j}}_r^3&=&(j_n,\ldots,j_{t+s}).\hfill\\ \end{matrix}$$
  and make 
  $$\begin{matrix}
     \widehat{\tau}_1^{{\bf{j}}_t^1}&=&\delta_{X_t}^{\otimes j_t}\otimes t_t\otimes\cdots\otimes t_2\otimes \delta_{X_1}^{\otimes j_1}\otimes t_1\otimes \delta_{X_0}^{\otimes j_0},\hfill\\     
       \widehat{\tau}_3^{{\bf{j}}_r^3}&=&\delta_{X_n}^{\otimes j_n}\otimes t_n\otimes\cdots\otimes t_{t+s+2}\otimes 
       \delta_{X_{t+s+1}}^{\otimes j_{t+s+1}}\otimes t_{t+s+1}\otimes \delta_{X_{t+s}}^{\otimes j_{t+s}}.\hfill\\
    \end{matrix}$$
    Then, we obtain 
    $$S_{r,s,t}^{tw}=\sum_{ {\bf{j}}_t^1,\hbox{\,}{\bf{j}}_r^3}(-1)^{\vert \tau_3^r\vert}
     b^{ad}_{\vert {\bf{j}}_t^1\vert +\vert{\bf{j}}_r^3 \vert +r+t+1}(\widehat{\tau}_3^{{\bf{j}}_r^3}\otimes b_s^{tw}(\tau_2^s) \otimes\widehat{\tau}_1^{{\bf{j}}_t^1}),$$
    where $\vert {\bf{j}}_t^1\vert$ and $\vert{\bf{j}}_r^3 \vert$ denote the sum of their components respectively. Now, make    
    $$\begin{matrix}{\bf{j}}_s^2&=&(j'_{t+s},j_{t+s-1},\ldots,j_{t+1},j'_t), \hfill\\
    \widehat{\tau}_2^{{\bf{j}}_s^2}&=&\delta_{X_{t+s}}^{\otimes j'_{t+s}}\otimes t_{t+s}\otimes\delta_{X_{t+s-1}}^{\otimes j_{t+s-1}}
    \otimes\cdots\otimes 
    t_{t+2}\otimes \delta_{X_{t+1}}^{\otimes j_{t+1}}
    \otimes t_{t+1}
    \otimes \delta_{X_t}^{\otimes j'_t}.\hfill\\\end{matrix}$$
    Then, we have
     $$S_{r,s,t}^{tw}=
     \sum_{ {\bf{j}}_t^1,\hbox{\,}{\bf{j}}_r^3}(-1)^{\vert \tau_3^r\vert}
     \sum_{ {\bf{j}}_s^2}
     b^{ad}_{\vert {\bf{j}}_t^1\vert +\vert{\bf{j}}_r^3 \vert +r+t+1}(\widehat{\tau}_3^{{\bf{j}}_r^3}\otimes 
     b^{ad}_{\vert {\bf{j}}_s^2\vert +s}(\widehat{\tau}_2^{{\bf{j}}_s^2}) \otimes\widehat{\tau}_1^{{\bf{j}}_t^1}),$$
    where $\vert {\bf{j}}_s^2\vert$ denotes the sum of its components. 
    Then, we have 
      $$S_{r,s,t}^{tw}=
     \sum_{ {\bf{j}}_t^1,\hbox{\,}{\bf{j}}_s^2,\hbox{\,}{\bf{j}}_r^3}
     b^{ad}_{\vert {\bf{j}}_t^1\vert +\vert{\bf{j}}_r^3 \vert +r+t+1}
     (id^{\otimes \vert {\bf{j}}_r^3\vert+r}
     \otimes  b^{ad}_{\vert {\bf{j}}_s^2\vert +s}\otimes 
     id^{\otimes \vert{\bf{j}}_t^1\vert+t})
     (\widehat{\tau}_3^{{\bf{j}}_r^3}\otimes 
    \widehat{\tau}_2^{{\bf{j}}_s^2}
    \otimes\widehat{\tau}_1^{{\bf{j}}_t^1}).$$
  Now, consider the following sequence of non-negative integers
  $${\bf j}_r^3*{\bf j}_s^2*{\bf j}_t^1=(j_n,\ldots,j_{t+s+1},j_{t+s}+j'_{t+s},j_{t+s-1},\ldots,j_{t+1},j'_t+j_t,j_{t-1},\ldots,j_0).$$
  Then, we have 
    $$S_{r,s,t}^{tw}=
     \sum_{ {\bf{j}}_t^1,{\bf{j}}_s^2,{\bf{j}}_r^3}
     b^{ad}_{\vert {\bf{j}}_t^1\vert +\vert{\bf{j}}_r^3 \vert +r+t+1}
     (id^{\otimes \vert {\bf{j}}_r^3\vert+r}
     \otimes  b^{ad}_{\vert {\bf{j}}_s^2\vert +s}\otimes 
     id^{\otimes \vert{\bf{j}}_t^1\vert+t})
     (\widehat{\tau}^{{\bf j}_r^3*{\bf j}_s^2*{\bf j}_t^1}),$$
     where $\widehat{\tau}^{{\bf j}_r^3*{\bf j}_s^2*{\bf j}_t^1}$
     is defined using the sequence on non-negative integers ${\bf j}_r^3*{\bf j}_s^2*{\bf j}_t^1$ in a similar way as 
     $\widehat{\tau}_1^{\bf{j}_t^1}$ is defined using the sequence  ${\bf j}_t^1$. 
     
     For any sequence ${\bf i}_n=(i_n,\ldots,i_0)$ of non-negative integers and any integers $r',s',t'$ such that $n+\vert {\bf i}_n\vert=r'+s'+t'$ with $r',t'\geq 0$ and $s'\geq 1$, make  
     $$S_{r',s',t'}^{{\bf i}_n}=b^{ad}_{r'+1+t'}
     (id^{\otimes r'}\otimes b_{s'}^{ad}\otimes id^{\otimes t'})
     (\widehat{\tau}^{{\bf i}_n}),$$
     where $\widehat{\tau}^{{\bf i}_n}=\delta_{X_n}^{\otimes i_n}\otimes t_n\otimes\cdots\otimes \delta_{X_1}^{\otimes i_1}\otimes t_1\otimes \delta_{X_0}^{\otimes i_0}$. Hence, we have 
       $$S_{r',s',t'}^{{\bf i}_n}=
     b^{ad}_{\vert {\bf{j}}_t^1\vert +\vert{\bf{j}}_r^3 \vert +r+t+1}
     (id^{\otimes \vert {\bf{j}}_r^3\vert+r}
     \otimes  b^{ad}_{\vert {\bf{j}}_s^2\vert +s}\otimes 
     id^{\otimes \vert{\bf{j}}_t^1\vert+t})
     (\widehat{\tau}^{{\bf j}_r^3*{\bf j}_s^2*{\bf j}_t^1}),$$
     for unique integers $r,s,t$, such that $r,t\geq 0$, $s\geq 1$, and $r+s+t=n$, and unique sequences 
     ${\bf{j}}_r^3$, ${\bf{j}}_s^2$, ${\bf{j}}_t^1$ with $r'=\vert {\bf{j}}_r^3\vert +r$, $s'=\vert {\bf{j}}_s^2\vert +s$ and 
     $t'=\vert {\bf{j}}_t^1\vert +t$, thus $\widehat{\tau}^{{\bf i}_n}=\widehat{\tau}^{{\bf j}_r^3*{\bf j}_s^2*{\bf j}_t^1}$. 
     In fact,         we have the following identity 
   $$\sum_{\scriptsize\begin{matrix}r+s+t=n\\ s\geq 1;r,t\geq 0\end{matrix}} 
   S^{tw}_{r,s,t}=\sum_{{\bf i}_n}
    \sum_{\scriptsize\begin{matrix}r'+s'+t'=n+\vert {\bf i}_n\vert\\ s'\geq 1;r',t'\geq 0\end{matrix}} 
  S_{r',s',t'}^{{\bf i}_n}.  
  $$
  Now, since $\ad(B)$ is a $b$-category, for fixed $n$ and $i_n$, we have 
   $$\sum_{\scriptsize\begin{matrix}r'+s'+t'=n+\vert {\bf i}_n\vert\\ s'\geq 1;r',t'\geq 0\end{matrix}} 
  S_{r',s',t'}^{{\bf i}_n}=\sum_{\scriptsize\begin{matrix}r'+s'+t'=n+\vert {\bf i}_n\vert\\ s'\geq 1;r',t'\geq 0\end{matrix}} b^{ad}_{r'+1+t'}
     (id^{\otimes r'}\otimes b_{s'}^{ad}\otimes id^{\otimes t'})
     (\widehat{\tau}^{{\bf i}_n})=0.$$
     Hence, we finally obtain that 
     $\sum_{\scriptsize\begin{matrix}r+s+t=n\\ s\geq 1;r,t\geq 0\end{matrix}} S^{tw}_{r,s,t}=0$. 
 \end{proof}
 
 \begin{remark} The $A_\infty$-category $\tw(A)$ can be defined naturally in such a way that $\tw(A)$ is the $A_\infty$-category of $\tw(B)$. Thus, 
 $H^0(\tw(A))\simeq H^{-1}(\tw(B))$. 
 \end{remark}

   The following theorem of Keller and Lef\`evre-Hasegawa, see {\cite{LH}}\S7, plays an essential role in our later argumentation,  which follows closely  that of \cite{KKO}. 
  
  \begin{theorem}\label{T: Keller-Lefevre} If $A$ is the Yoneda $A_\infty$-algebra associated to the $\Lambda$-module $\Delta$, then there is an equivalence of categories 
  $${\cal F}(\Delta)\rightmap{\simeq} H^0(\tw(A)).$$
  \end{theorem}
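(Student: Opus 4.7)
\begin{proofof}{T: Keller-Lefevre}
(Sketch of approach.) The plan is to construct a functor $F\colon \tw(A)\rightmap{}{\cal F}(\Delta)$ that factors through a quasi-equivalence with the $A_\infty$-category $\tw(A)$ and descends on $H^0$ to the required equivalence. Fix the projective resolution $P_\Delta=\bigoplus_{i\in{\cal P}}P_{\Delta_i}$ as in (\ref{D: the graded endomorphism algebra of PDelta}), whose dg-endomorphism algebra ${\cal E}=\End_\Lambda^*(P_\Delta)$ has the Yoneda algebra $A$ as its cohomology, and recall from (\ref{T: Yoneda A-infinito algebra es estricta}) that the Kadeishvili transfer produces the strict $A_\infty$-structure $\{m_n\}$ on $A$. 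The crucial auxiliary datum is the Merkulov-Kadeishvili homotopy transfer: there are $k$-linear morphisms $\iota\colon A\rightmap{}{\cal E}$ and projections $\pi\colon{\cal E}\rightmap{}A$ with $\pi\iota=\mathrm{id}_A$ and $\iota\pi-\mathrm{id}_{\cal E}=m_1^{\cal E}h+hm_1^{\cal E}$ for some homotopy $h$, such that $\iota$ is an $A_\infty$-quasi-isomorphism with higher components $\iota_n$ constructed by summing over planar binary trees. I would fix this data at the start.

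First I would define $F$ on objects. Given $\underline{X}=(X,\delta_X)$ with filtration $0=X_0\subseteq\cdots\subseteq X_{\ell(X)}=X$, write $\delta_X=\sum_{x\in\hueca{B}}f_x\otimes x$ and assemble the twisted differential on the right $\Lambda$-module complex $X\otimes_S P_\Delta$ (where $Xe_i\otimes_k P_{\Delta_i}$ is a direct summand) as
$$d_{\underline X}:=\mathrm{id}_X\otimes d_{P_\Delta}+\sum_{n\geq 1}\sum_{x_n,\dots,x_1\in\hueca{B}}f_{x_n}\cdots f_{x_1}\otimes \iota_n(x_n\otimes\cdots\otimes x_1).$$
The filtration hypothesis (\ref{P: tw(B)})(1) makes this a finite sum, and the Maurer-Cartan equation (\ref{P: tw(B)})(2) for $\delta_X$ together with the $A_\infty$-relations for $\iota$ translate exactly into $d_{\underline X}^2=0$. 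The filtration of $X$ induces a filtration of the resulting complex whose subquotients are copies of $P_\Delta$, so $H^0(X\otimes_SP_\Delta,d_{\underline X})$ is a $\Lambda$-module lying in ${\cal F}(\Delta)$; this module is $F(\underline X)$. On morphisms, I would send a homogeneous element $\zeta=\sum f\otimes a\in\tw(A)(\underline{X},\underline{Y})_0$ to the corresponding chain map of twisted complexes via the same $\iota$-twist, and then take the induced map on $H^0$; the cocycle condition $b_1^{tw}(\zeta)=0$ becomes a chain-map relation, while coboundaries $b_1^{tw}(\eta)$ become chain homotopies, producing a well-defined functor $\overline F\colon H^0(\tw(A))\rightmap{}{\cal F}(\Delta)$.

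Second, I would prove that $\overline F$ is essentially surjective by induction on the length of a $\Delta$-filtration. For $M\in{\cal F}(\Delta)$ with filtration of length $t$, the top subquotient is some $\Delta_{i_t}$, and the short exact sequence $0\to M'\to M\to\Delta_{i_t}\to 0$ is classified by an element of $\Ext^1_\Lambda(\Delta_{i_t},M')=e_{i_t}A_1\otimes_S X'$ for $X'$ the multiplicity data of $M'$. Lifting this, together with the higher Massey products that detect the precise isomorphism class of $M$ as an iterated extension, I obtain $\delta_X$ satisfying the Maurer-Cartan equation; the strictness of the $A_\infty$-structure on $A$ (so that $m_n$ vanishes on any tensor containing some $e_i$) guarantees that this induction is not obstructed by spurious terms. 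Essential surjectivity then follows. Full faithfulness is proved by comparing the spectral sequence on $\tw(A)(\underline X,\underline Y)$ filtered by the filtrations of $X$ and $Y$ with the spectral sequence for $\Hom_\Lambda(F\underline X,F\underline Y)$ filtered analogously: both degenerate and have the same $E_1$-page computing $\Hom_\Lambda$ between subquotients of the $\Delta$-filtrations, and naturality of the Merkulov-Kadeishvili transfer identifies them.

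The main obstacle is the verification that the Maurer-Cartan equation for $\delta_X$ in $\ad(B)$ corresponds precisely to $d_{\underline X}^2=0$ in the dg-category of $\Lambda$-complexes; this requires a careful tree-combinatorial bookkeeping showing that the higher components $\iota_n$ of the Kadeishvili quasi-isomorphism convert the Stasheff relations on $A$ into the Leibniz identity for ${\cal E}$ after twisting. This is exactly the content of Lef\`evre-Hasegawa's result in \cite{LH}\S7, and here we only need to observe that the strictness of $\{m_n\}$ obtained in (\ref{T: Yoneda A-infinito algebra es estricta}) is compatible with the Kadeishvili transfer, so that the constructions above are valid in our setting. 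The details then follow the pattern established in \cite{LH} and \cite{KKO}(7).
\end{proofof}
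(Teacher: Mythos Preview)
The paper does not prove this theorem at all: it is stated as a result of Keller and Lef\`evre-Hasegawa and attributed to \cite{LH}\S7, with no argument given. Indeed, later in the paper (see the paragraph preceding Theorem~\ref{T: Keller exactness of equiv}) the authors explicitly write that ``the detailed proof of this result is beyond the scope of this paper.'' So your sketch is not comparable to the paper's own proof, because there is none.

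That said, your outline is broadly in the right spirit. The strategy of transporting twisted objects over $A$ to twisted complexes over the dg endomorphism algebra ${\cal E}$ via the Kadeishvili $A_\infty$-quasi-isomorphism $\iota$, and then taking cohomology to land in ${\cal F}(\Delta)$, is the standard mechanism behind this equivalence. Your identification of the Maurer--Cartan condition on $\delta_X$ with $d_{\underline X}^2=0$ as the crux is correct, and the inductive essential-surjectivity and spectral-sequence full-faithfulness arguments are the expected shapes. One small caution: the functor you build goes from $\tw(A)$ to ${\cal F}(\Delta)$, whereas the paper later uses the equivalence in the direction $K\colon{\cal F}(\Delta)\rightmap{}H^0(\tw(A))$ (see Theorem~\ref{T: Keller exactness of equiv}); this is harmless since you are constructing a quasi-inverse, but be aware that the paper's subsequent arguments (exactness, $G(\Delta_i)\cong S_i$) rely on the explicit form of $K$ rather than of your $F$. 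Also, your claim that $\Ext^1_\Lambda(\Delta_{i_t},M')=e_{i_t}A_1\otimes_S X'$ is not literally correct in general---the extension group is computed by the twisted complex structure, not just the multiplicity data---so the inductive step would need more care. But since the paper treats the whole theorem as a black box, none of these details are needed here.
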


 \section{The $b$-category $\cv(B)$}
 
 In this section $A$ is the Yoneda $A_\infty$-algebra associated to 
 $\Delta$, the algebra  $B$ denotes its bar construction, and $\hat{D}(B)$  its graded dual $S$-$S$-bimodule. The following construction, which was introduced in \cite{KKO}, plays an important role in the passage from $H^{-1}(\tw(B))$ to the module category of a special bocs, see \S\ref{de H(tw(B)) en A(Delta)-mod}.  
 
 \begin{proposition}\label{P: conv(B)}
 There is a $b$-category $\cv(B)$ described by the following. The objects of $\cv(B)$ are  the left $S$-modules, given two left $S$-modules $X$ and $Y$, we have the graded hom space 
 $$\begin{matrix}\cv(B)(X,Y)
 &=&
\bigoplus_{n\in \hueca{Z}}\Hom^n_{\GM\g S\g S}(\hat{D}(B),\Hom_k(X,Y))\hfill\\
&=&
\bigoplus_{n\in \hueca{Z}}\Hom_{\Mod\g S\g S}(\hat{D}(B)_{-n},\Hom_k(X,Y))\hfill\\
&=&
\bigoplus_{n\geq -1}\Hom_{\Mod\g S\g S}(\hat{D}(B_n),\Hom_k(X,Y)).\hfill\\
\end{matrix}$$

 If $n\geq 1$ and $X_0,X_1,\ldots,X_n\in \Ob(\cv(B))$,  
 we have the following homogeneous linear map of degree 1  
 $$\begin{matrix}\cv(B)(X_{n-1},X_n)\otimes_k\cdots \otimes_k\cv(B)(X_1,X_2)\otimes_k\cv(B)(X_0,X_1)\hbox{\,}\rightmap{b^{cv}_n}\hbox{\,}\cv(B)(X_0,X_n)\end{matrix}$$ 
 which maps each homogeneous generator  $F_n\otimes\cdots\otimes F_2\otimes F_1$ on 
  $$
   b_n^{cv}(F_n\otimes\cdots\otimes F_2\otimes F_1)=(-1)^{\lambda_n(\vert F_n\vert,\ldots,\vert F_1\vert)}\nu_n (F_n\otimes\cdots\otimes F_1)\hat{d}(b_n),$$
  here we have each $F_i:\hat{D}(B)\rightmap{}\Hom_k(X_{i-1},X_i)$, so we have 
  $$\hat{D}(B)^{\otimes n}\rightmap{ \ F_n\otimes\cdots\otimes F_1 \ }\Hom_k(X_{n-1},X_n)\otimes_S\cdots\otimes_S\Hom_k(X_0,X_1)\rightmap{\nu_n}\Hom_k(X_0,X_n),$$
  where $\nu_n$ denotes the map induced by composition, $\hat{d}(b_n)$ is defined in (\ref{P: diferencial en T[[B]]}), and $\lambda_n$ is defined in (\ref{D: de lambda n}). 
 \end{proposition}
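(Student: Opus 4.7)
\medskip
\noindent\textit{Plan of proof.} The only thing to check is the $b$-category Stasheff identity $S^b_n=0$ for $\cv(B)$; that each $b^{cv}_n$ is homogeneous of degree $1$ follows at once from $|\hat d(b_n)|=1$ and the fact that $\nu_n$ and the sign prefactor preserve degree. Fix $n\in\hueca{N}$, a sequence of left $S$-modules $X_0,\ldots,X_n$, and a homogeneous generator $F_n\otimes\cdots\otimes F_1$ with $F_i:\hat D(B)\to\Hom_k(X_{i-1},X_i)$. I will expand every summand
$$E_{r,s,t}:=b^{cv}_{r+1+t}\bigl(id^{\otimes r}\otimes b^{cv}_s\otimes id^{\otimes t}\bigr)(F_n\otimes\cdots\otimes F_1)$$
and show it factors through $\nu_n(F_n\otimes\cdots\otimes F_1)$ followed by a universal operator on $\hat D(B)$.

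The strategy is to peel off one layer at a time. Writing the inner block as $G_s:=b^{cv}_s(F_{r+s}\otimes\cdots\otimes F_{r+1})=(-1)^{\lambda_s}\nu_s(F_{r+s}\otimes\cdots\otimes F_{r+1})\hat d(b_s)$, evaluating on $\alpha\in\hat D(B)$ yields a composition of the outer factors $F_i$ with $G_s$ inserted, applied to the tensor $\hat d(b_{r+1+t})(\alpha)$. By the associativity of $\nu$ (ordinary composition of linear maps is associative), this composition equals $\nu_n(F_n\otimes\cdots\otimes F_1)$ applied to the tensor obtained by inserting $\hat d(b_s)$ into the middle of $\hat d(b_{r+1+t})(\alpha)$. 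Thus
$$E_{r,s,t}=\varepsilon_{r,s,t}\,\nu_n(F_n\otimes\cdots\otimes F_1)\cdot\bigl(id^{\otimes r}\otimes\hat d(b_s)\otimes id^{\otimes t}\bigr)\hat d(b_{r+1+t}),$$
for a sign $\varepsilon_{r,s,t}$ coming from (i) the prefactors $(-1)^{\lambda_{r+1+t}}$ and $(-1)^{\lambda_s}$, and (ii) the Koszul signs generated by commuting the degree-$1$ operator $\hat d(b_s)$ past the $F_i$'s with $i>r+s$ when redistributing the composition.

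At this point I invoke Lemma \ref{L: la formula para manipular la diferencial hat d}, which yields
$$\bigl(id^{\otimes r}\otimes\hat d(b_s)\otimes id^{\otimes t}\bigr)\hat d(b_{r+1+t})=\hat\tau_n\hat D\bigl(b_{r+1+t}(id^{\otimes r}\otimes b_s\otimes id^{\otimes t})\bigr).$$
Once the sign $\varepsilon_{r,s,t}$ is shown to be independent of the triple $(r,s,t)$ (up to an overall global factor), I can pull the constant outside, sum over $(r,s,t)$ with $r+s+t=n$ and $s\geq1$, and use linearity and $S$-$S$-linearity of $\hat\tau_n\hat D(-)$ to get
$$\sum_{r,s,t}E_{r,s,t}=(\text{global sign})\,\nu_n(F_n\otimes\cdots\otimes F_1)\,\hat\tau_n\hat D\Bigl(\sum_{r,s,t}b_{r+1+t}(id^{\otimes r}\otimes b_s\otimes id^{\otimes t})\Bigr),$$
and the inner sum vanishes by the Stasheff identities for the $b$-algebra $B$ recorded in (\ref{D: construccion barra}) (which are exactly the identities used in Step 2 of the proof of (\ref{P: diferencial en T[[B]]})).

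The main obstacle I expect is bookkeeping the Koszul signs: each $F_i$ has an arbitrary degree $|F_i|$, and both $\hat d(b_s)$ and the outer evaluation generate signs of the form $(-1)^{|F_{j}|\cdot(\text{something})}$ when moved past the operators. The key identity I will use is the recursive decomposition $\lambda_n(d_1,\ldots,d_n)=\lambda_r(d_{t+s+1},\ldots,d_n)+\lambda_s(d_{t+1},\ldots,d_{t+s})+\lambda_t(d_1,\ldots,d_t)+(\text{cross terms})$, together with the sign $(-1)^{\lambda_n(|F_n|,\ldots,|F_1|)}$ built into the definition of $b^{cv}_n$. It is exactly to absorb all these cross terms that this prefactor was inserted; a careful tally should make every sign independent of $(r,s,t)$, reducing the identity $S^b_n=0$ to the Stasheff identity for $B$ via the clean algebraic form above.
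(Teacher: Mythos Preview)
Your plan is exactly the paper's approach: express each summand as $(-1)^u\,\nu_n(F_n\otimes\cdots\otimes F_1)\,\hat\tau_n\hat D\bigl(b_{r+1+t}(id^{\otimes r}\otimes b_s\otimes id^{\otimes t})\bigr)$ via Lemma~\ref{L: la formula para manipular la diferencial hat d}, verify that the sign $u=\lambda_n(|F_n|,\ldots,|F_1|)$ is the same for every $(r,s,t)$, and then sum using the Stasheff identity for $B$. One slip to fix when you write it out: with $id^{\otimes r}\otimes b^{cv}_s\otimes id^{\otimes t}$ applied to $F_n\otimes\cdots\otimes F_1$, the inner block is $F_{t+s}\otimes\cdots\otimes F_{t+1}$ (not $F_{r+s}\otimes\cdots\otimes F_{r+1}$), and the Koszul contributions are $(-1)^{|\phi_3^r|}$ from moving $b^{cv}_s$ past the leftmost $r$ factors together with an extra $(-1)^{|\phi_1^t|}$ from factoring $\phi_2^s\hat d(b_s)$ out of the tensor; the paper records the needed $\lambda_n$-identity that collapses $|\phi_3^r|+u_1+u_2+|\phi_1^t|$ to $\lambda_n(|F_n|,\ldots,|F_1|)$.
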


 \begin{proof}  In order to show that 
   $$\sum_{\scriptsize\begin{matrix}r+s+t=n\\ s\geq 1;r,t\geq 0\end{matrix}} 
   b^{cv}_{r+1+t}(id^{\otimes r}\otimes b^{cv}_s\otimes id^{\otimes t})(F_n\otimes\cdots\otimes F_1)=0,$$
   we compute each summand $S_{r,s,t}^{cv}:=b^{cv}_{r+1+t}(id^{\otimes r}\otimes b^{cv}_s\otimes id^{\otimes t})(F_n\otimes\cdots\otimes F_1),$ where $n=r+s+t$ with $s\geq 1$ and $t,r\geq 0$. We proceed as we did in the proof that $\tw(B)$ is a $b$-category. Make 
  $$\begin{matrix}
\phi_1^t&=&F_t\otimes\cdots\otimes F_1,\hfill\\  
\phi_2^s&=&F_{n-r}\otimes\cdots\otimes F_{t+1}, \hfill\\
\phi^r_3&=&F_n\otimes\cdots\otimes F_{n-r+1}.\hfill\\ 
 \end{matrix}$$
  Then, we have  
 $S^{cv}_{r,s,t}=
 (-1)^{\vert \phi_3^r\vert}b_{r+1+t}^{cv}(\phi_3^r\otimes b_s^{cv}(\phi_2^s)\otimes \phi_1^t)$.
From the equality $\vert b_s^{cv}(\phi_2^s)\vert=\vert F_{n-r}\vert+\cdots+\vert F_{t+1}\vert+1$, we obtain 
$$S^{cv}_{r,s,t}=
 (-1)^{\vert \phi_3^r\vert+u_1}\nu_{r+1+t}(\phi_3^r\otimes b^{cv}_s(\phi_2^s)\otimes \phi_1^t)\hat{d}(b_{r+1+t}),$$
 where $u_1=\lambda_{r+1+t}(\vert F_n\vert,\ldots,\vert F_{n-r+1}\vert,\vert F_{n-r}\vert+\cdots+\vert F_{t+1}\vert-1, \vert F_t\vert,\ldots,\vert F_1\vert)$. Now, make $u_2=\lambda_s(\vert F_{n-r}\vert,\ldots,\vert F_{t+1}\vert)$, then we have 
 $$S^{cv}_{r,s,t}=
 (-1)^{\vert \phi_3^r\vert+u_1+u_2}\nu_{r+1+t}(\phi_3^r\otimes \nu_s(\phi_2^s)\hat{d}(b_s)\otimes \phi_1^t)\hat{d}(b_{r+1+t}),$$
 which coincides with  
 $$(-1)^u\nu_{r+1+t}(id^{\otimes r}\otimes\nu_s\otimes id^{\otimes t})(\phi_3^r\otimes \phi_2^s
 \otimes \phi_1^t)(id^{\otimes r}\otimes\hat{d}(b_s)\otimes id^{\otimes t})\hat{d}(b_{r+1+t}),$$
 where $u= \vert \phi_3^r\vert+u_1+u_2+\vert \phi_1^t\vert$. 
 
 Notice that the map $\lambda_n:\hueca{Z}^n\rightmap{}\hueca{Z}$, given by 
 $\lambda_n(d_n,\ldots,d_1)=\sum_{n\geq i> j\geq 1}d_id_j$, always satisfies that $\lambda_n(d_n,\ldots,d_1)$ coincides with 
$$\begin{matrix}
\lambda_{r+1+t}(d_n,\ldots,d_{n-r+1},(d_{n-r}+\cdots +d_{n-r-s+1})-1,d_{n-r-s},\ldots,d_{1})\\
+\,\lambda_s(d_{n-r},\ldots,d_{n-r-s+1})
+\,d_n+\cdots+d_{n-r+1}+d_{n-r-s}+\cdots+d_1.\hfill\\
  \end{matrix}$$
Therefore, we obtain that  $u=\lambda_n(\vert F_n\vert,\ldots,\vert F_1\vert)$.  From (\ref{L: la formula para manipular la diferencial hat d}), we see that the last description of $S_{s,r,t}^{cv}$ coincides with 
$$(-1)^u\nu_{r+1+t}(id^{\otimes r}\otimes\nu_s\otimes id^{\otimes t})(\phi_3^r\otimes \phi_2^s
 \otimes \phi_1^t)(id^{\otimes r}\otimes\hat{\tau}_s\hat{D}(b_s)\otimes id^{\otimes t})\hat{\tau}_{r+1+t}\hat{D}(b_{r+1+t}),$$
 and with 
 $$(-1)^u\nu_{r+1+t}(id^{\otimes r}\otimes\nu_s\otimes id^{\otimes t})(\phi_3^r\otimes \phi_2^s
 \otimes \phi_1^t)\hat{\tau}_n\hat{D}(id^{\otimes r}\otimes b_s\otimes id^{\otimes t})\hat{D}(b_{r+1+t}).$$
 Then, we have 
 $$\begin{matrix}S_{s,r,t}^{cv}
 &=&
 (-1)^u\nu_n(\phi_3^r\otimes \phi_2^s
 \otimes \phi_1^t)\hat{\tau}_n\hat{D}(id^{\otimes r}\otimes b_s\otimes id^{\otimes t})\hat{D}(b_{r+1+t})\hfill\\
 &=&
 (-1)^u\nu_n(F_n\otimes\cdots\otimes F_1)
 \hat{\tau}_n\hat{D}(b_{r+1+t}(id^{\otimes r}\otimes b_s\otimes id^{\otimes t})).\hfill\\
 \end{matrix}$$
Then, adding up, we obtain 
$$\sum_{\scriptsize\begin{matrix}r+s+t=n\\ s\geq 1;r,t\geq 0\end{matrix}} 
   S_{s,r,t}^{cv}=\sum_{\scriptsize\begin{matrix}r+s+t=n\\ s\geq 1;r,t\geq 0\end{matrix}}(-1)^u\nu_n(F_n\otimes\cdots\otimes F_1)
 \hat{\tau}_n\hat{D}(b_{r+1+t}(id^{\otimes r}\otimes b_s\otimes id^{\otimes t})),$$
 where the sum at the right can be rewritten as
 $$(-1)^u\nu_n(F_n\otimes\cdots\otimes F_1)
 \hat{\tau}_n\hat{D}(\sum_{\scriptsize\begin{matrix}r+s+t=n\\ s\geq 1;r,t\geq 0\end{matrix}}b_{r+1+t}(id^{\otimes r}\otimes b_s\otimes id^{\otimes t}))=0,$$
 so $\cv(B)$ is a $b$-category. 
 \end{proof}

Now, we describe an  equivalence between $\ad(B)$ and $\cv(B)$, 
which plays an important role in the next section. 
 
\begin{proposition}\label{P: ad(B) equiv ac(B)}
For each pair of right $S$-modules $X$ and $Y$, there is an isomorphism of graded $k$-vector spaces 
$$\Psi_{X,Y}:\ad(B)(X,Y)\rightmap{}\cv(B)(X^{op},Y^{op}).$$
The recipe of $\Psi$ on each homogeneous component of degree $n\in \hueca{Z}$ is the following. 
Given 
$$f=\sum_{x\in \hueca{B}_n}f_x\otimes x\in \ad(B)(X,Y)_n=\bigoplus_{i,j\in {\cal P}}\Hom_k(Xe_i,Ye_j)\otimes_k e_jB_ne_i$$
we have 
$\Psi(f)\in \cv(B)(X^{op},Y^{op})_n=\Hom_{S\g S}(\hat{D}(B_n),\Hom_k(X^{op},Y^{op}))$, defined by 
$$\Psi(f)(\zeta)=\sum_{x\in \hueca{B}_n} f_x\zeta(x),$$
where $\zeta\in \hat{D}(B_n)$.  As before, we consider $\zeta\in \hat{D}(B_n)\subseteq \hat{D}(B)$ 
and $f_x\in \Hom_k(Xe_{s(x)},Ye_{t(x)})\subseteq \Hom_k(X,Y)$.  
Moreover, for each sequence of right $S$-modules $X_0,X_1,\ldots,X_n$, we have the following commutative diagram
$$\begin{matrix}\ad(B)(X_{n-1},X_n)\otimes_k\cdots \otimes_k\ad(B)(X_0,X_1)&\rightmap{b^{ad}_n}&\ad(B)(X_0,X_n)\hfill\\
\rmapdown{\Psi_{X_{n-1},X_n}\otimes\cdots\otimes\Psi_{X_0,X_1}}&&\rmapdown{\Psi_{X_0,X_n}}\\   
\cv(B)(X^{op}_{n-1},X^{op}_n)\otimes_k\cdots \otimes_k\cv(B)(X^{op}_0,X^{op}_1)&\rightmap{b^{cv}_n}&\cv(B)(X^{op}_0,X^{op}_n).\hfill\\   
  \end{matrix}$$ 
\end{proposition}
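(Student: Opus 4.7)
\noindent\textit{Proof plan.} I would split the argument into three tasks: (i) exhibit $\Psi_{X,Y}$ as a homogeneous bijection; (ii) verify naturality in the pair $(X,Y)$ and realize the map as an isomorphism via the chosen dual basis; (iii) prove the compatibility with $b_n^{ad}$ and $b_n^{cv}$.

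\medskip
First I would check that $\Psi_{X,Y}$ is well-defined and preserves degrees on each graded piece. Fix $n \in \hueca{Z}$. The right-hand side
$$\Hom_{S \g S}(\hat{D}(B_n),\Hom_k(X^{op},Y^{op}))$$
can be computed using the decomposition of $\hat{D}(B_n)$ into homogeneous directed pieces $e_j \ast \hat{D}(B_n) \ast e_i = D(e_j B_n e_i)$. Using the directed dual basis $\{x^{\ast}\mid x \in \hueca{B}_n\}$ introduced in (\ref{N: la base de B}), any $S$-$S$-bimodule morphism $\hat{D}(B_n) \to \Hom_k(X,Y)$ is uniquely determined by its values on each basis element $x^{\ast}$, and those values must lie in $\Hom_k(Xe_{s(x)}, Ye_{t(x)})$. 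This identifies the target with $\bigoplus_{i,j}\Hom_k(Xe_i,Ye_j)\otimes_k e_j B_n e_i$, and this identification is exactly $\Psi$. Therefore $\Psi_{X,Y}$ is a bijection in each degree; its inverse is given explicitly by $\varphi \mapsto \sum_{x \in \hueca{B}_n}\varphi(x^{\ast}) \otimes x$, and the inverse relations follow from $y^{\ast}(x)=\delta_{x,y}e_{t(x)}$.

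\medskip
Next I would establish the compatibility with the higher multiplications. Take a typical sequence of homogeneous generators $f_i = g_i \otimes x_i \in \ad(B)(X_{i-1},X_i)$, with $g_i \in \Hom_k(X_{i-1}e_{s(x_i)},X_i e_{t(x_i)})$ and $x_i \in \hueca{B}$ (one factor at a time is enough by multilinearity). On the $\ad(B)$-side,
$$b_n^{ad}(f_n\otimes\cdots\otimes f_1)=g_n\cdots g_1 \otimes b_n(x_n\otimes\cdots\otimes x_1),$$
so, for $\zeta\in\hat{D}(B_{|f_1|+\cdots+|f_n|+1})$,
$$\Psi(b_n^{ad}(f_n\otimes\cdots\otimes f_1))(\zeta)=g_n\cdots g_1\,\zeta\bigl(b_n(x_n\otimes\cdots\otimes x_1)\bigr).$$
On the $\cv(B)$-side, $\hat{d}(b_n)(\zeta)=\hat{\tau}_n\hat{D}(b_n)(\zeta)$, which by (\ref{L: coefs de hat(d)(bn)}) expands as
$$\hat{d}(b_n)(\zeta)=\sum_{z=y_n^{\ast}\otimes\cdots\otimes y_1^{\ast}\in\widehat{\hueca{T}}_n} c_z^{\zeta}\,z, \qquad e_{t(y_n)}c_z^{\zeta}=\zeta\bigl(b_n(y_n\otimes\cdots\otimes y_1)\bigr).$$
Applying $\Psi(f_i)$ to a basis element $y_i^{\ast}$ gives $g_i\,e_{t(y_i)}$ when $y_i=x_i$ and zero otherwise, so after feeding $\hat{d}(b_n)(\zeta)$ into $\nu_n \circ (\Psi(f_n)\otimes\cdots\otimes\Psi(f_1))$ only the single index $(y_n,\ldots,y_1)=(x_n,\ldots,x_1)$ contributes, yielding (up to the sign $(-1)^{\lambda_n(|f_n|,\ldots,|f_1|)}$) exactly $g_n\cdots g_1\,\zeta(b_n(x_n\otimes\cdots\otimes x_1))$. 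Both sides therefore agree once the sign bookkeeping matches.

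\medskip
The main obstacle, and where I would be most careful, is precisely this sign. The sign $(-1)^{\lambda_n}$ appearing in the definition of $b_n^{cv}$ must match the Koszul sign produced when evaluating the tensor product of graded maps
$$(\Psi(f_n)\otimes\cdots\otimes\Psi(f_1))(y_n^{\ast}\otimes\cdots\otimes y_1^{\ast}),$$
since in the single nonzero contribution we have $|y_i^{\ast}|=|\Psi(f_i)|=|f_i|$, and the standard Koszul convention produces precisely the sign $\sum_{i<j}|f_i||f_j|=\lambda_n(|f_n|,\ldots,|f_1|)$ when moving each $\Psi(f_j)$ past the earlier $y_i^{\ast}$'s. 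This is exactly what is needed for the diagram to commute, and the verification is a routine (but bookkeeping-intensive) application of (\ref{L: naturalidad de hat(theta)}) and (\ref{L: naturalidad iterada de theta}). Once this is settled, commutativity of the diagram is immediate and the proposition follows.
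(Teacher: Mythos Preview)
Your proposal is correct and follows essentially the same route as the paper: both arguments identify $\Psi$ with the standard isomorphism $W\otimes V\cong\Hom_k(V^*,W)$ on each directed piece, and both verify the square by expanding $\hat d(b_n)(\zeta)$ via (\ref{L: coefs de hat(d)(bn)}) and observing that only the term $(y_n,\ldots,y_1)=(x_n,\ldots,x_1)$ survives. One small slip: you write $|y_i^*|=|f_i|$, but in fact $|y_i^*|=-|x_i|=-|f_i|$; this does not affect the sign computation (since $(-1)^{-m}=(-1)^m$), and the Koszul sign you obtain does cancel the $(-1)^{\lambda_n}$ exactly as you claim --- though this follows directly from the Koszul rule for evaluating a tensor of graded maps rather than from (\ref{L: naturalidad de hat(theta)}) or (\ref{L: naturalidad iterada de theta}).
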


\begin{proof} Given any elements $f=\sum_{x\in \hueca{B}_n}f_x\otimes x\in \ad(B)(X,Y)_n$ and $\zeta\in \hat{D}(B_n)$, the product $f_x\zeta(x)$ is taken in the $S$-$S$-bimodule $\Hom_k(X,Y)$, but belongs to $\Hom_k(X^{op},Y^{op})=\Hom_k(X,Y)^{op}$. In fact, we have 
$$\begin{matrix}
   \cv(B)(X^{op},Y^{op})_n&=&\Hom_{S\g S}(\hat{D}(B_n),\Hom_k(X^{op},Y^{op}))\hfill\\
   &=&
   \Hom_{S\g S}(\hat{D}(B_n),\Hom_k(X,Y)^{op})\hfill\\
&=&
   \Hom_{S\g S}(D(B_n),\Hom_k(X,Y)).\hfill\\
   \end{matrix}$$
In order to show that $\Psi(f)\in\cv(B)(X^{op},Y^{op})_n$, we have to verify that 
$$\Psi(f)\in \Hom_{S\g S}(D(B_n),\Hom_k(X,Y)),$$
so take any 
$\zeta\in D(B_n)=\Hom_S(\,_SB_n,\,_SS)$ and let us show that, for any $s,s'\in S$, we have  
$\Psi(f)(s\zeta s')=s[\Psi(f)(\zeta)] s'$. 
It is enough to verify this on any generator $f=f_x\otimes x$ of $\ad(B)(X,Y)_n$. We have 
$$\begin{matrix}
  \Psi(f_x\otimes x)(s\zeta s')
  &=&
  f_x[(s\zeta s')(x)]\hfill
  &=&
  f_x[\zeta(xs)s']\hfill\\
  &=&
  [f_x\zeta(xs)]s'\hfill 
  &=&
  \Psi(f_x\otimes xs)(\zeta)s' \hfill\\ 
  &=&
   \Psi(sf_x\otimes x)(\zeta)s'\hfill
  &=&
 sf_x\zeta(x)s'\hfill\\ 
  &=&
  s[\Psi(f_x\otimes x)(\zeta)]s',&\hfill\\
  \end{matrix}$$
  because $x\in e_{t(x)}Be_{s(x)}$ and $f_x\in e_{s(x)}\Hom_k(X,Y)e_{t(x)}$. 
So $\Psi_{X,Y}$ is a well defined linear map, which restricts to linear maps
$$\Hom_k(Xe_i,Ye_j)\otimes_k e_jB_ne_i\rightmap{\Psi_n^{i,j}} \Hom_k(D(e_jB_ne_i),\Hom_k(Xe_i,Ye_j)).$$
 So, in order to verify that $\Psi_{X,Y}$ is an isomorphism, we verify that each $\Psi_n^{i,j}$ is an isomorphism. For this notice that $D(e_jB_ne_i)\cong \Hom_k(e_jB_ne_i,k)=D_k(e_jB_ne_i)$ and then we have an isomorphism 
 $$\Hom_k(D(e_jB_ne_i),\Hom_k(Xe_i,Ye_j))\rightmap{\phi} \Hom_k(D_k(e_jB_ne_i),\Hom_k(Xe_i,Ye_j)).$$
 
 Recall that, if $V$, $W$ are $k$-vector spaces, there is a natural morphism $$\psi:W\otimes_k V\rightmap{}\Hom_k(V^*,W),$$ where $V^*=\Hom_k(V,k)$ and  $\psi(w\otimes v)(\zeta)=w\zeta(v)$, for $w\in W$, $v\in V$, $\zeta\in V^*$. The map $\psi$ is an isomorphism when $V$ is finite-dimensional. If we take $W=\Hom_k(Xe_i,Ye_j)$ and $V=e_jB_ne_i$, we have  that $\phi\Psi^{i,j}_n=\psi$ is an isomorphism. So $\Psi_{X,Y}$ is an isomorphism.  
 
 Now, take a sequence of right $S$-modules $X_0,X_1,\ldots,X_n$. 
 We want to show that  
 $$b_n^{cv}(\Psi_{X_{n-1},X_n}\otimes\cdots\otimes\Psi_{X_0,X_1})
 =\Psi_{X_0,X_n}b_n^{ad}.$$
 Consider  typical homogeneous generators 
 $$f_n\otimes x_n\in \ad(B)(X_{n-1},X_n),\ldots, f_1\otimes x_1\in \ad(B)(X_0,X_1),$$
 $\zeta\in D(B_n)$, and let us compute 
 $$F_n:=b_n^{cv}(
 \Psi_{X_{n-1},X_n}(f_n\otimes x_n)
 \otimes\cdots\otimes
 \Psi_{X_0,X_1}(f_1\otimes x_1))(\zeta).$$
 From (\ref{L: coefs de hat(d)(bn)}),  we have 
 $\hat{d}(b_n)(\zeta)=\sum_{z\in \widehat{\hueca{T}}_n} c_z z$, where $\widehat{\hueca{T}}_n$ consists of products of the form $z=x^*_{i_n}\otimes\cdots\otimes x^*_{i_1}$ and $e_{t(x_{i_n})}c_z=\zeta(b_n(x_{i_n}\otimes\cdots\otimes x_{i_1}))$.
 Since  
 $$F_n= (-1)^{\lambda_n(\vert x_n\vert,\ldots,\vert x_1\vert)}\nu_n(\Psi_{X_{n-1},X_n}(f_n\otimes x_n)\otimes\cdots\otimes\Psi_{X_0,X_1}(f_1\otimes x_1))\hat{d}(b_n)(\zeta),$$
 we obtain  
 $$F_n=(-1)^{\lambda_n(\vert x_n\vert,\ldots,\vert x_1\vert)}
 \sum_{z}c_z\nu_n(\Psi_{X_{n-1},X_n}(f_n\otimes x_n)\otimes\cdots\otimes\Psi_{X_0,X_1}(f_1\otimes x_1))[z].$$
 Hence, we have 
 $$\begin{matrix}
  F_n
  &=&
  \sum_{x^*_{i_n}\otimes\cdots\otimes x^*_{i_1}}c_{x^*_{i_n}\otimes\cdots\otimes x^*_{i_1}}f_nx_{i_n}^*(x_n)\cdots f_1x_{i_1}^*(x_1)\hfill\\
  &=&
   c_{x^*_n\otimes\cdots\otimes x^*_1}f_n\cdots f_1\hfill\\
   &=&
   (f_n\cdots f_1)e_{t(x_n)}c_{x^*_n\otimes\cdots\otimes x^*_1}\hfill\\  
   &=&
   (f_n\cdots f_1)\zeta(b_n(x_n\otimes\cdots\otimes x_1))\hfill\\ 
   &=&
  \Psi_{X_0,X_n}(f_n\cdots f_1\otimes b_n(x_n\otimes\cdots\otimes x_1))(\zeta)\hfill\\ 
  &=&
  \Psi_{X_0,X_n}b_n^{ad}((f_n\otimes x_n)\otimes\cdots\otimes(f_1\otimes x_1))(\zeta).\hfill\\
   \end{matrix}$$
This finishes the proof. 
 \end{proof}

 \begin{lemma}\label{L: caracterizacion de los ciclos de en tw(B)}
 Assume that $t\in \tw(B)(\underline{X},\underline{Y})_{-1}$, then $t$ is a cocycle of the cochain complex $\tw(B)(\underline{X},\underline{Y})$ if and only if, for any $\alpha\in e_j\hat{D}(B_0)e_i$,   
 we have 
 
$$\Psi(\delta_Y)(\alpha) \Psi(t)(e^*_i)=\Psi(t)(e^*_j) \Psi(\delta_X)(\alpha)+S_{\delta(\alpha)}(t),$$  
where 
$$S_{\delta(\alpha)}(t)=\sum_{s=2}^\infty\sum_{\scriptsize\begin{matrix}i_0+i_1+1=s\\ 
i_0,i_1\geq 0\end{matrix}}
\nu_s(\Psi(\delta_Y)^{\otimes i_1}\otimes \Psi(t)\otimes\Psi(\delta_X)^{\otimes i_0})[\delta(\alpha)].$$
 \end{lemma}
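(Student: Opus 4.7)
The plan is to translate the cocycle condition $b_1^{tw}(t)=0$ into the stated algebraic identity by applying the isomorphism $\Psi$ of (\ref{P: ad(B) equiv ac(B)}) to pass from $\ad(B)$-data to $\cv(B)$-data, and then evaluating at $\alpha$ using the explicit formula for $d(\alpha)$ given in (\ref{C: calculo de dif d en grados 0 y 1 usando la  diferencial delta}).

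First, I would use that $\Psi: \ad(B)(X,Y)\to \cv(B)(X^{op},Y^{op})$ is an isomorphism of graded spaces, so $b_1^{tw}(t)=0$ iff $\Psi(b_1^{tw}(t))=0$, iff $\Psi(b_1^{tw}(t))(\alpha)=0$ for every directed $\alpha\in e_j\hat{D}(B_0)e_i$. By the compatibility square of (\ref{P: ad(B) equiv ac(B)}) applied to each summand in the definition of $b_1^{tw}(t)$,
\[
\Psi(b_1^{tw}(t))=\sum_{s\geq 1}\sum_{\substack{i_0+i_1+1=s\\ i_0,i_1\geq 0}} b_s^{cv}\bigl(\Psi(\delta_Y)^{\otimes i_1}\otimes \Psi(t)\otimes \Psi(\delta_X)^{\otimes i_0}\bigr).
\]
Evaluating at $\alpha$ and using the definition of $b_s^{cv}$, together with the observation that $\lambda_s$ evaluated on the degree sequence $(0,\ldots,0,-1,0,\ldots,0)$ is zero (all products vanish), the sign is $+1$ and we get
\[
\Psi(b_1^{tw}(t))(\alpha)=\sum_{s,\,i_0,\,i_1}\nu_s\bigl(\Psi(\delta_Y)^{\otimes i_1}\otimes \Psi(t)\otimes \Psi(\delta_X)^{\otimes i_0}\bigr)\,\hat{d}(b_s)(\alpha).
\]
Since $\hat{d}(b_1)=0$, the sum effectively begins at $s=2$, and the remaining inner sum is controlled by the total differential $d(\alpha)=\sum_{s\geq 2}\hat{d}(b_s)(\alpha)$.

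The main input is then Corollary (\ref{C: calculo de dif d en grados 0 y 1 usando la  diferencial delta})(2):
\[
d(\alpha)=e_j^*\otimes\alpha-\alpha\otimes e_i^*+\delta(\alpha)+h(\alpha),\quad h(\alpha)\in {\cal N}.
\]
I would observe that any element of ${\cal N}$ carries a tensor factor of strictly negative degree in $\hat{D}(B)$, whereas $\Psi(\delta_X),\Psi(\delta_Y)$ are supported on $\hat{D}(B)_0$ and $\Psi(t)$ on $\hat{D}(B)_1$; hence every operator $\nu_s(\Psi(\delta_Y)^{\otimes i_1}\otimes\Psi(t)\otimes\Psi(\delta_X)^{\otimes i_0})$ annihilates $h(\alpha)$. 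It remains to identify the three surviving contributions. The tensor $e_j^*\otimes\alpha\in \hat{D}(B)_1\otimes\hat{D}(B)_0$ fits only the slot pattern $(i_0,i_1)=(1,0)$ of $s=2$, yielding $\Psi(t)(e_j^*)\Psi(\delta_X)(\alpha)$. Similarly $-\alpha\otimes e_i^*$ fits only $(i_0,i_1)=(0,1)$ and contributes $-\Psi(\delta_Y)(\alpha)\Psi(t)(e_i^*)$. Finally $\delta(\alpha)\in [T_S(\hat{D}(B)_0\oplus\hat{D}(J))]_1$ is by construction a sum of tensors with exactly one $\hat{D}(J)$-factor (at a variable position) and the remaining factors in $\hat{D}(B)_0$; summing the compositions over all positions and lengths reproduces $S_{\delta(\alpha)}(t)$ exactly as defined in the statement. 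Assembling,
\[
\Psi(b_1^{tw}(t))(\alpha)=\Psi(t)(e_j^*)\Psi(\delta_X)(\alpha)-\Psi(\delta_Y)(\alpha)\Psi(t)(e_i^*)+S_{\delta(\alpha)}(t),
\]
which vanishes for all $\alpha$ iff the claimed identity holds.

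The chief obstacle is bookkeeping: one must carefully respect the tensor ordering (the convention that $\delta_X$'s sit to the right of $t$ and $\delta_Y$'s to the left) so that the position of the unique $\hat{D}(B)_{-1}$-factor inside each summand of $d(\alpha)$ is matched with the correct slot index $(i_0,i_1)$. Doing so is exactly what forces the combinatorial sum that defines $S_{\delta(\alpha)}(t)$ to be reproduced on the nose from the $\delta(\alpha)$-piece of $d(\alpha)$.
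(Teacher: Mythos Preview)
Your proof is correct and follows essentially the same route as the paper's: apply $\Psi$ to the cocycle equation $b_1^{tw}(t)=0$, pass to the $\cv$-side via the compatibility square of (\ref{P: ad(B) equiv ac(B)}), and then evaluate at a directed $\alpha$ using the decomposition $d(\alpha)=e_j^*\otimes\alpha-\alpha\otimes e_i^*+\delta(\alpha)+h(\alpha)$ from (\ref{C: calculo de dif d en grados 0 y 1 usando la  diferencial delta}). You spell out two details the paper leaves implicit---the vanishing of the sign $(-1)^{\lambda_s}$ (since the degree sequence has a single nonzero entry) and the reason the ${\cal N}$-term $h(\alpha)$ contributes nothing (degree mismatch with the supports of $\Psi(\delta_X)$, $\Psi(\delta_Y)$, $\Psi(t)$)---but the strategy and computations are otherwise identical.
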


 \begin{proof} We have $t\in \ad(B)(X,Y)_{-1}$. By definition, the 
 $\tw(B)$-morphism $t$ is a $-1$-cocycle iff we have 
  $$b_1^{tw}(t)=\sum_{i_0,i_1\geq 0}
b^{ad}_{i_0+i_1+1}(\delta_Y^{\otimes i_1}\otimes t\otimes\delta_X^{\otimes i_0})=0.$$
 From (\ref{P: ad(B) equiv ac(B)}), the preceding formula holds iff the following holds 
$$\begin{matrix}0&=&\sum_{i_0,i_1\geq 0}
\Psi b^{ad}_{i_0+i_1+1}(\delta_Y^{\otimes i_1}\otimes t\otimes\delta_X^{\otimes i_0})\hfill\\
&=&
\sum_{i_0,i_1\geq 0}
b^{cv}_{i_0+i_1+1}(\Psi(\delta_Y)^{\otimes i_1}\otimes \Psi(t)\otimes\Psi(\delta_X)^{\otimes i_0}).\hfill\\
\hfill\\
\end{matrix}$$
For any directed element $\alpha\in e_j\hat{D}(B_0)e_i$, we have 
 $$
\sum_{i_0,i_1\geq 0}
\nu_{i_0+i_1+1}(\Psi(\delta_Y)^{\otimes i_1}\otimes \Psi(t)\otimes\Psi(\delta_X)^{\otimes i_0})\hat{d}(b_{i_0+i_1+1})(\alpha)=0,$$
or, equivalently, 
$$
\sum_{s=2}^\infty\sum_{\scriptsize\begin{matrix}i_0+i_1+1=s\\ 
i_0,i_1\geq 0\end{matrix}}
\nu_s(\Psi(\delta_Y)^{\otimes i_1}\otimes \Psi(t)\otimes\Psi(\delta_X)^{\otimes i_0})d(\alpha)=0.$$
From (\ref{C: calculo de dif d en grados 0 y 1 usando la  diferencial delta}), we have $d(\alpha)=e^*_j\otimes \alpha-\alpha\otimes e_i^*+\delta(\alpha)+h(\alpha)$, with $h(\alpha)\in {\cal N}$. Evaluating at this expression of $d(\alpha)$ the preceding equation, we obtain 
$$\begin{matrix}
 0&=& \nu_2(\Psi(t)\otimes \Psi(\delta_X))(e_j^*\otimes \alpha)-\nu_2(\Psi(\delta_Y)\otimes \Psi(t))(\alpha\otimes e_i^*)+
  S_{\delta(\alpha)}(t)\hfill\\
  &=&
  \nu_2(\Psi(t)(e_j^*)\otimes \Psi(\delta_X)(\alpha)) -\nu_2(\Psi(\delta_Y)(\alpha)\otimes \Psi(t)(e_i^*))+
  S_{\delta(\alpha)}(t),\hfill\\
  \end{matrix}$$
where $S_{\delta(\alpha)}$ is defined in the statement of this lemma. 
 Then, we get 
$$\Psi(\delta_Y)(\alpha) \Psi(t)(e_i^*)=\Psi(t)(e_j^*) \Psi(\delta_X)(\alpha)+S_{\delta(\alpha)}(t),$$
as we wanted to show. 
 \end{proof}

 \begin{lemma}\label{L: tw(B) es homologicamente unitaria}
  The $b$-category $\tw(B)$ is cohomologically unitary, as in (\ref{D: H-1(cal B)}), so $H^{-1}(\tw(B))=Z_{-1}(\tw(B))$ is a category. Moreover, for each object $\underline{X}$ the identity morphism 
 $1_{\underline{X}}$ in $Z_{-1}(\tw(B))(\underline{X},\underline{X})$ of $\underline{X}$ satisfies $\Psi(1_{\underline{X}})(e_i^*)=id_{Xe_i}$, for $i\in {\cal P}$, and $\Psi(1_{\underline{X}})(\xi)=0$, for  $\xi\in\hat{D}(J)$. 
 \end{lemma}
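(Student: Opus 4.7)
The natural candidate for the identity is
$$1_{\underline{X}} := \sum_{i \in {\cal P}} id_{Xe_i} \otimes e_i \in \ad(B)(X,X)_{-1},$$
where $e_i \in B_{-1}$ makes the degree come out to $-1$. The two formulas for $\Psi(1_{\underline{X}})$ are then immediate from the definition of $\Psi$ in (\ref{P: ad(B) equiv ac(B)}) together with the dual basis property: $e_j^*(e_i)=\delta_{i,j}e_i$, and every $\xi \in \hat{D}(J)$ annihilates all $e_j$'s by definition of the basis in (\ref{N: la base de B}). So the content of the lemma reduces to checking that (a) $1_{\underline{X}} \in Z_{-1}(\tw(B))(\underline{X},\underline{X})$, and (b) for every $t \in Z_{-1}(\tw(B))(\underline{X},\underline{Y})$, both $b_2^{tw}(1_{\underline{Y}} \otimes t)$ and $b_2^{tw}(t \otimes 1_{\underline{X}})$ equal $t$ modulo coboundaries.

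The key structural input is the \emph{strictness} of the $A_\infty$-structure assumed in (\ref{R: merkulov}): translated through the bar construction, it gives $b_1 = 0$ and
$$b_n(y_n \otimes \cdots \otimes y_1) = 0 \quad \hbox{for all } n \geq 3, \hbox{ whenever some } y_u = e_j.$$
From $m_2$ being the Yoneda product one computes (bookkeeping the Koszul sign from $\sigma^{\otimes 2}$) that
$$b_2(e_j \otimes y) = y \hbox{ if } t(y)=j, \qquad b_2(y \otimes e_j) = (-1)^{|y|+1} y \hbox{ if } s(y)=j,$$
and both are zero otherwise.

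For (a), I expand $b_1^{tw}(1_{\underline{X}}) = \sum_{i_0,i_1 \geq 0} b_{i_0+i_1+1}^{ad}(\delta_X^{\otimes i_1} \otimes 1_{\underline{X}} \otimes \delta_X^{\otimes i_0})$. The $n=1$ term is $b_1^{ad}(1_{\underline{X}}) = \sum_i id_{Xe_i} \otimes b_1(e_i) = 0$. Every $n \geq 3$ term vanishes because the input tensor contains the $B_{-1}$-factor $e_i$ supplied by $1_{\underline{X}}$, and strictness kills all such higher multiplications. Only the two $n=2$ summands remain, and because $\delta_X = \sum_x f_x \otimes x$ with each $x \in \hueca{B}_0$ of degree $0$, the formulas for $b_2$ give $b_2^{ad}(1_{\underline{X}} \otimes \delta_X) = \delta_X$ and $b_2^{ad}(\delta_X \otimes 1_{\underline{X}}) = (-1)^{0+1}\delta_X = -\delta_X$, which cancel.

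For (b), the same reasoning — strictness annihilating every $b_n^{ad}$ with $n \geq 3$ applied to a tensor containing $1_{\underline{X}}$, and $b_1 = 0$ — leaves only a single surviving summand $b_2^{ad}(t \otimes 1_{\underline{X}})$ in the expansion of $b_2^{tw}(t \otimes 1_{\underline{X}})$. Writing $t = \sum_y g_y \otimes y$ with $|y| = -1$, the sign $(-1)^{|y|+1} = 1$ yields $b_2^{ad}(t \otimes 1_{\underline{X}}) = t$ exactly; the mirror argument for $b_2^{tw}(1_{\underline{Y}} \otimes t)$ uses $b_2(e_j \otimes y) = y$ and also gives $t$ exactly. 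Hence the compositions agree with $t$ even before passing to cohomology, and $1_{\underline{X}}$ is the identity of $\underline{X}$ in $H^{-1}(\tw(B))$. The only mildly delicate point of the proof is pinning down the sign in $b_2(y \otimes e_j)$ via the commutative square of (\ref{P: A-infinite cats vs b-cats}); once that is settled, everything collapses to a two-line cancellation.
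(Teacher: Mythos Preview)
Your proof is correct and follows essentially the same approach as the paper: define $1_{\underline{X}}=\sum_i id_{Xe_i}\otimes e_i$, invoke strictness to kill all $b_n^{ad}$ with $n\geq 3$ containing an $e_j$-factor, and reduce to the $b_2^{ad}$ computation. The only difference is in part (a): the paper verifies $b_1^{tw}(1_{\underline{X}})=0$ by passing through the characterization of cocycles via $\Psi$ in Lemma~(\ref{L: caracterizacion de los ciclos de en tw(B)}) (checking the formula $\Psi(\delta_X)(\alpha)\Psi(\tau_X)(e_i^*)=\Psi(\tau_X)(e_j^*)\Psi(\delta_X)(\alpha)+S_{\delta(\alpha)}(\tau_X)$, which collapses because $\Psi(\tau_X)$ vanishes on $\hat D(J)$), whereas you compute the two surviving $n=2$ summands $b_2^{ad}(1_{\underline X}\otimes\delta_X)=\delta_X$ and $b_2^{ad}(\delta_X\otimes 1_{\underline X})=-\delta_X$ directly and cancel them. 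Your route is slightly more self-contained; the paper's route reuses a lemma it needs anyway for Theorem~(\ref{T: tw(B) equiv (cal A,I)-Mod}).
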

 
 \begin{proof} Take $\underline{X}=(X,\delta_X)\in \Ob(\tw(B))$ and consider $\tau_{X}:=\sum_{j\in {\cal P}} id_{Xe_j}\otimes e_j$, thus $\tau_{X}\in \tw(B)(\underline{X},\underline{X})_{-1}=\ad(B)(X,X)_{-1}$. From (\ref{L: caracterizacion de los ciclos de en tw(B)}),   in order to prove that $b_1^{tw}(\tau_X)=0$, we have to show, for  
 $\alpha\in e_j\hat{D}(B_0)e_i$, the following equality  
 $$\Psi(\delta_X)(\alpha) \Psi(\tau_X)(e_i^*)=\Psi(\tau_X)(e_j^*) \Psi(\delta_X)(\alpha)+S_{\delta(\alpha)}(\tau_X).$$
 Recall that $\Psi(\tau_X)\in \Hom_{S\g S}(\hat{D}(B_{-1}),\Hom_k(X^{op},X^{op}))$. For $\xi\in \hat{D}(J)$, we get  
 $$\Psi(\tau_X)(\xi)=\Psi(\sum_{j\in {\cal P}}id_{Xe_j}\otimes e_j)(\xi)=\sum_{j\in {\cal P}}id_{Xe_j}\xi(e_i)=0,$$ and, for any $i\in {\cal P}$, we have   
$\Psi(\tau_X)(e_i^*)= \Psi(\sum_jid_{Xe_j}\otimes e_j)(e_i^*)=id_{Xe_i}$.
 These equalities imply the wanted equality, because, from (\ref{L: coeficientes de delta(x*)}), it follows that $S_{\delta(\alpha)}(\tau_X)=0$. Hence, we have $\tau_X\in Z_{-1}(\tw(B))(\underline{X},\underline{X})$.

 Now, given $t\in \tw(B)(\underline{X},\underline{Y})_{-1}=\ad(B)(X,Y)_{-1}$, we  want to show that $\tau_Y t=t$ and $t \tau_X=t$ in $H^{-1}(\tw(B))=Z_{-1}(\tw(B))$. That is $b^{tw}(\tau_Y\otimes t)=t$ and $b^{tw}(t\otimes \tau_X)=t$. 
 In order to prove that $t \tau_X=t$, we have to show that 
 $$\sum_{i_0,i_1,i_2\geq 0}b^{ad}_{i_0+i_1+i_2+2}(\delta_Y^{\otimes i_2}\otimes t\otimes\delta_X^{\otimes i_1}\otimes\tau_X\otimes\delta_X^{\otimes i_0})=t.$$
 Recall that, for $n\geq 3$, we have $b_n(z_n\otimes\cdots\otimes z_2\otimes z_1)=0$ when some factor $z_i\in \{e_j\mid j\in {\cal P}\}$, see (\ref{R: merkulov}). 
 Then we are reduced to show that 
 $b_2^{ad}(t\otimes\tau_X)=t$. The last equality holds for the typical generators $f_s\otimes x_s\in \ad(B)(X,Y)_{-1}$, 
 where $x_s\in e_vB_{-1}e_u$ and 
 $f_s\in \Hom_k(Xe_u,Ye_v)$, for some $u,v\in {\cal P}$, because 
 $$\begin{matrix}
 b^{ad}_2((f_s\otimes x_s)\otimes \tau_X)
 &=&
 \sum_jb^{ad}_2((f_s\otimes x_s)\otimes  (id_{Xe_j}\otimes e_j))\hfill\\
 &=&
 \sum_jf_s id_{Xe_j}\otimes b_2(x_s\otimes e_j)\hfill\\
 &=&
 f_s\otimes ((-1)^{1+\vert x_s\vert}\sigma m_2(\sigma^{-1}(x_s)\otimes\sigma^{-1}(e_u))\hfill\\
 &=&f_s\otimes x_se_u=f_s\otimes x_s.\hfill\\
 \end{matrix}$$
 Hence $b_2^{ad}(t\otimes \tau_X)=t$, and $t\tau_X=t$ in $Z_{-1}(\tw(B))$. Similarly, we have that 
 $b_2^{ad}(\tau_Y\otimes t)=t$, and then $\tau_Yt=t$ in $Z_{-1}(\tw(B))$. 
 \end{proof}

\section{The category of $({\cal A}(\Delta),I)$-modules}\label{de H(tw(B)) en A(Delta)-mod}
 
 In this section we describe an equivalence of categories from $H^{-1}(\tw(B))$ onto $({\cal A}(\Delta),I)$-Mod. 
  We recall from \cite{bpsqh}\S 2, the following.
 
 \begin{definition}\label{D: la cat de mod's de cal A(Delta)}
  Assume that ${\cal A}={\cal A}(\Delta)$ is the weak ditalgebra associated to $\Delta$, which is interlaced with the ideal $I$, as in (\ref{D: def del bocs A(Delta)}), see (\ref{D: weak ditalgebra}) and (\ref{D: weak ditalg interlaced an ideal}).   
The category $({\cal A}(\Delta),I)\g\Mod$ is the following. 
The class of objects of $({\cal A},I)\g\Mod$ is the class of left $A(\Delta)$-modules 
$M$ 
such that $IM=0$; given two $A(\Delta)$-modules $M$ and $N$ annihilated by $I$,
the set of morphisms $\Hom_{({\cal A},I)}(M,N)$ from $M$ to $N$ in $({\cal A},I)\g\Mod$ is, by definition, 
the 
collection of pairs $f=(f^0,f^1)$, with $f^0\in \Hom_k(M,N)$ and 
$f^1\in \Hom_{A(\Delta)\g A(\Delta)}(V,\Hom_k(M,N))$, where $V$ denotes the bimodule 
$V=A(\Delta)\otimes_S \hat{D}(J)\otimes_S A(\Delta)$, such that, for any $a\in A(\Delta)$ and $m\in M$, 
the following holds
$$af^0[m]=f^0[am]+f^1(\delta(a))[m].$$
If $f^1\in \Hom_{A(\Delta)\g A(\Delta)}(V,\Hom_k(M,N))$  and 
$g^1\in \Hom_{A(\Delta)\g A(\Delta)}(V,\Hom_k(N,L))$, we consider the morphism 
$g^1*f^1\in \Hom_{A(\Delta)\g A(\Delta)}(V^{\otimes 2},\Hom_k(M,L))$ defined, for any 
$\sum_ju_j\otimes v_j\in V\otimes_AV$, by 
$$(g^1*f^1)(\sum_ju_j\otimes v_j)=\sum_j g^1(u_j)f^1(v_j):M\rightmap{}L.$$

Given   $f=(f^0,f^1)\in \Hom_{({\cal A},I)}(M,N)$ and 
$g=(g^0,g^1)\in \Hom_{({\cal A},I)}(N,L)$, by definition 
$gf:=(g^0f^0,(gf)^1)\in \Hom_{({\cal A},I)}(M,L)$, where 
$$(gf)^1(v):=g^0f^1(v)+g^1(v)f^0+(g^1*f^1)(\delta(v)),\hbox{ for } v\in V.$$
\end{definition}

 It is convenient to think an $A(\Delta)$-module $M$ as a pair $(M,\rho_M)$, where $M$ is a left $S$-module and $\rho_M:A(\Delta)\rightmap{}\End_k(M)$ is a morphism of $S$-algebras. Moreover, such $\rho_M$ is uniquely determined by a morphism of $S$-$S$-bimodules, which we denote with the same symbol, 
 $\rho_M:\hat{D}(B_0)\rightmap{}\Hom_k(\,_SM,\,_SM)$. In the next statement we reformulate the conditions defining $({\cal A},I)\g\Mod$.
 
 \begin{lemma}\label{L: (cal A,I) reformulada}
 We have the following equivalences.
 \begin{enumerate}
  \item  An $A(\Delta)$-module $M$ satisfies $IM=0$ 
  iff  $$\sum_{n=2}^\infty\nu_n(\rho_M^{\otimes n})\hat{d}(b_{n,0})(\zeta)=0,\hbox{  for all }\zeta\in \hat{D}(B)_{-1}.$$ 
  \item Given $M,N\in ({\cal A},I)\g\Mod$, a pair $(f^0,f^1)$, where $f^0\in \Hom_k(M,N)$ and 
$f^1\in \Hom_{A(\Delta)\g A(\Delta)}(V,\Hom_k(M,N))$, is a morphism in 
$({\cal A},I)\g\Mod$ iff for any directed element $\alpha\in \hat{D}(B_0)$, we have  
$$\rho_N(\alpha)f_{s(\alpha)}^0=f_{t(\alpha)}^0\rho_M(\alpha)+\sum_{n=2}^\infty\sum_{i_0+i_1+1=n}\nu_n(\rho_N^{\otimes i_1}\otimes f^1\otimes \rho_M^{\otimes i_0})(\delta(\alpha)),$$
where $f^0_i=e_if^0e_i:e_iM\rightmap{}e_iN$ denotes the restriction of $f^0$, for  $i\in {\cal P}$. 
\item Given morphisms $f=(f^0,f^1):(M,\rho_M)\rightmap{}(N,\rho_N)$ and $g=(g^0,g^1):(N,\rho_N)\rightmap{}(L,\rho_L)$ in $({\cal A},I)\g\Mod$, then 
$$(gf)^1=g^0f^1+g^1f^0-
\sum_{n=2}^\infty\sum_{\scriptsize\begin{matrix}i_0+i_1+i_2+2=n\\ 
i_0,i_1,i_2\geq 0\end{matrix}}\nu_n(\rho_L^{\otimes i_2}\otimes g^1\otimes \rho_N^{\otimes i_1}\otimes f^1\otimes\rho_M^{\otimes i_0})\delta.$$ 
 \end{enumerate}
 \end{lemma}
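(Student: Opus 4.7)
\medskip
\noindent\textbf{Proof proposal.}

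The strategy is to unfold each of the abstract conditions appearing in Definition (\ref{D: la cat de mod's de cal A(Delta)}) using the explicit descriptions of the ideal $I$, the map $\beta$, and the differential $\delta$ developed in the previous sections. In each item the same mechanism is at play: $A(\Delta)=T_S(\hat{D}(B)_0)$ acts on $M$ via the algebra morphism $\rho_M$, so the action on $M$ of a pure tensor $\alpha_n\otimes\cdots\otimes\alpha_1$ is the composition $\rho_M(\alpha_n)\cdots\rho_M(\alpha_1)$, that is to say $\nu_n(\rho_M^{\otimes n})(\alpha_n\otimes\cdots\otimes\alpha_1)$; this is how the evaluation maps $\nu_n$ enter.

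For item (1), since $I$ is the two-sided ideal of $A(\Delta)$ generated by $\Im\beta$ and $M$ is an $A(\Delta)$-module, the condition $IM=0$ is equivalent to $\beta(\zeta)\cdot m=0$ for all $\zeta\in\hat{D}(B_1)=\hat{D}(B)_{-1}$ and all $m\in M$. I would then apply (\ref{P: donde se define beta}), which states $\beta(\zeta)=\sum_{n\geq 2}\hat{d}(b_{n,0})(\zeta)\in T_S(\hat{D}(B)_0)$. Translating the action on $M$ via $\rho_M$ as described above, $\beta(\zeta)\cdot m=\sum_{n\geq 2}\nu_n(\rho_M^{\otimes n})(\hat{d}(b_{n,0})(\zeta))(m)$, and ranging over $m$ gives the stated equivalence.

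For item (2), I would plug a directed element $\alpha\in e_j\hat{D}(B_0)e_i$ into the defining identity $af^0[m]=f^0[am]+f^1(\delta(a))[m]$. Specialising first to $a=e_i\in S\subset A(\Delta)$, for which $\delta(e_i)=0$, shows that $f^0$ is $S$-linear; so for a general directed $\alpha$ the two side-terms translate to $\rho_N(\alpha)f^0_{s(\alpha)}(m)$ and $f^0_{t(\alpha)}\rho_M(\alpha)(m)$. The crux is the term $f^1(\delta(\alpha))(m)$: by (\ref{L: coeficientes de delta(x*)}), $\delta(\alpha)$ is a finite sum of pure tensors in $(\hat{D}(B)_0\oplus\hat{D}(J))^{\otimes n}$ of total degree one, each carrying exactly one factor from $\hat{D}(J)$. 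Using the $A(\Delta)$-bilinearity of $f^1$ on $V=A(\Delta)\otimes_S\hat{D}(J)\otimes_S A(\Delta)$, the evaluation of $f^1$ on such a tensor becomes the composition $\nu_n(\rho_N^{\otimes i_1}\otimes f^1\otimes\rho_M^{\otimes i_0})$ applied to that tensor, with $\rho_N$'s acting on the factors to the left of the $\hat{D}(J)$-piece and $\rho_M$'s on the factors to the right; this reproduces the displayed sum. The converse, that the identity for directed $\alpha$ extends to the full defining condition for arbitrary $a\in A(\Delta)$, follows by $k$-linearity together with an induction on tensor length using the Leibniz rule for $\delta$ established in (\ref{C: calculo de dif d en grados 0 y 1 usando la  diferencial delta}).

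Item (3) is a direct unfolding: one takes a pure tensor $v\in V$, substitutes $\delta(v)\in V^{\otimes_{A(\Delta)} 2}$ into the defining formula $(gf)^1(v)=g^0f^1(v)+g^1(v)f^0+(g^1*f^1)(\delta(v))$, expands $(g^1*f^1)$ using the convolution recipe of (\ref{D: la cat de mod's de cal A(Delta)}), and applies the same pure-tensor decomposition of $\delta$ as in item (2) to land on the displayed sum. The main obstacle is bookkeeping rather than conceptual: one must keep track of the signs coming from the explicit form of $\delta$ in (\ref{C: calculo de dif d en grados 0 y 1 usando la  diferencial delta}) and verify that the reduction modulo $I_V$ implicit in passing from $T(\Delta)$ to $T(\Delta)/I(\Delta)$ is consistent with the $\nu_n$-composition of operators, accounting in particular for the minus sign in front of the sum.
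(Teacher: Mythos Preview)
Your approach is essentially the same as the paper's: in each item you unfold the abstract definition by translating the $A(\Delta)$-action into compositions $\nu_n(\rho^{\otimes n})$ and decomposing $\delta(\alpha)$ or $\delta(v)$ into sums of pure tensors with a prescribed number of $\hat{D}(J)$-factors. Items (1) and (2) match the paper almost verbatim; your remark that $f^0$ is $S$-linear (by taking $a=e_i$) and your mention of a Leibniz-rule induction are a bit more explicit than the paper, which simply asserts the reduction to generators.

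There is one point in item (3) where your explanation is imprecise. You attribute the minus sign in front of the sum to ``the signs coming from the explicit form of $\delta$'' and to ``the reduction modulo $I_V$''. Neither is the actual source. The paper's computation shows that $(g^1*f^1)$ applied directly to $\delta(v)$ yields the composition $\rho_L(\gamma^3)g^1(\xi^2)\rho_N(\gamma^2)f^1(\xi^1)\rho_M(\gamma^1)$ with \emph{no} sign. The sign appears only when one rewrites this composition as $\nu_n(\rho_L^{\otimes i_2}\otimes g^1\otimes\rho_N^{\otimes i_1}\otimes f^1\otimes\rho_M^{\otimes i_0})(\delta(v))$ using the Koszul convention for the graded tensor product of maps: here $f^1$ has odd degree as a morphism $\hat{D}(J)\to\Hom_k(M,N)$, and in the evaluation it must be commuted past the degree-one element $\xi^2$, producing $(-1)^{|f^1|\cdot|\xi^2|}=-1$. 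This is what the paper means by ``since the morphism $f^1$ has degree 1''. Apart from this clarification, your proposal is correct.
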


 \begin{proof} (1): Given an $A(\Delta)$-module $M$, the action of some element $\alpha\in\hat{D}(B_0)$ on $m\in M$ is given by $\alpha m=\rho_M(\alpha)(m)$. For an element $\gamma_s\in \hat{D}(B_0)^{\otimes s}$ of the form $\gamma_s=\alpha_s\otimes\cdots\otimes \alpha_1$, with $\alpha_1,\ldots,\alpha_s\in \hat{D}(B_0)$, we have 
 $$\gamma_s m=[\rho_M(\alpha_s)\cdots\rho_M(\alpha_1)](m)=
 \nu_s\rho_M^{\otimes s}(\alpha_s\otimes\cdots\otimes \alpha_1)(m).$$
 Thus, if $\gamma=\sum_s\gamma_s$, we have $\gamma m=\sum_s\nu_s\rho_M^{\otimes s}(\gamma_s)[m]$. Clearly, $IM=0$ iff 
 $\beta(\zeta)M=0$, for all $\zeta\in \hat{D}(B)_{-1}$. This is equivalent to 
 $\rho_M(\beta(\zeta))=0$. Recall that 
 $\beta(\zeta)=\sum_{n=2}^\infty \hat{d}(b_{n,0})(\zeta)$, a finite sum with 
 $\hat{d}(b_{n,0})(\zeta)\in \hat{D}(B_0)^{\otimes n}$. So, 
 $\rho_M(\beta(\zeta))=\sum_{n=2}^\infty\nu_n\rho_M^{\otimes n}\hat{d}(b_{n,0})(\zeta)$ and (1) holds. 
 
 \medskip
 (2): Take $f^0\in \Hom_k(M,N)$ and 
$f^1\in \Hom_{A(\Delta)\g A(\Delta)}(V,\Hom_k(M,N))$. The morphism $f^1$ is uniquely determined by its restriction $f^1:\hat{D}(J)\rightmap{}\Hom_k(M,N)$, by the formula 
$$f^1(\gamma_r\otimes \xi\otimes \gamma_t)
=
(\nu_r\rho_N^{\otimes r}\gamma_r) f^1(\xi)(\nu_t\rho_M^{\otimes t}\gamma_t)=\nu_{r+1+t}(\rho_N^{\otimes r}\otimes f^1\otimes \rho_M^{\otimes t})(\gamma_r\otimes\xi\otimes\gamma_t),$$
for $\gamma_r\in \hat{D}(B_0)^{\otimes r}$, $\xi\in \hat{D}(J)$ and $\gamma_t\in\hat{D}(B_0)^{\otimes t}$.  The condition for $(f^0,f^1)$ to be a morphism from $M$ to $N$ in $({\cal A},I)\g\Mod$ is equivalent to 
$$\alpha f^0[m]=f^0[\alpha m]+f^1(\delta(\alpha ))[m],$$
for any $\alpha\in \hat{D}(B_0)$ and $m\in M$. We can write  $\delta(\alpha)=\sum_n\delta(\alpha)_n$, where 
$$\delta(\alpha)_n=
\sum_i\gamma_i^2\otimes \xi_i\otimes \gamma_i^1,$$
with $\gamma_i^2\in \hat{D}(B_0)^{\otimes r}$, $\xi_i\in \hat{D}(J)$,  $\gamma_i^1\in \hat{D}(B_0)^{\otimes t}$ satisfying that 
$r+1+t=n$. Then, we have 
$$f^1(\delta(\alpha))=\sum_{n=2}^{\infty}\sum_{r+1+t=n}\nu_n
(\rho_N^{\otimes r}\otimes f^1\otimes \rho_M^{\otimes t})(\delta(\alpha)).$$
Then, (2) holds.

\medskip
(3): Consider morphisms $f=(f^0,f^1):(M,\rho_M)\rightmap{}(N,\rho_N)$ and $g=(g^0,g^1):(N,\rho_N)\rightmap{}(L,\rho_L)$ in $({\cal A},I)\g\Mod$. Given $v\in V$, we can write $\delta(v)=\sum_n\delta(v)_n$, where 
$$\delta(v)_n=
\sum_i
\gamma_i^3\otimes \xi_i^2\otimes\gamma_i^2\otimes \xi_i^1\otimes\gamma_i^1,$$
with $\gamma_i^3\in \hat{D}(B_0)^{\otimes r}$, $\gamma_i^2\in \hat{D}(B_0)^{\otimes s}$, $\gamma_i^1\in \hat{D}(B_0)^{\otimes t}$, and $\xi^1_i,\xi_i^2\in \hat{D}(J)$, satisfying that $r+s+t+2=n$. Then, since the morphism $f^1\in \Hom_{S\g S}(\hat{D}(J),\Hom_k(M,N))$ has degree 1, we obtain 
$$(g^1*f^1)(\delta(v))=-\sum_{n=2}^{\infty}\sum_{r+s+t+2=n}\nu_n
(\rho_L^{\otimes r}\otimes g^1\otimes \rho_N^{\otimes s}\otimes f^1\otimes \rho_M^{\otimes t})(\delta(v)).$$
Then, we get the formula in (3).
 \end{proof}

\begin{theorem}\label{T: tw(B) equiv (cal A,I)-Mod}
 Let $A$ be the Yoneda $A_\infty$-algebra associated to $\Delta$, $B$ its bar construction, and ${\cal A}=({\cal A}(\Delta),\delta)$ the corresponding weak ditalgebra with the ideal $I$ fixed in  (\ref{P: I entrelazado}). Then, there is an equivalence of categories 
 $$H^{-1}(\tw(B))\simeq ({\cal A},I)\g\Mod.$$
 Since $\tw(B)(\underline{X},\underline{Y})_n=\ad(B)(X,Y)_n=0$, for $n<-1$, the category $H^{-1}(\tw(B))$ coincides with the category 
 $Z_{-1}(\tw(B))$ which has the same objects than $\tw(B)$, and the morphisms $t:\underline{X}\rightmap{}\underline{Y}$ 
 of $Z_{-1}(\tw(B))$ are the elements in $\ad(B)(X,Y)_{-1}$ such that 
 $$b_1^{tw}(t)=\sum_{i_0,i_1\geq 0}
b^{ad}_{i_0+i_1+1}(\delta_Y^{\otimes i_1}\otimes t\otimes\delta_X^{\otimes i_0})=0.$$
The composition of two morphisms $t_1:\underline{X}_0\rightmap{}\underline{X}_1$ and $t_2:\underline{X}_1\rightmap{}\underline{X}_2$ 
 in $Z_{-1}(\tw(B))$ is given by 
 $$t_2t_1=b_2^{tw}(t_2\otimes t_1)=\sum_{i_0,i_1,i_2\geq 0}b^{ad}_{i_0+i_1+i_2+2}(\delta_{X_2}^{\otimes i_2}\otimes t_2\otimes \delta_{X_1}^{\otimes i_1}\otimes t_1\otimes\delta_{X_0}^{\otimes i_0}).$$
 
 The equivalence functor $M:Z_{-1}(\tw(B))\rightmap{}({\cal A},I)\g\Mod$ is defined as follows. Given $\underline{X}\in Z_{-1}(\tw(B))$, its image is the $A(\Delta)$-module $M(\underline{X})=(X^{op},\Psi(\delta_X))$, where $\Psi$ is the isomorphism introduced in (\ref{P: ad(B) equiv ac(B)}). Given a morphism  $t:\underline{X}\rightmap{}\underline{Y}$ 
 in $Z_{-1}(\tw(B))$, its image $M(t)=(M(t)^0,M(t)^1):M(\underline{X})\rightmap{}M(\underline{Y})$ is given by  
 $$M(t)^0=\Psi(t)(\sum_{j\in {\cal P}}  e^*_j) \hbox{ \ and  \ } M(t)^1(\xi)=\Psi(t)(\xi), \hbox{ for } \xi\in \hat{D}(J).$$
\end{theorem}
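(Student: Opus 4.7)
The plan is to verify separately that $M$ is well-defined on objects, on morphisms, preserves identities and composition, and is essentially surjective and fully faithful. Throughout, the two key tools are the isomorphism $\Psi_{X,Y}:\ad(B)(X,Y)\rightmap{}\cv(B)(X^{op},Y^{op})$ of Proposition~(\ref{P: ad(B) equiv ac(B)}), which converts the $b^{ad}$-formulas into $b^{cv}$-formulas, and the reformulation of $({\cal A},I)\g\Mod$ furnished by Lemma~(\ref{L: (cal A,I) reformulada}). Along the way, one uses that $\Psi(\delta_X)$ and $\Psi(t)|_{\hat{D}(J)}$ live in degrees $0$ and $-1$ respectively, so when paired with $\hat{d}(b_s)$ via $\nu_s$ (composition), only the components $\hat{d}(b^0_{0,\ldots,0})=\hat{d}(b_{s,0})$ and $\hat{d}(b^0_{\ldots,1,\ldots})$ survive, by Lemma~(\ref{L: formula para hat(d)(bn)}).

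For well-definedness on objects, the Maurer--Cartan equation $\sum_{s\geq 1}b_s^{ad}((\delta_X)^{\otimes s})=0$ translates via $\Psi$ into $\sum_{s\geq 1}b_s^{cv}(\Psi(\delta_X)^{\otimes s})=0$. Evaluating both sides on a directed element $\zeta\in \hat{D}(B)_{-1}=\hat{D}(B_1)$ and discarding the components of $\hat{d}(b_s)(\zeta)$ on which $\Psi(\delta_X)^{\otimes s}$ vanishes for degree reasons, one is left with $\sum_{s\geq 2}\nu_s(\Psi(\delta_X)^{\otimes s})\hat{d}(b_{s,0})(\zeta)=0$, which, after recognizing $\beta(\zeta)=\sum_{s\geq 2}\hat{d}(b_{s,0})(\zeta)$ from Proposition~(\ref{P: donde se define beta}), is exactly the condition in Lemma~(\ref{L: (cal A,I) reformulada})(1) that $I\cdot M(\underline{X})=0$. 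For well-definedness on morphisms, we split $\Psi(t)$ along the decomposition $\hat{D}(B)_1=\bigl(\bigoplus_{i\in {\cal P}}ke_i^*\bigr)\oplus \hat{D}(J)$, which gives precisely $M(t)^0$ and $M(t)^1$. The cocycle condition $b_1^{tw}(t)=0$, as reformulated in Lemma~(\ref{L: caracterizacion de los ciclos de en tw(B)}), is identical to the morphism condition in Lemma~(\ref{L: (cal A,I) reformulada})(2). Preservation of identities is immediate from Lemma~(\ref{L: tw(B) es homologicamente unitaria}): the unit $\tau_X$ satisfies $\Psi(\tau_X)(e_i^*)=id_{Xe_i}$ and $\Psi(\tau_X)(\xi)=0$ for $\xi\in \hat{D}(J)$, hence $M(\tau_X)=(id_{X^{op}},0)$.

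The main obstacle, and the step that consumes the bulk of the argument, is the preservation of composition. The composite $b_2^{tw}(t_2\otimes t_1)$ is the infinite sum displayed in the statement, and after applying $\Psi$ and the multiplicative property of Proposition~(\ref{P: ad(B) equiv ac(B)}), its image is a corresponding sum of $b_n^{cv}$-terms involving $\Psi(\delta_{X_0}), \Psi(t_1), \Psi(\delta_{X_1}), \Psi(t_2), \Psi(\delta_{X_2})$. One must evaluate this sum on the two distinguished kinds of inputs: on $e_j^*$ to obtain $M(t_2t_1)^0$, and on $\xi\in \hat{D}(J)$ to obtain $M(t_2t_1)^1$. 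The key point is that Corollary~(\ref{C: calculo de dif d en grados 0 y 1 usando la  diferencial delta})(1)-(3) expresses $d(e_j^*)=e_j^*\otimes e_j^*+h(e_j^*)$ and $d(\xi)=e_j^*\otimes\xi+\xi\otimes e_i^*+\delta(\xi)+h(\xi)$ with $h(-)\in{\cal N}$, while the strictness of the $A_\infty$-structure (Remark~(\ref{R: merkulov})) guarantees that $b_n$ vanishes on any tensor having some $e_s$-entry for $n\geq 3$, and a short computation shows the same vanishing property for the induced $\hat{d}(b_n)$ applied to basic elements with too many $e_s^*$'s. This kills all the "garbage" contributions coming from $h(e_j^*)$, $h(\xi)$, and the cross-terms between $\Psi(t)$ and the $e_s^*$-components of $\Psi(\delta_X)$-type insertions. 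What remains on the $e_j^*$ input is just the ordinary composition $M(t_2)^0 M(t_1)^0$, and on the $\xi$ input one recovers precisely the formula of Lemma~(\ref{L: (cal A,I) reformulada})(3).

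Finally, to show $M$ is an equivalence: faithfulness and fullness on Hom-spaces follow from $\Psi$ being a linear isomorphism together with the observation that the decomposition $\hat{D}(B)_1=\bigoplus_i ke_i^*\oplus \hat{D}(J)$ identifies $\Psi(t)$ with the pair $(M(t)^0,M(t)^1)$ bijectively. For essential surjectivity, given $(M,\rho_M)\in({\cal A},I)\g\Mod$, set $X=M^{op}$ and define $\delta_X\in \ad(B)(X,X)_0$ via $\Psi(\delta_X):=\rho_M$; the Maurer--Cartan condition is equivalent to $I\cdot M=0$ by Step 1, and the filtration condition in Proposition~(\ref{P: tw(B)})(1) holds because Lemma~(\ref{L: sobre elementos homogeneos de TS(DB)}) forces $\hat{D}(B_0)^{\otimes n}=0$ for $n>\ell=\vert \overline{\cal P}\vert$, making $A(\Delta)$ finite-dimensional with nilpotent radical of index at most $\ell$; the required filtration of $X$ is then given by the radical powers of this action. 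This completes the equivalence, and composing with the identification $H^{-1}(\tw(B))=Z_{-1}(\tw(B))$ (which holds since $\ad(B)(X,Y)_n=0$ for $n<-1$) gives the theorem.
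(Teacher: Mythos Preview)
Your proposal is correct and follows essentially the same five-step architecture as the paper's proof: objects via Lemma~(\ref{L: (cal A,I) reformulada})(1), morphisms via Lemma~(\ref{L: caracterizacion de los ciclos de en tw(B)}), composition via Corollary~(\ref{C: calculo de dif d en grados 0 y 1 usando la  diferencial delta}) and Lemma~(\ref{L: (cal A,I) reformulada})(3), identities via Lemma~(\ref{L: tw(B) es homologicamente unitaria}), and then fullness/faithfulness/density exactly as the paper does, including the radical filtration for the twisting condition.

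One small sharpening for the composition step: the reason the terms $h(e_j^*),h(\xi)\in{\cal N}$ die when fed into $\nu_s(\rho_2^{\otimes i_2}\otimes\Psi(t_2)\otimes\rho_1^{\otimes i_1}\otimes\Psi(t_1)\otimes\rho_0^{\otimes i_0})$ is a pure support argument rather than strictness of $b_n$: each $\rho_u=\Psi(\delta_{X_u})$ is only defined on $\hat{D}(B)_0$ and each $\Psi(t_u)$ only on $\hat{D}(B)_1$, so any tensor factor landing in $\hat{D}(B)_{-j}$ with $j\geq 1$ (which is what membership in ${\cal N}$ forces) is annihilated. Strictness has already been used upstream, in the proof of Proposition~(\ref{P: diferencial de basicos}), to guarantee that the non-${\cal N}$ part of $d(e_i^*)$ and $d(\xi)$ has the simple shape you quote. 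Also, your phrase ``the $e_s^*$-components of $\Psi(\delta_X)$-type insertions'' is a slip: $\Psi(\delta_X)$ lives on $\hat{D}(B_0)$, which contains no $e_s^*$. With that adjustment, your outline matches the paper line by line.
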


\begin{proof} \emph{Step 1:} According to (\ref{L: (cal A,I) reformulada}), in order to show that $IM(\underline{X})=0$, we have to prove that, for any $\zeta\in \hat{D}(B)_{-1}$, we have $\sum_{s=2}^\infty\nu_s(\rho_M^{\otimes s})\hat{d}(b_{s,0})(\zeta)=0$. But, since $\underline{X}$ is an object of $\tw(B)$, it satisfies $\sum_{s=1}^\infty b_s^{ad}(\delta_X^{\otimes s})=0$. From the commutativity of the square in (\ref{P: ad(B) equiv ac(B)}), using (\ref{L: formula para hat(d)(bn)}) and the fact that $\vert \delta_X\vert=0$, we obtain 
$$\begin{matrix}0&=&
\sum_{s=1}^\infty \Psi b_s^{ad}(\delta_X^{\otimes s})(\zeta)\hfill\\
&=&
\sum_{s=1}^\infty b^{cv}_s(\Psi(\delta_X)^{\otimes s})(\zeta)\hfill\\
&=&
\sum_{s=2}^\infty\nu_s(\rho_M^{\otimes s})\hat{d}(b_s)(\zeta)\hfill\\
&=&
\sum_{s=2}^\infty\nu_s(\rho_M^{\otimes s})\hat{d}(b_{s,0})(\zeta).\hfill\\
  \end{matrix}$$
So, indeed we have $M(\underline{X})\in ({\cal A},I)\g\Mod$. 

\medskip
\emph{Step 2:} Take a morphism  $t:\underline{X}\rightmap{}\underline{Y}$ 
 of $Z_{-1}(\tw(B))$. 
 From (\ref{L: caracterizacion de los ciclos de en tw(B)}), we have 
$$\Psi(\delta_Y)(\alpha) \Psi(t)(e^*_i)=\Psi(t)(e^*_j) \Psi(\delta_X)(\alpha)+S_{\delta(\alpha)}(t),$$  
for any $\alpha\in e_j\hat{D}(B_0)e_i$, where 
$$S_{\delta(\alpha)}(t)=\sum_{s=2}^\infty\sum_{\scriptsize\begin{matrix}i_0+i_1+1=s\\ 
i_0,i_1\geq 0\end{matrix}}
\nu_s(\Psi(\delta_Y)^{\otimes i_1}\otimes \Psi(t)\otimes\Psi(\delta_X)^{\otimes i_0})(\delta(\alpha)).$$

We want to show that the pair $M(t)=(M(t)^0,M(t)^1)$ satisfies the equation of (\ref{L: (cal A,I) reformulada})(2), 
where $M(t)^1$ denotes also the extension of the morphism 
$M(t)^1:\hat{D}(J)\rightmap{}\Hom_k(M(\underline{X}),M(\underline{Y}))$ to $V$.

We have that $\Psi(t)\in \Hom_{S\g S}(\hat{D}(B_{-1}),\Hom_k(X^{op},Y^{op}))$. For each $i\in {\cal P}$, we have $M(t)^0_i=e_iM(t)^0e_i=e_i\Psi(t)(\sum_je_j^*)e_i=\Psi(t)(e_i^*)$. Then, we get 
$$\Psi(\delta_Y)(\alpha) M(t)^0_i=M(t)_j^0 \Psi(\delta_X)(\alpha)+S_{\delta(\alpha)}(t),$$ 
which means that $M(t):M(\underline{X})\rightmap{}M(\underline{Y})$ is a morphism in $({\cal A},I)\g \Mod$, according to 
(\ref{L: (cal A,I) reformulada})(2).

\medskip
\emph{Step 3:} Take morphisms $t_1:\underline{X}_0\rightmap{}\underline{X}_1$ and $t_2:\underline{X}_1\rightmap{}\underline{X}_2$ 
 in $Z_{-1}(\tw(B))$, and let us show that the given association rule $M$ preserves their composition: $M(t_2t_1)=M(t_2)M(t_1)$. 
 Applying $\Psi$ to the formula of the composition $t_2t_1$ in $Z_{-1}(\tw(B))$ recalled in the statement of this theorem, we have 
 $$
 \Psi(t_2t_1)=\sum_{i_0,i_1,i_2\geq 0}\Psi(b^{ad}_{i_0+i_1+i_2+2}(\delta_{X_2}^{\otimes i_2}\otimes t_2\otimes \delta_{X_1}^{\otimes i_1}\otimes t_1\otimes\delta_{X_0}^{\otimes i_0})),$$
 which coincides with 
$$\sum_{i_0,i_1,i_2\geq 0}b^{cv}_{i_0+i_1+i_2+2}(\Psi(\delta_{X_2})^{\otimes i_2}\otimes \Psi(t_2)\otimes \Psi(\delta_{X_1})^{\otimes i_1}\otimes \Psi(t_1)\otimes\Psi(\delta_{X_0})^{\otimes i_0}).
$$
For simplicity, we write $M_u$ and $\rho_u$ instead of $M(\underline{X}_u)$ and $\Psi(\delta_{X_u})$, respectively, for $u\in \{0,1,2\}$. Then, for a basic directed element $x\in e_j\hueca{B}_{-1}e_i$, we have that $\Psi(t_2t_1)(x^*)$ coincides with 
$$\sum_{s=2}^\infty\sum_{\scriptsize\begin{matrix}i_0+i_1+i_2+2=s\\ 
i_0,i_1,i_2\geq 0\end{matrix}}(-1)^{u(i_2,i_1,i_0)}\nu_s(\rho_2^{\otimes i_2}\otimes \Psi(t_2)\otimes \rho_1^{\otimes i_1}\otimes \Psi(t_1)\otimes\rho_0^{\otimes i_0})d(x^*),$$
with  $u(i_2,i_1,i_0)=\lambda_s(0,\ldots,0,1,0,\ldots,0,1,0,\ldots,0)=1$, where the first sequence of ceros, from left to right,  has length $i_2$, the second has length $i_1$, and the third one has length $i_0$. 

If $x=e_i$, then $d(e_i^*)=e^*_i\otimes e^*_i+h(e^*_i)$, with $h(e_i^*)\in {\cal N}$. It follows that 
$M(t_2t_1)^0_i=\Psi(t_2t_1)(e_i^*)=-\nu_2(\Psi(t_2)\otimes \Psi(t_1))(e^*_i\otimes e^*_i)=
\Psi(t_2)(e_i^*)\Psi(t_1)(e^*_i)=M(t_2)^0_iM(t_1)_i^0$, for all $i\in {\cal P}$, so $M(t_2t_1)^0=M(t_2)^0M(t_1)^0$. 

If $x\in J$, we have 
$d(x^*)=e_j^*\otimes x^*+x^*\otimes e_i^*+\delta(x^*)+h(x^*)$, with $h(x^*)\in {\cal N}$. Then, we have 
that $M(t_2t_1)^1(x^*)=\Psi(t_2t_1)(x^*)=S_0+S_{\delta(x^*)}$, where 
$$S_{0}=-\nu_2(\Psi(t_2)\otimes \Psi(t_1))(e_j^*\otimes x^*+x^*\otimes e_i^*), \hbox{ and }$$ 
$$S_{\delta(x^*)}=-\sum_{s=2}^\infty\sum_{\scriptsize\begin{matrix}i_0+i_1+i_2+2=s\\ 
i_0,i_1,i_2\geq 0\end{matrix}}\nu_s(\rho_2^{\otimes i_2}\otimes \Psi(t_2)\otimes \rho_1^{\otimes i_1}\otimes \Psi(t_1)\otimes\rho_0^{\otimes i_0})(\delta(x^*)).$$
Moreover, we have 
$$\begin{matrix}
   S_0&=&
   \Psi(t_2)(e_j^*)\Psi(t_1)(x^*)+\Psi(t_2)(x^*)\Psi(t_1)(e_i^*)\hfill\\
   &=&
   M(t_2)^0_jM(t_1)^1(x^*)+M(t_2)^1(x^*)M(t_1)^0_i\hfill\\
   &=&
   [M(t_2)^0M(t_1)^1+M(t_2)^1M(t_1)^0](x^*).\hfill\\
  \end{matrix}$$
By the definition of the composition in $({\cal A},I)\g\Mod$, we have  $$(M(t_2)M(t_1))^1=M(t_2)^0M(t_1)^1+M(t_2)^1M(t_1)^0+(M(t_2)^1*M(t_1)^1)\delta.$$
So, we only have to verify that $(M(t_2)^1*M(t_1)^1)(\delta(x^*))=S_{\delta(x^*)}$. 
But, according with (\ref{L: (cal A,I) reformulada})(3), we have this equality. Hence, we get that $M$ preserves composition of morphisms. 

 According with (\ref{L: tw(B) es homologicamente unitaria}), the identity morphism 
 $1_{\underline{X}}$ in $Z_{-1}(\tw(B))(\underline{X},\underline{X})$ of the object $\underline{X}$ satisfies $\Psi(1_{\underline{X}})(e_i^*)=id_{Xe_i}$ and $\Psi(1_{\underline{X}})(\xi)=0$, for all $\xi\in\hat{D}(J)$. Then, we have  
  $M(1_{\underline{X}})^0_i=\Psi(1_{\underline{X}})(e_i^*)=id_{Xe_i}$ and,  
 if $\xi\in \hat{D}(J)$, we have $M(1_{\underline{X}})^1(\xi)=\Psi(1_{\underline{X}})(\xi)=0$. 
So $M$ preserves identities  and we have a 
functor $M:Z_{-1}(\tw(B))\rightmap{}({\cal A},I)\g\Mod$. 

\medskip
\emph{Step 4:} Let us show that the functor $M$ is dense. So, consider an object  in $(N,\rho_N)\in ({\cal A},I)\g\Mod$. Then, we can write it as $N=X^{op}$, where $X$ is a right $S$-module.    
We have 
$\rho_{N}\in \Hom_{S\g S}(\hat{D}(B)_0,\Hom_k(X^{op},X^{op}))$, but we know that 
$\Psi:\ad(B)(X,X)\rightmap{}\cv(B)(X^{op},X^{op})$ is an isomorphism, so there is some $\delta_{X}\in \ad(B)(X,X)$ homogeneous with degree 0 such that $\Psi(\delta_{X})=\rho_N$. 

By (\ref{R: B y D(B) homogeneos}), we have that $\hat{D}(B)_0$ is finite dimensional and $\hat{D}(B)_0^{\otimes \ell}=0$, then the $k$-algebra  $\Gamma=A(\Delta)=T_S(\hat{D}(B_0))$ is  finite dimensional. Consider the Jacobson radical $\rad(\Gamma)$ of $\Gamma$, so 
we have  $\rad(\Gamma)^n=0$ for its nilpotence index $n$. Consider the filtration of left $S$-modules   
$$0=\rad(\Gamma)^nN\subseteq \cdots\subseteq \rad(\Gamma)^2N\subseteq \rad(\Gamma)N\subseteq N$$
or, equivalently, the filtration of right $S$-modules
$$0=X\rad(\Gamma^{op})^n\subseteq \cdots\subseteq X\rad(\Gamma^{op})^2\subseteq X\rad(\Gamma^{op})\subseteq X.$$
If we write $\delta_X=\sum_{x\in \hueca{B}_0}f_x\otimes x$ and take a directed basic element $y\in \hueca{B}_0$, we obtain 
$\rho_N(y^*)=\Psi(\delta_X)(y^*)=\sum_{x\in \hueca{B}_0}f_xy^*(x)=f_ye_{t(y)}=f_y$. Since $y^*\in \hat{D}(B)_0\subseteq \rad(\Gamma)$, it annihilates the semisimple right $\Gamma^{op}$-module $X\rad(\Gamma^{op})^{s}/X\rad(\Gamma^{op})^{s+1}$, for all $s\in [0,n-1]$. Hence, we have 
$$f_y(X\rad(\Gamma^{op})^{s})=\rho_N(y^*)X\rad(\Gamma^{op})^{s}\subseteq X\rad(\Gamma^{op})^{s+1},$$
and the pair $(X,\delta_X)$ satisfies the first condition of (\ref{P: tw(B)}). 
For the second one, since $(N,\rho_N)\in ({\cal A},I)\g\Mod$, the $A(\Delta)$-module $N$ satisfies 
 $IN=0$. As we saw in (\ref{L: (cal A,I) reformulada}), this means that 
  $\sum_{n=2}^\infty\nu_n(\rho_N^{\otimes n})\hat{d}(b_{n,0})=0$. Then, we have 
  $$
  0=
  \sum_{n= 2}^\infty b^{cv}_n(\Psi(\delta_X)^{\otimes n})
  =
  \Psi(\sum_{n= 2}^{\infty} b_n^{ad}(\delta_X^{\otimes n})).$$
Since $\Psi$ is an isomorphism, we obtain that $\sum_{n= 2}^{\infty} b_n^{ad}(\delta_X^{\otimes n})=0$. Therefore, the pair $(X,\delta_X)$ is an object of $\tw(B)$

\medskip
\emph{Step 5:} Let us show that $M$ is a faithful and full functor. Assume that $t:\underline{X}\rightmap{}\underline{Y}$ is a morphism in $Z_{-1}(\tw(B))$ such that $M(t)=0$. Thus, we have $\Psi(t)(e_i^*)=M(t)^0_i=0$, for $i\in {\cal P}$, and  $\Psi(t)(\xi)=M(t)^1(\xi)=0$, for  $\xi\in \hat{D}(J)$. Here,  the domain of $\Psi(t)$ is $\hat{D}(B)_1=\bigoplus_{j\in {\cal P}} ke_j^*\oplus \hat{D}(J)$, so we get $\Psi(t)=0$. Since $\Psi$ is an isomorphism we obtain $t=0$, and $M$ is a faithful functor. 

Let $f=(f^0,f^1):M(X,\delta_X)\rightmap{}M(Y,\delta_Y)$ be a morphism in $({\cal A},I)\g\Mod$. Consider the morphism of $S$-$S$-bimodules $\phi:\hat{D}(B)_1\rightmap{}\Hom_k(X^{op},Y^{op})$ defined by 
$$\phi(e_i^*)=e_if^0e_i\in \Hom_k(Xe_i,Ye_i) \hbox{  \ and  \ } \phi(\xi)=e_jf^1(\xi)e_i\in \Hom_k(Xe_i,Ye_j),$$
for $i\in {\cal P}$ and $\xi\in e_jJe_i\cap \hueca{B}$. Since $\Psi$ is surjective, there is some morphism $t\in \ad(B)(X^{op},Y^{op})_{-1}$ such that $\Psi(t)=\phi$. Since $f=(f^0,f^1)$ is a morphism in $({\cal A},I)\g\Mod$, we have the equation 
(\ref{L: (cal A,I) reformulada})(2), for all directed elements $\alpha\in \hat{D}(B)_0$. Then, we can reverse the argument given in \emph{Step 2} to obtain that $t$  satisfies 
$$b_1^{tw}(t)=\sum_{i_0,i_1\geq 0}
b^{ad}_{i_0+i_1+1}(\delta_Y^{\otimes i_1}\otimes t\otimes\delta_X^{\otimes i_0})=0.$$
So, $t:(X,\delta_X)\rightmap{}(Y,\delta_Y)$ is a morphism in $Z_{-1}(\tw(B))$. Then, we have that $M$ is a full functor. 
\end{proof}

\section{The weak ditalgebra ${\cal A}(\Delta)$ and the ideal $I$}

We recall some terminology from \cite{BSZ} and \cite{bpsqh}. 

\begin{definition}\label{D: weak ditalgebra}
 A \emph{weak ditalgebra} ${\cal A}$ is a pair ${\cal A}=(T,\delta)$, where $T$ is a graded tensor algebra and $\delta$ is a \emph{differential} on $T$, that is $\delta:T \rightmap{}T$ is a homogeneous linear map of degree 1 and satisfies Leibniz rule 
  $\delta(ab)=\delta(a)b+(-1)^{\vert a\vert}a\delta(b)$ 
 on homogeneous elements $a,b\in T$. 
 The weak ditalgebra ${\cal A}$ is called a \emph{ditalgebra} if, furthermore, we have $\delta^2=0$. 
 
 A weak ditalgebra ${\cal A}=(T,\delta)$ has \emph{layer $(R,W)$} iff $R$ is a $k$-algebra and $W$ is an $R$-$R$-bimodules equipped with an $R$-$R$-bimodule decomposition $W_0\oplus W_1$ such that $W_0\subseteq T_0$, $W_1\subseteq T_1$, the algebra  $T$ is freely generated by the pair $(R,W)$, and $\delta(R)=0$, see \cite{BSZ}(4.1). When there is no danger of confusion, we write $A:=T_0$ and $V:=T_1$. Thus, we have $A\cong T_R(W_0)$ and $V\cong A\otimes_RW_1\otimes_R A$. 
 
A layer $(R,W)$ of a weak ditalgebra ${\cal A}=(T,\delta)$ is called 
 \emph{triangular} if
\begin{enumerate}
 \item There is a filtration of $R$-$R$-subbimodules 
$0=W_0^0\subseteq W_0^1\subseteq \cdots \subseteq W_0^r=W_0$ such that 
$\delta(W_0^{i+1})\subseteq A_iW_1A_i$, for all $i\in [0,r-1]$, 
where $A_i$ denotes the $R$-subalgebra $A$ generated by $W_0^i$.
\item There is a filtration of $R$-$R$-subbimodules 
$0=W_1^0\subseteq W_1^1\subseteq \cdots \subseteq W_1^s=W_1$ such that 
$\delta(W_1^{i+1})\subseteq AW_1^iAW_1^iA$, for all $i\in [0,s-1]$.
\end{enumerate}
\end{definition}

\begin{definition}\label{D: weak ditalg interlaced an ideal}
 Let ${\cal A}=(T,\delta)$ be a weak ditalgebra, make $A=T_0$ and $V=T_1$. 
 Assume that $I$ is an ideal of  $A$, satisfying that $\delta(I)\subseteq IV+VI$. 
 Then, the weak ditalgebra ${\cal A}$ is called  \emph{interlaced with the ideal $I$} 
iff 
 \begin{enumerate}
  \item $\delta^2(A)\subseteq IV^2+VIV+V^2I$, and
  \item $\delta^2(V)\subseteq IV^3+VIV^2+V^2IV+V^3I$.
 \end{enumerate}
 The pair $({\cal A},I)$ is called then an \emph{interlaced weak 
ditalgebra}.
\end{definition}

\begin{definition}\label{D: triangular ideal and balanced ideal}
 Let ${\cal A}=(T,\delta)$ be a weak ditalgebra, make $A=T_0$ and $V=T_1$, and assume that 
$I$ is an ideal of $A$. Then, 
 we say that $I$ is an  
 \emph{${\cal A}$-triangular ideal of $A$} iff there is a sequence of 
$k$-subspaces
 $$0=H_0\subseteq H_1\subseteq\cdots\subseteq H_t=I$$
 such that
 $\delta(H_i)\subseteq AH_{i-1}V+VH_{i-1}A, \hbox{ for all } i\in [1,t].$
\end{definition}

\begin{definition}\label{D: triangular layered weak ditalg}
A pair $({\cal A},I)$ is called a \emph{triangular interlaced  weak ditalgebra} if 
${\cal A}=(T,\delta)$ is a weak ditalgebra interlaced with the ideal $I$ of $T_0$, where ${\cal A}$ has a triangular layer 
and $I$ is an ${\cal A}$-triangular ideal. 
\end{definition}

The importance of these triangular interlaced weak ditalgebras $({\cal A},I)$ is that the reduction procedures introduced by the Kiev school of representation theory can be applied to the study of their module  categories $({\cal A},I)\g\Mod$, see \cite{bpsqh}.
 In order to show that the pair $({\cal A}(\Delta),I)$ constructed in \S4, see (\ref{D: def del bocs A(Delta)}), is a triangular interlaced  weak ditalgebra, we need some preliminary facts.
 
 \begin{remark}\label{R: special basis required for triangularity}
  We will use a basis $\hueca{B}$ of $B$ as constructed in (\ref{N: la base de B}), but we refine the choice of the basis of the vector space $J$ as follows. Sine $J$ is nilpotent, we have a filtration 
  $0=J^{\ell_0}\subset \cdots\subset J^2\subset J$, and an $S$-$S$-bimodule decomposition $J=L_1\oplus L_2\oplus \cdots\oplus L_{\ell_0-1}$, such that 
$J^u=J^{(u+1)}\oplus L_u$, for all $u\in [1,\ell_0-1]$. Then, we can choose directed $k$-vector space basis $\hueca{J}_u(i,j)\subseteq e_jL_ue_i$, for $i,j\in {\cal P}$ and $u\in [1,\ell_0-1]$. Then, we make $\hueca{J}_u=\bigcup_{i,j}\hueca{J}_u(i,j)$ and $\hueca{J}=\bigcup_u\hueca{J}_u$. For the construction of $\hueca{B}$ in (\ref{N: la base de B}), we use this special choice for the basis of $J$. 

As usual,  the \emph{depth $\nu_1(x)$ of an element $x\in J$} is the number such that $x\in J^{\nu_1(x)}\setminus J^{\nu_1(x)+1}$.   Here, the set of basic elements in $x\in \hueca{J}$ with $\nu_1(x)=u$ is precisely $\hueca{J}_u$. Thus, for $x\in \hueca{J}$, we have that $x^*(L_v)=0$, for all $v\not=\nu_1(x)$. 
 If $x\in \hueca{J}$, we make $\nu_1(x^*)=\nu_1(x)$.

We will also refine the choice of the basis $\hueca{B}_0$ as follows. For each $t\in \hueca{N}$, consider the $S$-$S$-subbimodule $U_t:=\sum_{r+s=t}J^rB_0J^s$ of $B_0$. Then, we have an $S$-$S$-bimodule filtration 
$0=U_{\ell_1}\subset \cdots\subset U_1\subset U_0=B_0$, and an $S$-$S$-bimodule decomposition 
$B_0=G_0\oplus G_1\oplus\cdots \oplus G_{\ell_1-1}$, with $U_v=U_{v+1}\oplus G_v$, for all $v\in [1,\ell_1-1]$. 
As before,  we  choose directed $k$-vector space basis $\hueca{G}_v(i,j)\subseteq e_jG_ve_i$, for $i,j\in {\cal P}$ 
and $v\in [1,\ell_1-1]$. Then, we make $\hueca{G}_v=\bigcup_{i,j}\hueca{G}_v(i,j)$ and $\hueca{B}_0=\bigcup_v\hueca{G}_v$. For the construction of $\hueca{B}$ in (\ref{N: la base de B}), we use this special choice for the basis of $B_0$.

As before,  the \emph{depth $\nu_0(x)$ of an element $x\in B_0$} is the number such that $x\in U_{\nu_0(x)}\setminus U_{\nu_0(x)+1}$. Thus,  for $x\in \hueca{B}_0$, we have that $x^*(G_v)=0$ if $v\not=\nu_0(x)$. Also,   for $x\in \hueca{B}_0$, we make $\nu_0(x^*)=\nu_0(x)$.
 \end{remark}

\begin{remark}\label{R: la bigrafica de (cal A)(Delta)}
 Notice that the pair $(S,W)$, where $W_0=\hat{D}(B)_0$ and $W_1=\hat{D}(J)$, is a layer for  the weak ditalgebra ${\cal A}(\Delta)=(T(\Delta),\delta)$, see (\ref{D: def del bocs A(Delta)}). 

Let us consider \emph{the bigraph ${\cal B}$ of the layered ditalgebra ${\cal A}(\Delta)$}, 
it has set of points ${\cal P}$; given $i,j\in {\cal P}$,  
the set of \emph{solid arrows} from $i$ to $j$ is $\hueca{B}^*_0(i,j)$ and 
the set of \emph{dashed arrows} from $i$ to $j$ is $\hueca{J}^*(i,j)$. 
The starting point of any path $\gamma$ is denoted by $s(\gamma)$ and its terminal point by $t(\gamma)$. A path $\gamma$ is called a \emph{cycle} iff $s(\gamma)=t(\gamma)$. A cycle $\gamma$ is called a \emph{loop} iff $\gamma$ has only one arrow. 

Notice that the graded tensor algebra $T(\Delta)=T_S(W_0\oplus W_1)$ can be identified with the \emph{path algebra $k({\cal B})$  of 
the bigraph ${\cal B}$}, whose underlying vector space has as basis the set of 
paths (of any kind of arrows) of ${\cal B}$ (including one trivial path for 
each point $i\in {\cal P}$) and the product is induced by the concatenation of paths. Each idempotent $e_i$  of $S$ is 
identified with the trivial path at the point $i$. 
The homogeneous elements $h\in k({\cal B})_u$ of 
degree 
$u$ are the linear combinations of paths containing exactly $u$ dashed arrows.

We visualize the products of basic directed elements of $\hueca{B}^*_0\cup \hueca{J}^*$ in $T(\Delta)$ as paths of the bigraph ${\cal B}$. 
An important observation is the following. 
\begin{enumerate}
 \item If $\alpha:i\rightarrow j$ is solid arrow of ${\cal B}$, 
 we have $\overline{i}<\overline{j}$ in the quotient poset $\overline{\cal P}$.
 \item If $\xi:i\rightarrow j$ is a dashed arrow  of ${\cal B}$, 
 we have $\overline{i}\leq\overline{j}$ in the quotient poset $\overline{\cal P}$. 
\end{enumerate}
\end{remark}

\begin{lemma}\label{L: precycles}
A path $\gamma$ from $i$ to $j$ in the bigraph ${\cal B}$ is called a \emph{precycle} iff $i\sim j$. A precycle of length 1 is called a \emph{preloop}. Then, any precycle $\gamma$ of length $\geq 1$ consists of dashed arrows only.
\end{lemma}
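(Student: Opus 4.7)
The plan is to use the order-theoretic properties of arrows in the bigraph ${\cal B}$ recorded in Remark \ref{R: la bigrafica de (cal A)(Delta)}, together with the transitivity of $\leq$ on the quotient poset $\overline{\cal P}$. The idea is that following a path gives a weakly increasing chain in $\overline{\cal P}$, strictly increasing each time a solid arrow is traversed; a precycle forces the chain to start and end at the same element of $\overline{\cal P}$, so no strict increase — hence no solid arrow — is allowed.

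More concretely, I would proceed as follows. Let $\gamma$ be a precycle of length $n\geq 1$ in ${\cal B}$, from $i_0=i$ to $i_n=j$, with $i\sim j$. Decompose $\gamma$ as a concatenation of arrows
\[
i_0\xrightarrow{\alpha_1}i_1\xrightarrow{\alpha_2}\cdots\xrightarrow{\alpha_n}i_n,
\]
where each $\alpha_r$ is either solid or dashed. From Remark \ref{R: la bigrafica de (cal A)(Delta)}, each arrow $\alpha_r:i_{r-1}\to i_r$ satisfies $\overline{i_{r-1}}\leq \overline{i_r}$, with strict inequality $\overline{i_{r-1}}<\overline{i_r}$ whenever $\alpha_r$ is solid. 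By transitivity of the partial order on $\overline{\cal P}$, we obtain
\[
\overline{i_0}\leq \overline{i_1}\leq\cdots\leq \overline{i_n}.
\]

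Since $\gamma$ is a precycle, $i_0\sim i_n$, i.e.\ $\overline{i_0}=\overline{i_n}$ in $\overline{\cal P}$. Combined with the above chain of inequalities and antisymmetry of the partial order on $\overline{\cal P}$, this forces every intermediate inequality to be an equality: $\overline{i_0}=\overline{i_1}=\cdots=\overline{i_n}$. Therefore no step can be a strict inequality, which rules out solid arrows entirely, so every $\alpha_r$ must be dashed. This is the entire content of the lemma, and there is no real obstacle — the only thing to keep track of is that the statement about arrows in Remark \ref{R: la bigrafica de (cal A)(Delta)} comes directly from the homological system conditions recalled in Remark \ref{R: B y D(B) homogeneos}(3), which guarantee that nonzero directed elements in $\hat{D}(B)_0$ strictly increase the $\overline{\cal P}$-class while those in $\hat{D}(J)\subseteq \hat{D}(B)_1$ only weakly increase it.
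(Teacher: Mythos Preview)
Your proof is correct and follows essentially the same approach as the paper's own proof: both use the chain of inequalities $\overline{i_0}\leq\overline{i_1}\leq\cdots\leq\overline{i_n}$ in $\overline{\cal P}$ coming from Remark~\ref{R: la bigrafica de (cal A)(Delta)}, together with $\overline{i_0}=\overline{i_n}$, to force all inequalities to be equalities and hence exclude solid arrows.
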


\begin{proof} Assume that $\gamma=\alpha_n\alpha_{n-1}\cdots\alpha_2\alpha_1$ is a precycle from $i$ to $j$, where $\alpha_1,\ldots,\alpha_n$ are (solid or dashed) arrows of ${\cal B}$, with $t(\alpha_u)=i_u$, for $u\in [1,n]$. Then, by the last observation in (\ref{R: la bigrafica de (cal A)(Delta)}), we have 
$\overline{i}\leq \overline{i_1}\leq \cdots 
\leq \overline{i_n}=\overline{j}$ with $\overline{i}=\overline{j}$, thus $\overline{i}=\overline{i_1}=\cdots= \overline{i_n}$. Then all the arrows $\alpha_1,\ldots,\alpha_n$ must be dashed arrows.  
\end{proof}

In the following we use the height maps $h:X\rightmap{}\hueca{N}\cup \{0\}$ associated to various  finite posets $(X,\leq)$, as defined in (\ref{R: def de height map}). Recall that every height map $h$ satisfies: $y<x$ implies $h(y)<h(x)$, for all $x,y\in X$. 

\begin{lemma}\label{L map d det by height of quotient poset}
Consider the quotient poset $\overline{\cal P}$ associated to ${\cal P}$, and its height map $h:\overline{\cal P}\rightmap{}\hueca{N}\cup \{0\}$.
Consider the set of paths $\Gamma$ of ${\cal B}$ and the map $d:\Gamma\rightmap{}\hueca{N}\cup\{0\}$ defined, for each path $\gamma$ in ${\cal B}$, as $d(\gamma)=h(\overline{t(\gamma)})- h(\overline{s(\gamma)})$.
We have the following. 
\begin{enumerate}
 \item If $\alpha$ is a solid arrow, then $d(\alpha)>0$.
 \item If $\xi$ is a dashed arrow, then $d(\xi)\geq 0$ and ($d(\xi)=0$ iff $\xi$ is a preloop).
\item Whenever $\gamma=\gamma'\gamma''$ is a 
factorization of the path $\gamma$ as a product of non-trivial subpaths, where  
exactly one of them is a precycle, we have 
$$d(\gamma)=d(\eta), 
\hbox{ where } \eta\in \{\gamma',\gamma''\} \hbox{ is not a precycle.} $$
 \item  Whenever $\gamma=\gamma'\gamma''\gamma'''$ is a 
factorization of the path $\gamma$ as a product of non-trivial subpaths, where  
at most one of them is a precycle, we have
$$d(\gamma)>d(\eta), 
\hbox{ for each } \eta\in \{\gamma',\gamma'',\gamma'''\} \hbox{ not a precycle.} $$
\end{enumerate}
\end{lemma}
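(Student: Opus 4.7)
The plan is to reduce everything to one additivity observation plus the characterization ``$d=0$ exactly on precycles.''

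First I would record the basic additivity property of $d$: if $\gamma=\gamma'\gamma''$ is a concatenation of paths, then $s(\gamma)=s(\gamma'')$, $t(\gamma)=t(\gamma')$ and $t(\gamma'')=s(\gamma')$, so a telescoping gives
\[
 d(\gamma)=h(\overline{t(\gamma')})-h(\overline{s(\gamma'')})=\bigl(h(\overline{t(\gamma')})-h(\overline{t(\gamma'')})\bigr)+\bigl(h(\overline{t(\gamma'')})-h(\overline{s(\gamma'')})\bigr)=d(\gamma')+d(\gamma'').
\]
By iteration, $d$ of any path is the sum of $d$ of its arrows. Combined with the fact that $h$ is strictly monotone on $\overline{\cal P}$ (see \ref{R: def de height map}), this reduces (1) and (2) to the observations in \ref{R: la bigrafica de (cal A)(Delta)}: a solid arrow $\alpha:i\to j$ satisfies $\overline{i}<\overline{j}$, hence $d(\alpha)>0$; a dashed arrow $\xi:i\to j$ satisfies $\overline{i}\leq\overline{j}$, hence $d(\xi)\geq 0$, with $d(\xi)=0$ iff $\overline{i}=\overline{j}$, i.e.\ iff $\xi$ is a preloop.

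Next I would prove the key intermediate fact: for any non-trivial path $\gamma$ in ${\cal B}$, $d(\gamma)\geq 0$, and $d(\gamma)=0$ if and only if $\gamma$ is a precycle. The non-negativity is immediate from (1), (2) and additivity. For the equality case, if $\gamma$ is a precycle, then $\overline{s(\gamma)}=\overline{t(\gamma)}$ gives $d(\gamma)=0$ directly. Conversely, if $d(\gamma)=0$, then $\overline{s(\gamma)}\leq\overline{t(\gamma)}$ (chain of inequalities along the arrows, as in \ref{L: precycles}) together with $h(\overline{s(\gamma)})=h(\overline{t(\gamma)})$ forces $\overline{s(\gamma)}=\overline{t(\gamma)}$, since $h$ is strictly increasing; hence $\gamma$ is a precycle.

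Given this intermediate fact, (3) is automatic: if $\gamma=\gamma'\gamma''$ and exactly one of $\gamma',\gamma''$ is a precycle, then additivity together with $d(\text{precycle})=0$ yields $d(\gamma)=d(\eta)$ for the non-precycle factor $\eta$. For (4), write $\gamma=\gamma'\gamma''\gamma'''$ with at most one factor a precycle, so given a non-precycle $\eta\in\{\gamma',\gamma'',\gamma'''\}$, among the other two factors at least one is also not a precycle and hence contributes a strictly positive summand to $d$; by additivity $d(\gamma)-d(\eta)=$ sum of the $d$'s of the other two $>0$.

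I do not expect a real obstacle here: the proof is essentially bookkeeping once additivity of $d$ and the characterization of precycles as the $d=0$ paths are in place. The only subtle point to be careful about is in the characterization of precycles of positive length; \ref{L: precycles} already supplies the chain $\overline{s(\gamma)}\leq\overline{i_1}\leq\cdots\leq\overline{t(\gamma)}$ needed to run the strict-monotonicity argument.
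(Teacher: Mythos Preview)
Your proof is correct and follows essentially the same approach as the paper. The paper declares (1), (2), (3) ``clear'' and handles (4) by writing out one chain of strict inequalities in $\overline{\cal P}$; you make the additivity $d(\gamma'\gamma'')=d(\gamma')+d(\gamma'')$ explicit and use it together with the characterization ``$d=0$ on precycles'' to dispatch all four items uniformly, which is slightly more organized but amounts to the same argument.
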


\begin{proof} (1), (2) and (3) are clear. In (4), we have three cases. Assume, for instance, that $\gamma'''$ is the only precycle. Then, we have 
$$\overline{s(\gamma)}=
\overline{s(\gamma''')}=
\overline{t(\gamma''')}=
 \overline{s(\gamma'')}<
 \overline{t(\gamma'')}=
 \overline{s(\gamma')}<\overline{t(\gamma')}=
 \overline{t(\gamma)},$$
 thus $d(\gamma')<d(\gamma)$ and $d(\gamma'')<d(\gamma)$. The other cases are similar. 
\end{proof}

\begin{proposition}\label{P: triangularidad de I}
The pair $({\cal A}(\Delta),I)$ constructed in \S4, see (\ref{D: def del bocs A(Delta)}), is a triangular interlaced  weak ditalgebra.
\end{proposition}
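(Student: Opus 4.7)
The layer $(S,W)$ with $W_0=\hat{D}(B)_0$, $W_1=\hat{D}(J)$ is already fixed, and the interlacing conditions $\delta(I)\subseteq IV+VI$, $\delta^2(A)\subseteq IV^2+VIV+V^2I$, $\delta^2(V)\subseteq IV^3+VIV^2+V^2IV+V^3I$ have been established in (\ref{P: I entrelazado}). Thus the only remaining tasks are to produce a triangular filtration of the layer $(S,W)$ and an $\mathcal{A}(\Delta)$-triangular $k$-subspace filtration of the ideal $I$, as in (\ref{D: triangular ideal and balanced ideal}).

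The plan is to introduce weight functions on the directed bases $\hueca{B}_0^*$, $\hueca{J}^*$ and $\hueca{B}_1^*$. To handle $I$, I also fix, by the same recipe as in (\ref{R: special basis required for triangularity}), a refined basis $\hueca{B}_1$ of $B_1=\Ext^2_\Lambda(\Delta,\Delta)$ adapted to the filtration $U'_t:=\sum_{r+s=t}J^rB_1J^s$, yielding a depth function $\nu_2$. Then I set
$$
\omega_0(x^*)=Mf(x^*)+\nu_0(x),\quad \omega_1(\xi^*)=Mf(\xi^*)+\nu_1(\xi),\quad \omega_2(\zeta^*)=Mf(\zeta^*)+\nu_2(\zeta),
$$
with $f(\cdot)=h(\overline{t(\cdot)})-h(\overline{s(\cdot)})$ coming from (\ref{L map d det by height of quotient poset}) and $M$ chosen larger than every relevant depth. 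The ascending filtrations $W_0^{(r)}$ and $W_1^{(r)}$ are defined as the $S$-$S$-bimodule spans of the dual-basis elements of weight $\le r$, and the filtration $H_r\subseteq I$ is defined as the $k$-span of products $a\beta(\zeta^*)a'$ with $a,a'$ paths in $A$ and $\omega(a)+\omega_2(\zeta^*)+\omega(a')\le r$, where $\omega$ is the natural additive extension of $\omega_0$ to paths in $A$.

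The heart of the argument, carried out uniformly in all three cases, is the verification that $\delta$ strictly lowers the weight. For $W_0$: given $\alpha^*$ with $\omega_0(\alpha^*)=r$, (\ref{L: coeficientes de delta(x*)}) expresses $\delta(\alpha^*)$ as a sum of path tensors $z=x_{i_n}^*\otimes\cdots\otimes x_{i_1}^*$ from $s(\alpha)$ to $t(\alpha)$ in the bigraph $\mathcal{B}$ with exactly one dashed factor. If that dashed factor is not a preloop or if $n\ge 3$, Lemma (\ref{L map d det by height of quotient poset})(4) combined with (\ref{L: precycles}) gives $f(x_{i_j}^*)<f(\alpha^*)$ for every solid factor, forcing $\omega_0(x_{i_j}^*)<r$; in the remaining case $n=2$ with a preloop, the non-vanishing of the coefficient $\alpha^*(b_2(x_{i_2}\otimes x_{i_1}))$, together with the multiplicativity $J\cdot U_t\subseteq U_{t+1}$ of the $U_t$-filtration of $B_0$, forces $\nu_0(x_{i_j}^*)<\nu_0(\alpha)$ for the solid factor, so again $\omega_0(x_{i_j}^*)<r$. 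This yields $\delta(W_0^{(r)})\subseteq A_{r-1}W_1A_{r-1}$. The $W_1$-case is strictly analogous, with two dashed factors in each $z$ and with $J^a\cdot J^b\subseteq J^{a+b}$ replacing $J\cdot U_t\subseteq U_{t+1}$. For $I$, Step 2 of the proof of (\ref{P: I entrelazado}) gives $\delta(\beta(\zeta^*))=-\pi d(h(\zeta^*))\in VI+IV$, while Step 1 identifies the $I$-content as being of the form $\beta(\zeta'^*)$ with $\zeta'^*=x_{i_r}^*\in\hueca{B}_1^*$ appearing as a sub-factor inside a path tensor of $h(\zeta^*)$; the same case analysis (now using $J\cdot U'_t\subseteq U'_{t+1}$ in the preloop case) shows $\omega_2(\zeta'^*)<\omega_2(\zeta^*)$, and applying the Leibniz rule to the generic element $a\beta(\zeta^*)a'$ of $H_r$ yields $\delta(H_r)\subseteq AH_{r-1}V+VH_{r-1}A$.

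The single main obstacle is precisely this preloop case: because $f$ does not drop for preloops, a filtration by $f$ alone is insufficient, and one is forced to combine $f$ with the depth refinements $\nu_0$, $\nu_1$, $\nu_2$ coming from the respective filtrations of $B_0$, $J$, $B_1$. The payoff is the elementary multiplicativity $J\cdot(\text{depth }t)\subseteq(\text{depth }t+1)$ in each of these filtrations, which translates the non-vanishing of the relevant Yoneda-type coefficient into a genuine strict decrease of the corresponding depth. The lexicographic weight $\omega_i=Mf+\nu_i$, with $M$ beating every depth, then upgrades the drop in either $f$ or $\nu_i$ to a strict decrease of $\omega_i$, which is what the triangularity conditions demand.
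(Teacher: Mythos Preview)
Your approach for the triangularity of $W_0$ and $W_1$ is essentially identical to the paper's: both use a lexicographic combination of the height-difference $f=d$ and the depth functions $\nu_0,\nu_1$, the only cosmetic difference being that you encode lexicography additively as $Mf+\nu$ with large $M$, while the paper passes through the height map of the product poset.

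For the ideal $I$, your route is genuinely different and arguably cleaner. The paper filters $I$ by an intricate triple-invariant $(\hat h(\eta),\phi(\eta),p(\eta))$---involving the $\overline{\mathcal P}\times\overline{\mathcal P}$-class of source/target, the \emph{minimal} path length appearing in $\eta$, and a maximum of $h_0$-sums over such shortest paths---and then, in the preloop case, performs a delicate analysis stripping off the dashed preloop via auxiliary maps $\lambda_\xi,\rho_\xi$. You instead extend the same additive-weight philosophy by introducing a third depth function $\nu_2$ on $\hat D(B_1)$ (via the filtration $U'_t=\sum_{r+s=t}J^rB_1J^s$) and filter $I$ by the total weight of a generator $a\beta(\zeta^*)a'$. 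This works because: (i) for the outer Leibniz terms $\delta(a)\beta(\zeta^*)a'$ and $a\beta(\zeta^*)\delta(a')$, one may absorb all of $\delta(a)$ (resp.\ $\delta(a')$) into the $V$-factor and take the bare $\beta(\zeta^*)$ as the $H_{r-1}$-piece, which has weight $\omega_2(\zeta^*)=r-\omega(a)-\omega(a')\le r-1$ since $a$ (resp.\ $a'$) is nontrivial; and (ii) for the middle term $a\,\delta(\beta(\zeta^*))\,a'$, the crucial observation---left implicit in your write-up---is that the preloop case $f(\zeta')=f(\zeta)$ forces the relevant tensor in $h(\zeta^*)$ to have exactly two factors, since $f$ telescopes along the tensor and every $B_0$-factor contributes at least $1$ to it; hence only $b_2$ is involved and your multiplicativity $J\cdot U'_t\subseteq U'_{t+1}$ legitimately applies. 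What your approach buys is uniformity across $W_0,W_1,I$, at the cost of introducing an extra refined basis of $B_1$; what the paper's approach buys is avoiding that extra basis, at the cost of the rather technical $(\hat h,\phi,p)$-filtration and the $\lambda_\xi$-stripping argument.
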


\begin{proof} We adopt the preceding notations.

\medskip
\noindent\emph{Step 1: Description of the triangular filtration of $W_0$, see (\ref{D: weak ditalgebra})(1).}
\medskip

Consider the partial order on the set ${\cal P}(\hueca{B}^*_0)=\{(d(\alpha),\nu_0(\alpha))\mid \alpha\in \hueca{B}^*_0\}$ 
defined by $(i,j)<(i',j')$ iff $i<i'$ or (if $i=i'$, then $j<j'$), and its corresponding  height map  $h_0:{\cal P}(\hueca{B}^*_0)\rightmap{}\hueca{N}\cup \{0\}$. Write 
$h_0(\alpha):=h_0(d(\alpha),\nu_0(\alpha))$, for any $\alpha\in \hueca{B}_0^*$. Thus, $(d(\alpha),\nu_0(\alpha)) <(d(\beta),\nu_0(\beta))$ implies that $h_0(\alpha)<h_0(\beta)$.

Consider the filtration 
$0=W_0^0\subseteq W_0^1\subseteq \cdots \subseteq W_0^r=W_0$,  
where $W_0^u$ is the $S$-$S$-subbimodule of $W_0$ generated 
by $\{ \alpha\in \hueca{B}_0^*\mid h_0(\alpha)< u\}$. 

Assume that we have a solid arrow $\alpha\in W_0^u$, thus $h_0(\alpha)< u$ and $u\geq 1$. We want to show that $\delta(\alpha)\in  A_{u-1}\hat{D}(J)A_{u-1}$. We can write 
$$\delta(\alpha)=\delta(\alpha)_2+\delta(\alpha)_3+\cdots+\delta(\alpha)_w,$$
for some $w\geq 2$ and each $\delta(\alpha)_v$ is a linear combination of paths of length $v$ and degree 1.  
Recall that $\alpha$ is a solid arrow, so it can not be a cycle. 

Then, if $\delta(\alpha)_v\not=0$, for $v\in [3,w]$, there is a path (non-cycle)  
$\gamma_2\xi\gamma_1$ in its expression as linear combination of paths, with non-zero coefficient, where $\gamma_2$ and $\gamma_1$ are paths composed by solid arrows and $\xi$ is a dashed arrow. Here, from (\ref{L: coeficientes de delta(x*)}), we know that one of $\gamma_1$ or $\gamma_2$ must be non-trivial. Then, (\ref{L map d det by height of quotient poset}) implies that whenever $\gamma_1$ is not trivial (resp. $\gamma_2$ non-trivial) we have $d(\gamma_1)<d(\gamma_2\xi\gamma_1)=d(\alpha)$ (resp.  $d(\gamma_2)<d(\gamma_2\xi\gamma_1)=d(\alpha)$), then, in any case, the arrows $\beta$ composing $\gamma_1$ or $\gamma_2$ satisfy $d(\beta)<d(\alpha)$, so $h_0(\beta)<h_0(\alpha)$, which implies that $\gamma_2\xi\gamma_1\in A_{u-1}\hat{D}(J)A_{u-1}$. So $\delta(\alpha)_v\in A_{u-1}\hat{D}(J)A_{u-1}$. 

It remains to consider the case $v=2$. If $\delta(\alpha)_2\not=0$, there is a path of the form $\xi\beta$ (or $\beta\xi$), where $\beta$ is a solid arrow and $\xi$ is a dashed arrow, appearing in the expression of $\delta(\alpha)_2$ 
with non-zero coefficient. If $\xi$ is not a preloop, we proceed as before  to 
obtain that 
$\beta\xi\in A_{u-1}\hat{D}(J)A_{u-1}$ (or $\xi\beta\in A_{u-1}\hat{D}(J)A_{u-1}$). Assume that $\xi$ is a preloop and that the path $\xi\beta$ has appeared in $\delta(\alpha)_2$. Here, we have $d(\beta)=d(\alpha)$. Let us examine its coefficient $c^{\alpha}_{\xi\otimes\beta}$ in the explicit description of $\delta(\alpha)$ given in (\ref{L: coeficientes de delta(x*)}).
We have $\alpha=x^*$, $\beta=y^*$ and $\xi=r^*$, where $x,y\in \hueca{B}_0$ and $r\in \hueca{J}$, then 
$$0\not=e_{t(\alpha)}c^{\alpha}_{\xi\otimes \beta}=x^*[b_2(r\otimes y)]=x^*[\sigma(\sigma^{-1}(r)\sigma^{-1}(y))]=x^*(ry).$$ 
We know that $r\in \hueca{J}$, thus $r\in \hueca{J}_s\subseteq J^s$, for some $s\in [1,\ell_0-1]$. Thus, we get $ry\in U_{s+\nu_0(y)}=\bigoplus_{l\geq s+\nu_0(y)}G_l$. Since $x^*(ry)\not=0$, we have $x^*(U_{s+\nu_0(y)})\not=0$, thus  $x^*(G_l)\not=0$, for some $l\geq s+\nu_0(y)$. 
This implies that 
$\nu_0(\beta)=\nu_0(y)<s+\nu_0(y)\leq l=\nu_0(x)=\nu_0(\alpha)$,
so $h_0(\beta)<h_0(\alpha)$ and $\xi\beta\in A_{u-1}\hat{D}(J)A_{u-1}$. 
The case where $\beta\xi$ appears in the expression of $\delta(\alpha)$ with non-zero coefficient is treated similarly. 
Then, we get $\delta(\alpha)_2\in A_{u-1}\hat{D}(J)A_{u-1}$. Adding up, we get that $\delta(\alpha)\in A_{u-1}\hat{D}(J)A_{u-1}$, and we have the triangularity condition for $W_0$.

\medskip
\noindent\emph{Step 2: Description of the triangular filtration of $W_1$, see (\ref{D: weak ditalgebra})(2).}
\medskip

Consider the partial order on the set ${\cal P}(\hueca{J}^*)=\{(d(\xi),\nu_1(\xi))\mid \xi\in \hueca{J}^*\}$ defined by $(i,j)<(i',j')$ iff $i<i'$ or (if $i=i'$, then $j<j'$).  The poset 
${\cal P}(\hueca{J}^*)$ admits a height map $h_1:{\cal P}(\hueca{J}^*)\rightmap{}\hueca{N}\cup \{0\}$. Write $h_1(\xi)=h_1(d(\xi),\nu_1(\xi))$, for any $\xi\in \hueca{J}^*$. Thus, as before, $(d(\xi),\nu_1(\xi))<(d(\zeta),\nu_1(\zeta))$ implies that $h_1(\xi)<h_1(\zeta)$. 

Consider the filtration 
$0=W_1^0\subseteq W_1^1\subseteq \cdots \subseteq W_1^s=W_1$ 
where $W_1^u$ is the $S$-$S$-subbimodule of $W_1$ generated 
by $\{ \xi\in \hueca{J}^*\mid h_1(\xi)< u\}$. 

Assume that we have a dashed arrow $\xi\in W_1^u$, thus $h_1(\xi)< u$ and $u\geq 1$. We want to show that $\delta(\xi)\in  AW_1^{u-1}AW_1^{u-1}A$. We can write 
$$\delta(\xi)=\delta(\xi)_2+\delta(\xi)_3+\cdots+\delta(\xi)_w,$$
for some $w\geq 2$ and each $\delta(\xi)_v$ is a linear combination of paths of length $v$ and degree 2. 
Now, the dashed arrow  $\xi$ may be a loop.  

\medskip
\noindent\emph{Case 1: $\xi$ is not a preloop (or, equivalently, $d(\xi)>0$).}
\medskip 

If $\delta(\xi)_v\not=0$, for $v\in [3,w]$, there is a path 
$\gamma_3\xi_2\gamma_2\xi_1\gamma_1$ in its expression as linear combination of paths, with non-zero coefficient, where $\gamma_1$, $\gamma_2$ and $\gamma_3$ are paths composed by solid arrows and $\xi_1$ and $\xi_2$ are dashed arrows. Moreover, at least one of the paths $\gamma_1$, $\gamma_2$ or $\gamma_3$ is not trivial. 

In each possible case, using (\ref{L map d det by height of quotient poset}), we can show that the dashed arrows  $\xi_1$ and $\xi_2$ satisfy $d(\xi_1),d(\xi_2)<d(\xi)$, so $h_1(\xi_1),h_1(\xi_2)<h_1(\xi)$, which implies that $\gamma_3\xi_2\gamma_2\xi_1\gamma_1\in AW_1^{u-1}AW_1^{u-1}A$.
Thus, we get $\delta(\xi)_v\in AW_1^{u-1}AW_1^{u-1}A$.

It remains to consider the case $v=2$. If $\delta(\xi)_2\not=0$, there is a path of the form $\xi_2\xi_1$, where $\xi_1$ and $\xi_2$ are  dashed arrows, appearing in the expression of $\delta(\xi)_2$ 
with non-zero coefficient. If both $\xi_1$ and $\xi_2$ are not preloops, 
we proceed as before, applying (\ref{L map d det by height of quotient poset}) to $\xi_2\xi_1$, to 
obtain that 
$\xi_2\xi_1\in W_1^{u-1}AW_1^{u-1}$. 
If some of $\xi_1$ or $\xi_2$
is a preloop, so $d(\xi)=d(\xi_1\xi_2)$, we can examine its coefficient $c^{\xi}_{\xi_2\otimes\xi_1}$ in the explicit description of $\delta(\xi)$ given in (\ref{L: coeficientes de delta(x*)}) and deduce, as before, that $\nu_1(\xi_2),\nu_1(\xi_1)<\nu_1(\xi)$. Indeed, we have $\xi=x^*$, $\xi_1=x_1^*$ and $\xi_2=x_2^*$, for $x\in \hueca{J}$, $x_1\in \hueca{J}_{s_1}$ and $x_2\in \hueca{J}_{s_2}$, for some $s_1,s_2\in [1,\ell_0-1]$, and $x^*(x_1x_2)\not=0$.
Thus, $x_1x_2\in J^{s_1+s_2}=\bigoplus_{l\geq s_1+s_2}L_l$, and $x^*(L_l)\not=0$, for some $l\geq s_1+s_2$. Therefore, we get 
$\nu_1(\xi_1),\nu_1(\xi_2)<\nu_1(\xi_1)+\nu_1(\xi_2)=\nu_1(x_1)+\nu_1(x_2)=s_1+s_2\leq l=\nu_1(x)=\nu_1(\xi)$, as claimed. 
 So  $\xi_2\xi_1\in W_1^{u-1}AW_1^{u-1}$. 
Then, we get $\delta(\xi)_2\in W_1^{u-1}AW_1^{u-1}$. From this, we 
 we obtain that $\delta(\xi)\in AW_1^{u-1}AW_1^{u-1}A$.

\medskip
\noindent\emph{Case 2: $\xi$ is a preloop (or, equivalently, $d(\xi)=0$).}
\medskip 
 
 If $\xi$ is a preloop, since the only possible precycles in the bigraph ${\cal B}$ are composed by dashed arrows,
 we obtain that $\delta(\xi)$ is a finite sum of elements of the form $c_{\xi_1\otimes\xi_2}^\xi\xi_2\xi_1$, 
 for some dashed preloops $\xi_1$ and $\xi_2$, and 
 $c_{\xi_1\otimes\xi_2}^\xi\in k$. Here again, examining the coefficient 
  $c^\xi_{\xi_1\otimes\xi_2}$ in case it is not zero, we obtain that $\delta(\xi)\in AW_1^{u-1}AW_1^{u-1}A$.

\medskip
\noindent\emph{Step 3: Description of the triangular filtration of $I$, as in (\ref{D: triangular ideal and balanced ideal})}
\medskip

Consider the quotient poset $\overline{\cal P}$ associated to the preordered set ${\cal P}$ and define the 
following 
relation 
on ${\cal S}:=\overline{\cal P}\times \overline{\cal P}$. We make  $(\overline{i},\overline{j})<(\overline{i'},\overline{j'})$ iff 
$\overline{i'}< \overline{i}$ and [if $\overline{i}=\overline{i'}$, then $\overline{j}< \overline{j'}]$. 
With this relation, the set  ${\cal S}$ becomes a poset. Consider the corresponding 
\emph{height map} $\hat{h}:{\cal  S}\rightmap{}\hueca{N}\cup \{0\}$. 
 Then, given $i,j\in {\cal P}$ and $\eta\in e_jIe_i$, we write $\hat{h}(\eta):=\hat{h}(\overline{i},\overline{j})$. 

Consider the map $\phi:I\rightmap{}\hueca{N}\cup \{0\}$ which associates to an element $\eta \in I$ the minimal length of the paths which appear with non-zero coefficient in the expression of $\eta$ as a linear combination of paths formed by solid arrows of the bigraph ${\cal B}$.  
 Finally, define the map $p:I\rightmap{}\hueca{N}\cup\{0\}$ by the following rule, given $\eta\in I$ with $\phi(\eta)=n$, 
$$p(\eta)=\max\{ h_0(\alpha_{n})+\cdots+h_0(\alpha_{1})\mid   \alpha_{n}\cdots\alpha_{2}\alpha_{1}\in \Gamma(\eta)\},$$
where the elements $\alpha_{n}\cdots\alpha_{2}\alpha_{1}$ run in the set of paths $\Gamma(\eta)$ of length $n$ which appear in the expression of $\eta$ as a linear combination of paths composed by solid arrows with non-zero coefficient. 

Notice that, since the lengths of the paths of solid arrows in ${\cal B}$ 
are bounded, there are bounds for the numbers 
$\hat{h}(\eta)$, $\phi(\eta)$ and $p(\eta)$, for  $\eta\in I$. Consider the finite set 
${\cal P}(I)=\{(\hat{h}(\eta),\phi(\eta),p(\eta))\mid \eta\in I \hbox{ directed} \}$ and the partial order relation on it defined by $(r,s,t)<(r',s',t')$ iff $r<r'$
or [$r=r'$ and [$s>s'$ or   
[$s=s'$ and $t<t']]$.  Then, consider the associated height map $\h:{\cal P}(I)\rightmap{}\hueca{N}\cup\{0\}$, and make $\h(\eta)=\h(\hat{h}(\eta),\phi(\eta),p(\eta))$, for any directed element $\eta\in I$. For each $u\geq 0$, consider the vector subspace $H_u$ of $I$ generated by the directed elements $\eta\in I$ with $\h(\eta)< u$. 
We have a finite vector space filtration of $I$ of the form
$$0= H_0\subseteq H_1\subseteq\cdots\subseteq H_{\ell_I}=I.$$
Take $u\in [1,\ell_I]$, we have to verify that $\delta(H_u)\subseteq
AH_{u-1}V+VH_{u-1}A$. It will be enough to verify that $\delta(\eta)\in AH_{u-1}V+VH_{u-1}A$, for any directed element $\eta\in I$ with $\h(\eta)< u$. Suppose that $0\not=\eta\in e_jIe_i$. The fact that there are no precycles in ${\cal B}$ formed by solid arrows implies that $i\not\sim j$. By (\ref{P: I entrelazado}), we have $\delta(I)\subseteq IV+VI$, hence 
$\delta(\eta)\in e_j(IV+VI)e_i$. Since $IV+VI=IV\oplus VI$, we have the following direct sum decomposition for $ e_j(IV+VI)e_i$
$$ [\bigoplus_{\scriptsize\begin{matrix}r\in {\cal P}\\ \overline{r}\not=\overline{i}\end{matrix}}e_jIe_rVe_i]\oplus 
  [\bigoplus_{\scriptsize\begin{matrix}r\in {\cal P}\\ \overline{j}\not=\overline{r}\end{matrix}}e_jVe_rIe_i]\oplus 
  [\bigoplus_{\scriptsize\begin{matrix}r\in {\cal P}\\ \overline{r}=\overline{i}\end{matrix}}e_jIe_rVe_i]\oplus 
  [\bigoplus_{\scriptsize\begin{matrix}r\in {\cal P}\\ \overline{j}=\overline{r}\end{matrix}}e_jVe_rIe_i].$$

If the component of $\delta(\eta)$ in the direct summand  $e_jIe_rVe_i$, with $\overline{r}\not=\overline{i}$, is of the form $\sum_t\eta_t\zeta_t$, where  $0\not=\zeta_t\in e_rVe_i$ and $\eta_t\in e_jIe_r$. Then, there is a (non-precycle) path in ${\cal B}$ from $i$ to $r$, so $\overline{i}<\overline{r}$. This implies $(\overline{r},\overline{j})<(\overline{i},\overline{j})$, and $\hat{h}(\overline{r},\overline{j})<\hat{h}(\overline{i},\overline{j})$, hence $\hat{h}(\eta_t)<\hat{h}(\eta)$, and $\h(\eta_t)<\h(\eta)< u$. Thus, this component of $\delta(\eta)$ belongs to $H_{u-1}V$.  Similarly, we can show that the component of $\delta(\eta)$ in $e_jVe_rIe_i$ with $\overline{r}\not= \overline{j}$ belongs to $VH_{u-1}$.

Let us show the component of $\delta(\eta)$   in the summand 
  $e_jVe_rIe_i$
  of the preceding decomposition of $e_j(IV+VI)e_i$, with $\overline{r}=\overline{j}$, belongs to $VH_{u-1}$. For this, the following maps will be useful. 
We have the finite-dimensional graded $S$-$S$-bimodule $W$, and its graded right dual bimodule $\check{D}(W)$, and similarly, we have the right graded dual $\check{D}(W_1)$. The directed basis $\hueca{J}$ of $J$, determines a dual basis $\hueca{J}^*$ of $W_1$, and this $\hueca{J}^*$ determines a dual basis of $\check{D}(W_1)$. Thus, any $\xi\in \hueca{J}^*$ determines its dual map $\xi^{\ostar}:W_1\rightmap{}S$, defined by $\xi^{\ostar}(\zeta)=\delta_{\zeta,\xi}e_{s(\zeta)}$, for $\zeta\in \hueca{J}^*$, which can also be considered as an element $\xi^{\ostar}\in \check{D}(W)$ with $\xi^\ostar(W_0)=0$.  Consider  the linear map $\lambda_{\xi}:k({\cal B})\rightmap{}k({\cal B})$, which is  defined on the basis of $k({\cal B})$ formed by the paths of ${\cal B}$ by 
$$\lambda_{\xi}(\alpha_n\alpha_{n-1}\cdots\alpha_1)=\xi^{\ostar}(\alpha_n)\alpha_{n-1}\cdots\alpha_2\alpha_1.$$
Thus,  we get $\xi\lambda_\xi(\xi\alpha_{n-1}\cdots\alpha_1)=\xi\alpha_{n-1}\cdots\alpha_1$, if $\alpha_n=\xi$, and  $\lambda_\xi(\alpha_n\cdots\alpha_1)=0$, whenever $\alpha_n\not=\xi$.
 We have 
 $$e_jVe_rIe_i=
\bigoplus_{\xi} \xi e_rIe_i,$$
where $\xi$ runs in the set of (dashed) preloops from $r$ to $j$ of the bigraph ${\cal B}$.  Let us examine the component, denoted by $\delta(\eta)_\xi$, of $\delta(\eta)$ in the direct summand $\xi e_rIe_i$, for a fixed dashed preloop $\xi$ from $r$  to $j$. 
Assume that $\delta(\eta)_\xi\not=0$ and notice that  
$\delta(\eta)_\xi=\xi\lambda_{\xi}(\delta(\eta))$, so the element $\lambda_{\xi}(\delta(\eta))\in e_rIe_i$ determines the component $\delta(\eta)_\xi$.

Make $m:=\phi(\eta)$ and $n:=\phi(\lambda_\xi(\delta(\eta)))$. So $n\geq m$, because, from (\ref{L: coeficientes de delta(x*)}), 
we have 
$n+1=\phi(\xi\lambda_\xi(\delta(\eta)))=\phi(\delta(\eta)_\xi)\geq \phi(\delta(\eta))>\phi(\eta)=m.$
We have  $\hat{h}(\lambda_{\xi}(\delta(h)))=\hat{h}(\overline{i},\overline{j})=\hat{h}(\eta)$, so if  $n>m$,  we get 
$$(\hat{h}(\lambda_{\xi}[\delta(\eta)]), \phi(\lambda_{\xi}[\delta(\eta)]),p(\lambda_{\xi}[\delta(\eta)]))<(\hat{h}(\eta),\phi(\eta), p(\eta)).$$
This implies that $\h(\lambda_{\xi}(\delta(\eta)))<\h(\eta)$, and $\delta(\eta)_\xi=\xi\lambda_{\xi}(\delta(\eta))\in VH_{u-1}$. 

Now, assume that $n=m$. We can write $\eta=\eta_{m}+\eta_{m+1}+\cdots $, where each 
$\eta_{m+t}$ is a linear combination of paths of length $m+t$. Moreover, we have 
$$\lambda_{\xi}(\delta(\eta))=\lambda_{\xi}(\delta(\eta_m))+\lambda_{\xi}(\delta(\eta_{m+1}))+\cdots$$
Then, we have  $\phi(\lambda_{\xi}(\delta(\eta_s)))=n$, for some $s\geq m$. Therefore, we have 
$n=\phi(\lambda_{\xi}(\delta(\eta_s)))\geq s\geq m,$ which implies that $n=s=m$. 
 
Write $\eta_n=\sum_{v} c_{v}\gamma_v$, where each $\gamma_v$ is a path formed by $n$ solid arrows and $c_v\in k$, thus $\delta(\eta_n)=\sum_vc_v\delta(\gamma_v)$. 
If $\gamma_v=\alpha^v_{n}\alpha^v_{{n-1}}\cdots\alpha^v_2\alpha^v_1$, with $\alpha_n^v,\ldots,\alpha^v_1$ solid arrows, then  
 $$\delta(\gamma_v) =\delta(\alpha^v_{n})\alpha^v_{{n-1}}\cdots\alpha_2^v\alpha^v_{1}
+
 \alpha^v_{n}\delta(\alpha^v_{{n-1}})\cdots\alpha_2^v\alpha^v_{1}
+\cdots+
 \alpha^v_{n}\alpha^v_{{n-1}}\cdots\alpha^v_{2}\delta(\alpha^v_{1}).$$
Notice that $\lambda_{\xi}(\delta(\gamma_v))=\lambda_{\xi}(\delta(\alpha^v_{n})\alpha^v_{{n-1}}\cdots\alpha^v_2\alpha^v_{1})=\lambda_{\xi}(\delta(\alpha^v_{n}))\alpha^v_{{n-1}}\cdots\alpha^v_2\alpha^v_{1}$, because  $\alpha^v_{n}$ is a solid arrow, so $\xi^{\ostar}(\alpha^v_{n})=0$. Now, write 
$\delta(\alpha^v_{n})=\sum_wc^v_w\zeta^v_w\xi^v_w\gamma_w^v$, where each  $\gamma_w^v$ and $\zeta_w^v$ are paths of degree 0, $\xi^v_w$ are dashed arrows, and $c^v_w\in k$.  
Then, $\lambda_{\xi}(\delta(\alpha_n^v))\not=0$ implies that some $\zeta_w^v$ is trivial, again because, otherwise, 
$\xi^{\ostar}$ evaluated at the last arrow of $\zeta_w^v$ would be zero. Then, we have $\lambda_{\xi}(\delta(\alpha_n^v))=\sum_wc_w^v\lambda_{\xi}(\xi_w^v\gamma_w^v)$, where now $w$ runs in the set of indices with $\zeta_w^v$ trivial, and therefore
$$\begin{matrix}
n=\phi[\lambda_{\xi}(\delta(\eta_n))]
&=&
\phi[\sum_vc_v\lambda_{\xi}(\delta(\gamma_v))]\hfill\\
&=&
\phi[\sum_vc_v\lambda_{\xi}(\delta(\alpha^v_n))\alpha^v_{n-1}\cdots\alpha^v_2\alpha^v_1)]\hfill\\
&=&
\phi[\sum_{v,w}c_vc^v_w\lambda_{\xi}(\xi_w^v\gamma_w^v\alpha^v_{n-1}\cdots\alpha^v_2\alpha^v_1)]\hfill\\
&=&
\phi[\sum_{v,w}c_vc^v_w\gamma_w^v\alpha^v_{n-1}\cdots\alpha^v_2\alpha^v_1].\hfill\\
\end{matrix}$$

Then, there is at least one path
 $\gamma_w^v$ which is a solid arrow $\beta_w^v$, and we have  
$\lambda_{\xi}(\delta(\eta_n))=\sum_{v,w}c_vc^v_w\beta_w^v\alpha^v_{n-1}\cdots\alpha^v_2\alpha^v_1$, where all the $\beta^v_w$ are solid arrows.
By the triangularity condition for $W_0$, proved in Step 1, if $h_0(\alpha_n^v)=t$, so $\alpha_n^v\in W_0^{t+1}$, 
we have $\delta(\alpha_n^v)\in A_{t}\hat{D}(J)A_{t}$, which implies that $\beta_w^v\in W_0^{t}$, so 
$h_0(\beta_w^v)< t=h_0(\alpha_n^v)$. 
Then, we obtain 
$$\begin{matrix}p[\lambda_{\xi}(\delta(\eta_n))]&=&\max\{h_0(\beta_w^v)+h_0(\alpha_{n-1}^v)+\cdots+h_0(\alpha_1^v)\mid v,w\}\hfill\\
 &<&
 \max\{h_0(\alpha_n^v)+h_0(\alpha_{n-1}^v)+\cdots+h_0(\alpha_1^v)\mid v\}
 =p(\eta).\hfill\\
  \end{matrix}$$ 
 Since $\phi[\lambda_\xi(\delta(\eta_l))]\geq l$, for all $l> n$, we have $\Gamma(\lambda_{\xi}(\delta(\eta)))=\Gamma(\lambda_{\xi}(\delta(\eta_n)))$. Therefore, we get 
  $p[\lambda_{\xi}(\delta(\eta))]=p[\lambda_{\xi}(\delta(\eta_n))]$, and this implies that $\h(\lambda_{\xi}[\delta(\eta)])<\h(\eta)$. 
 Then, also in this case, we get $\delta(\eta)_\xi=\xi\lambda_{\xi}(\delta(\eta))\in VH_{u-1}$.

We also have that the  component of  $\delta(\eta)$ in $e_jIe_rVe_i$ lies in $H_{u-1}V$. This can be  shown in a similar way. 
We consider the decomposition
$$e_jIe_rVe_i=\bigoplus_\xi e_jIe_r\xi,$$
where $\xi$    runs in the set of (dashed) preloops from $i$ to $r$ of the bigraph ${\cal B}$ and then examine each component  $\delta(\eta)_\xi$ in the summand $e_jIe_r\xi$. For this analysis, we can consider the left dual $S$-$S$-bimodule $\hat{D}(W_1)$ of the $S$-$S$-bimodule $W_1$ and a dual basis for the fixed basis $\hueca{J}$ of $W_1$. Thus, for a fixed basic element $\xi\in \hueca{J}^*$, we have  its dual basic element  $\xi^*:W_1\rightmap{}S$ is given by $\xi^*(\zeta)=\delta_{\xi,\zeta}e_{t(\zeta)}$, and the linear map 
$\rho_{\xi}:k({\cal B})\rightmap{}k({\cal B})$ defined on the basis of $k({\cal B})$ formed by the paths of ${\cal B}$ by 
$$\rho_{\xi}(\alpha_n\alpha_{n-1}\cdots\alpha_1)=\alpha_n\alpha_{n-1}\cdots\alpha_2\xi^*(\alpha_1).$$

Finally, since every component of $\delta(\eta)$ lies in 
$H_{u-1}V+VH_{u-1}$, so does $\delta(\eta)$. This finishes the proof.
\end{proof}

\section{The exact category $({\cal A},I)\g\Mod$}\label{S: exact structure for Roiter dits with ideal}\label{S: Exact cat}

The category of modules $\underline{\cal A}\g\Mod$ for an interlaced weak ditalgebra $\underline{\cal A}=({\cal A},I)$ is defined in general in a similar way than the category $({\cal A}(\Delta),I)\g\Mod$, see (\ref{D: la cat de mod's de cal A(Delta)}) and \cite{bpsqh}\S2.
In this section, we show that, for a Roiter interlaced weak ditalgebra $\underline{\cal A}=({\cal A},I)$, its category of modules $\underline{\cal A}\g\Mod$ admits an exact structure. We only have to adapt \cite{BSZ}\S6 to this context. We will apply this section to $({\cal A}(\Delta),I)$, but we give a more general presentation. Let us first recall some facts from \cite{bpsqh}\S5.

\begin{definition}\label{D: Roiter weak ditalgebra} 
A triangular interlaced weak ditalgebra $({\cal A},I)$, with layer $(R,W)$, is 
called a \emph{Roiter interlaced weak ditalgebra} iff the following 
property is satisfied: for any isomorphism $f^0$ of $R$-modules 
$f^0:M\rightmap{}N$ and any $f^1\in \Hom_{R\g R}(W_1,\Hom_k(M,N))$, if one of 
$M$ or $N$ has a structure of left $A/I$-module, then the other one admits also 
a structure of left $A/I$-module such that $(f^0,f^1)\in \Hom_{({\cal 
A},I)}(M,N)$.
\end{definition}

Notice that, whenever $({\cal A},I)$ is a triangular interlaced weak ditalgebra with layer $(R,W)$ such that $R$ is semisimple, from 
\cite{bpsqh}(5.3), we know that $({\cal A},I)$ is a Roiter  interlaced weak ditalgebra.

 \begin{lemma}\label{L: Ovsienko}  Assume that $({\cal A},I)$ is a Roiter interlaced weak ditalgebra
 with  layer $(R,W)$.
Suppose that $f=(f^0,f^1):M\rightmap{} N$ in $({\cal
A},I)\g\Mod$. Then, 
\begin{enumerate}
\item If $f^0$ is a retraction in $R\g\Mod$, there is a
morphism $h:M'\rightmap{} M$ in $({\cal A},I)\g\Mod$ with
$h^0$ isomorphism and $(fh)^1=0$.
\item If $f^0$ is a section in $R\g\Mod$, there is a
morphism $g:N\rightmap{} N'$ in $({\cal A},I)\g\Mod$ with
$g^0$ isomorphism and $(gf)^1=0$.
\end{enumerate}
\end{lemma}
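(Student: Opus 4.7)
The plan is to prove (1) first; statement (2) will follow by a dual construction. I would set $h^0:=id_M$ and attempt to build $h^1\in\Hom_{R\g R}(W_1,\Hom_k(M,M))$ so that the cocycle equation
\[
(fh)^1(v)=f^0h^1(v)+f^1(v)+(f^1*h^1)(\delta v)=0
\]
holds on $W_1$; by the bimodule rule, $h^1$ then extends uniquely to an $A$-$A$-bimodule map $V\rightmap{}\Hom_k(M,M)$, after which the formula $(fh)^1(v)=0$ propagates from $W_1$ to all of $V$. Once such a pair $h=(id_M,h^1)$ is produced, the Roiter hypothesis, applied to the $R$-module isomorphism $id_M$ from the bare $R$-module $M$ to the $A/I$-module $M$, supplies an $A/I$-module $M'$ with underlying $R$-module $M$ such that $h:M'\rightmap{}M$ is a morphism of $({\cal A},I)\g\Mod$. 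Then $(fh)^0=f^0$ and $(fh)^1=0$, as required.

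To construct $h^1$ I would use the triangular filtration $0=W_1^0\subseteq W_1^1\subseteq\cdots\subseteq W_1^s=W_1$ with $\delta(W_1^{u+1})\subseteq AW_1^uAW_1^uA$, together with a fixed section $s^0:N\rightmap{}M$ of $f^0$ in $R\g\Mod$. Suppose $h^1$ has been specified as an $R$-$R$-bimodule map on $W_1^u$, so that it is already defined on $AW_1^uA$ by the bimodule rule, and suppose the cocycle equation holds on $W_1^u$. For $v\in W_1^{u+1}$, the element $\delta v$ lies in $AW_1^uAW_1^uA$, so $(f^1*h^1)(\delta v)$ is already available; I set
\[
h^1(v):=-s^0\bigl(f^1(v)+(f^1*h^1)(\delta v)\bigr).
\]
Applying $f^0$ and using $f^0s^0=id_N$ gives $f^0h^1(v)=-f^1(v)-(f^1*h^1)(\delta v)$, which is precisely $(fh)^1(v)=0$. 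The $R$-$R$-bilinearity is inherited from $s^0$, $f^1$, $\delta$, and the inductive hypothesis, and since the filtration is finite the procedure terminates.

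For (2), I would dually fix a retraction $r^0:M\rightmap{}N$ of the section $f^0$, set $g^0:=id_N$, and build $g^1$ inductively along the same triangular filtration by
\[
g^1(v):=-\bigl(f^1(v)+(g^1*f^1)(\delta v)\bigr)r^0
\]
for $v\in W_1^{u+1}$; composing on the right with $f^0$ and using $r^0f^0=id_M$ yields $g^1(v)f^0=-f^1(v)-(g^1*f^1)(\delta v)$, i.e.\ $(gf)^1(v)=0$. The Roiter property then provides an $A/I$-module $N'$ with underlying $R$-module $N$ for which $g=(id_N,g^1)$ is a morphism in $({\cal A},I)\g\Mod$, and $(gf)^0=f^0$, $(gf)^1=0$.

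The only point that requires some care, rather than being an outright obstacle, is checking that the inductive definition of $h^1$ (respectively $g^1$) really produces an $R$-$R$-bimodule map and that the resulting $A$-$A$-bimodule extension to $V$ satisfies $(fh)^1=0$ on all of $V$ and not merely on generators; the triangularity of $W_1$ makes the recursion well-founded, and the fact that $h^1$ is evaluated on $\delta v\in AW_1^uAW_1^uA$ through the $\rho_M,\rho_N$-actions (as spelled out in Lemma \ref{L: (cal A,I) reformulada}) ensures the whole construction is consistent with the $A$-action. With these pieces in place, the verification reduces to a straightforward bookkeeping of the defining equations of morphisms in $({\cal A},I)\g\Mod$.
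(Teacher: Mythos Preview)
Your recursion hides a circularity. To evaluate $(f^1*h^1)(\delta v)$ for $v\in W_1^{u+1}$ you need $h^1$ as an $A$-$A$-bimodule map on $AW_1^uA\subseteq V$, and the right $A$-action on $\Hom_k(M',M)$ is through $\rho_{M'}$: concretely, for $\delta v=\sum_j a_jw_jb_jw'_jc_j$ one has $(f^1*h^1)(\delta v)=\sum_j\rho_N(a_j)f^1(w_j)\rho_M(b_j)h^1(w'_j)\rho_{M'}(c_j)$. But $\rho_{M'}$ only exists after the Roiter property has been applied, and Roiter requires $h^1$ to be given on all of $W_1$ first. Your remark that the evaluation runs ``through the $\rho_M,\rho_N$-actions'' is exactly where the argument breaks; the formula in Lemma~\ref{L: (cal A,I) reformulada}(3), read for the composite $fh$, carries $\rho_{M'}$ on the far right, not $\rho_M$. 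Using $\rho_M$ as a stand-in gives a well-defined recursion, but there is no reason the resulting $h$ then satisfies the genuine equation $(fh)^1=0$, since $\rho_{M'}(c)=\rho_M(c)-h^1(\delta c)$ and the discrepancy does not cancel.

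The paper organises the induction so that this problem never arises. Rather than building a single $h^1$ layer by layer, it composes morphisms: assuming $f^1(W_1^{i-1})=0$, it sets $h^1(w):=-t^0f^1(w)$ on \emph{all} of $W_1$ at once (so Roiter can immediately supply $M'$ and the extension of $h^1$ to $V$), and then checks $(fh)^1(W_1^i)=0$. The cross term $(f^1*h^1)(\delta w)$ vanishes not by a recursive construction of $h^1$ but because $f^1$ already kills $W_1^{i-1}$ and $\delta w\in AW_1^{i-1}AW_1^{i-1}A$. One then replaces $f$ by $fh$ and repeats; the final $h$ is the composite of these single-step morphisms. Part~(2) is dual. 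The fix to your argument is therefore not a matter of bookkeeping but of restructuring the induction so that $h^1$ is globally defined before any $A$-bimodule extension is invoked.
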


\begin{proof} We only prove (1). Adopt the notation of (\ref{D: weak ditalgebra}) 
and notice that the proof follows by induction from the following.

\medskip
{\sl Claim:} Given $i\in[1,s]$, if $f^1(W_1^{i-1})=0$ then
there exists a morphism $h=(h^0,h^1):M'\rightmap{} M$ in $({\cal A},I)\g\Mod$ with
$h^0$ isomorphism and $(fh)^1(W_1^i)=0$.
\medskip

 {\sl Proof of the Claim:} Let $t^0\in \Hom_R(N,M)$ be a right inverse
for $f^0$ in $R\g\Mod$. Denote by $M'$ the underlying $R$-module
of $M$. Define, for each $w\in W_1$,  
$h^1(w):=-t^0f^1(w)\in\Hom_k(M',M)$. Then, $h^1\in \Hom_{R\g
R}(W_1,\Hom_k(M',M))$. Since $({\cal A},I)$ is a Roiter interlaced weak ditalgebra, we know that $M'$
admits a left $A/I$-module structure such that $h=(1_M,h^1)$ is a
morphism in $({\cal A},I)\g\Mod$. Since  $f^1(W_1^{i-1})=0$, then
$h^1(W_1^{i-1})=0$. Now, if $w\in W_1^i$, we obtain from the
triangularity condition that $\delta(w)=\sum_ja_jw_jb_jw'_jc_j$,
for some $a_j,b_j,c_j\in A$ and $w_j,w'_j\in W^{i-1}_1$. Then,
$(fh)^1(w)=f^0h^1(w)+f^1(w)1_M+\sum_ja_jf^1(w_j)b_jh^1(w'_j)c_j$.
Therefore $(fh)^1(w)=-f^0t^0f^1(w)+f^1(w)1_M=0$, and the claim  is proved.
\end{proof}

\begin{corollary}\label{L: f iso sii f^0 iso} Assume that  $({\cal A},I)$ is a  Roiter interlaced weak  ditalgebra with  layer $(R,W)$ and
consider any morphism $f=(f^0,f^1):M\rightmap{} N$ in 
$({\cal
A},I)\g\Mod$. Then, $f$ is an isomorphism  in $({\cal A},I)\g\Mod$ iff $f^0$ is an isomorphism in $R\g\Mod$.
\end{corollary}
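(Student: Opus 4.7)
The plan is to treat the two implications separately, with the forward direction being essentially formal and the reverse direction relying on Lemma \ref{L: Ovsienko}.

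For the forward direction, suppose $f$ is an isomorphism in $({\cal A},I)\g\Mod$ with inverse $g=(g^0,g^1)$. The composition recipe recalled in (\ref{D: la cat de mod's de cal A(Delta)}) gives $(gf)^0 = g^0 f^0$ on the zero component, and the identity morphism $1_M$ in $({\cal A},I)\g\Mod$ has underlying $0$-component $id_M$. Hence $g^0 f^0 = id_M$ and, symmetrically, $f^0 g^0 = id_N$, so $f^0$ is an isomorphism of $R$-modules (indeed, of $A/I$-modules).

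For the reverse direction, assume $f^0$ is an isomorphism of $R$-modules, so in particular it is both a section and a retraction in $R\g\Mod$. Apply Lemma \ref{L: Ovsienko}(1) to obtain a morphism $h=(h^0,h^1):M'\rightmap{}M$ in $({\cal A},I)\g\Mod$ with $h^0$ an isomorphism and $(fh)^1=0$. Symmetrically, apply Lemma \ref{L: Ovsienko}(2) to obtain $g=(g^0,g^1):N\rightmap{}N'$ with $g^0$ an isomorphism and $(gf)^1=0$. The key observation is that morphisms $\varphi=(\varphi^0,0)$ in $({\cal A},I)\g\Mod$ are exactly the $A/I$-module homomorphisms $\varphi^0$ (by the defining relation $a\varphi^0[m]=\varphi^0[am]$), and by the composition formula in (\ref{D: la cat de mod's de cal A(Delta)}) composing two such morphisms again yields one with trivial second component. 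Hence $fh$ corresponds to the $A/I$-linear map $f^0h^0$, which is an $R$-bijection between $A/I$-modules, and therefore has an $A/I$-linear inverse $u$. The pair $(u,0)$ is then a two-sided inverse of $fh$ in $({\cal A},I)\g\Mod$, so $f$ acquires a right inverse $h(u,0)$. A parallel argument using $gf$ produces a left inverse of $f$. Having both one-sided inverses, $f$ is an isomorphism in $({\cal A},I)\g\Mod$.

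The only real content is in Lemma \ref{L: Ovsienko}, which has already been proved. The main subtlety to verify carefully in writing out the argument is that an $A/I$-linear bijection between $A/I$-modules is automatically an $A/I$-isomorphism, so that the zero-second-component morphisms $fh$ and $gf$ can indeed be inverted inside $({\cal A},I)\g\Mod$; this is a standard categorical fact for module categories and poses no real difficulty.
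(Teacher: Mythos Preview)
Your proof is correct and follows essentially the same approach as the paper's: both use Lemma~\ref{L: Ovsienko} to replace $f$ by a composite with trivial second component, observe that such a morphism is an $A/I$-linear bijection and hence invertible in $({\cal A},I)\g\Mod$, and conclude that $f$ has both a left and a right inverse. The paper is slightly terser, invoking only part (2) of Lemma~\ref{L: Ovsienko} to show $f$ is a section and then saying ``dually'' for the retraction half, whereas you invoke both parts explicitly; the content is the same.
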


\begin{proof} It is
clear that if $f$ is an isomorphism  in $({\cal A},I)\g\Mod$ then $f^0$
is an isomorphism in $R\g\Mod$. Conversely, assume that $f^0$ is an isomorphism in $R\g\Mod$. By (\ref{L: Ovsienko}), there is morphism 
$g=(g^0,g^1):N\rightmap{} N'$ with $(gf)^1=0$ and $g^0$ isomorphism in $R\g\Mod$. This implies that $g^0f^0:M\rightmap{}N'$ is an $A/I$-module isomorphism, hence  $gf=(g^0f^0,0)$ is an isomorphism in $({\cal A},I)\g\Mod$, and so $f$ is a section in $({\cal A},I)\g\Mod$. Dually, one shows that  $f$ is a retraction in  $({\cal A},I)\g\Mod$.
\end{proof}

\begin{proposition}\label{P: idempotents split}
   Let $({\cal A},I)$ be a Roiter interlaced weak ditalgebra, then 
idempotents split in $({\cal A},I)\g\Mod$.
   That is, for any idempotent $e\in \End_{({\cal A},I)}(M)$, there is an 
isomorphism $h:M_1\oplus M_2\rightmap{}M$ in 
   $({\cal A},I)\g\Mod$ such that 
   $$h^{-1}eh=\begin{pmatrix}
            1_{M_1}&0\\ 0&0\\
           \end{pmatrix}.$$
\end{proposition}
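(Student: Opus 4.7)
The plan is to reduce to the case where $e^0$ is a diagonal projection on an $R$-module decomposition of $M$, transfer that decomposition to $({\cal A},I)\g\Mod$ via the Roiter property, and finally use the triangularity of $W_1$ to kill the $1$-component of $e$ by conjugation.

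First I would observe that for any morphism $f=(f^0,f^1)$ in $({\cal A},I)\g\Mod$ the defining identity $af^0[m]=f^0[am]+f^1(\delta(a))[m]$ reduces, for $a\in R\subseteq A$, to $af^0[m]=f^0[am]$ because $\delta(R)=0$ (as $R$ is part of the layer). Hence $f^0$ is automatically $R$-linear, and from $(e\circ e)^0=e^0\circ e^0=e^0$ one deduces that $e^0\in\End_R(M)$ is idempotent. Since $R\g\Mod$ is abelian, this idempotent splits and gives an $R$-module decomposition $M=M_1\oplus M_2$ with $e^0$ corresponding to $\mathrm{diag}(1_{M_1},0)$.

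Next, I would apply the Roiter property (\ref{D: Roiter weak ditalgebra}) to the $R$-isomorphism $\mathrm{id}\colon M_1\oplus M_2\to M$: since $M$ already carries an $A/I$-module structure, there is an $A/I$-module structure on $M_1\oplus M_2$ (call the result $N$) together with some $h^1\in\Hom_{R\g R}(W_1,\Hom_k(N,M))$ making $h:=(\mathrm{id},h^1)\colon N\to M$ a morphism of $({\cal A},I)\g\Mod$, which by (\ref{L: f iso sii f^0 iso}) is an isomorphism. Passing to $e':=h^{-1}eh\in\End_{({\cal A},I)}(N)$ we arrive at an idempotent with $(e')^0=\mathrm{diag}(1_{M_1},0)$. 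It therefore suffices to construct an automorphism $g$ of $N$ in $({\cal A},I)\g\Mod$ with $g^{-1}e'g=((e')^0,0)$, since then $h\cdot g$ is the desired isomorphism $M_1\oplus M_2\to M$ conjugating $e$ to $\mathrm{diag}(1_{M_1},0)$.

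The core of the argument, and the main obstacle, is the construction of $g$. The plan is to proceed by induction along the triangular filtration $0=W_1^0\subseteq W_1^1\subseteq\cdots\subseteq W_1^s=W_1$ guaranteed by (\ref{D: weak ditalgebra})(2). Assuming inductively that $(e')^1$ already vanishes on $W_1^{i-1}$, one imitates the proof of (\ref{L: Ovsienko}): one chooses a correction $g_i=(1_N,g_i^1)$ whose $1$-component cancels the $\mathrm{diag}(1,0)$-off-diagonal part of $(e')^1$ on $W_1^i$, so $g_i$ is automatically an automorphism by (\ref{L: f iso sii f^0 iso}), and the condition $\delta(W_1^i)\subseteq AW_1^{i-1}AW_1^{i-1}A$ ensures that the conjugation $g_i^{-1}e'g_i$ introduces only corrections that lie already in the previously handled filtration level, so that its $1$-component vanishes on $W_1^i$. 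After $s$ steps one kills $(e')^1$ entirely, and the composition $g:=g_s\cdots g_1$ is the sought-for automorphism. The delicate point will be the verification that the required $g_i^1$ can be found at each step; this relies on the idempotency relation $(e')^2=e'$, which forces the off-diagonal blocks of $(e')^1$ relative to the decomposition $N=M_1\oplus M_2$ to satisfy precisely the identities needed for the correction to exist in $\Hom_{R\g R}(W_1^i,\Hom_k(N,N))$ modulo $W_1^{i-1}$.
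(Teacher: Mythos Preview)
The paper states this proposition without proof, deferring to the adaptation of \cite{BSZ}\S6 announced at the start of the section. Your outline is essentially that standard argument and is correct in substance: reduce to $e^0$ being a diagonal $R$-projection, then use the triangular filtration of $W_1$ together with the Roiter property to conjugate away $e^1$ level by level; once the $1$-component vanishes, the remaining $0$-component is an $A/I$-linear idempotent and the splitting follows in $A/I\g\Mod$, hence in $({\cal A},I)\g\Mod$.

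Two small expository points deserve correction. First, in your application of the Roiter property to $\mathrm{id}\colon M_1\oplus M_2\to M$, the map $h^1$ is an \emph{input} to the Roiter property, not an output (see (\ref{D: Roiter weak ditalgebra})); with $h^1=0$ this step is vacuous and returns $N=M$, so it can be omitted. Second, and relatedly, $M_1$ and $M_2$ are only $R$-summands at that stage and do not yet carry $A/I$-structures; the honest $A/I$-decomposition only appears \emph{after} your inductive conjugations have reduced the idempotent to $(p,0)$, at which point $p$ is an $A/I$-module idempotent and $M_1=\Im p$, $M_2=\Ker p$ inherit their structures. Your ``delicate point'' is handled exactly as you suggest: the idempotency relation $p\,(e')^1(v)+(e')^1(v)\,p+((e')^1*(e')^1)(\delta v)=(e')^1(v)$, combined with $\delta(W_1^i)\subseteq AW_1^{i-1}AW_1^{i-1}A$ and the inductive hypothesis $(e')^1(W_1^{i-1})=0$, forces the diagonal blocks of $(e')^1(v)$ to vanish on $W_1^i$, so one may take $g_i^1(v)=(1-p)(e')^1(v)p-p(e')^1(v)(1-p)$ and invoke Roiter to produce the conjugating isomorphism.
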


 \begin{lemma}\label{L: basico de sucs exactas} Let $({\cal A},I)$ be a 
 Roiter interlaced weak ditalgebra. Suppose that 
$M\rightmap{f}E\rightmap{g}N$
is a pair of composable morphisms in $({\cal A},I)\g\Mod$ with $gf=0$
and
$$0\rightmap{}M\rightmap{f^0}E\rightmap{g^0}N\rightmap{}0$$
is a split exact sequence of left $R$-modules. Then, there is an
isomorphism $h:E'\rightmap{}E$ in $({\cal A},I)\g\Mod$ such that
$(gh)^1=0$ and $(h^{-1}f)^1=0$.
\end{lemma}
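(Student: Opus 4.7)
The plan is to construct $h$ by induction on the triangular filtration $0 = W_1^0 \subseteq W_1^1 \subseteq \cdots \subseteq W_1^s = W_1$ granted by the triangularity of the layer of $({\cal A}, I)$, see (\ref{D: weak ditalgebra})(2). Specifically, for each $i \in [0, s]$ I will produce a morphism $h_i: E^{(i)} \to E$ in $({\cal A}, I)\g\Mod$ whose $0$-component $h_i^0$ is an isomorphism of $R$-modules (so that $h_i$ itself is an isomorphism by (\ref{L: f iso sii f^0 iso})), satisfying $(g h_i)^1(W_1^i) = 0$ and $(h_i^{-1} f)^1(W_1^i) = 0$. Taking $E' := E^{(s)}$ and $h := h_s$ finishes the proof, and the base case $i = 0$ is trivial with $h_0 = 1_E$.

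For the inductive step, given $h_i$, I abbreviate $\phi := g h_i$ and $\psi := h_i^{-1} f$, and seek $h_{i+1}$ of the form $h_{i+1} = h_i \tilde{h}$, where $\tilde{h}: E^{(i+1)} \to E^{(i)}$ has $\tilde{h}^0 = 1_{E^{(i)}}$ and where the unknown is an $R$-$R$-bimodule morphism $\tilde{h}^1: W_1 \to \Hom_k(E^{(i)}, E^{(i)})$ that I require to vanish on $W_1^i$; the Roiter hypothesis then provides the $A/I$-module structure on $E^{(i+1)}$ that makes $\tilde{h}$ a morphism in $({\cal A}, I)\g\Mod$. For $w \in W_1^{i+1}$, the triangularity $\delta(w) \in A W_1^i A W_1^i A$, together with $\tilde{h}^1(W_1^i) = 0$ and the inductive vanishings $\phi^1(W_1^i) = \psi^1(W_1^i) = 0$, kills every $(g^1 * f^1)\delta$-type term that appears in the composition formula of (\ref{D: la cat de mod's de cal A(Delta)}) when one expands $(g h_{i+1})^1(w)$ and $(h_{i+1}^{-1} f)^1(w)$. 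The two vanishing conditions then reduce to the $\Hom_k$-level linear system
$$g^0 h_i^0 \cdot \tilde{h}^1(w) = -\phi^1(w), \qquad \tilde{h}^1(w) \cdot (h_i^0)^{-1} f^0 = \psi^1(w).$$

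To solve this system, I fix an $R$-module splitting $E \cong M \oplus N$ of the given short exact sequence and, via $h_i^0$, write $\tilde{h}^1(w)$ as a $2 \times 2$ block matrix on $E^{(i)} \cong M \oplus N$. The first equation pins down the $N$-row of the matrix, the second equation pins down the $M$-column, and the remaining $N \to M$ entry (which occurs in neither equation) may be set to zero. The only scalar consistency to check is that both equations agree on their common $(N, M)$-block; that is, $-\phi^1(w) (h_i^0)^{-1} f^0 = g^0 h_i^0 \psi^1(w)$. This is immediate from the identity $\phi \psi = g f = 0$ in $({\cal A}, I)\g\Mod$, whose $1$-component on $w \in W_1^{i+1}$ reads exactly $\phi^0 \psi^1(w) + \phi^1(w) \psi^0 = 0$ once the $(\phi^1 * \psi^1)\delta(w)$-term vanishes by the inductive hypothesis. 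The main delicate point of the argument is precisely this clean accounting of the $\delta$-contributions via the triangular filtration; once this is in place, one extends $\tilde{h}^1|_{W_1^{i+1}}$ to an $R$-$R$-bimodule map on all of $W_1$ by zero on a chosen bimodule complement, and the induction closes at $i = s$.
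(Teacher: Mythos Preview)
Your proof is correct. Both your argument and the paper's proceed by induction on the triangular filtration of $W_1$, but the routes differ in how the two vanishing conditions are handled.

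The paper first invokes (\ref{L: Ovsienko}) to reduce to the case $f^1=0$; this makes $f^0$ an $A$-module (not just $R$-module) morphism. At each inductive step the paper defines $h^1(w)=-t^0 g^1(w)$ using only a right inverse $t^0$ of $g^0$, which immediately gives $(gh)^1(W_1^i)=0$, and then exploits the $A$-linearity of $f^0$ to show that $(h^{-1}f)^1=0$ is preserved exactly. Your argument bypasses the preliminary reduction and instead treats $f$ and $g$ symmetrically: at each step you solve for the full $2\times 2$ block matrix of $\tilde{h}^1(w)$, with the consistency of the overlapping entry supplied directly by $(gh_i)(h_i^{-1}f)=gf=0$. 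What your approach buys is a single self-contained induction with a transparent symmetry between the $f$-side and the $g$-side; what the paper's approach buys is a simpler closed formula $h^1(w)=-t^0g^1(w)$ at each step, at the cost of the initial normalization and a less symmetric verification that $(h^{-1}f)^1$ stays zero.
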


\begin{proof} By  (\ref{L: Ovsienko}), we may assume that $f^1=0$. Once
again, we adopt the notation of (\ref{D: weak ditalgebra}). The proof will follow by
induction from the following.

\medskip
{\sl Claim:} Given $i\in[1,s]$, if $f^1=0$ and $g^1(W_1^{i-1})=0$ then
there exists an isomorphism $h:E'\rightmap{} E$ in $({\cal A},I)\g\Mod$
with $(h^{-1}f)^1=0$ and $(gh)^1(W_1^i)=0$.
\medskip

{\sl Proof of the Claim:} First follow the argument in the proof of
the claim in (\ref{L: Ovsienko}), where using a right inverse $t^0$ for $g^0$
we constructed an isomorphism $h=(1_E,h^1):E'\rightmap{} E$ in
$({\cal A},I)\g\Mod$, where $E'$ has the same underlying $R$-module
that $E$ has,
$h^1(w)=-t^0g^1(w)$, for $w\in W_1$, and $(gh)^1(W_1^i)=0$. We
will
show that $(h^{-1}f)^1=0$. For this, we first notice that
 $(gf)^1=0$ implies $0=(gf)^1(w)=g^1(w)f^0$, for any
$w\in W_1$. Thus, $h^1(w)f^0=0$, for any $w\in W_1$, too.
Since $f^1=0$, we know that $f^0\in\Hom_A(M,E)$. Then, if $a,b\in
A$, $w\in W_1$ and $m\in M$, we have $\left(h^1(awb)f^0\right)[m]=
\left(ah^1(w)b\right)\left[f^0[m]\right]=ah^1(w)\left[bf^0[m]\right]=
a\left(h^1(w)f^0[bm]\right)=0$. Thus, $h^1(v)f^0=0$ holds for any
$v\in AW_1A=V$.
Now,  if $u:=h^{-1}$, we get that $u^0=1_E$.
Moreover, if $w\in W_1$
and we write $\delta(w)=\sum_jv_jv'_j$, with $v_j,v'_j\in AW_1A$,
then $0=(uh)^1(w)=h^1(w)+u^1(w)+\sum_ju^1(v_j)h^1(v'_j)$.
Then,
$(uf)^1(w)=u^0f^1(w)+u^1(w)f^0+\sum_ju^1(v_j)f^1(v'_j)=u^1(w)f^0=
[-h^1(w)-\sum_ju^1(v_j)h^1(v'_j)]f^0=0$. Which implies that
$(uf)^1=0$, as wanted.\end{proof}

 \begin{definition}\label{D: exact structure} If ${\cal C}$ is an additive $k$-category where
idempotents split, a pair $(s,d)$ of composable morphisms
$M\rightmap{s}E\rightmap{d} N$ in ${\cal C}$ is called  {\sl
exact}\index{Exact pair of composable morphisms} iff $s$ is
the kernel of $d$ and $d$ is the cokernel of $s$.
A {\sl morphism of exact pairs}\index{Morphism!morphism of exact pairs} from
$M\rightmap{s}E\rightmap{d} N$ to $M'\rightmap{s'}E'\rightmap{d'}
N'$  is a triple of morphisms
$(u,v,w)$ in ${\cal C}$ making the
following diagram commutative
$$\begin{matrix}M&\rightmap{s}&E&\rightmap{d}& N\\ 
\lmapdown{u}&&\lmapdown{v}&&\lmapdown{w}\\ 
M'&\rightmap{s'}&E'&\rightmap{d'}& N'.\\ \end{matrix}$$
If $u$, $v$ and $w$ are isomorphisms,  $(u,v,w)$ is an {\sl isomorphism of exact pairs}\index{Isomorphism!isomorphism of exact pairs}.
If there is such an isomorphism with $u=1_M$ and $w=1_N$, the exact pairs
are called {\sl equivalent}\index{Equivalence of exact pairs}.

Once we have fixed a class of exact pairs $\cal E$ closed under isomorphisms,
if $(s,d)\in\cal E$, $s$ is called an {\sl inflation} and $d$ is called a
{\sl deflation}.
\end{definition}

 \begin{lemma}\label{L: pares exactos} Assume that $({\cal A},I)$ is a Roiter interlaced weak ditalgebra. Then,
  whenever  $$0\rightmap{}M\rightmap{f^0}E\rightmap{g^0}N\rightmap{}0$$
is an exact sequence in $A/I\g\Mod$, we have that  $M\rightmap{\ (f^0,0)\ }E\rightmap{\ (g^0,0)\ }N$ is an exact pair in $({\cal A},I)\g\Mod$. 
\end{lemma}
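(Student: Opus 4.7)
The plan is to verify directly the universal properties defining kernels and cokernels in $({\cal A},I)\g\Mod$. The key simplification is that the composition formula of (\ref{D: la cat de mod's de cal A(Delta)}) collapses whenever one of the two composable morphisms has trivial one-component: if $f^1=0$ or $g^1=0$, then the bilinear correction $(g^1*f^1)\delta$ vanishes and $(gf)^1(v)$ reduces to $g^1(v)f^0$ or $g^0f^1(v)$, respectively. In particular $(g^0,0)(f^0,0)$ equals $(g^0f^0,0)=(0,0)$, and both $(f^0,0)$ and $(g^0,0)$ are morphisms in $({\cal A},I)\g\Mod$ because the defining equation $af^0(m)=f^0(am)+f^1(\delta(a))(m)$ reduces to the usual $A/I$-linearity of $f^0$.

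For the kernel property, let $h=(h^0,h^1):X\rightmap{}E$ be a morphism in $({\cal A},I)\g\Mod$ with $(g^0,0)h=0$. Computing the zero- and one-components of this composition, we obtain $g^0h^0=0$ and $g^0h^1(v)=0$ for all $v\in V$. Since $f^0$ is the kernel of $g^0$ in $A/I\g\Mod$, the first equation gives a unique $A/I$-linear map $\phi^0:X\rightmap{}M$ with $f^0\phi^0=h^0$. Since $f^0$ is injective, the second equation lets us define $\phi^1(v):X\rightmap{}M$ pointwise as the unique preimage under $f^0$, so that $f^0\phi^1(v)=h^1(v)$. The $A\otimes A$-bilinearity of $\phi^1$ and the morphism equation for $\phi=(\phi^0,\phi^1)$ both follow by applying $f^0$ to the two sides of the desired identities and invoking its injectivity together with its $A/I$-linearity. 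The composition formula then verifies $(f^0,0)\phi=h$, while uniqueness of $\phi$ is automatic because $f^0$ is monic.

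The cokernel property is strictly dual. From $h(f^0,0)=0$ one reads off $h^0f^0=0$ and $h^1(v)f^0=0$; using that $g^0$ is the cokernel of $f^0$ in $A/I\g\Mod$ and is surjective as a $k$-linear map, one defines $\psi=(\psi^0,\psi^1):N\rightmap{}Y$ by $\psi^0g^0=h^0$ and $\psi^1(v)g^0=h^1(v)$, and verifies the morphism condition and the factorisation $\psi(g^0,0)=h$ in exactly the same fashion. No substantive obstacle is expected: the entire argument is driven by the observation that, whenever one of the composable morphisms has zero one-component, the differential $\delta$ and the bilinear corrections drop out of every computation, reducing the problem to ordinary kernel and cokernel factorisations in $A/I\g\Mod$ lifted level by level via the injectivity of $f^0$ and the surjectivity of $g^0$.
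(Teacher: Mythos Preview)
Your argument is correct and follows essentially the same route as the paper's proof: factor $h^0$ and each $h^1(v)$ through the injective map $f^0$, then use injectivity of $f^0$ again to verify the $A$-$A$-bilinearity of $\phi^1$ and the morphism equation for $(\phi^0,\phi^1)$, with the cokernel case dual. One small imprecision: at the stage where you invoke the kernel property of $f^0$, the map $h^0$ is not $A/I$-linear (it satisfies the twisted relation $ah^0(x)=h^0(ax)+h^1(\delta(a))(x)$), so $\phi^0$ should first be obtained merely as a $k$-linear (or $R$-linear) map via the injectivity of $f^0$, and only afterwards shown to satisfy the morphism equation---which is exactly what you do in the next sentence.
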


\begin{proof}  Write $f=(f^0,0)$ and $g=(g^0,0)$.
 We only prove that $f=\Ker g$ (the equality $g=\Cok f$
  is proved dually). So suppose that $Z\rightmap{t}E$
satisfies $gt=0$ in $({\cal A},I)\g\Mod$.
Then, $g^0t^0=0$ and $g^0t^1(v)=0$, for all $v\in V$. Then, there
exist a unique $h^0\in \Hom_R(Z,M)$ with $f^0h^0=t^0$ and, for each $v\in
V$, there exists a unique $h^1(v)\in \Hom_k(Z,M)$ with
$f^0h^1(v)=t^1(v)$. Since $f^0$ is an injective morphism of $A$-modules,
we get that $h^1\in \Hom_{A\g A}(V,\Hom_k(Z,M))$.
Moreover, for $a\in A$ and $z\in Z$,
$$f^0(ah^0(z))=at^0(z)=t^0(az)+t^1(\delta(a))(z)=f^0\left[h^0(az)+h^1(\delta(a))(z)\right].$$
Using again that $f^0$ is injective, we get that 
$h=(h^0,h^1)\in\Hom_{({\cal A},I)}(Z,M)$. Clearly, $h$ is unique such that $fh=t$ in $({\cal A},I)\g\Mod$.
\end{proof}

For the sake of completeness, we recall the following definition. 

\begin{definition}\label{D: de estruct exacta} Assume that ${\cal C}$ is an additive $k$-category  and  ${\cal E}$ is a class of exact pairs in ${\cal C}$
closed under isomorphisms.
Then, the class ${\cal E}$ is called \emph{an exact structure
on} ${\cal C}$ iff
\begin{enumerate}
\item[(1)] The composition of deflations is a deflation.
\item[(2)] For each morphism $f:Z'\rightmap{}Z$ and each deflation
$d:Y\rightmap{}Z$ there exists a morphism $f':Y'\rightmap{}Y$ and a
deflation $d':Y'\rightmap{}Z'$ such that $df'=fd'$.
\item[(3)] Identities are deflations. If $gf$ is a deflation, then
so is $g$.
\item[(3)$^{op}$] Identities are inflations. If $gf$ is an inflation, then
so is $f$.
\end{enumerate}
 An \emph{exact category} is a category ${\cal C}$, as above, endowed with an exact structure  ${\cal E}$. 
A functor $F:{\cal C}\rightmap{}{\cal C}'$, where the categories ${\cal C}$ and ${\cal C}'$ are endowed with exact structures ${\cal E}$ and ${\cal E}'$, respectively, is called  \emph{exact} if it maps ${\cal E}$ into ${\cal E}'$.
\end{definition}

\begin{definition}\label{D: S-acceptable y E_A,S}
Assume that $\underline{\cal A}=({\cal A},I)$ is a Roiter interlaced weak ditalgebra
with layer $(R,W)$.  Consider the class 
$\underline{\cal E}=\underline{\cal E}({\underline{\cal A}})$ of composable pairs of morphisms $M\rightmap{f}E\rightmap{g}N$ such that there is a commutative diagram in $\underline{\cal A}\g\Mod$
$$\begin{matrix}
M&\rightmap{f}&E&\rightmap{g}&N\\
\rmapdown{\cong}&&\rmapdown{\cong}&&\rmapdown{\cong}\\
M'&\rightmap{(\varphi^0,0)}&E'&\rightmap{(\psi^0,0)}&N',
  \end{matrix}$$
  such that $0\rightmap{}M'\rightmap{\varphi^0}E'\rightmap{\psi^0}N'\rightmap{}0$
  is a split exact sequence in $R\g\Mod$.
\end{definition}

We will establish that $\underline{\cal E}$ is an exact structure of 
$\underline{\cal A}\g\Mod$, 
for any Roiter interlaced weak ditalgebra $\underline{\cal A}$. The following statement gives a simple description of the class $\underline{\cal E}$.

\begin{lemma}\label{L: caract de conflaciones} Let  $\underline{\cal A}$ be a Roiter interlaced weak ditalgebra. Then, $\underline{\cal E}$ is a class of exact pairs in $\underline{\cal A}\g\Mod$ closed under isomorphisms. Moreover, if we  consider morphisms $f:M\rightmap{}E$ and $g:E\rightmap{}N$ in $\underline{\cal A}\g\Mod$. Then:
\begin{enumerate}
\item $M\rightmap{f}E\rightmap{g}N$ is a conflation iff $gf=0$ and 
$0\rightmap{}M\rightmap{f^0}E\rightmap{g^0}N\rightmap{}0$
is a split exact sequence in $R\g\Mod$.
\item $g:E\rightmap{}N$ is a deflation iff $g^0:E\rightmap{}N$ is a retraction in $R\g\Mod$.
\item $f:M\rightmap{}E$ is an inflation  iff $f^0:M\rightmap{}E$ is a section in $R\g\Mod$.
\end{enumerate}
\end{lemma}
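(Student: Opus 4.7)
The plan is to first verify that $\underline{\cal E}$ is a class of exact pairs closed under isomorphisms, then establish (1) as the central characterization, and finally derive (2) and (3) as direct consequences. The proof will rely on Lemmas \ref{L: pares exactos}, \ref{L: basico de sucs exactas}, \ref{L: Ovsienko}, and the isomorphism criterion \ref{L: f iso sii f^0 iso}; no fresh ingredients are needed.

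Closure of $\underline{\cal E}$ under isomorphism is immediate from its definition. To see that every element of $\underline{\cal E}$ is an exact pair, I will observe that any $(f,g)\in\underline{\cal E}$ is isomorphic (as composable pair) to a pair $M'\rightmap{(\varphi^0,0)}E'\rightmap{(\psi^0,0)}N'$ whose underlying $R$-sequence splits. The vanishing of the 1-components forces $\varphi^0$ and $\psi^0$ to be $A$-linear, so the underlying sequence is a short exact sequence in $A/I\g\Mod$; Lemma \ref{L: pares exactos} then gives exactness of the normalized pair, and transport through the defining isomorphism yields exactness of $(f,g)$.

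For part (1), the forward direction is routine: the defining commutative diagram transfers $R$-split exactness of the normalized bottom row to the top row via the $R$-module isomorphisms sitting as 0-components of the vertical maps, and $gf=0$ follows from $\psi^0\varphi^0=0$. For the converse, given $gf=0$ and $R$-split exactness of $0\to M\to E\to N\to 0$, I will invoke Lemma \ref{L: basico de sucs exactas} to produce an isomorphism $h:E'\to E$ in $\underline{\cal A}\g\Mod$ with $(gh)^1=0$ and $(h^{-1}f)^1=0$. The commutative diagram with vertical arrows $id_M$, $h^{-1}$, $id_N$ then exhibits $(f,g)$ as isomorphic to the normalized pair $(h^{-1}f,gh)$, whose $R$-underlying sequence is $R$-isomorphic to the original and hence split exact; thus $(f,g)\in\underline{\cal E}$.

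Parts (2) and (3) follow from (1) and Lemma \ref{L: Ovsienko}; I will describe (2), with (3) entirely dual. The forward direction is immediate from (1). For the converse, given $g^0$ a retraction in $R\g\Mod$, Lemma \ref{L: Ovsienko}(1) applied to $g$ yields $h:E'\to E$ with $h^0$ an isomorphism in $R\g\Mod$ and $(gh)^1=0$; by \ref{L: f iso sii f^0 iso}, $h$ is in fact an isomorphism in $\underline{\cal A}\g\Mod$. The vanishing of $(gh)^1$ forces $(gh)^0=g^0h^0$ to be $A$-linear, and it remains a retraction in $R\g\Mod$. Its $A$-module kernel $K$ sits inside $E'$, hence satisfies $IK=0$, giving an $A/I$-module short exact sequence $0\to K\to E'\to N\to 0$ that splits in $R\g\Mod$. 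Applying (1) with $f=(\iota,0)$ (where $\iota:K\to E'$ is the inclusion) shows $K\rightmap{(\iota,0)}E'\rightmap{gh}N$ is a conflation; transporting through $h$ produces a conflation with right morphism $g$, proving that $g$ is a deflation. The main obstacle, though a mild one, is the bookkeeping of structures throughout part (2): one must carefully check that the $A$-module kernel of $(gh)^0$ automatically carries an $A/I$-module structure (because it embeds into the $I$-annihilated module $E'$) and that the transport of the conflation through the isomorphism $h$ indeed lands back in $\underline{\cal E}$, which is precisely the closure under isomorphisms established at the outset.
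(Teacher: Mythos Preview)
Your proof is correct and close to the paper's, with one notable structural difference worth recording. For the converse in (1), the paper applies Lemma~\ref{L: Ovsienko}(1) to $g$ alone to produce a commutative square with bottom row $(\psi^0,0)$, then takes the $A/I$-module kernel $\varphi^0$ of $\psi^0$ and invokes Lemma~\ref{L: pares exactos} to obtain the induced morphism $t:M\to M'$, which is then shown to be an isomorphism via \ref{L: f iso sii f^0 iso}. You instead go straight to Lemma~\ref{L: basico de sucs exactas}, which normalizes both morphisms simultaneously; this is more economical and is precisely the situation that lemma was designed for. For (2), you recycle the already-proved (1), whereas the paper essentially reruns the kernel-plus-\ref{L: pares exactos} argument inline; again your route is tidier. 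Both arguments rest on the same ingredients and neither introduces anything the other lacks.
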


\begin{proof} We will prove (1) and, at the same time, that 
$\underline{\cal E}$ is a class of exact pairs in $\underline{\cal A}\g\Mod$. If $M\rightmap{f}E\rightmap{g}N\in \underline{\cal E}$,  there is a commutative diagram in $\underline{\cal A}\g\Mod$
$$\begin{matrix}
M&\rightmap{f}&E&\rightmap{g}&N\\
\rmapdown{t}&&\rmapdown{s}&&\rmapdown{r}\\
M'&\rightmap{(\varphi^0,0)}&E'&\rightmap{(\psi^0,0)}&N',
  \end{matrix}$$
  such that $r,s,t$ are isomorphisms and  $0\rightmap{}M'\rightmap{\varphi^0}E'\rightmap{\psi^0}N'\rightmap{}0$
  is a split exact sequence in $R\g\Mod$.
  In particular,  we have the commutative diagram in $R$-Mod:
  $$\begin{matrix}
M&\rightmap{f^0}&E&\rightmap{g^0}&N\\
\rmapdown{t^0}&&\rmapdown{s^0}&&\rmapdown{r^0}\\
M'&\rightmap{\varphi^0}&E'&\rightmap{\psi^0}&N',
  \end{matrix}$$
  where $r^0,s^0,t^0$ are isomorphisms, which implies that
 $0\rightmap{}M\rightmap{f^0}E\rightmap{g^0}N\rightmap{}0$
is a split exact sequence in $R\g\Mod$.

Conversely, if $M\rightmap{f}E\rightmap{g}N$ is a pair of composable morphisms in $\underline{\cal A}\g\Mod$ such that $gf=0$ and  $$0\rightmap{}M\rightmap{f^0}E\rightmap{g^0}N\rightmap{}0$$
is a split exact sequence in $R\g\Mod$, then, by (\ref{L: Ovsienko})(1),   there is a commutative square in $\underline{\cal A}\g\Mod$
  $$\begin{matrix}
  E&\rightmap{g}&N\\
  \lmapdown{s}&&\rmapdown{r}\\
  E'&\rightmap{(\psi^0,0)}&N',\\
    \end{matrix}$$
    where $r,s$ are isomorphisms. It follows that $\psi^0$ is a retraction in $R\g\Mod$. Now, consider the kernel $\varphi^0:M'\rightmap{}E'$ of the morphism $\psi^0:E'\rightmap{}N'$ in $A/I\g\Mod$. Since $gf=0$, from (\ref{L: pares exactos}), we know that there is an induced morphism of $\underline{\cal A}$-modules  $t:M\rightmap{}M'$ such that the following diagram commutes:
   $$\begin{matrix}
M&\rightmap{f}&E&\rightmap{g}&N\\
\rmapdown{t}&&\rmapdown{s}&&\rmapdown{r}\\
M'&\rightmap{(\varphi^0,0)}&E'&\rightmap{(\psi^0,0)}&N',
  \end{matrix}$$ 
 where the lower row is an exact pair in $\underline{\cal A}\g\Mod$. In particular, we have a commutative diagram
   $$\begin{matrix}
M&\rightmap{f^0}&E&\rightmap{g^0}&N\\
\rmapdown{t^0}&&\rmapdown{s^0}&&\rmapdown{r^0}\\
M'&\rightmap{\varphi^0}&E'&\rightmap{\psi^0}&N',
  \end{matrix}$$ 
  which implies that $t^0$ is an isomorphism. Since $\underline{\cal A}$ is a Roiter interlaced weak ditalgebra, $t$ is an isomorphism too and 
 $M\rightmap{f}E\rightmap{g}N$ is an exact pair of $\underline{\cal A}\g\Mod$, which belongs to $\underline{\cal E}$.
 
 (2):  If $g:E\rightmap{}N$ is a deflation, clearly, we get that $g^0$ is a retraction in $R\g \Mod$.  On the other hand, if we start from a morphism $g:E\rightmap{}N$ in $\underline{\cal A}\g\Mod$ such that $g^0:E\rightmap{}N$ is a retraction in $R\g\Mod$, since $\underline{\cal A}$ is a Roiter interlaced weak ditalgebra, by (\ref{L: Ovsienko})(1), 
  there is a commutative square in $\underline{\cal A}\g\Mod$
  $$\begin{matrix}
  E&\rightmap{g}&N\\
  \lmapdown{s}&&\rmapdown{r}\\
  E'&\rightmap{(\psi^0,0)}&N',\\
    \end{matrix}$$
    where $r,s$ are  isomorphisms. Then, as before, we consider the kernel $\varphi^0$ of $\psi^0$ in $A/I\g\Mod$ and construct a commutative square in $\underline{\cal A}\g\Mod$ 
     $$\begin{matrix}
M&\rightmap{f}&E&\rightmap{g}&N\\
\rmapdown{t}&&\rmapdown{s}&&\rmapdown{r}\\
M'&\rightmap{(\varphi^0,0)}&E'&\rightmap{(\psi^0,0)}&N',
  \end{matrix}$$ 
    where $t$ is an isomorphism, so $g$ is a deflation.
     The proof of (3) is dual to the proof of (2).
\end{proof}

\begin{proposition}\label{P: E_A,S  estruct exacta}
If $\underline{\cal A}$ is a Roiter interlaced weak ditalgebra, then 
$\underline{\cal E}$ is an exact structure on $\underline{\cal A}\g\Mod$.
\end{proposition}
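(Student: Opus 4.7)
The plan is to verify the four axioms of Definition \ref{D: de estruct exacta} for the class $\underline{\cal E}$. The characterization in Lemma \ref{L: caract de conflaciones} reduces axioms (1), (3), and (3)$^{op}$ to elementary facts about $R$-module maps: the identity has $0$-component equal to the identity, which is trivially both an $R$-section and an $R$-retraction; a composite of $R$-retractions is an $R$-retraction, yielding (1); and if $(gf)^0=g^0f^0$ is an $R$-retraction (resp.\ an $R$-section) then so is $g^0$ (resp.\ $f^0$), yielding the nontrivial halves of (3) and (3)$^{op}$.

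The main step is axiom (2). Given a morphism $f=(f^0,f^1)\colon Z'\to Z$ and a deflation $d\colon Y\to Z$ in $\underline{\cal A}\g\Mod$, I would first invoke Lemma \ref{L: Ovsienko}(1) to compose $d$ with a suitable isomorphism and reduce to $d=(d^0,0)$; then $d^0$ is an $A/I$-linear surjection that is $R$-split. Pick an $R$-section $s\colon Z\to Y$ of $d^0$ and set $K:=\ker d^0$. The candidate pullback object $Y'$ will have underlying $R$-module $Z'\oplus K$, equipped with a twisted $A/I$-module structure
$$a\cdot(z',k) := \bigl(az',\; ak+\chi(a)(z')\bigr)$$
for a derivation $\chi\colon A\to \Hom_k(Z',K)$ to be constructed. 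Take $d':=(p_1,0)\colon Y'\to Z'$ where $p_1$ is the projection; then $p_1$ is $A$-linear for any $\chi$ and $R$-split by $z'\mapsto(z',0)$, so $d'$ is a deflation by Lemma \ref{L: caract de conflaciones}(2). Set $(f')^0(z',k):=sf^0(z')+\iota^0(k)$ where $\iota^0\colon K\hookrightarrow Y$ is the inclusion, so that $d^0(f')^0=f^0p_1=f^0(d')^0$. The morphism axiom for $f'$ combined with the equality $df'=fd'$ then forces
$$(f')^1(\delta a)(z',k) \;=\; sf^1(\delta a)(z') - \tau(a,f^0(z')) - \chi(a)(z'),$$
where $\tau(a,y):=s(ay)-as(y)\in K$ measures the failure of $s$ to be $A$-linear. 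Extending $(f')^1$ by $A$-$A$-bilinearity from $W_1$ should match this prescription on $\delta A$.

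The main obstacle, as I see it, is the simultaneous construction of the derivation $\chi$ and of an $A$-$A$-bilinear map $(f')^1\colon V\to \Hom_k(Y',Y)$ whose restriction to $\delta A$ is as prescribed. I would build both by induction along the triangular filtrations $W_0^0\subset\cdots\subset W_0^r$ and $W_1^0\subset\cdots\subset W_1^s$ of the layer, applying the Roiter property of $\underline{\cal A}$ at each level to realign the pieces, in the spirit of the construction in \cite{BSZ}\S6 for ordinary triangular ditalgebras. The interlacing conditions from Definition \ref{D: weak ditalg interlaced an ideal}, namely $\delta(I)\subseteq IV+VI$ together with $\delta^2(A),\,\delta^2(V)\subseteq \sum V^r I V^s$, are exactly what is needed to show that $\chi(I)=0$ (so that $IY'=0$ and hence $Y'$ is an honest $A/I$-module), and that the cocycle obstructions to the associativity $\chi(ab)=a\chi(b)+\chi(a)\cdot b$ and to the $A$-$A$-bilinearity of $(f')^1$ land in $I\cdot\Hom_k(-,K)$ or $\Hom_k(-,-)\cdot I$, both of which vanish because $I$ annihilates $Z'$, $K$, and $Y$. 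With $\chi$ and $(f')^1$ in hand, the fact that $d^1=0$ reduces the equality $df'=fd'$ on $1$-components to the identity $d^0(f')^1(v)=f^1(v)p_1$, which is immediate from $d^0\circ s=1_Z$ and $d^0\circ\iota^0=0$. This completes axiom (2) and hence the proposition.
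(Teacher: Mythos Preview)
Your handling of axioms (1), (3), and (3)$^{op}$ agrees with the paper's: all three are immediate from Lemma~\ref{L: caract de conflaciones}. The real divergence is in axiom~(2).

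The paper's argument for (2) is much shorter and avoids any explicit construction. Given $f\colon Z'\to Z$ and a deflation $d\colon Y\to Z$, one forms the single morphism $(f,d)\colon Z'\oplus Y\to Z$ in $({\cal A},I)\g\Mod$. Its $0$-component $(f^0,d^0)$ is an $R$-retraction because $d^0$ is, so by Lemma~\ref{L: caract de conflaciones}(2) the map $(f,d)$ is itself a deflation. Hence it sits in a conflation and in particular has a kernel in $({\cal A},I)\g\Mod$, say $(d',-f')^t\colon Y'\to Z'\oplus Y$. The relation $(f,d)(d',-f')^t=0$ gives $fd'=df'$, and at the $0$-level this exhibits $Y'$ as the $R$-module pullback of $f^0$ and $d^0$; since $d^0$ is an $R$-retraction so is $(d')^0$, whence $d'$ is a deflation by Lemma~\ref{L: caract de conflaciones}(2) again. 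No twisted module structures, no derivation $\chi$, no inductive construction of $(f')^1$.

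Your route attempts to build the pullback object by hand as $Z'\oplus K$ with a twisted $A/I$-action and to manufacture $\chi$ and $(f')^1$ simultaneously by induction along the triangular filtrations. This is in the spirit of the Roiter-type arguments in \cite{BSZ}\S6, but you have not actually carried out the construction: the crucial step---defining $(f')^1$ on $W_1$ so that its $A$-$A$-bilinear extension matches the prescribed values on $\delta A$, while producing a compatible $\chi$---is only asserted, and the interaction with the interlacing conditions is gestured at rather than verified. Even if this can be made to work, it is unnecessary: the kernel of $(f,d)$ already exists in $({\cal A},I)\g\Mod$ by the lemmas you have available, and invoking it bypasses all of the delicate bookkeeping with $\tau$, $\chi$, and the filtrations.
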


\begin{proof} From (\ref{L: caract de conflaciones}), items (1), (3) and (3)$^{op}$ of  Definition (\ref{D: exact structure}) are clear. Let us prove the remaining one (2). Take a
 morphism $f:Z'\rightmap{}Z$ and any deflation $d:Y\rightmap{}Z$.
 Consider the morphism $(f,d):Z'\bigoplus Y\rightmap{}Z$ in $({\cal A},I)\g\Mod$. Since $d$ is a deflation, $d^0$ is a retraction in $R\g\Mod$ and, therefore, so is $(f^0,d^0)$. Then, $(f,d):Z'\bigoplus Y\rightmap{}Z$ is a deflation and its kernel has the form
 $(d', -f')^t: Y'\rightmap{} Z'\bigoplus Y$. 
 Therefore, we have $fd'=df'$ and a pull-back diagram in $R\g\Mod$
$$\begin{matrix}
 Y'& \rightmap{(d')^0}& Z'\\ 
 \lmapdown{(f')^0}&&\rmapdown{f^0}\\ 
Y& \rightmap{d^0}& Z.\\ 
\end{matrix}$$
Since $d^0$ is a retraction in $R\g\Mod$, the same is true for $(d')^0$. Therefore, $d'$ is a deflation, proving our statement.
\end{proof}

The preceding result applies to the module category of the Roiter interlaced weak ditalgebra 
$\underline{\cal A}(\Delta)=({\cal A}(\Delta),I)$ constructed before, see (\ref{D: def del bocs A(Delta)}). 
The following theorem combines the equivalence of (\ref{T: Keller-Lefevre})
with the one constructed in (\ref{T: tw(B) equiv (cal A,I)-Mod}). It generalizes the result given in \cite{KKO} for the case of quasi-hereditary algebras. It relies on the explicit description of the equivalence of categories ${\cal F}(\Delta)\simeq H^0(\tw(A))$ mentioned in (\ref{T: Keller-Lefevre}). The detailed proof of this result is beyond the scope of this paper. 

\begin{theorem}\label{T: Keller exactness of equiv}
Consider the equivalence functors $K:{\cal F}(\Delta)\rightmap{}H^0(\tw(A))$ of  (\ref{T: Keller-Lefevre}), $E:H^0(\tw(A))\rightmap{} H^{-1}(\tw(B))$, of  (\ref{L: H0(cal A) equiv H-1(cal B) si cal B es la barra de cal A}), 
 and the equivalence  $M:H^{-1}(\tw(B))\rightmap{}\underline{\cal A}(\Delta)\g \Mod$ constructed in (\ref{T: tw(B) equiv (cal A,I)-Mod}).  
 Then the composition 
 $$G:=MEK:{\cal F}(\Delta)\rightmap{}\underline{\cal A}(\Delta)\g \mod$$
 is an equivalence mapping short exact sequences to conflations and such that $G(\Delta_i)\cong S_i$, for all $i\in {\cal P}$, where $S_i=(ke_i,\rho_{S_i})$ and $\rho_{S_i}=0$, with the notation of (\ref{L: (cal A,I) reformulada}).
\end{theorem}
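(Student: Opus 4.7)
\bigskip

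\noindent\textit{Proof proposal.} The fact that $G$ is an equivalence is immediate since it is the composition of three equivalences $K$, $E$, and $M$; the work lies in verifying the two additional properties. My plan is to first pin down the underlying object $G(X)$ as a concrete $A(\Delta)/I$-module, then check the two claims by direct inspection.

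First I would unpack the Keller--Lef\`evre equivalence $K$. The standard construction sends $X \in {\cal F}(\Delta)$ to the twisted object whose underlying right $S$-module is $\bigoplus_{i\in {\cal P}}\Hom_\Lambda(\Delta_i,X')\otimes_k e_i$, where $X'$ is a suitable resolution of $X$ by modules in $\add(\Delta)$, with twisting differential $\delta_X$ encoding the $A_\infty$-action of $A$ via the projection of the differential of $X'$; when $X=\Delta_i$ the resolution can be taken to be $\Delta_i$ itself, so that $K(\Delta_i)=(ke_i,0)$ in $\tw(A)$. Applying $E$ and then $M$ turns $(ke_i,0)$ into the module $(ke_i,\Psi(0))=(ke_i,0)$, which is precisely $S_i$. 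This gives $G(\Delta_i)\cong S_i$ for every $i\in {\cal P}$.

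Next I would handle the exactness statement. By (\ref{L: caract de conflaciones})(1), a composable pair $G(X)\rightmap{G(f)}G(Y)\rightmap{G(g)}G(Z)$ in $\underline{\cal A}(\Delta)\g\mod$ is a conflation as soon as $G(g)G(f)=0$ and the induced sequence $0\to G(X)^0\to G(Y)^0\to G(Z)^0\to 0$ splits in $S\g\mod$. The first is automatic since $G$ is a functor. For the second, observe that from the description of $K$ sketched above, a short exact sequence $0\to X\to Y\to Z\to 0$ in ${\cal F}(\Delta)$ induces, after choosing compatible $\add(\Delta)$-resolutions and applying $\Hom_\Lambda(\Delta,-)$, a direct sum decomposition at the level of the underlying graded $S$-modules (since $\Ext^1_\Lambda(\Delta,Z)$ contributes only to the twisting, not to the underlying module). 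Thus the underlying $S$-module of $K(Y)$ is, up to isomorphism, the direct sum of those of $K(X)$ and $K(Z)$, so $K$ sends short exact sequences to exact pairs whose underlying $R$-sequences split; this property is preserved by $E$ (which does not alter underlying spaces, only shifts degrees) and by $M$ (which takes the underlying module to its opposite). Applying (\ref{L: caract de conflaciones})(1) then concludes.

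The step I anticipate as the main obstacle is making the first paragraph rigorous: one must choose, for each $X\in {\cal F}(\Delta)$, a functorial $\add(\Delta)$-resolution and verify that the underlying graded $S$-module of $K(X)$ really is $\bigoplus_i \Hom_\Lambda(\Delta_i,X')\otimes_ke_i$ with the twisting datum $\delta_X$ given by Merkulov's transfer. Because the detailed proof of (\ref{T: Keller-Lefevre}) is explicitly placed outside the scope of the paper, I would expect the argument to record the needed explicit form of $K$ as a black box and then simply trace it through $E$ and $M$, the latter of which is entirely explicit by (\ref{T: tw(B) equiv (cal A,I)-Mod}).
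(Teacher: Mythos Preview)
The paper does not actually supply a proof of this theorem. Immediately before the statement it says that the theorem ``relies on the explicit description of the equivalence of categories ${\cal F}(\Delta)\simeq H^0(\tw(A))$ mentioned in (\ref{T: Keller-Lefevre})'' and that ``the detailed proof of this result is beyond the scope of this paper.'' So there is nothing to compare your argument against: the result is stated as a black box, to be used later in the proofs of (\ref{T: charact of strict Yoneda algebras}) and (\ref{T: Borel subalgebra}).

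Your sketch is a reasonable outline of how such a proof would go, and you have correctly identified the genuine obstacle: everything hinges on an explicit, functorial description of the Keller--Lef\`evre equivalence $K$ (from \cite{LH}\S7), including how it acts on short exact sequences at the level of underlying $S$-modules. Once that is granted, the passage through $E$ (which is the bar isomorphism of (\ref{L: H0(cal A) equiv H-1(cal B) si cal B es la barra de cal A}) and changes nothing structurally) and through $M$ (which is completely explicit by (\ref{T: tw(B) equiv (cal A,I)-Mod}), sending $(X,\delta_X)$ to $(X^{op},\Psi(\delta_X))$) is indeed routine, and your appeal to (\ref{L: caract de conflaciones})(1) to recognize conflations is exactly right. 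Your final paragraph is an accurate self-assessment; the paper simply declines to fill that gap.
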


\section{The right algebra of $({\cal A},I)$}\label{S: Gamma}

In this section we recall some results by W.L. Burt and M.C. Butler on bocses and reformulate them in our context for their application in the next section. 

\begin{remark}\label{L: notation for Burt-Butler context}
In this section we assume that $\underline{\cal A}=({\cal A},I)$ is a \emph{special triangular interlaced weak  ditalgebra}, in the following sense.  The weak ditalgebra  
${\cal A}=(T,\delta)$ admits a triangular layer $(S,W_0\oplus W_1)$, where $S$ is a finite product of fields. We write $S=\bigoplus_{i\in {\cal P}}ke_i$, where $1=\sum_{i\in {\cal P}}e_i$ is a decomposition of the unit of $S$ as a sum of primitive  orthogonal central idempotents. We adopt the notation of (\ref{D: weak ditalgebra})--(\ref{D: triangular layered weak ditalg}) with $R=S$. Thus, we have $T=T_S(W_0\oplus W_1)$, $A=T_S(W_0)$,  $V=A\otimes_SW_1\otimes_SA$, 
and $I$ is an ideal of $A$.  We will assume that $W_0$ and $W_1$ are finite dimensional $S$-$S$-bimodules and we will furthermore assume that the quotient algebra $\underline{A}:=A/I$ is finite-dimensional.  

 Recall, from \cite{bpsqh}(4.3), that $$\underline{V}:=V/(IV+VI)=\frac{A\otimes_S W_1\otimes_S A}{I\otimes W_1\otimes A+A\otimes W_1\otimes I}\cong \underline{A}\otimes_S W_1\otimes_S \underline{A}.$$ 
 We denote by $\pi=p\otimes id_{W_1}\otimes p:A\otimes W_1\otimes A\rightmap{}\underline{A}\otimes W_1\otimes \underline{A}$, where $p:A\rightmap{}\underline{A}$ is the canonical projection. 

The pair $\underline{\cal A}$ is a Roiter interlaced weak ditalgebra, as in (\ref{D: Roiter weak ditalgebra}), and \S\ref{S: exact structure for Roiter dits with ideal} applies to its module category $\underline{\cal A}\g\Mod$. 
In particular, if we make $\underline{A}=A/I$, the embedding functor $$L=L_{\underline{\cal A}}:\underline{A}\g\mod\rightmap{}\underline{\cal A}\g\mod,$$
 which maps each $\underline{A}$-module $M$ onto itself and any morphism 
 $f^0:M\rightmap{}N$ in $\underline{A}\g\mod$ onto $(f^0,0):M\rightmap{}N$ in $\underline{\cal A}\g\mod$,  is an exact functor, when we consider the exact structure $\underline{\cal E}$ on $\underline{\cal A}\g\mod$ introduced in (\ref{D: S-acceptable y E_A,S}), see (\ref{L: caract de conflaciones}). 
\end{remark}

 \begin{definition}\label{D: Gamma}
Assume that $\underline{\cal A}$ is a special interlaced  weak ditalgebra as above. Then, the \emph{right algebra of $\underline{\cal A}$} is the finite-dimensionl $k$-algebra 
$$\Gamma:=\End_{\underline{\cal A}}(\underline{A})^{op}.$$
 There is an embedding of $\underline{A}$ in $\Gamma$  given by the composition 
 $$\begin{matrix}\underline{A}&\rightmap{\cong}&\End_{\underline{A}}(\underline{A})^{op}&\rightmap{L_{\underline{\cal A}}}&\End_{\underline{\cal A}}(\underline{A})^{op}&=&\Gamma\end{matrix},$$
 so $\Gamma$ becomes naturally an $\underline{A}$-algebra.
\end{definition}

The statements in (\ref{P: de A-mod en Gamma-mod y Burt-Butler})  outline important relations between the algebras $\underline{A}$ and $\Gamma$, they translate  results of W.L. Burt and M.C.R. Butler to the terminology we use here, see \cite{BuBt} and \cite{Bu}. For the sake of completeness, we give a sketch-review of  their proofs.  
In the rest of this section, $\underline{\cal A}$ is a special interlaced weak ditalgebra, $\Gamma$ is its right algebra, and we adopt the preceding notation. 

\begin{lemma}\label{L: suc exacta} For any $M\in \underline{A}\g\mod$, there is an exact sequence of left $\underline{A}$-modules
$$0\rightmap{}\Hom_{\underline{A}}(\underline{A},M)\rightmap{L}
\Hom_{\underline{\cal A}}(\underline{A},M)\rightmap{\xi}\Hom_{\underline{A}}(\underline{V},M)\rightmap{}0,$$
where the space $\Hom_{\underline{\cal A}}(\underline{A},M)$ is a left $\underline{A}$-module through $\underline{A}\rightmap{}\End_{\underline{\cal A}}(\underline{A})^{op}$.
\end{lemma}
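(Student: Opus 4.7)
The plan is to write down explicit formulas for $L$ and $\xi$ and check exactness at each spot, exploiting the interlaced property $\delta(I)\subseteq IV+VI$ at the crucial moment. The map $L$ sends an $\underline{A}$-linear $f^0:\underline{A}\rightmap{}M$ to the pair $(f^0,0)$, and this is tautologically a morphism in $\underline{\cal A}\g\mod$. For $\xi$, I start from the observation that whenever $(f^0,f^1)\in\Hom_{\underline{\cal A}}(\underline{A},M)$, the bimodule morphism $f^1:V\rightmap{}\Hom_k(\underline{A},M)$ must kill $IV+VI$, because $I$ acts by zero on both $\underline{A}$ and $M$; so $f^1$ descends uniquely to a morphism $\underline{f^1}:\underline{V}\rightmap{}\Hom_k(\underline{A},M)$ of $\underline{A}$-$\underline{A}$-bimodules. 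I then set $\xi(f^0,f^1)(v):=\underline{f^1}(v)(1)$ for $v\in\underline{V}$; the left $\underline{A}$-linearity of this map in $v$ follows from $\underline{f^1}(av)(1)=(a\cdot\underline{f^1}(v))(1)=a\,\underline{f^1}(v)(1)$. Checking that $L$ and $\xi$ are morphisms of left $\underline{A}$-modules with respect to the right-multiplication action on the source $\underline{A}$ is immediate.

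Exactness at the first two spots is straightforward. Injectivity of $L$ is clear from the definition. The identity $\xi L=0$ holds because $L(f^0)=(f^0,0)$ has vanishing second component. For $\ker\xi\subseteq\im L$, suppose $\xi(f^0,f^1)=0$, so $\underline{f^1}(v)(1)=0$ for all $v\in\underline{V}$. Using right $\underline{A}$-linearity of $\underline{f^1}$, one has $\underline{f^1}(v)(a)=\underline{f^1}(va)(1)=0$ for every $a\in\underline{A}$, so $\underline{f^1}=0$ and hence $f^1=0$. The defining relation $af^0(m)=f^0(am)+f^1(\delta(a))(m)$ then reduces to $af^0(m)=f^0(am)$, showing $f^0$ is $A$-linear, and hence $\underline{A}$-linear since $IM=0$; thus $(f^0,f^1)=L(f^0)$.

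The heart of the argument is surjectivity of $\xi$. Given $\psi\in\Hom_{\underline{A}}(\underline{V},M)$, define $f^1:V\rightmap{}\Hom_k(\underline{A},M)$ by first going through $\underline{V}$ and setting $\underline{f^1}(v)(a):=\psi(va)$; bimodule linearity of $\underline{f^1}$ is a direct consequence of the left $\underline{A}$-linearity of $\psi$ and the definition of the actions on $\Hom_k(\underline{A},M)$. To produce $f^0$, pick any $x\in M$ (for instance $x=0$) and set
$$f^0(\overline{b}):=bx-\psi(\overline{\delta(b)}),\qquad b\in A,$$
where $\overline{\delta(b)}$ denotes the class of $\delta(b)\in V$ in $\underline{V}$. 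This is well defined on $\underline{A}=A/I$: if $b\in I$ then $bx=0$ in $M$ (since $IM=0$), and by the interlaced hypothesis $\delta(b)\in IV+VI$, whence $\overline{\delta(b)}=0$ in $\underline{V}$ and $\psi(\overline{\delta(b)})=0$. It remains to verify the morphism condition $af^0(m)=f^0(am)+f^1(\delta(a))(m)$; lifting $m=\overline{b}$ with $b\in A$ and using Leibniz $\delta(ab)=\delta(a)b+a\delta(b)$ (valid in degree zero), together with left $\underline{A}$-linearity of $\psi$, one computes
$$f^0(a\overline{b})+f^1(\delta(a))(\overline{b})=abx-\psi(\overline{\delta(ab)})+\psi(\overline{\delta(a)b})=abx-\psi(\overline{a\delta(b)})=a(bx-\psi(\overline{\delta(b)}))=af^0(\overline{b}),$$
as required. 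Finally $\xi(f^0,f^1)(v)=\underline{f^1}(v)(1)=\psi(v)$, so $\xi$ is surjective. The main technical point is the well-definedness of $f^0$ on $\underline{A}$; this is exactly where the hypothesis that ${\cal A}$ is interlaced with $I$ intervenes.
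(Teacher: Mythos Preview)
Your proof is correct and follows essentially the same approach as the paper's own proof: both define $\xi(f^0,f^1)(v)=\underline{f^1}(v)[1]$, argue $\ker\xi=\Im L$ via $\underline{f^1}(v)[a]=\underline{f^1}(va)[1]$, and for surjectivity set $f^1$ via $\underline{f^1}(v)[a]=\psi(va)$ together with $f^0(\overline{b})=bx-\psi(\overline{\delta(b)})$ for an arbitrary $x\in M$, verifying the morphism condition through Leibniz. You are in fact slightly more explicit than the paper in checking that $f^0$ is well defined on $\underline{A}=A/I$, correctly pinpointing the interlaced hypothesis $\delta(I)\subseteq IV+VI$ as the reason $\overline{\delta(b)}=0$ when $b\in I$.
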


\begin{proof} Since $M\in \underline{A}\g\mod$, any $f^1\in \Hom_{A\g A}(V,\Hom_k(\underline{A},M))$ determines an $\underline{f}^1\in\Hom_{\underline{A}\g\underline{A}}(\underline{V},\Hom_k(\underline{A},M))$ such that $\underline{f}^1\pi=f^1$. Then, we can  define
$$\begin{matrix}
 \xi&:&\Hom_{\underline{\cal A}}(\underline{A},M)&\rightmap{}&\Hom_{\underline{A}}(\underline{V},M)\\
&&\hbox{ \ \ }f=(f^0,f^1)&\longmapsto&\xi(f):\underline{V}\rightmap{}M\\
&&&&\hbox{\hskip1.5cm}v\mapsto \underline{f}^1(v)[1].\\
\end{matrix}$$

Thus, $\xi$ is a morphism of left $\underline{A}$-modules. Suppose that
 $f\in\Hom_{\underline{\cal A}}(\underline{A},M)$ satisfies that $\xi(f)=0$. Thus, 
$\underline{f}^1(v)[1]=0$, for all $v\in \underline{V}$. Hence, $\underline{f}^1(v)[b]=\underline{f}^1(vb)[1]=0$, for all $b\in \underline{A}$. Thus, $\underline{f}^1(v)=0$, for all $v\in \underline{V}$, and $\underline{f}^1=0$, thus $f^1=0$. This means that $f\in \Im L$. Therefore, 
$\Im L=\Ker \xi$.
 
Take $g\in \Hom_{\underline{A}}(\underline{V},M)$ and consider its image $\underline{g}^1$ under the canonical isomorphism
$$\Hom_{\underline{A}}(\underline{V},M)\cong\Hom_{\underline{A}}(\underline{V}\otimes_{\underline{A}}\underline{A},M)\cong\Hom_{\underline{A}\g \underline{A}}(\underline{V},\Hom_k(\underline{A},M)).$$
Thus, $\underline{g}^1:\underline{V}\rightmap{}\Hom_{k}(\underline{A},M)$ is a morphism of $\underline{A}$-bimodules satisfying $\underline{g}^1(v)[b]=g(vb)$, for $v\in \underline{V}$ and $b\in \underline{A}$. Consider the composition morphism $g^1:=\underline{g}^1\pi\in \Hom_{A\g A}(V,\Hom_k(\underline{A},M))$. 
Then, fix any $m_0\in M$ and define, for $a\in \underline{A}$, 
$$g^0(a):=am_0-g^1(\delta(a))[1].$$
Thus, $g^0\in\Hom_k(\underline{A},M)$. Moreover, $(g^0,g^1)\in\Hom_{\underline{\cal A}}(\underline{A},M)$. Indeed, given $a\in \underline{A}$ and $b\in A$, we have
$$\begin{matrix}
   bg^0(a)&=&bam_0-bg^1(\delta(a))[1]\hfill\\
&=&bam_0-g^1(b\delta(a))[1]\hfill\\
&=&bam_0+g^1(\delta(b)a-\delta(ba))[1]\hfill\\
&=&bam_0-g^1(\delta(ba))[1]+g^1(\delta(b))[a]\hfill\\
&=&g^0(ba)+g^1(\delta(b))[a].\hfill\\
  \end{matrix}$$
Since $\xi(g^0,g^1)(v)=\underline{g}^1(v)[1]=g(v)$, for all $v\in \underline{V}$, $\xi$ is surjective and we are done.
\end{proof}

\begin{remark}\label{R: Gamma es plano} 
Using the same argument in the proof of  \cite{BSZ}(6.20), we obtain that any projective $\underline{A}$-module is an $\underline{\cal E}$-projective $\underline{\cal A}$-module. Moreover, every 
 $\underline{\cal E}$-projective $\underline{\cal A}$-module is isomorphic to 
 a direct summand in $\underline{\cal A}\g\mod$ of some projective $\underline{A}$-module. 
 
 Then, the functor $F=\Hom_{\underline{\cal A}}(\underline{A},-):\underline{\cal A}\g\mod\rightmap{}\Gamma\g\mod$ is exact.  
 The following lemma implies that the right $\underline{A}$-module $\Gamma$ is flat. Indeed, the functor $\Gamma\otimes_{\underline{A}}-$ is exact because  
 $\Gamma\otimes_{\underline{A}}-\cong FL$ and we know that $L$ and $F$ are exact functors, see (\ref{L: pares exactos}). 
 
 It is not hard to see that the functor $F$ is in fact full and faithful. The proof of \cite{BSZ}(7.12) can be adapted to this situation. 
\end{remark}

\begin{lemma}\label{L: el iso sigma de Gamma otimes- a Hom(A,L-)}
For $M\in \underline{A}\g\mod$, the map $\sigma_M:\Gamma\otimes_{\underline{A}}M\rightmap{}\Hom_{\underline{\cal A}}(\underline{A},M)$ given by $\sigma(f\otimes m)=f_m=(f^0_m,f^1_m)$, where 
$f^0_m(a)=f^0(a)m$ and $f^1_m(v)[a]=f^1(v)[a]m$ defines a natural isomorphism $$\sigma_M:\Gamma\otimes_{\underline{A}}M\rightmap{}\Hom_{\underline{\cal A}}(\underline{A},L(M)).$$
\end{lemma}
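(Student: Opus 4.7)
First I would verify that $\sigma_M$ is a well-defined natural transformation. Given $f = (f^0, f^1) \in \Hom_{\underline{\cal A}}(\underline{A}, \underline{A})$ and $m \in M$, the pair $f_m = (f^0_m, f^1_m)$ belongs to $\Hom_{\underline{\cal A}}(\underline{A}, M)$: the required identity $af^0_m(b) = f^0_m(ab) + f^1_m(\delta(a))[b]$, for $a \in A$ and $b \in \underline{A}$, is obtained by evaluating the analogous identity for $(f^0, f^1)$ and then right-multiplying by $m$. Next, the assignment $(f, m) \mapsto f_m$ is $\underline{A}$-balanced. Indeed, for $c \in \underline{A}$ the product $fc$ in $\Gamma$ corresponds to the composition in $\underline{\cal A}\g\Mod$ of $f$ with $L(c) = (r_c, 0)$, where $r_c(a) = ac$. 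Applying the composition rule of (\ref{L: (cal A,I) reformulada})(3), in which the $\delta$-correction vanishes because $L(c)$ has trivial $1$-component, yields $(fc)^0(a) = f^0(a)c$ and $(fc)^1(v)[a] = f^1(v)[a]c$. Hence $(fc)_m = f_{cm}$, as required. Naturality in $M$ is then immediate from the formulas.

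Next I would establish the base case $M = \underline{A}$. Here $\sigma_{\underline{A}}(f \otimes 1) = f_1$ coincides with $f$ itself, since $f^0_1(a) = f^0(a) \cdot 1 = f^0(a)$ and $f^1_1(v)[a] = f^1(v)[a]$. Thus the map $\sigma_{\underline{A}} : \Gamma \otimes_{\underline{A}} \underline{A} \to \Hom_{\underline{\cal A}}(\underline{A}, L(\underline{A}))$ is the canonical identification, and in particular an isomorphism. Since both the source functor $G_1 := \Gamma \otimes_{\underline{A}}(-)$ and the target functor $G_2 := \Hom_{\underline{\cal A}}(\underline{A}, L(-))$ commute with finite direct sums, it follows that $\sigma_{\underline{A}^n}$ is an isomorphism for every $n \geq 0$.

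For a general $M \in \underline{A}\g\mod$, I would exploit that the algebra $\underline{A}$ is finite-dimensional, so $M$ admits a finite free presentation $\underline{A}^s \to \underline{A}^r \to M \to 0$. Both $G_1$ and $G_2$ are right exact on $\underline{A}\g\mod$: $G_1$ because tensor is right exact; $G_2$ because $L$ sends short exact sequences of $\underline{A}$-modules to conflations in $\underline{\cal A}\g\Mod$ (see Lemma \ref{L: pares exactos}), and then $F = \Hom_{\underline{\cal A}}(\underline{A}, -)$ is exact on conflations, as recorded in Remark \ref{R: Gamma es plano}. Applying both functors to the presentation of $M$ and using the naturality of $\sigma$ yields a commutative diagram with exact rows whose first two vertical maps $\sigma_{\underline{A}^s}$ and $\sigma_{\underline{A}^r}$ are isomorphisms by the previous step. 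The five lemma then forces $\sigma_M$ to be an isomorphism.

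The main obstacle will be the verification of $\underline{A}$-balancedness, which requires a careful unwinding of the composition formula in $\underline{\cal A}\g\Mod$, in order to see that the quadratic $\delta$-correction in $(fc)^1$ vanishes whenever the right factor $L(c) = (r_c, 0)$ has trivial $1$-component. Once this simplification is in place, the remainder reduces to a routine reduction to the free case through right exactness and the five lemma.
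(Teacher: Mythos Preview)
Your proof is correct and follows essentially the same approach as the paper: reduce to the free case $M=\underline{A}^n$ via additivity, then use a free presentation of $M$ together with (right) exactness of both functors and the five lemma. You supply more detail on well-definedness and $\underline{A}$-balancedness than the paper does (the paper omits these verifications entirely), but the structural argument is identical.
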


\begin{proof} Notice that
$\sigma_{\underline{A}}:\Gamma\otimes_{\underline{A}}\underline{A}\rightarrow \Gamma$ is just the isomorphism given
by multiplication. From the additivity of the      
  functors $\Gamma\otimes_{\underline{A}} -$ and
$\Hom_{\underline{\cal A}}(\underline{A},L(-))$, we obtain that
$\sigma_{\underline{A}^n}$ is an isomorphism too. Finally, if we consider a
free resolution $\underline{A}^m\rightarrow \underline{A}^n\rightarrow M\rightarrow 0$ of
$M$ in $\underline{A}\g\mod$, we can apply the exact functors $\Gamma\otimes_{\underline{A}} -$ and
$\Hom_{\underline{\cal A}}(\underline{A},L(-))$ to it, and then $\sigma_M$ is an isomorphism because
the following diagram commutes
$$\begin{matrix}
\Gamma\otimes_{\underline{A}} \underline{A}^m&\rightmap{}&\Gamma\otimes_{\underline{A}}
\underline{A}^n&\rightmap{}&\Gamma\otimes_{\underline{A}}M&\rightmap{}&0\hfill\\ 
\lmapdown{\sigma_{\underline{A}^m}} &&\lmapdown{\sigma_{\underline{A}^n}}
&&\lmapdown{\sigma_{M}} &&\hfill\\ 
\Hom_{\underline{\cal A}}(\underline{A},\underline{A}^m)&\rightmap{}&\Hom_{\underline{\cal
A}}(\underline{A},\underline{A}^n)&\rightmap{}&\Hom_{\underline{\cal A}}(\underline{A},M)&\rightmap{}&0.\hfill\\ \end{matrix}$$
\end{proof}

\begin{lemma}\label{L: adjuncion y alpha mono}
 The functor $T:=\Gamma\otimes_{\underline{A}}-:\underline{A}\g\mod\rightmap{}\Gamma\g\mod$ admits as a right adjoint the restriction functor $S:\Gamma\g\mod\rightmap{}\underline{A}\g\mod$. Consider the adjunction isomorphism
$$\zeta:\Hom_{\Gamma}(TM,N)\rightmap{}\Hom_{\underline{A}}(M,SN),$$
and its unit $\alpha$, that is the natural transformation 
$\alpha:1_{\underline{A}\g\mod}\rightmap{}ST$ defined, for $M\in \underline{A}\g\mod$, by $\alpha_M:=\zeta(1_{TM}):M\rightmap{}STM$. Then, for each $M\in \underline{A}\g\mod$, we have the commutative square
$$\begin{matrix}
M&\rightmap{\alpha_M}&\Gamma\otimes_{\underline{A}}M\\
\lmapdown{\psi_M}&&\rmapdown{\sigma_M}\\
\Hom_{\underline{A}}(\underline{A},M)&\rightmap{L}&\Hom_{\underline{\cal A}}(\underline{A},M),\\
\end{matrix}\\$$
where $\psi_M$ is the canonical isomorphism and $\sigma_M$ is the isomorphism introduced in (\ref{L: el iso sigma de Gamma otimes- a Hom(A,L-)}). In particular, $\alpha_M$ is a monomorphism.
\end{lemma}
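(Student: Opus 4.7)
\begin{proofof}{L: adjuncion y alpha mono}
The plan is to check separately the three pieces: (i) that $(T,S)$ is indeed an adjoint pair, (ii) the explicit form of $\alpha_M$ and the commutativity of the square, and (iii) the monomorphism conclusion, which will be an immediate consequence of (ii).

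First, for (i), the adjunction is the standard extension/restriction of scalars adjunction for the $k$-algebra map $\underline{A}\hookrightarrow \Gamma$ defined in (\ref{D: Gamma}). On morphisms, $\zeta:\Hom_\Gamma(\Gamma\otimes_{\underline{A}}M,N)\rightmap{}\Hom_{\underline{A}}(M,SN)$ sends $\varphi$ to $m\mapsto \varphi(1\otimes m)$, and its inverse sends $g:M\rightmap{}SN$ to $\gamma\otimes m\mapsto \gamma\cdot g(m)$. Consequently the unit has the explicit recipe $\alpha_M(m)=1_\Gamma\otimes m$, for $m\in M$.

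Next, for (ii), recall that under the embedding $\underline{A}\hookrightarrow \Gamma$ of (\ref{D: Gamma}), the unit $1_{\underline{A}}\in \underline{A}$ is sent to $L(\id_{\underline{A}})=(\id_{\underline{A}},0)$, which is the identity element of $\Gamma=\End_{\underline{\cal A}}(\underline{A})^{op}$. Now fix $m\in M$ and compute $\sigma_M(\alpha_M(m))=\sigma_M(1_\Gamma\otimes m)$ using the formula of (\ref{L: el iso sigma de Gamma otimes- a Hom(A,L-)}) applied to $f=(\id_{\underline{A}},0)$: we get the morphism $f_m=(f^0_m,f^1_m)\in \Hom_{\underline{\cal A}}(\underline{A},M)$ where $f^0_m(a)=\id_{\underline{A}}(a)\cdot m=am$ and $f^1_m=0$. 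On the other hand $\psi_M(m):\underline{A}\rightmap{}M$ is the canonical $\underline{A}$-homomorphism $a\mapsto am$, so $L(\psi_M(m))=(\psi_M(m),0)$ by the definition of $L$, which coincides with $f_m$. Hence the square commutes.

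Finally, for (iii), observe that the map $L$ in the bottom row is injective: this is the exactness statement at $\Hom_{\underline{A}}(\underline{A},M)$ in the short exact sequence of (\ref{L: suc exacta}), which exhibits $L$ as the kernel of the surjection $\xi$. Since $\psi_M$ and $\sigma_M$ are isomorphisms and $L\circ \psi_M=\sigma_M\circ\alpha_M$ is a composition of a monomorphism with an isomorphism, we conclude that $\alpha_M$ is a monomorphism as claimed. The only step requiring any care is the verification in (ii) that $1_\Gamma$ coincides with $L(\id_{\underline{A}})$ under the algebra embedding, which is built into the very definition of the $\underline{A}$-algebra structure on $\Gamma$ in (\ref{D: Gamma}); after that the argument is essentially formal.
\end{proofof}
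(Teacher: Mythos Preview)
Your proof is correct and follows essentially the same approach as the paper: explicitly identify $\alpha_M(m)=1_\Gamma\otimes m$, compute $\sigma_M(1_\Gamma\otimes m)$ via the recipe of (\ref{L: el iso sigma de Gamma otimes- a Hom(A,L-)}) applied to $f=(\id_{\underline{A}},0)$, and match it with $L(\psi_M(m))$; the monomorphism conclusion then follows from the injectivity of $L$ established in (\ref{L: suc exacta}).
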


\begin{proof} For $M\in \underline{A}\g\mod$ and $N\in \Gamma\g\mod$, we have the adjunction isomorphism 
$$\begin{matrix}\zeta&:&\Hom_{\Gamma}(TM,N)&\rightmap{}&\Hom_{\underline{A}}(M,SN)\\
&&h&\longmapsto&\zeta(h):M\rightmap{}N\\
&&&&\hbox{\hskip1.8cm}m\mapsto h(1\otimes m),\\
 \end{matrix}\\$$
with inverse 
$$\begin{matrix}\zeta'&:&\Hom_{\underline{A}}(M,SN)&\rightmap{}&\Hom_{\Gamma}(TM,N)\\
&&g&\longmapsto&p(1_{\Gamma}\otimes g),\\
 \end{matrix}\\$$
where $p:\Gamma\otimes_{\underline{A}}N\rightmap{}N$ is the product morphism. 
The verification of the naturality of $\zeta$ is straightforward. 

Notice that $\alpha_M(m)=1\otimes m$, for $m\in M$. Make  $\theta:=\sigma_M\alpha_M$. Then, we have 
$\theta(m)=\sigma_M\alpha_M(m)=\sigma_M(id_{\underline{A}}\otimes m)=(\theta(m)^0,\theta(m)^1)$, where $\theta(m)^0[a]=am$ and
$\theta(m)^1(v)[a]=id_{\underline{A}}^1(v)[a]m=0$. Then, $\sigma_M\alpha_M(m)=\theta(m)=(\psi_M(m),0)=L\psi_M(m)$, and we are done.
\end{proof}

\begin{corollary}\label{C: suc ex con alpha} For each $M\in \underline{A}\g\mod$, there is the following exact sequence in $\underline{A}\g\mod$
 $$0\rightmap{}M\rightmap{ \ \ \alpha_M \ \ }\Gamma\otimes_{\underline{A}}M\rightmap{ \ \ }\Hom_{\underline{A}}(\underline{V},M)\rightmap{}0.$$
\end{corollary}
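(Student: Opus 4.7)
The plan is to obtain the stated exact sequence by transporting the exact sequence of Lemma~\ref{L: suc exacta} through the isomorphism $\sigma_M$ of Lemma~\ref{L: el iso sigma de Gamma otimes- a Hom(A,L-)}, using the commutative square of Lemma~\ref{L: adjuncion y alpha mono} to identify the leftmost map with $\alpha_M$.

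First I would write down the exact sequence of left $\underline{A}$-modules given by Lemma~\ref{L: suc exacta}:
\[
0 \longrightarrow \Hom_{\underline{A}}(\underline{A},M) \xrightarrow{\,L\,} \Hom_{\underline{\cal A}}(\underline{A},M) \xrightarrow{\,\xi\,} \Hom_{\underline{A}}(\underline{V},M) \longrightarrow 0.
\]
Composing on the left with the canonical isomorphism $\psi_M : M \to \Hom_{\underline{A}}(\underline{A},M)$ and on the middle term with the inverse of the isomorphism $\sigma_M : \Gamma \otimes_{\underline{A}} M \to \Hom_{\underline{\cal A}}(\underline{A},M)$ yields an exact sequence
\[
0 \longrightarrow M \xrightarrow{\,\sigma_M^{-1}\circ L \circ \psi_M\,} \Gamma\otimes_{\underline{A}} M \xrightarrow{\,\xi\circ\sigma_M\,} \Hom_{\underline{A}}(\underline{V},M) \longrightarrow 0.
\]

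Then I would invoke the commutative square of Lemma~\ref{L: adjuncion y alpha mono}, which states precisely that $\sigma_M\circ \alpha_M = L\circ \psi_M$. Since $\sigma_M$ is an isomorphism, this gives $\sigma_M^{-1}\circ L\circ \psi_M = \alpha_M$, so the leftmost map in the displayed exact sequence is exactly $\alpha_M$. Defining the third map as the composition $\xi\circ\sigma_M$ yields the desired exact sequence in $\underline{A}\g\mod$.

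There is essentially no obstacle here: the statement is a direct reformulation of the exact sequence of Lemma~\ref{L: suc exacta} after using two natural isomorphisms already established in the previous two lemmas, and the key compatibility needed to pin down the leftmost arrow as $\alpha_M$ is provided by the commutative square in Lemma~\ref{L: adjuncion y alpha mono}. The only minor bookkeeping point is to confirm that the maps in the resulting sequence are morphisms of left $\underline{A}$-modules, which is immediate because $L$, $\xi$, $\sigma_M$, $\psi_M$, and $\alpha_M$ have all been shown (or are immediate) to be $\underline{A}$-linear.
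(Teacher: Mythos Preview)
Your proposal is correct and follows exactly the same approach as the paper, which simply cites Lemmas~\ref{L: suc exacta} and~\ref{L: adjuncion y alpha mono}; you have merely spelled out the details of how the exact sequence of the former is transported via the isomorphisms $\psi_M$ and $\sigma_M$, with the commutative square of the latter identifying the leftmost map as $\alpha_M$.
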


\begin{proof} It follows from (\ref{L: suc exacta}) and (\ref{L: adjuncion y alpha mono}).
\end{proof}

\begin{definition}\label{D: projectivizing} An $\underline{A}$-bimodule $\underline{V}$ is called {\sl projectivizing} iff $\underline{V}\otimes_{\underline{A}} X$ is projective for all $X\in \underline{A}\g\mod$ and $Y\otimes_{\underline{A}}\underline{V}$ is projective for all $Y\in \mod\g \underline{A}$.
\end{definition}

\begin{lemma}\label{R: projectivizing} With our fixed notation, the $S$-$S$-bimodule $W_1$ is freely generated by a set $\hueca{B}_1$. Hence, the $\underline{A}$-bimodule $\underline{V}$ is projectivizing.
\end{lemma}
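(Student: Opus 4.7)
The plan is to exploit the fact that $S$ is a finite product of copies of $k$, so every $S$-$S$-bimodule decomposes canonically as a direct sum of its isotypic components $e_j(-)e_i$, each of which is just a $k$-vector space. This will yield a free generating set of $W_1$ and make the projectivity computation immediate.

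First I would fix, for each pair $(i,j) \in {\cal P} \times {\cal P}$, a $k$-basis $\hueca{B}_1(i,j)$ of the finite-dimensional $k$-vector space $e_j W_1 e_i$, and set $\hueca{B}_1 := \bigcup_{i,j} \hueca{B}_1(i,j)$. Each $x \in \hueca{B}_1$ is directed in the sense of (\ref{N: direcciones en bimods}), with source $s(x)$ and target $t(x)$. Since $W_1 = \bigoplus_{i,j} e_j W_1 e_i$ as $S$-$S$-bimodules, one obtains an isomorphism of $S$-$S$-bimodules
$$W_1 \cong \bigoplus_{x \in \hueca{B}_1} Se_{t(x)} \otimes_k e_{s(x)} S,$$
which exhibits $W_1$ as the free $S$-$S$-bimodule on the set $\hueca{B}_1$: any $S$-$S$-bimodule map out of $W_1$ is determined by arbitrary assignments $x \mapsto z_x$ with $z_x = e_{t(x)} z_x e_{s(x)}$.

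Next I would compute the tensor product. For any $X \in \underline{A}\g\mod$, using that $\underline{V} \cong \underline{A} \otimes_S W_1 \otimes_S \underline{A}$ and that $\underline{A} \otimes_{\underline{A}} X \cong X$,
$$\underline{V} \otimes_{\underline{A}} X \cong \underline{A} \otimes_S W_1 \otimes_S X \cong \bigoplus_{x \in \hueca{B}_1} \underline{A} e_{t(x)} \otimes_k e_{s(x)} X.$$
Each $\underline{A} e_{t(x)}$ is a (finitely generated) projective left $\underline{A}$-module, and $e_{s(x)} X$ is a $k$-vector space, so after choosing a $k$-basis of each $e_{s(x)} X$ the right-hand side becomes a direct sum of copies of the projectives $\underline{A} e_{t(x)}$. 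Hence $\underline{V} \otimes_{\underline{A}} X$ is projective in $\underline{A}\g\Mod$. The argument for $Y \otimes_{\underline{A}} \underline{V}$ with $Y \in \mod\g\underline{A}$ is symmetric, using the decomposition $Y\otimes_{\underline{A}} \underline{V} \cong \bigoplus_{x \in \hueca{B}_1} Ye_{t(x)} \otimes_k e_{s(x)} \underline{A}$ and the projectivity of each $e_{s(x)}\underline{A}$ as a right $\underline{A}$-module. Thus $\underline{V}$ is projectivizing.

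There is no real obstacle here: the only subtlety is to be sure that ``freely generated by $\hueca{B}_1$'' is interpreted in the category of $S$-$S$-bimodules with the directed action, and the key computation is just the identification $\underline{A} \otimes_S Se_j \otimes_k e_i S \otimes_S X \cong \underline{A} e_j \otimes_k e_i X$, which is standard.
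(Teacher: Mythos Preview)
Your proof is correct and follows essentially the same approach as the paper: both use the directed basis to write $W_1\cong\bigoplus_{x\in\hueca{B}_1}Se_{t(x)}\otimes_ke_{s(x)}S$ and then deduce $\underline{V}\cong\bigoplus_{x\in\hueca{B}_1}\underline{A}e_{t(x)}\otimes_ke_{s(x)}\underline{A}$. The only cosmetic difference is that the paper records this decomposition of $\underline{V}$ and then invokes the general observation that any bimodule of the form $\bigoplus_i P_i\otimes_k Q_i$ with each $P_i$ (resp.\ $Q_i$) projective on the right (resp.\ left) is projectivizing, whereas you verify this directly by tensoring with $X$ and $Y$.
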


\begin{proof} Since $S$ is a finite product of fields and $W_1$ is finite-dimensional, there is a directed $k$-basis $\hueca{B}_1$ of $W_1$ and an isomorphism 
$W_1\cong \bigoplus_{w\in\hueca{B}_1}Se_{t(w)}\otimes_ke_{s(w)}S.$
Then,  we have 
$$\begin{matrix}
   \underline{V}&\cong&\underline{A}\otimes_SW_1\otimes_S\underline{A}\hfill\\
&\cong&\bigoplus_{w\in \hueca{B}_1}\underline{A}\otimes_SSe_{t(w)}\otimes_ke_{s(w)}S\otimes_S\underline{A}\hfill\\
&\cong&\bigoplus_{w\in \hueca{B}_1}\underline{A}e_{t(w)}\otimes_ke_{s(w)}\underline{A}\hfill\\
  \end{matrix}$$
Now notice that, given a family $\{Q_i\}_{i=1}^n$ of projective left $\underline{A}$-modules and a family $\{P_i\}_{i=1}^n$ of projective right $\underline{A}$-modules, then the $\underline{A}$-bimodule 
$\underline{V}:=\bigoplus_{i=1}^n{P_i}\otimes_kQ_i$ is projectivizing.
  Thus, the $\underline{A}$-bimodule $\underline{V}$ is projectivizing. 
\end{proof}

\begin{corollary}\label{Gamma es underline(A) modulo proy derecho}
The right $\underline{A}$-module $\Gamma$ is projective. 
\end{corollary}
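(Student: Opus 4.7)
The plan is to take the exact sequence from Corollary~\ref{C: suc ex con alpha} with $M = \underline{A}$, namely
$$0 \longrightarrow \underline{A} \xrightarrow{\alpha_{\underline{A}}} \Gamma \longrightarrow \Hom_{\underline{A}}(\underline{V}, \underline{A}) \longrightarrow 0,$$
view it as a sequence of right $\underline{A}$-modules via the $\underline{A}$-bimodule structure on $M = \underline{A}$, and observe that the quotient is a projective right $\underline{A}$-module, so the sequence splits and $\Gamma$ becomes visibly projective.

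The first step is to upgrade the displayed sequence from left modules to right modules. The right $\underline{A}$-action on $\Hom_{\underline{\cal A}}(\underline{A}, \underline{A}) = \Gamma$ induced by post-multiplication on the target copy of $\underline{A}$ is $(\gamma \cdot a)^0 = r_a \circ \gamma^0$ and $(\gamma \cdot a)^1(v) = r_a \circ \gamma^1(v)$, and a direct check from the composition rule in $\underline{\cal A}\g\Mod$ (using that $L(r_a)^1 = 0$, so the cross terms vanish) shows this action coincides with the ring-theoretic one $\gamma \mapsto L(r_a) \circ \gamma$ coming from the embedding $\underline{A} \hookrightarrow \Gamma$ of Definition~\ref{D: Gamma}. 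One then verifies that $\alpha_{\underline{A}}$ and the map $\xi$ from Lemma~\ref{L: suc exacta} are both right $\underline{A}$-linear, using their explicit formulas: for $\xi$ this reduces to $\xi(\gamma \cdot a)(v) = \underline{\gamma}^1(v)[1]\, a = \xi(\gamma)(v) \cdot a$.

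The second step is the projectivity of $Q := \Hom_{\underline{A}}(\underline{V}, \underline{A})$ as a right $\underline{A}$-module. Using the decomposition
$$\underline{V} \cong \bigoplus_{w \in \hueca{B}_1} \underline{A}e_{t(w)} \otimes_k e_{s(w)}\underline{A}$$
from Lemma~\ref{R: projectivizing} (where $\hueca{B}_1$ is a finite directed basis of $W_1$), the Hom-tensor adjunction identifies each summand of $Q$ with $\Hom_k(e_{s(w)}\underline{A}, e_{t(w)}\underline{A})$, the right $\underline{A}$-action being on the target copy of $e_{t(w)}\underline{A}$. Since $e_{s(w)}\underline{A}$ is finite-dimensional over $k$, there is a natural isomorphism of right $\underline{A}$-modules $\Hom_k(e_{s(w)}\underline{A}, e_{t(w)}\underline{A}) \cong (e_{s(w)}\underline{A})^{\ast} \otimes_k e_{t(w)}\underline{A}$, which is a finite direct sum of copies of the projective right $\underline{A}$-module $e_{t(w)}\underline{A}$. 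Summing over the finite set $\hueca{B}_1$ proves $Q$ is projective, whence the extension splits and $\Gamma \cong \underline{A} \oplus Q$ is a direct sum of projective right $\underline{A}$-modules.

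The main obstacle is not substantive but administrative: one must carefully keep track of which right $\underline{A}$-module structure is being used at each stage, and in particular verify that the three potentially distinct actions in play — the ring action on $\Gamma$ from $\underline{A} \hookrightarrow \Gamma$, the post-multiplication action on $\Hom_{\underline{\cal A}}(\underline{A}, \underline{A})$ from the bimodule $M = \underline{A}$, and the target action on $\Hom_{\underline{A}}(\underline{V}, \underline{A})$ — all agree and are intertwined by $\alpha_{\underline{A}}$ and $\xi$. Once this bookkeeping is settled, the Hom-tensor computation and the splitting conclusion are essentially immediate.
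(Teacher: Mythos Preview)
Your proposal is correct and follows essentially the same approach as the paper: both arguments take the exact sequence $0\to\underline{A}\to\Gamma\to\Hom_{\underline{A}}(\underline{V},\underline{A})\to 0$ (the paper invokes Lemma~\ref{L: suc exacta} directly with $M=\underline{A}$, you invoke its reformulation Corollary~\ref{C: suc ex con alpha}), observe it is a sequence of right $\underline{A}$-modules, and split it by showing the quotient is projective. The only difference is cosmetic: the paper dispatches the projectivity of $\Hom_{\underline{A}}(\underline{V},\underline{A})$ in one line by citing the general fact that the $\underline{A}$-dual of a finitely generated projective left module is a finitely generated projective right module (Lemma~\ref{R: projectivizing} already records that $\underline{V}$ is projectivizing, hence in particular f.g.\ projective on the left), whereas you unpack this via the explicit decomposition of $\underline{V}$ and Hom--tensor adjunction; both reach the same conclusion.
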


\begin{proof} Taking $M=\underline{A}$, from (\ref{L: suc exacta}), we have the exact sequence of vector spaces
$$0\rightmap{}\Hom_{\underline{A}}(\underline{A},\underline{A})\rightmap{L}
\Hom_{\underline{\cal A}}(\underline{A},\underline{A})\rightmap{\xi}\Hom_{\underline{A}}(\underline{V},\underline{A})\rightmap{}0.$$
Thus, we have the exact sequence of vector spaces
$$0\rightmap{}\underline{A}\rightmap{s}
\Gamma\rightmap{\xi}\Hom_{\underline{A}}(\underline{V},\underline{A})\rightmap{}0,$$
where $s$ is the canonical embedding. This is, in fact, an exact sequence of right $\underline{A}$-modules. Since the left $\underline{A}$-module $\underline{V}$ is finitely generated projective, its dual right $\underline{A}$-module  
$\Hom_{\underline{A}}(\underline{V},\underline{A})$ is finitely generated  projective, thus the last exact sequence splits, and $\Gamma_{\underline{A}}$ is projective.
\end{proof}

From \cite{BuBt}(3.6) or \cite{Bu}(11.8), we have the following. 

\begin{lemma}\label{L: homs injectivos } If $\underline{V}$ is a projectivizing $\underline{A}$-bimodule, then 
$\Hom_{\underline{A}}(\underline{V},M)$ is an injective $\underline{A}$-module, for all $M\in \underline{A}\g\mod$.
\end{lemma}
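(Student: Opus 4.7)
The plan is to use the hom–tensor adjunction to reduce injectivity of $\Hom_{\underline{A}}(\underline{V},M)$ to exactness of a composite functor built from $\underline{V}\otimes_{\underline{A}}-$ and $\Hom_{\underline{A}}(-,M)$, and then exploit both halves of the ``projectivizing'' hypothesis on $\underline{V}$.

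First, I would invoke the standard bimodule adjunction: for any left $\underline{A}$-module $X$, there is a natural isomorphism
$$\Hom_{\underline{A}}\bigl(X,\Hom_{\underline{A}}(\underline{V},M)\bigr)\cong \Hom_{\underline{A}}\bigl(\underline{V}\otimes_{\underline{A}}X,M\bigr),$$
where the left-hand Hom is computed using the left $\underline{A}$-module structure on $\Hom_{\underline{A}}(\underline{V},M)$ induced by the right $\underline{A}$-action on $\underline{V}$. Thus showing that $\Hom_{\underline{A}}(\underline{V},M)$ is injective is equivalent to showing that the composite functor $F:=\Hom_{\underline{A}}(\underline{V}\otimes_{\underline{A}}-,M)$ is exact on $\underline{A}\g\mod$.

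Next, I would use that the projectivizing hypothesis applied with $Y=\underline{A}$ gives $\underline{A}\otimes_{\underline{A}}\underline{V}\cong \underline{V}$ projective as a right $\underline{A}$-module, hence flat on the right. (Alternatively, one reads this directly from the explicit description $\underline{V}\cong\bigoplus_{w\in\hueca{B}_1}\underline{A}e_{t(w)}\otimes_k e_{s(w)}\underline{A}$ used in (\ref{R: projectivizing}).) Consequently, for any short exact sequence $0\to X'\to X\to X''\to 0$ in $\underline{A}\g\mod$, the induced sequence
$$0\rightmap{}\underline{V}\otimes_{\underline{A}}X'\rightmap{}\underline{V}\otimes_{\underline{A}}X\rightmap{}\underline{V}\otimes_{\underline{A}}X''\rightmap{}0$$
remains exact in $\underline{A}\g\mod$, and by the projectivizing hypothesis (applied with left modules) all three terms are projective left $\underline{A}$-modules.

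Finally, I would apply $\Hom_{\underline{A}}(-,M)$ to the last exact sequence. This functor is always left exact, and the obstruction to right exactness is $\Ext^1_{\underline{A}}(\underline{V}\otimes_{\underline{A}}X'',M)$, which vanishes because $\underline{V}\otimes_{\underline{A}}X''$ is projective. Hence $F$ is exact, which by the adjunction proves that $\Hom_{\underline{A}}(\underline{V},M)$ is an injective left $\underline{A}$-module. There is no real obstacle here: the whole argument is formal once the two ``projectivizing'' properties (flatness of $\underline{V}$ on the right, and projectivity of each $\underline{V}\otimes_{\underline{A}}X$ on the left) are simultaneously used in the two halves of the composite $F$.
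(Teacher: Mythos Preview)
Your proof is correct and follows essentially the same route as the paper's argument: both use the hom--tensor adjunction $\Hom_{\underline{A}}(X,\Hom_{\underline{A}}(\underline{V},M))\cong \Hom_{\underline{A}}(\underline{V}\otimes_{\underline{A}}X,M)$, then use right projectivity of $\underline{V}$ to keep the tensored sequence exact and left projectivity of each $\underline{V}\otimes_{\underline{A}}X$ to ensure $\Hom_{\underline{A}}(-,M)$ stays exact. The only cosmetic difference is that the paper observes the tensored sequence actually splits (its right term is projective), whereas you phrase the same vanishing obstruction as $\Ext^1_{\underline{A}}(\underline{V}\otimes_{\underline{A}}X'',M)=0$.
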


\begin{proposition}\label{P: Burt-Butler}
 Consider the tensor functor $T:=\Gamma\otimes_{\underline{A}}-:\underline{A}\g\mod\rightmap{}\Gamma\g\mod$. Then, every extension $e:0\rightmap{}TM\rightmap{}E\rightmap{}TN\rightmap{}0$ in $\Gamma\g\mod$ is equivalent to an extension $Te'$ obtained by applying $T$ to an extension $$e'':0\rightmap{}M\rightmap{}E''\rightmap{}N\rightmap{}0$$ in $\underline{A}\g\mod$.
\end{proposition}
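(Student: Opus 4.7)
The plan is to produce $e''$ from $e$ by a three-step procedure. First, restrict $e$ to $\underline{A}\g\mod$ along the scalar embedding $\underline{A}\hookrightarrow\Gamma$ (the restriction functor $S$); then pull the resulting $\underline{A}$-extension back along the unit $\alpha_N:N\rightarrow STN$ of the adjunction introduced in (\ref{L: adjuncion y alpha mono}); finally, use the injectivity of $\Hom_{\underline{A}}(\underline{V},M)$ to realize this pullback as the pushout of some $e''$ along $\alpha_M:M\rightarrow STM$. The inputs I shall rely on are: $T$ is exact by (\ref{R: Gamma es plano}), hence preserves both pushouts and pullbacks of short exact sequences; $(T,S)$ is an adjoint pair with unit $\alpha$ by (\ref{L: adjuncion y alpha mono}); and the short exact sequence $0\rightarrow M\rightarrow STM\rightarrow\Hom_{\underline{A}}(\underline{V},M)\rightarrow 0$ of (\ref{C: suc ex con alpha}) has injective cokernel in $\underline{A}\g\mod$ by (\ref{R: projectivizing}) and (\ref{L: homs injectivos }).

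Carrying out the first two steps, I obtain $\tilde e:=(\alpha_N)^{*}Se$, an $\underline{A}$-extension of $N$ by $STM$. Applying $\Ext^{1}_{\underline{A}}(N,-)$ to the short exact sequence $0\rightarrow M\rightarrow STM\rightarrow\Hom_{\underline{A}}(\underline{V},M)\rightarrow 0$ gives a long exact sequence ending in $\Ext^{1}_{\underline{A}}(N,\Hom_{\underline{A}}(\underline{V},M))=0$, so the pushout map $(\alpha_M)_{*}:\Ext^{1}_{\underline{A}}(N,M)\rightarrow\Ext^{1}_{\underline{A}}(N,STM)$ is surjective. Pick $e''$ in its preimage of the class of $\tilde e$, so that $(\alpha_M)_{*}e''\cong\tilde e$ as $\underline{A}$-extensions. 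This $e''$ is the candidate.

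It remains to check that $Te''\cong e$ in $\Gamma\g\mod$. Concatenating the equivalence $(\alpha_M)_{*}e''\cong\tilde e$ with the defining pullback square $\tilde e=(\alpha_N)^{*}Se$ produces a morphism of $\underline{A}$-short exact sequences from $e''$ to $Se$ whose outer components are precisely $\alpha_M$ and $\alpha_N$. Transposing under the adjunction $(T,S)$ converts this into a morphism of $\Gamma$-short exact sequences from $Te''$ to $e$, and by the triangle identity for the unit-counit pair of $(T,S)$, the outer components of the transpose are $1_{TM}$ and $1_{TN}$. The short five lemma in the abelian category $\Gamma\g\mod$ then forces the middle component $TE''\rightarrow E$ to be an isomorphism, yielding the claimed equivalence of $\Gamma$-extensions.

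The main obstacle will be this last paragraph's abstract passage from an $\underline{A}$-morphism of short exact sequences to a $\Gamma$-morphism of short exact sequences: one has to verify that the hom-tensor transpose of the middle component really is a map of the $\Gamma$-sequences, which comes down to the naturality of the counit evaluated at the bottom row $Se$ together with the preservation by the exact functor $T$ of the pullback square defining $\tilde e$. Once these two compatibilities are unwound explicitly, the final step is a direct application of the short five lemma.
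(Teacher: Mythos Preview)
Your proof is correct and follows essentially the same approach as the paper's. The paper's proof simply observes that $(T,S)$ is an adjoint pair of exact functors with $\alpha_M$ a monomorphism having injective cokernel, and then defers to the argument of \cite{Bu}(10.3); the detailed steps of that argument --- restrict along $S$, pull back along $\alpha_N$, use injectivity of $\Coker\alpha_M$ to realize the result as a pushout along $\alpha_M$, then transpose via the triangle identity --- are precisely what you have written out.
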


\begin{proof} We have the adjoint pair of exact functors $$\underline{A}\g\mod\dobleflechavieneva{S}{T}\Gamma\g\mod,$$ 
where $S$ is the restriction functor. Moreover, the morphism $\alpha_M:M\rightmap{}STM$ is a monomorphism with injective cokernel. Then, we can follow the argument of \cite{Bu}(10.3) to obtain the wanted extension equivalent to $e$.
\end{proof}

\begin{proposition}\label{P: de A-mod en Gamma-mod y Burt-Butler}
 Given a special interlaced weak ditalgebra $\underline{\cal A}$, we have: 
 \begin{enumerate}
  \item The functor 
 $F=\Hom_{\underline{\cal A}}(\underline{A},-):\underline{\cal A}\g\mod\rightmap{}\Gamma\g\mod$
 is a full and faithful exact functor.
 \item The  $\underline{A}$-algebra $\Gamma$ is a right $\underline{A}$-module by restriction, it  determines an exact functor 
$$\Gamma\otimes_{\underline{A}}-:\underline{A}\g\mod\rightmap{}\Gamma\g\mod.$$ 
and, we have  $\Gamma\otimes_{\underline{A}}-\cong\Hom_{\underline{\cal A}}(\underline{A},L(-))$.
\item The functor $F$ restricts to an equivalence of categories $F:\underline{\cal A}\g\mod\rightmap{}{\cal I}$, where ${\cal I}$ is the full subcategory of $\Gamma\g \mod$ of modules induced from $\underline{A}\g\mod$, that is by the class of $\Gamma$-modules isomorphic to some 
$\Gamma\otimes_{\underline{A}}N$, for some $N\in \underline{A}\g\mod$. 

Moreover, the subcategory ${\cal I}$ of $\Gamma\g\mod$ is closed under extensions. 
\item The functor $\Gamma\otimes_{\underline{A}}-:\underline{A}\g\mod\rightmap{}\Gamma\g\mod$ induces epimorphisms
$$\Ext^n_{\underline{A}}(M,N)\rightmap{}\Ext^n_{\Gamma}(\Gamma\otimes_{\underline{A}}M,\Gamma\otimes_{\underline{A}}N),$$
for all $M,N\in \underline{A}\g\mod$ and $n\geq 1$. They are isomorphisms for $n\geq 2$. 
 \end{enumerate}
 \end{proposition}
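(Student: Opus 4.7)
The plan is to derive each of the four items from the preceding lemmas, essentially assembling the Burt--Butler machinery adapted to our interlaced weak ditalgebra setting.

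For (1), exactness of $F=\Hom_{\underline{\cal A}}(\underline{A},-)$ is immediate once one knows that $\underline{A}$ is $\underline{\cal E}$-projective in $\underline{\cal A}\g\mod$: by Remark~\ref{R: Gamma es plano} every projective $\underline{A}$-module, and in particular $\underline{A}$ itself, is $\underline{\cal E}$-projective. For full faithfulness I would adapt the argument of \cite{BSZ}(7.12) as indicated there; the point is that a morphism in $\underline{\cal A}\g\mod$ out of $\underline{A}$ is completely recoverable from the pair formed by its evaluation at $1\in\underline{A}$ and its $f^1$-component, both of which $F$ records.

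For (2), the isomorphism $\Gamma\otimes_{\underline{A}}-\cong F\circ L$ is precisely Lemma~\ref{L: el iso sigma de Gamma otimes- a Hom(A,L-)}, and exactness of $\Gamma\otimes_{\underline{A}}-$ then follows from the exactness of $L$ (Lemma~\ref{L: pares exactos}) combined with (1); alternatively, one may invoke Corollary~\ref{Gamma es underline(A) modulo proy derecho}, which shows that $\Gamma$ is projective, hence flat, as a right $\underline{A}$-module. For (3), the decisive observation is that $L$ is the identity on objects, so every $X\in\underline{\cal A}\g\mod$ is literally $L(|X|)$, where $|X|$ is the underlying $\underline{A}$-module. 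Applying $F$ and using (2) yields $F(X)\cong\Gamma\otimes_{\underline{A}}|X|\in{\cal I}$, so the essential image of $F$ coincides with ${\cal I}$. Combined with full faithfulness from (1), this makes $F:\underline{\cal A}\g\mod\rightmap{}{\cal I}$ an equivalence. That ${\cal I}$ is closed under extensions in $\Gamma\g\mod$ is exactly the content of Proposition~\ref{P: Burt-Butler}.

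For (4), I would take a projective resolution $P_\bullet\rightmap{}M$ in $\underline{A}\g\mod$. Because $T:=\Gamma\otimes_{\underline{A}}-$ has the exact restriction functor $S$ as right adjoint (Lemma~\ref{L: adjuncion y alpha mono}), $T$ preserves projectives, so $TP_\bullet\rightmap{}TM$ is a projective resolution in $\Gamma\g\mod$. The adjunction $\Hom_\Gamma(TP_i,TN)\cong\Hom_{\underline{A}}(P_i,STN)$ then yields the identification
$$\Ext^n_\Gamma(TM,TN)\cong\Ext^n_{\underline{A}}(M,STN),$$
under which the map induced by $T$ becomes the map induced on Ext by the unit $\alpha_N:N\rightmap{}STN$ from Lemma~\ref{L: adjuncion y alpha mono}. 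The short exact sequence of Corollary~\ref{C: suc ex con alpha},
$$0\rightmap{}N\rightmap{\alpha_N}STN\rightmap{}\Hom_{\underline{A}}(\underline{V},N)\rightmap{}0,$$
together with the injectivity of $\Hom_{\underline{A}}(\underline{V},N)$ granted by Lemma~\ref{L: homs injectivos } (applicable because $\underline{V}$ is projectivizing by Lemma~\ref{R: projectivizing}), produces a long exact sequence in which $\Ext^n_{\underline{A}}(M,\Hom_{\underline{A}}(\underline{V},N))=0$ for all $n\geq 1$. The vanishing of the right-hand neighbour immediately yields surjectivity in the case $n=1$, and the simultaneous vanishing of both neighbours for $n\geq 2$ yields the claimed isomorphism.

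The main obstacle I anticipate is the full-faithfulness claim in (1): although the strategy is the one used for plain ditalgebras in \cite{BSZ}, one must check carefully that the presence of the ideal $I$ and the passage from $V$ to $\underline{V}=V/(IV+VI)$ do not spoil the reconstruction of a morphism from its image under $F$. The Ext computation in (4) is essentially formal once one verifies that the map on Ext induced by the tensor functor $T$ agrees with the map induced by the unit $\alpha_N$, a standard consequence of the $T\dashv S$ adjunction together with the fact that $T$ sends a chosen projective resolution of $M$ to a projective resolution of $TM$.
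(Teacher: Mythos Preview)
Your proposal is correct and aligns closely with the paper's own proof. For parts (1)--(3) you invoke exactly the same lemmas the paper does (Remark~\ref{R: Gamma es plano}, Lemma~\ref{L: el iso sigma de Gamma otimes- a Hom(A,L-)}, and Proposition~\ref{P: Burt-Butler}), and your essential-image argument for (3) makes explicit what the paper leaves implicit.

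For (4) there is a mild but genuine difference worth noting. The paper also starts from the exact sequence of Corollary~\ref{C: suc ex con alpha} to obtain that $\Ext^n_{\underline{A}}(M,\alpha_N)$ is an epimorphism for $n\geq 1$ and an isomorphism for $n\geq 2$, but then identifies this map with the map induced by $T$ via a commutative square involving the counit $\beta_{TN}$ and an appeal to \cite{Bu}(20.9), which asserts that $\Ext^n_\Gamma(TM,\beta_{TN})\circ\gamma^n$ is an isomorphism. Your route is more self-contained: you use directly that $T$ preserves projectives (because $S$ is exact) to get $\Ext^n_\Gamma(TM,TN)\cong\Ext^n_{\underline{A}}(M,STN)$ via the adjunction at the level of complexes, and then check that the induced map agrees with $\Ext^n_{\underline{A}}(M,\alpha_N)$. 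This avoids the external citation and is arguably cleaner; the paper's diagram is essentially encoding the same naturality, but outsources the verification to \cite{Bu}.
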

 
 \begin{proof} (1) and (2) were remarked in (\ref{R: Gamma es plano}), and (3) is (\ref{P: Burt-Butler}).   
 
\noindent(4): Recall that,  we have an exact sequence $0\rightmap{}N\rightmap{\alpha_N}\Gamma\otimes_{\underline{A}}N\rightmap{}I\rightmap{}0$ in $\underline{A}\g\mod$, with $I$  injective, for any $N\in \underline{A}\g\mod$. The corresponding long exact sequence gives epimorphisms
$$\Ext^n_{\underline{A}}(M,\alpha_N):\Ext^n_{\underline{A}}(M,N)\rightmap{}\Ext^n_{\underline{A}}(M,\Gamma\otimes_{\underline{A}}N),$$
 for $n\geq 1$ and isomorphisms for $n\geq 2$. We have a commutative diagram 
 $$\begin{matrix}\Ext^n_{\underline{A}}(M,N)&\rightmap{\gamma^n}&\Ext^n_{\Gamma}(\Gamma\otimes_{\underline{A}}M,\Gamma\otimes_{\underline{A}}N)\hfill\\
 \lmapdown{\Ext^n_{\underline{A}}(M,\alpha_N)}&&\rmapup{\Ext^n_{\Gamma}(\Gamma\otimes_{\underline{A}}M,\beta_{TN})}\\
 \Ext^n_{\underline{A}}(_{\underline{A}}M,_{\underline{A}}\Gamma\otimes_{\underline{A}}N)&\rightmap{\gamma^n}&\Ext^n_{\Gamma}(\Gamma\otimes_{\underline{A}}M,\Gamma\otimes_{\underline{A}}\Gamma\otimes_{\underline{A}}N),\hfill\\
 \end{matrix}$$
 where $\gamma^n$ denotes the morphisms induced by the application of the exact functor $T=\Gamma\otimes_{\underline{A}}-$, and 
 $\beta_{TN}:\Gamma\otimes_{\underline{A}}\Gamma\otimes_{\underline{A}}N\rightmap{}\Gamma\otimes_{\underline{A}}N$ is the product map. Since the composition $\Ext^n_{\Gamma}(\Gamma\otimes_{\underline{A}}M,\beta_{TN})\gamma^n$ is 
 an isomorphism, see \cite{Bu}(20.9), we are done.  
 \end{proof}

\section{Strict interlaced weak ditalgebras}\label{S: SIWD}

\begin{definition}\label{D: strict weak interlaced ditalgebras}
 Let ${\cal P}=({\cal P},\leq)$ be a finite preordered set. Let $\underline{\cal A}=({\cal A},I)$ be a triangular interlaced weak  ditalgebra, thus ${\cal A}=(T_S(W_0\oplus W_1),\delta)$ is a weak ditalgebra with triangular layer $(S,W_0\oplus W_1)$ and $I$ is an ${\cal A}$-triangular ideal of $A:=T_S(W_0)$. We will say that $\underline{\cal A}$ is ${\cal P}$-\emph{strict  interlaced weak ditalgebra} iff the following holds:
 \begin{enumerate}
  \item The algebra $S$ is a finite product of $\vert {\cal P}\vert$ copies of the field $k$, thus $S=\bigoplus_{i\in {\cal P}}ke_i$, where $1=\sum_{i\in {\cal P}}e_i$ is a decomposition of the unit of $S$ as a sum of primitive orthogonal central idempotents. 
  \item We have that $e_jW_1e_i\not=0$ implies $i\leq j$,
  \item We have that $e_jW_0e_i\not=0$ implies $\overline{i}<\overline{j}$.
  \item $I\subseteq \rad(A)^2$.  
 \end{enumerate}
 We will denote by $\underline{A}$ the quotient $k$-algebra $A/I$. 
\end{definition}

\begin{remark}\label{R: orden de caminos y orden de cal P} If $\underline{\cal A}$ be a ${\cal P}$-strict interlaced weak  ditalgebra as above, 
 we will identify the graded tensor algebra $A=T_S(W_0)$ with the path algebra $k({\cal B}_0)$ of the quiver ${\cal B}_0$ with set of points ${\cal P}$; such that for $i,j\in {\cal P}$, the set of solid arrows from $i$ to $j$ is a fixed basis $\hueca{B}_0(i,j)$ of the vector space $e_jW_0e_i$. 
 
 Notice that the quiver ${\cal B}_0$ coincides with the Gabiel quiver of $\underline{A}$ and has not oriented cycles. Then, we have the  partial order of precedence $\preceq$ in ${\cal P}$, as defined in (\ref{R: on directed algebra}).  
 So, for $i,j\in {\cal P}$, we have $i\prec j$ iff there is a non-trivial path from $i$ to $j$ in the quiver ${\cal B}_0$. In fact, we always have that $i\prec j$ implies $\overline{i}<\overline{j}$. 
 
 Since the quiver ${\cal B}_0$ has no oriented cycle, the algebra $\underline{A}$ is finite-dimensional. Let us write $\underline{P}_i:=\underline{A}e_i$, for all $i\in {\cal P}$. Then, the family  $\{\underline{P}_i\}_{i\in {\cal P}}$ is a complete family of representatives of the indecomposable projective $\underline{A}$-modules (we denote by $\{S_i\}_{i\in {\cal P}}$ the corresponding family of simple $\underline{A}$-modules $S_i=\underline{P}_i/\rad(\underline{P}_i)$, which is a complete family of representatives of the isoclasses of the simple $\underline{A}$-modules).   
\end{remark}

\begin{lemma}\label{L: underline A tiene stict homol system con simples}
 Assume that $\underline{\cal A}$ is a ${\cal P}$-strict interlaced weak ditalgebra, then the algebra $\underline{A}$ is directed. 
 Moreover,  the 
 triple    $\underline{\cal H}=({\cal P},\leq,\{S_i\}_{i\in {\cal P}})$ is a strict homological system for the algebra $\underline{A}$.
\end{lemma}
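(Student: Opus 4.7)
The plan is to reduce the lemma to Lemma (\ref{L: homological systems on directed algebras}), by first showing that $\underline{A}$ is directed and then transferring the strict homological system obtained for the precedence order $\preceq$ to the given preorder $\leq$.

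First I would use Remark (\ref{R: orden de caminos y orden de cal P}) to identify $A = T_S(W_0)$ with the path algebra $k({\cal B}_0)$, where ${\cal B}_0$ has set of vertices ${\cal P}$ and, for each $i,j\in{\cal P}$, a basis of $e_jW_0e_i$ as the set of arrows from $i$ to $j$. Since condition (3) of (\ref{D: strict weak interlaced ditalgebras}) says $e_jW_0e_i\neq 0$ implies $\overline{i}<\overline{j}$, any oriented cycle in ${\cal B}_0$ would produce a strict chain $\overline{i_0}<\overline{i_1}<\cdots<\overline{i_n}=\overline{i_0}$ in $\overline{\cal P}$, which is impossible. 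Hence ${\cal B}_0$ has no oriented cycles. Because $I\subseteq \rad(A)^2$, the projection $A\to\underline{A}$ induces an isomorphism $\rad(A)/\rad(A)^2\cong\rad(\underline{A})/\rad(\underline{A})^2$, so the Gabriel quiver of $\underline{A}$ is precisely ${\cal B}_0$ and $\underline{A}$ is a directed finite-dimensional $k$-algebra.

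Next I would apply Lemma (\ref{L: homological systems on directed algebras}) to $\underline{A}$ with the partial order of precedence $\preceq$ on ${\cal P}$: this yields that $({\cal P},\preceq,\{S_i\}_{i\in{\cal P}})$ is a strict homological system for $\underline{A}$, i.e.\ $\Ext^n_{\underline{A}}(S_i,S_j)\neq 0$ for some $n\geq 1$ implies $i\prec j$. Now I would transfer this to $\leq$: by definition $i\prec j$ means there is an arrow $i\to j$ in ${\cal B}_0$, so $e_jW_0e_i\neq 0$, which by condition (3) gives $\overline{i}<\overline{j}$ in $\overline{\cal P}$, i.e., $i\leq j$ and $i\not\sim j$. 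Therefore the strict $\Ext$-condition holds verbatim with $\leq$ in place of $\preceq$, and since $\Hom_{\underline{A}}(S_i,S_j)\neq 0$ forces $i=j$ (so $i\leq j$), the two homological-system conditions of (\ref{D: homological system y F(Delta)}) hold for $({\cal P},\leq,\{S_i\}_{i\in{\cal P}})$.

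Finally, for admissibility, the family $\{\underline{A}e_i\}_{i\in{\cal P}}$ exhausts the indecomposable projectives of $\underline{A}$ up to isomorphism (with cardinality $|{\cal P}|$), and any composition series of $_{\underline{A}}\underline{A}$ is a $\Delta$-filtration with $\Delta=\bigoplus_i S_i$, so $\underline{A}\in{\cal F}(\Delta)$. This completes the proof.

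I do not expect a substantial obstacle: the only delicate point is ensuring that the Gabriel quiver of $\underline{A}$ is exactly ${\cal B}_0$, which hinges on the hypothesis $I\subseteq \rad(A)^2$; after that, the translation between the precedence order on a directed algebra (handled by the earlier lemma) and the preorder $\leq$ is immediate from condition (3) of (\ref{D: strict weak interlaced ditalgebras}).
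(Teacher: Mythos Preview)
Your proof is correct and follows essentially the same route as the paper's: invoke Lemma~(\ref{L: homological systems on directed algebras}) to get the strict homological system for the precedence order $\preceq$, then transfer to $\leq$ using the implication $i\prec j\Rightarrow\overline{i}<\overline{j}$ (the paper simply cites this implication from Remark~(\ref{R: orden de caminos y orden de cal P})). One small imprecision: after taking the transitive closure, $i\prec j$ means there is a non-trivial \emph{path} from $i$ to $j$, not necessarily a single arrow, so you cannot conclude $e_jW_0e_i\neq0$ directly; however, applying condition~(3) of (\ref{D: strict weak interlaced ditalgebras}) along each arrow of such a path and using transitivity of $<$ in $\overline{\cal P}$ still yields $\overline{i}<\overline{j}$, so your conclusion is unaffected.
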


\begin{proof} From (\ref{L: homological systems on directed algebras}), we already know that ${\cal H}=({\cal P},\preceq,\{S_i\}_{i\in {\cal P}})$ is a strict homological system for $\underline{A}$.  Since $i\prec j$ implies $\overline{i}<\overline{j}$, we immediately obtain that $\underline{\cal H}$ is a strict homological system for $\underline{A}$. 
\end{proof}

\begin{remark}\label{R: indec projs in cal A-mod}
 Let $\underline{\cal A}$ be a ${\cal P}$-strict  interlaced weak ditalgebra and adopt the notation of (\ref{D: strict weak interlaced ditalgebras}). Then, we have a special interlaced weak ditalgebra and the results of sections \S\ref{S: Exact cat} and \S\ref{S: Gamma} hold. 
\end{remark}

\begin{lemma}\label{L: indec projs in cal A-mod}
Let $\underline{\cal A}$ be a ${\cal P}$-strict  interlaced ditalgebra. Then, there is a family $\{Q_i\}_{i\in {\cal P}}$ of non-isomorphic indecomposable ${\underline{\cal E}}$-projective $\underline{\cal A}$-modules, which is a complete family of representatives of the indecomposable $\underline{\cal E}$-projective $\underline{\cal A}$-modules and, moreover, for each $i\in {\cal P}$, we have 
an isomorphism in $\underline{\cal A}\g\mod$
$$\underline{P}_i\cong Q_i\oplus(\bigoplus_{i\prec j}m_{i,j}Q_j), \hbox{ for some } m_{i,j}\geq 0.$$
\end{lemma}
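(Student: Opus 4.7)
The plan is to combine the splitting of idempotents in $\underline{\cal A}\g\mod$ established in (\ref{P: idempotents split}) with the Burt--Butler machinery of Section \ref{S: Gamma}. First I would verify that each $\underline{P}_i=\underline{A}e_i$ is $\underline{\cal E}$-projective in $\underline{\cal A}\g\mod$: this is immediate from Remark (\ref{R: Gamma es plano}), since projective $\underline{A}$-modules are $\underline{\cal E}$-projective. Under the full and faithful exact functor $F=\Hom_{\underline{\cal A}}(\underline{A},-):\underline{\cal A}\g\mod\rightarrow\Gamma\g\mod$ of (\ref{P: de A-mod en Gamma-mod y Burt-Butler}), we have $F(\underline{P}_i)\cong\Gamma e_i$ by (\ref{L: el iso sigma de Gamma otimes- a Hom(A,L-)}); hence $\End_{\underline{\cal A}}(\underline{P}_i)\cong(e_i\Gamma e_i)^{op}$ is finite-dimensional, so Krull--Schmidt applies and each $\underline{P}_i$ splits as a finite direct sum of indecomposable $\underline{\cal E}$-projective $\underline{\cal A}$-modules. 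Conversely, Remark (\ref{R: Gamma es plano}) shows every indecomposable $\underline{\cal E}$-projective is isomorphic to a direct summand of some $\underline{P}_i$, so these summands account for a complete family of indecomposable $\underline{\cal E}$-projectives.

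Next I would transport the decomposition problem to $\Gamma$. Via $F$, decomposing $\underline{P}_i$ in $\underline{\cal A}\g\mod$ is equivalent to decomposing $\Gamma e_i$ as a left $\Gamma$-module, which in turn is governed by a primitive orthogonal idempotent decomposition of $e_i$ inside $\Gamma$. The key input is the exact sequence
$$0\rightmap{}\underline{A}\rightmap{\alpha}\Gamma\rightmap{}\Hom_{\underline{A}}(\underline{V},\underline{A})\rightmap{}0$$
of (\ref{C: suc ex con alpha}), interpreted as $\underline{A}$-bimodules. Combined with the triangularity hypotheses $e_jW_0e_i\neq 0\Rightarrow\overline{i}<\overline{j}$ and $e_jW_1e_i\neq 0\Rightarrow i\leq j$, together with $I\subseteq\rad(A)^2$, this sequence shows that $e_i\Gamma e_i$ has a one-dimensional quotient $ke_i$ modulo a nilpotent ideal (using that there are no nontrivial paths from $i$ to itself in the quiver ${\cal B}_0$ of (\ref{R: orden de caminos y orden de cal P}), so $e_i\underline{A}e_i=ke_i$), and that $e_j\Gamma e_i\neq 0$ implies $i\preceq j$ in the precedence order.

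I would then proceed by induction on the height of $\overline{i}$ in the quotient poset $\overline{\cal P}$, going from maximal elements downwards. For $\overline{i}$ maximal, the triangular structure forces $\underline{P}_i=S_i$ and makes it indecomposable in $\underline{\cal A}\g\mod$, so we put $Q_i:=\underline{P}_i$. For the inductive step, decompose $e_i\in\Gamma$ as a sum of primitive orthogonal idempotents $e_i=f_{i,0}+\sum_u f_{i,u}$, where $f_{i,0}$ is the distinguished idempotent mapping to the standard generator of $ke_i\subseteq e_i\Gamma e_i/\rad$. Define $Q_i$ as the summand of $\underline{P}_i$ corresponding to $f_{i,0}$; each remaining $f_{i,u}$ maps to zero in $\underline{A}/\rad(\underline{A})$, and the triangularity shown above forces it to be conjugate in $\Gamma$ to some $f_{j,0}$ with $i\prec j$, giving $Q_{i,u}\cong Q_j$ for such a $j$. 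Non-isomorphism of the $Q_i$'s is then deduced from the corresponding non-isomorphism of the $S_i$'s after applying the composition $\Gamma\twoheadrightarrow\underline{A}\twoheadrightarrow\underline{A}/\rad(\underline{A})$.

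The main obstacle will be the precise analysis of the idempotent structure of $\Gamma$ and the promotion of the triangularity of the layer $(S,W_0\oplus W_1)$ to a triangularity statement for $\Gamma$. Concretely, one must establish that $e_i\Gamma e_i$ is local with residue field $k$ (so that $f_{i,0}$ is well defined up to conjugation), and that any primitive idempotent $f_{i,u}\in e_i\Gamma e_i$ distinct from $f_{i,0}$ admits a representative lying in $e_j\Gamma e_i$ with $i\prec j$; both facts ultimately rely on filtering $\Gamma$ by the triangular filtrations of $W_0$, $W_1$ and $I$ and tracking what $\Hom_{\underline{A}}(\underline{V},\underline{A})$ contributes to the idempotent structure.
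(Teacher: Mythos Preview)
Your approach via $\Gamma$ has two concrete errors that block it as written.

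First, the claim that $e_j\Gamma e_i\neq 0$ implies $i\preceq j$ in the precedence order is false. Since $e_j\Gamma e_i\cong \Hom_{\underline{\cal A}}(\underline{P}_j,\underline{P}_i)$, and a morphism $(f^0,f^1)$ may have $f^0=0$ but $f^1\neq 0$, the dashed arrows in $W_1$ (which obey only the preorder $\leq$, not the precedence order $\prec$) produce nonzero hom spaces unrelated to $\prec$. For instance, if $W_0=0$ and $e_iW_1e_j\neq 0$ with $j\leq i$ but $j\not\preceq i$, then $\Hom_{\underline{\cal A}}(\underline{P}_j,\underline{P}_i)\neq 0$.

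Second, you ask to show that $e_i\Gamma e_i$ is local. But $e_i\Gamma e_i\cong\End_{\underline{\cal A}}(\underline{P}_i)^{op}$, and the whole content of the lemma is that $\underline{P}_i$ is typically \emph{decomposable} in $\underline{\cal A}\g\mod$; hence $e_i\Gamma e_i$ is not local. This contradicts your own decomposition $e_i=f_{i,0}+\sum_u f_{i,u}$ into several primitives. Relatedly, a primitive idempotent $f_{i,u}\in e_i\Gamma e_i$ cannot ``have a representative in $e_j\Gamma e_i$'' for $j\neq i$, since $e_je_i=0$.

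The paper bypasses $\Gamma$ entirely. The key elementary observation is that an isomorphism $g=(g^0,g^1)$ in $\underline{\cal A}\g\mod$ has $g^0$ an isomorphism of $S$-modules (Corollary~\ref{L: f iso sii f^0 iso}), so a decomposition $\underline{P}_i\cong\bigoplus_j Q_{i,j}$ in $\underline{\cal A}\g\mod$ preserves $e_l$-dimensions. Since $\dim_k e_i\underline{P}_i=\dim_k e_i\underline{A}e_i=1$, exactly one summand, call it $Q_i$, has $e_iQ_i\neq 0$; every other summand $Q$ satisfies $e_iQ=0$, and $e_jQ\neq 0$ forces $e_j\underline{A}e_i\neq 0$, i.e.\ $i\prec j$. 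Then take the projective cover $q:\underline{P}\to Q$ in $\underline{A}\g\mod$; the preceding support condition gives $\underline{P}\cong\bigoplus_{i\prec j}s_j\underline{P}_j$. The conflation $K\rightmap{(\sigma,0)}\underline{P}\rightmap{(q,0)}Q$ together with $\underline{\cal E}$-projectivity of $Q$ makes $Q$ a summand of $\underline{P}$ in $\underline{\cal A}\g\mod$, and induction on $\prec$ identifies $Q$ with some $Q_t$, $i\prec t$. This argument uses only the $S$-module invariant $\dim_k e_jM$, which is insensitive to the dashed arrows, and that is why it succeeds where the $\Gamma$-analysis gets tangled in $W_1$.
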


\begin{proof} Given $i\in {\cal P}$, as we remarked in (\ref{R: Gamma es plano}), the $\underline{\cal A}$-module $\underline{P}_i$ is ${\underline{\cal E}}$-projective. Consider its decomposition as a direct sum of indecomposables.   So we have an isomorphism in 
$\underline{\cal A}\g\mod$:  
$$g=(g^0,g^1):\underline{P}_i\rightmap{}\bigoplus_{j=1}^{n_i}Q_{i,j},$$
 a direct sum of indecomposable $\underline{\cal E}$-projectives. 
Then, we have the linear isomorphism $g_i^0:e_i\underline{A}e_i\rightmap{}\bigoplus_{j=1}^{n_i}e_iQ_{i,j}$, where the domain is one dimensional. Then, there is a unique $Q_i:=Q_{i,j}$ with $\dim e_iQ_{i,j}=1$. We shall prove our lemma by induction on the partial order $\prec$ on ${\cal P}$. 

 If $i$ is maximal (that is if $i$ is a sink in ${\cal B}_0$), then $\underline{P}_i\cong Q_i$ is indecomposable in $\underline{\cal A}\g\mod$ and we are done. Assume we have $\underline{P}_s\cong  Q_s\oplus(\bigoplus_{s\prec t}m_{s,t}Q_t)$, for all $s\in {\cal P}$ with $i\prec s$, for a fixed $i\in {\cal P}$. Consider the indecomposable decomposition of $\underline{P}_i\cong  Q_i\oplus(\oplus_{j=1}^{u_i}Q_{i,j})$ of $\underline{P}_i$ in $\underline{\cal A}\g\mod$. Fix any direct summand $Q:=Q_{i,j}$, not isomorphic to $Q_i$, then $Q$ is an indecomposable 
 $\underline{\cal E}$-projective $\underline{\cal A}$-module such that $e_iQ=0$. Moreover, if $e_jQ\not=0$, we have $e_j\underline{A}e_i\not=0$, so $i\prec j$.   Now, consider the projective cover $q:\underline{P}\rightmap{}Q$ of $Q$ in $\underline{A}\g\mod$, then we have the commutative diagram
$$\begin{matrix}
   \underline{P}&\rightmap{q}&Q\\
&\ddmapdown{\overline{q}}&\shortlmapdown{}\\
&&Q/\rad(Q).\\
  \end{matrix}$$
 There, the module  $\underline{P}_j$ is a direct summand of $\underline{P}$ iff $S_j$ is a direct summand of $Q/\rad(Q)$, which is only possible if $e_jQ\not=0$, that is if $i\prec j$. This means that 
$\underline{P}\cong \oplus_{i\prec j}s_j\underline{P}_j$ in $\underline{A}\g\mod$, for some $s_j\geq 0$. Then, if $\sigma:K\rightmap{}\underline{P}$ is the kernel of $q$ in $\underline{A}\g\mod$, we have the conflation 
$$K\rightmap{\ (\sigma,0) \ }\underline{P}\rightmap{ \ (q,0) \ }Q$$
in the exact category $(\underline{\cal A}\g\mod,{\underline{\cal E}})$, where $Q$ is an ${\underline{\cal E}}$-projective module. Hence, $Q$ is a direct summand of $\underline{P}\cong  \oplus_{i\prec j}s_j\underline{P}_j$ in $\underline{\cal A}\g\mod$. Then, by induction hypothesis, $Q\cong Q_t$, for some $i\prec j\prec t$, and this finishes the proof.
\end{proof}

\begin{proposition}\label{P: strict homological system for Gamma}
 Let $\underline{\cal A}$ be a ${\cal P}$-strict  interlaced weak ditalgebra and consider its right algebra $\Gamma$. For $i\in {\cal P}$, make $P'_i:=\Gamma\otimes_{\underline{A}}Q_i$ and $\Delta'_i:=\Gamma\otimes_{\underline{A}}S_i$. Then,  $\{P'_i\}_{i\in {\cal P}}$ is a complete family of representatives of the indecomposable projective $\Gamma$-modules and  
 ${\cal H}':=({\cal P},\leq,\{\Delta'_i\}_{i\in P})$ is a strict homological system for $\Gamma$. 
\end{proposition}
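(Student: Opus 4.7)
My plan is to transport the strict homological system $\underline{\cal H}=({\cal P},\leq,\{S_i\}_{i\in {\cal P}})$ for the directed algebra $\underline{A}$, established in Lemma~\ref{L: underline A tiene stict homol system con simples}, to the right algebra $\Gamma$ via the exact, full and faithful functor $F=\Hom_{\underline{\cal A}}(\underline{A},-):\underline{\cal A}\g\mod\rightmap{}\Gamma\g\mod$ of Proposition~\ref{P: de A-mod en Gamma-mod y Burt-Butler}, together with its identification with $T=\Gamma\otimes_{\underline{A}}-$ from Lemma~\ref{L: el iso sigma de Gamma otimes- a Hom(A,L-)}. Since every $\underline{\cal A}$-module is canonically an $\underline{A}$-module, the indecomposable $\underline{\cal E}$-projectives $Q_i$ furnished by Lemma~\ref{L: indec projs in cal A-mod} lie in $\underline{A}\g\mod$ and satisfy $P'_i=\Gamma\otimes_{\underline{A}}Q_i\cong F(Q_i)$.

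For the first assertion I would apply $F$ to the decomposition of Lemma~\ref{L: indec projs in cal A-mod}, obtaining
$$\Gamma e_i\cong F(\underline{P}_i)\cong P'_i\oplus\bigoplus_{i\prec j}m_{i,j}P'_j\quad\hbox{in } \Gamma\g\mod.$$
Full faithfulness of $F$ gives $\End_\Gamma(P'_i)\cong \End_{\underline{\cal A}}(Q_i)$, which is local (finite-dimensional with only trivial idempotents since idempotents split in $\underline{\cal A}\g\mod$), hence $P'_i$ is indecomposable; it is projective as a summand of $\Gamma e_i$. The same full faithfulness yields $P'_i\cong P'_j\Leftrightarrow Q_i\cong Q_j\Leftrightarrow i=j$, and summing the display over $i$ shows that $\Gamma=\bigoplus_i\Gamma e_i$ is assembled from copies of the $P'_k$, so $\{P'_i\}_{i\in {\cal P}}$ is complete. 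Indecomposability and pairwise non-isomorphism of the $\Delta'_i=F(S_i)$ follow analogously once one checks that $\End_{\underline{\cal A}}(S_i)$ is local, which is immediate from Corollary~\ref{L: f iso sii f^0 iso}: an endomorphism $(f^0,f^1)$ of $S_i=ke_i$ has $f^0=\lambda\cdot\mathrm{id}_{S_i}$ for some $\lambda\in k$ (as $f^0$ is $S$-linear), and is invertible whenever $\lambda\not=0$. Admissibility of ${\cal H}'$ then follows from applying the exact functor $T$ to a $\Delta$-filtration of $\underline{A}\in {\cal F}(S)$, producing a $\Delta'$-filtration of $\Gamma\otimes_{\underline{A}}\underline{A}\cong \Gamma$.

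For the Ext conditions I would invoke Proposition~\ref{P: de A-mod en Gamma-mod y Burt-Butler}(4), which yields a surjection $\Ext^n_{\underline{A}}(S_i,S_j)\twoheadrightarrow\Ext^n_\Gamma(\Delta'_i,\Delta'_j)$ for $n=1$ and an isomorphism for $n\geq 2$; non-vanishing on the right forces non-vanishing on the left, and the strictness of $\underline{\cal H}$ (Lemma~\ref{L: underline A tiene stict homol system con simples}) then gives $i\leq j$ and $i\not\sim j$, as required.

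The main remaining obstacle is the Hom condition, and it is where the defining conditions of a ${\cal P}$-strict interlaced weak ditalgebra enter crucially. I plan to use the adjunction $\Hom_\Gamma(\Delta'_i,\Delta'_j)\cong \Hom_{\underline{A}}(S_i,\Res_{\underline{A}}\Delta'_j)$ together with the exact sequence $0\rightmap{}S_j\rightmap{}\Delta'_j\rightmap{}\Hom_{\underline{A}}(\underline{V},S_j)\rightmap{}0$ from Corollary~\ref{C: suc ex con alpha} to analyse the $\underline{A}$-composition factors of $\Delta'_j$. Using the free decomposition $\underline{V}\cong\bigoplus_{w\in \hueca{B}_1}\underline{A}e_{t(w)}\otimes_k e_{s(w)}\underline{A}$ from the proof of Lemma~\ref{R: projectivizing}, I compute
$$\Hom_{\underline{A}}(\underline{V},S_j)\cong\bigoplus_{w\in \hueca{B}_1,\, t(w)=j}D(e_{s(w)}\underline{A}),$$
whose $\underline{A}$-composition factors are the $S_r$ with $r\preceq s(w)$ for some such $w$. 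Condition (2) of Definition~\ref{D: strict weak interlaced ditalgebras} gives $s(w)\leq j$ in $({\cal P},\leq)$, while $r\preceq s(w)$ implies $r\leq s(w)$ in $({\cal P},\leq)$ (via Remark~\ref{R: orden de caminos y orden de cal P} and strictness of $\underline{\cal H}$ for $\underline{A}$); transitivity of $\leq$ yields $r\leq j$ for every composition factor $S_r$ of $\Res_{\underline{A}}\Delta'_j$, completing the proof of the Hom condition.
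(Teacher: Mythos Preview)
Your proposal is correct; all the pieces fit together and the argument goes through. The overall architecture—transporting $\underline{\cal H}$ along $F\cong T$—matches the paper, and your treatment of the projectives, the Ext conditions, and admissibility is essentially the same (the paper additionally spells out the explicit sequences $0\to\Gamma\otimes_{\underline{A}}K_i\to P'_i\to\Delta'_i\to 0$ with $K_i$ filtered by $S_j$, $\overline{j}>\overline{i}$, but your shortcut of tensoring a composition series of $\underline{A}$ already yields $\Gamma\in{\cal F}(\Delta')$, which is all that the definition of admissibility requires).

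The one genuine difference is the Hom condition. You go through the adjunction $\Hom_\Gamma(\Delta'_i,\Delta'_j)\cong\Hom_{\underline{A}}(S_i,\Res_{\underline{A}}\Delta'_j)$, invoke the structure sequence of Corollary~\ref{C: suc ex con alpha}, decompose $\Hom_{\underline{A}}(\underline{V},S_j)$ via the free presentation of $\underline{V}$, and read off the composition factors. This works, but the paper takes a much shorter route: since $F$ is full and faithful, $\Hom_\Gamma(\Delta'_i,\Delta'_j)\cong\Hom_{\underline{\cal A}}(S_i,S_j)$; for $i\neq j$ any nonzero $(f^0,f^1)$ has $f^0=0$, hence $0\neq f^1\vert_{W_1}\in\Hom_{S\text{-}S}(W_1,\Hom_k(S_i,S_j))\cong (e_jW_1e_i)^*$, and condition~(2) of Definition~\ref{D: strict weak interlaced ditalgebras} gives $i\leq j$ immediately. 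Your detour through the $\underline{A}$-composition factors of $\Delta'_j$ yields the same constraint $i\leq j$ but uses more machinery (Corollary~\ref{C: suc ex con alpha} and the explicit form of $\underline{V}$) where the paper only needs the definition of morphisms in $\underline{\cal A}\g\mod$. Also, your appeal to ``strictness of $\underline{\cal H}$'' in deducing $r\leq s(w)$ from $r\preceq s(w)$ is unnecessary; Remark~\ref{R: orden de caminos y orden de cal P} alone already gives $r\prec s(w)\Rightarrow\overline{r}<\overline{s(w)}\Rightarrow r\leq s(w)$.
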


\begin{proof} Since $F$ is full and faithful, each $P'_i=\Gamma\otimes_{\underline{A}}Q_i\cong F(L(Q_i))=F(Q_i)$ and 
each $\Delta'_i=\Gamma\otimes_{\underline{A}}S_i\cong F(L(S_i))=F(S_i)$ are  indecomposable $\Gamma$-modules. 
Moreover, using again that $F$ is full and faithful, and (\ref{L: indec projs in cal A-mod}), we know that $\{P'_i\}_{i\in {\cal P}}$ and $\{\Delta'_i\}_{i\in {\cal P}}$ are families of non-isomorphic $\Gamma$-modules. 

From (\ref{L: indec projs in cal A-mod}), we know that  $\Gamma\cong\Gamma\otimes_{\underline{A}}\underline{A}\cong \Gamma\otimes_{\underline{A}}(\bigoplus_{i\in {\cal P}}\underline{A}e_i)\cong F(\bigoplus_{i\in {\cal P}}\underline{P}_i)\cong \bigoplus_{i\in {\cal P}} F(Q_i)\oplus (\bigoplus_{i\prec j} m_{i,j}F(Q_j))$, so every $P'_i$ is a projective indecomposable $\Gamma$-module, and  $\{P'_i\}_{i\in {\cal P}}$ is a complete family of representatives of the indecomposable projective $\Gamma$-modules.

Assume that, for $i,j\in {\cal P}$, we have $\Hom_\Gamma(\Delta'_i,\Delta'_j)\not=0$. Then, we have that $\Hom_{\underline{\cal A}}(S_i,S_j)\not=0$. If we assume that $i\not=j$, and take $0\not=f=(f^0,f^1)\in \Hom_{\underline{\cal A}}(S_i,S_j)$, we have that $f^0=0$, but $0\not=f^1\in \Hom_{S\g S}(W_1,\Hom_k(S_i,S_j))\cong e_jW_1e_i$, therefore, since $\underline{\cal A}$ is strict, we obtain that $i\leq j$. 

Now, suppose that, for $i,j\in {\cal P}$ and $n\in \hueca{N}$, we have that $\Ext^n_\Gamma(\Delta'_i,\Delta'_j)\not=0$. From (\ref{P: de A-mod en Gamma-mod y Burt-Butler}), we obtain  $\Ext^n_{\underline{A}}(S_i,S_j)\not=0$ and, from (\ref{L: underline A tiene stict homol system con simples}), we get  $\overline{i}<\overline{j}$. 

So, once we show that ${\cal H}'$ is an admissible homological system for the algebra $\Gamma$, we get that it is strict.

Fix $i\in{\cal P}$ and consider the exact sequence $0\rightmap{}\rad(\underline{P}_i)\rightmap{}\underline{P}_i\rightmap{p^0}S_i\rightmap{}0$ in $\underline{A}\g\mod$. Applying $L_{\underline{\cal A}}$, we have a conflation 
$\rad(\underline{P}_i)\rightmap{}\underline{P}_i\rightmap{}S_i$ in $\underline{\cal A}\g\mod$. Consider the following composition of morphisms in $\underline{\cal A}\g\mod$ 
$$\begin{matrix}g_i=(g_i^0,g_i^1)&=&(Q_i&\rightmap{ \ (\sigma_i,0) \ }&Q_i\oplus(\bigoplus_{i\prec j}m_{i,j}Q_j)&
\cong& \underline{P}_i&\rightmap{ \ (p^0,0) \ }&S_i).\end{matrix}$$
Since $g_i^0\not=0$, we get that $g_i^0$ is a retraction, so $g_i$ is a deflation and we have a conflation 
$K_i\rightmap{f_i}Q_i\rightmap{g_i}S_i$ in $\underline{\cal E}$. Here, we know that $e_jK_i\not=0$ implies $e_jQ_i\not=0$, hence that $i\prec j$. 
By the definition of $\underline{\cal E}$ in (\ref{D: S-acceptable y E_A,S}), we know there is a commutative diagram 
$$\begin{matrix}
  K_i&\rightmap{f_i}&Q_i&\rightmap{g_i}&S_i\\
  \rmapdown{\cong}&&\rmapdown{\cong}&&\rmapdown{\cong}\\
  K'_i&\rightmap{ \ (u_i^0,0) \ }&E'_i&\rightmap{\ (v_i^0,0) \ }&S'_i,\\
  \end{matrix}$$
  where $0\rightmap{}K'_i\rightmap{u_i^0}E'_i\rightmap{v_i^0}S'_i\rightmap{}0$ is exact in $\underline{A}\g\mod$. 
  Applying $F$ to the diagram, from (\ref{P: de A-mod en Gamma-mod y Burt-Butler})(2), we get the commutative diagram in $\Gamma\g\mod$ with exact lower row
 $$\begin{matrix}
  F(K_i)&\rightmap{F(f_i)}&F(Q_i)&\rightmap{F(g_i)}&F(S_i)\\
  \rmapdown{\cong}&&\rmapdown{\cong}&&\rmapdown{\cong}\\
  F(K'_i)&\rightmap{ \ F(u_i^0,0) \ }&F(E'_i)&\rightmap{\ F(v_i^0,0) \ }&F(S'_i).\\
  \end{matrix}$$
  Hence, we obtain an exact sequence of $\Gamma$-modules
  $$0\rightmap{}\Gamma\otimes_{\underline{A}}K_i\rightmap{}\Gamma\otimes_{\underline{A}}Q_i\rightmap{}\Gamma\otimes_{\underline{A}}S_i\rightmap{}0,$$
  where $P'_i=\Gamma\otimes_{\underline{A}}Q_i$ and $\Delta'_i=\Gamma\otimes_{\underline{A}}S_i$. 
  
  Now, we show that $\Gamma\otimes_{\underline{A}}K_i$ is filtered by $\Gamma$-modules $\Delta'_j$ with $\overline{i}<\overline{j}$. 
  Recall that the $\underline{A}$-module $K_i$ admits a simple $\underline{A}$-module $S_j$ as a composition factor iff $\Hom_{\underline{A}}(\underline{A}e_j,K_i)\not=0$, that is iff $e_jK_i\not=0$, thus $i\prec j$. Consider a composition series $0\subseteq E_\ell\subseteq \cdots\subseteq E_1\subseteq E_0=K_i$ of the $\underline{A}$-module $K_i$, then we have exact sequences 
  $$\begin{matrix}
     0\rightmap{}E_\ell\rightmap{}E_{\ell-1}\rightmap{}S_{j_2}\rightmap{}0\\
     0\rightmap{}E_{\ell-1}\rightmap{}E_{\ell-2}\rightmap{}S_{j_3}\rightmap{}0\\
     \cdots \\
     0\rightmap{}E_1\rightmap{}E_0\rightmap{}S_{j_\ell}\rightmap{}0\\
    \end{matrix}$$
    with $i\prec j_1,\ldots,j_\ell$ and $E_\ell\cong S_{j_1}$. Applying the exact functor $\Gamma\otimes_{\underline{A}}-$ we obtain exact sequences 
     $$\begin{matrix}
     0\rightmap{}\Gamma\otimes_{\underline{A}}S_{j_1}\rightmap{}\Gamma\otimes_{\underline{A}}E_{\ell-1}\rightmap{}\Gamma\otimes_{\underline{A}}S_{j_2}\rightmap{}0\\
     0\rightmap{}\Gamma\otimes_{\underline{A}}E_{\ell-1}\rightmap{}\Gamma\otimes_{\underline{A}}E_{\ell-2}\rightmap{}\Gamma\otimes_{\underline{A}}S_{j_3}\rightmap{}0\\
     \cdots \\
     0\rightmap{}\Gamma\otimes_{\underline{A}}E_1\rightmap{}\Gamma\otimes_{\underline{A}}E_0\rightmap{}\Gamma\otimes_{\underline{A}}S_{j_\ell}\rightmap{}0.\\
    \end{matrix}$$
     Then, the module $\Gamma\otimes_{\underline{A}}K_i$ is filtered by $\Gamma$-modules $\Delta'_j$ with $i\prec j_1,\ldots,j_\ell$, thus with $\overline{i}< \overline{j_1},\ldots,\overline{j_\ell}$. Thus,  $P'_i\in {\cal F}(\Delta')$, for all $i\in {\cal P}$, and ${\cal H}'$ is  a strict homological system. 
\end{proof}

\begin{lemma}\label{L: cal F(Delta')=Inducidos y consecuencia} Adopt the notation of (\ref{P: de A-mod en Gamma-mod y Burt-Butler}). Then, we have ${\cal I}={\cal F}(\Delta')$.  Moreover, the family $\{P'_i\}_{i\in {\cal P}}$ is a complete family of representatives of the indecomposable ${\cal F}(\Delta')$-projective objects.
\end{lemma}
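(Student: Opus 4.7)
The plan is to prove the two claimed equalities separately and then invoke Proposition \ref{P: admis implies H2} to handle the projective statement.

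First, for the inclusion ${\cal F}(\Delta') \subseteq {\cal I}$: each $\Delta'_i = \Gamma\otimes_{\underline{A}}S_i$ lies in ${\cal I}$ by definition, and by Proposition \ref{P: de A-mod en Gamma-mod y Burt-Butler}(3) the subcategory ${\cal I}$ is closed under extensions in $\Gamma\g\mod$. A straightforward induction on the length of a $\Delta'$-filtration then shows that every $M\in {\cal F}(\Delta')$ lies in ${\cal I}$. For the reverse inclusion ${\cal I}\subseteq {\cal F}(\Delta')$, take $M=\Gamma\otimes_{\underline{A}}N$ with $N\in \underline{A}\g\mod$. Since $\underline{A}$ is directed (hence basic) with simple modules $\{S_i\}_{i\in {\cal P}}$, any composition series of $N$ in $\underline{A}\g\mod$ has factors of the form $S_{j_t}$; applying the exact functor $\Gamma\otimes_{\underline{A}}-$, guaranteed by Proposition \ref{P: de A-mod en Gamma-mod y Burt-Butler}(2), yields a $\Delta'$-filtration of $M$ with factors $\Delta'_{j_t}$. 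This settles ${\cal I}={\cal F}(\Delta')$.

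For the second assertion, recall from Proposition \ref{P: strict homological system for Gamma} that $\{P'_i\}_{i\in {\cal P}}$ is a complete family of representatives of the indecomposable projective $\Gamma$-modules and that ${\cal H}'$ is a strict homological system for $\Gamma$. Combining this with the equality ${\cal I}={\cal F}(\Delta')$ just established and the fact that $\Gamma\cong \Gamma\otimes_{\underline{A}}\underline{A}\in {\cal I}$, we get $\Gamma\in {\cal F}(\Delta')$, so ${\cal H}'$ is admissible for $\Gamma$. Moreover, applying the exact functor $\Gamma\otimes_{\underline{A}}-$ to the surjection $g_i^0\colon Q_i\twoheadrightarrow S_i$ exhibited in the proof of Proposition \ref{P: strict homological system for Gamma} yields a surjection $P'_i\twoheadrightarrow \Delta'_i$, so $P'_i$ is the projective cover of $\Delta'_i$ (matching the indexing prescribed in Proposition \ref{P: admis implies H2}).

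Now Proposition \ref{P: admis implies H2} applies to $\Gamma$ with the admissible homological system ${\cal H}'$. Its proof constructs, for each $i\in {\cal P}$, an indecomposable module $X_i\in {\cal F}(\Delta')$ satisfying $\Ext^1_\Gamma(X_i,Y)=0$ for all $Y\in {\cal F}(\Delta')$ (i.e., an indecomposable ${\cal F}(\Delta')$-projective), and identifies $P'_i\cong X_i$. Since every indecomposable ${\cal F}(\Delta')$-projective admits, by the same argument, a surjection onto some $\Delta'_i$ and is thus isomorphic to one of the $P'_i$, we conclude that $\{P'_i\}_{i\in {\cal P}}$ is a complete family of representatives of the indecomposable ${\cal F}(\Delta')$-projective objects. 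The only mildly delicate point is verifying that the indexing of the $P'_i$ really matches the one produced by Proposition \ref{P: admis implies H2}, which is handled in the preceding paragraph by tracing through the construction of $Q_i$ in Lemma \ref{L: indec projs in cal A-mod}; once this matching is established, the rest is a direct application of earlier results.
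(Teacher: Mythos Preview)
Your argument for ${\cal I}={\cal F}(\Delta')$ is identical to the paper's. For the second assertion, however, the paper takes a shorter and more self-contained route that exploits precisely the equality just proved: given an indecomposable ${\cal F}(\Delta')$-projective $Q'$, one writes $Q'\cong\Gamma\otimes_{\underline A}Q$ for some $Q\in\underline A\g\mod$, chooses any exact sequence $0\to K\to\underline P\to Q\to 0$ with $\underline P$ projective in $\underline A\g\mod$, and applies the exact functor $\Gamma\otimes_{\underline A}-$; all three terms land in ${\cal I}={\cal F}(\Delta')$, so ${\cal F}(\Delta')$-projectivity of $Q'$ splits the resulting sequence, and since $\underline P\cong\bigoplus_i m_iQ_i$ in $\underline{\cal A}\g\mod$ by Lemma~\ref{L: indec projs in cal A-mod}, $Q'$ is a summand of $\bigoplus_i m_iP'_i$, hence isomorphic to some $P'_i$. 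Your route through Proposition~\ref{P: admis implies H2} also works, but it leans on the $X_i$'s coming from \cite{MSX} inside that proof, and your completeness step (``and is thus isomorphic to one of the $P'_i$'') is underspecified as written: having a surjection $Q'\to\Delta'_i$ is not by itself enough; you must rerun the local-endomorphism-ring comparison from the proof of~\ref{P: admis implies H2} with $Q'$ in the role of $X_i$, using $\Ext^1_\Gamma(Q',M'_i)=0$ (valid because $M'_i\in{\cal F}(\Delta')$). The paper's argument avoids this detour and any appeal to \cite{MSX} at this point.
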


\begin{proof} Notice first that since ${\cal I}$ is closed under extensions and $\Delta'_i=\Gamma\otimes_{\underline{A}}S_i\in {\cal I}$, an easy induction shows that ${\cal F}(\Delta')\subseteq{\cal I}$. Conversely, given $\Gamma\otimes_{\underline{A}}M\in {\cal I}$, a composition series for the $\underline{A}$-module $M$ is transformed by the application of $\Gamma\otimes_{\underline{A}}-$ into a $\Delta'$-filtration of $\Gamma\otimes_{\underline{A}}M$, so it belongs to ${\cal F}(\Delta')$. 
So, indeed we have ${\cal I}={\cal F}(\Delta')$.

From (\ref{P: strict homological system for Gamma}), we know that 
 each $P'_i\in {\cal F}(\Delta')$, and it is an indecomposable  projective $\Gamma$-module, so, it is an indecomposable ${\cal F}(\Delta')$-projective. Assume that $Q'\in {\cal F}(\Delta')$ is an indecomposable ${\cal F}(\Delta')$-projective. Since ${\cal F}(\Delta')$ consists of induced $\Gamma$-modules, we know that $Q'\cong \Gamma\otimes_{\underline{A}}Q$, for some $Q\in \underline{A}\g\mod$. Consider any exact sequence in $\underline{A}\g\mod$ 
$$0\rightmap{}K\rightmap{f^0}\underline{P}\rightmap{g^0}Q\rightmap{}0$$
with $\underline{P}$ a projective $\underline{A}$-module. 
Then, we have the conflation 
$K\rightmap{(f^0,0)}\underline{P}\rightmap{(g^0,0)}Q$ in $\underline{\cal A}\g\mod$ and an exact sequence 
$$0\rightmap{}\Gamma\otimes_{\underline{A}}K\rightmap{}\Gamma\otimes_{\underline{A}}\underline{P}\rightmap{}Q'
\rightmap{}0,$$
in $\Gamma\g\mod$, which lies in ${\cal I}={\cal F}(\Delta')$. Moreover, we know that 
the projective $\underline{A}$-module $\underline{P}$ is $\underline{\cal E}$-projective and, so, it has the form $\underline{P}\cong\bigoplus_{i\in {\cal P}}m_iQ_i$ in $\underline{\cal A}\g\mod$. Then, 
$\Gamma\otimes_{\underline{A}}\underline{P}\cong \bigoplus_{i\in {\cal P}}m_i\Gamma\otimes_{\underline{A}}Q_i=\bigoplus_{i\in {\cal P}}m_iP'_i$. Since $Q'$ is an indecomposable ${\cal F}(\Delta')$-projective, the preceding exact sequence splits and we obtain that $Q'\cong P'_i$, for some $i\in {\cal P}$
\end{proof}

\eject

\begin{theorem}\label{T: charact of strict Yoneda algebras}
Assume that ${\cal H}=({\cal P},\leq,\{\Delta_i\}_{i\in {\cal P}})$ is an admissible homological system for $\Lambda$. Then, the following are equivalent.
\begin{enumerate}
 \item The Yoneda algebra $A(\Delta)=\bigoplus_{n\geq 0}\Ext^n_\Lambda(\Delta,\Delta)$ admits a strict structure of $A_\infty$-algebra over $S=\bigoplus_{i\in {\cal P}}ke_i$.
 \item The homological system ${\cal H}$ is strict. 
\end{enumerate}
Moreover, in this case the algebra $\Lambda$ and the right algebra $\Gamma$ of the interlaced weak ditalgebra $\underline{\cal A}(\Delta)$ of $\Delta$ 
are Morita equivalent. 
\end{theorem}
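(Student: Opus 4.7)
The plan is to establish the two implications separately, with the nontrivial direction being (1) $\Rightarrow$ (2), while the Morita equivalence of $\Lambda$ and $\Gamma$ will emerge as a byproduct. The implication (2) $\Rightarrow$ (1) is already available as Theorem \ref{T: Yoneda A-infinito algebra es estricta}, so only (1) $\Rightarrow$ (2) needs work. Assuming $A = A(\Delta)$ carries a strict $A_\infty$-structure in the sense of (\ref{R: strictness of A}), the whole machinery of sections 4--11 becomes applicable: I can form the interlaced weak ditalgebra $\underline{\cal A}(\Delta) = ({\cal A}(\Delta), I)$ of (\ref{D: def del bocs A(Delta)}), compute its right algebra $\Gamma$, and then compare $\Lambda$ and $\Gamma$ through the intermediary categories.

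First I would verify that $\underline{\cal A}(\Delta)$ is a ${\cal P}$-strict interlaced weak ditalgebra in the sense of (\ref{D: strict weak interlaced ditalgebras}). Conditions (1)--(3) reduce to the basic properties of the original (possibly non-strict) homological system: the base ring $S$ already has the right shape by construction, and for the bimodules $W_0 = \hat{D}(B)_0$ and $W_1 = \hat{D}(J)$ the isomorphisms $e_j * \hat{D}(B)_0 * e_i \cong D(\Ext^1_\Lambda(\Delta_i,\Delta_j))$ and $e_j * \hat{D}(J) * e_i \cong D(e_j J e_i)$ recorded in (\ref{R: B y D(B) homogeneos}) turn (2)--(3) into conditions (1)--(2) of (\ref{D: homological system y F(Delta)}). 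Condition (4), $I \subseteq \rad(A)^2$, follows from the fact that $A = T_S(\hat{D}(B)_0)$ is finite-dimensional (by (\ref{L: sobre elementos homogeneos de TS(DB)})), combined with $\Im \beta \subseteq \bigoplus_{n\geq 2} \hat{D}(B)_0^{\otimes n} = \rad(A)^2$, see (\ref{P: donde se define beta}). Once this is in place, Proposition \ref{P: strict homological system for Gamma} produces a strict homological system ${\cal H}' = ({\cal P}, \leq, \{\Delta'_i\}_{i\in {\cal P}})$ for the right algebra $\Gamma$.

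Next I would transfer strictness back from ${\cal H}'$ to ${\cal H}$. Composing the equivalence $G:{\cal F}(\Delta)\to \underline{\cal A}(\Delta)\g\mod$ of (\ref{T: Keller exactness of equiv}) with the full and faithful induction functor $F=\Gamma\otimes_{\underline{A}}-$ of (\ref{P: de A-mod en Gamma-mod y Burt-Butler}), which, by (\ref{L: cal F(Delta')=Inducidos y consecuencia}), restricts to an equivalence onto ${\cal F}(\Delta')\subseteq \Gamma\g\mod$, yields an equivalence of exact categories $\Phi:=F\circ G:{\cal F}(\Delta)\to {\cal F}(\Delta')$ sending $\Delta_i\mapsto \Delta'_i$ and, by matching tops, $P_i\mapsto P'_i$. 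Since $\Lambda\in {\cal F}(\Delta)$ and the indecomposable $\Lambda$-projectives are projective in the exact structure of ${\cal F}(\Delta)$, the $i$-bounded minimal projective resolutions of (\ref{L: admissible  implies P(Delta(i)) i-bounded}) compute $\Ext^n_{\Lambda}(\Delta_i,\Delta_j)=\Ext^n_{{\cal F}(\Delta)}(\Delta_i,\Delta_j)$ for every $n\geq 0$, and the parallel statement holds on the $\Gamma$-side with $\Delta'_i$. Applying $\Phi$ to such a resolution gives the corresponding projective resolution of $\Delta'_i$ in ${\cal F}(\Delta')$, so $\Ext^n_\Lambda(\Delta_i,\Delta_j)\cong \Ext^n_\Gamma(\Delta'_i,\Delta'_j)$ for all $n\geq 1$; strictness of ${\cal H}'$ then forces strictness of ${\cal H}$.

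Finally, the Morita equivalence of $\Lambda$ and $\Gamma$ falls out of the same functor $\Phi$. Since admissibility of ${\cal H}$ forces $\Lambda=\bigoplus_{i\in {\cal P}} P_i^{n_i}$ with every $n_i\geq 1$ (by the bijection in (\ref{P: admis implies H2})), we obtain $\Phi(\Lambda)\cong \bigoplus_{i\in {\cal P}}(P'_i)^{n_i}$, which is a progenerator of $\Gamma\g\mod$; the fullness and faithfulness of $\Phi$ on ${\cal F}(\Delta)$ then gives $\Lambda\cong \End_\Lambda(\Lambda)^{op}\cong \End_\Gamma(\Phi(\Lambda))^{op}$, whence $\Lambda$ and $\Gamma$ are Morita equivalent. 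The hard part I anticipate is checking that $\Phi$ is genuinely an equivalence of exact categories in both directions (not merely a functor sending short exact sequences to conflations), so that the Ext-identification in the third paragraph is rigorous; this requires unpacking the explicit nature of the Keller--Lef\`evre equivalence together with the exactness of $F$ from (\ref{P: de A-mod en Gamma-mod y Burt-Butler})(2), but should be routine given the detailed description of $G$ available from sections 5--9.
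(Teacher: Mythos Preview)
Your proposal is correct and follows essentially the same route as the paper: both directions hinge on the same ingredients (Theorem~\ref{T: Yoneda A-infinito algebra es estricta} for $(2)\Rightarrow(1)$; the ${\cal P}$-strictness of $\underline{\cal A}(\Delta)$, Proposition~\ref{P: strict homological system for Gamma}, and the composite equivalence $\Theta=F'G:{\cal F}(\Delta)\to{\cal F}(\Delta')$ for $(1)\Rightarrow(2)$).

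The only noteworthy difference is in the endgame. You transfer strictness by arguing that the exact equivalence $\Phi$ identifies $\Ext^n_{{\cal F}(\Delta)}$ with $\Ext^n_{{\cal F}(\Delta')}$, and then derive Morita equivalence separately from $\Phi(\Lambda)$ being a progenerator. The paper reverses this order: it first pins down $\Theta(P_i)\cong P'_i$ (via a projective-cover argument, not merely ``matching tops''), uses this to build an explicit Morita equivalence $\Omega:\Gamma\g\mod\to\Lambda\g\mod$ with $\Omega(\Delta'_i)\cong\Delta_i$, and then reads off $\Ext^n_\Lambda(\Delta_i,\Delta_j)\cong\Ext^n_\Gamma(\Delta'_i,\Delta'_j)$ from the equivalence of \emph{abelian} categories. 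The paper's ordering has the advantage that once $\Omega$ exists, the Ext comparison is automatic and requires no separate verification that the quasi-inverse of $\Phi$ is exact; your ``hard part'' is thus sidestepped rather than confronted. On the other hand, your derivation of Morita equivalence via $\End_\Gamma(\Phi(\Lambda))^{op}\cong\Lambda$ is a bit more direct than the paper's passage through $\Lambda'=\End_\Lambda(P)^{op}$ and $\Gamma'=\End_\Gamma(P')^{op}$. Both arguments ultimately rest on identifying $\Theta(P_i)\cong P'_i$, and for that step the paper's projective-cover comparison (using only one-directional exactness of $\Theta$ plus indecomposability) is more robust than your informal ``matching tops''; you should adopt it.
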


\begin{proof}$(2)$ implies $(1)$: If ${\cal H}$ is a strict homological system,  from (\ref{T: Yoneda A-infinito algebra es estricta}), we know that the Yoneda algebra of $\Delta$ admits a strict structure of $A_\infty$-algebra over $S$. 

$(1)$ implies $(2)$: The construction of the triangular interlaced weak ditalgebra $\underline{\cal A}(\Delta)=({\cal A}(\Delta),I)$ accomplished in (\ref{D: def del bocs A(Delta)}), only required an admissible homological system ${\cal H}$ for $\Lambda$ and a strict structure of $A_\infty$-algebra on the Yoneda algebra of $\Delta$, as we assumed at the begining of \S4. It is clear that the resulting interlaced weak  ditalgebra  $\underline{\cal A}(\Delta)$ is ${\cal P}$-strict. Then, we have the strict homological system ${\cal H}'=({\cal P},\leq,\{\Delta'_i\}_{i\in {\cal P}})$ for its right algebra $\Gamma$ described in (\ref{P: strict homological system for Gamma}). 

We have  the exact functor $F:\underline{\cal A}(\Delta)\g\mod\rightmap{}\Gamma\g\mod$ such that $F(S_i)=\Delta'_i$, for all $i\in {\cal P}$. From (\ref{P: de A-mod en Gamma-mod y Burt-Butler}) and (\ref{L: cal F(Delta')=Inducidos y consecuencia}), we know that this functor restricts to an equivalence of categories $F':\underline{\cal A}(\Delta)\g\mod\rightmap{}{\cal F}(\Delta')$, which maps conflations onto short exact sequences.   
  
  From (\ref{T: Keller exactness of equiv}), we know  that there is an equivalence of  categories  
  $$G:{\cal F}(\Delta)\rightmap{}\underline{\cal A}(\Delta)\g\mod, \hbox{ with } G(\Delta_i)\cong S_i, \hbox{ for } i\in {\cal P},$$
  and such that it maps short exact sequences onto conflations. 
  The  equivalence $\Theta=F'G:{\cal F}(\Delta)\rightmap{}{\cal F}(\Delta')$,  satisfies $\Theta(\Delta_i)\cong\Delta'_i$, for all $i\in {\cal P}$, and it maps short exact sequences with terms in ${\cal F}(\Delta)$ onto short exact sequences with terms in ${\cal F}(\Delta')$. Its quasi-inverse $\Theta':{\cal F}(\Delta')\rightmap{}{\cal F}(\Delta)$ satisfies $\Theta'(\Delta'_i)\cong\Delta_i$, for $i\in {\cal P}$.
  
  We clearly have that $\{P_i\}_{i\in {\cal P}}$ is a family of non-isomorphic indecomposable ${\cal F}(\Delta)$-projectives. Since $\Theta'$ is an equivalence, from (\ref{L: cal F(Delta')=Inducidos y consecuencia}), we know that $\{\Theta'(P'_i)\}_{i\in {\cal P}}$ is a complete family of non-isomorphic indecomposable ${\cal F}(\Delta)$-projectives. Suppose that $P_i\cong \Theta'(P'_j)$, thus $\Theta(P_i)\cong \Theta\Theta'(P'_j)\cong P'_j$. From (\ref{P: admis implies H2}), we have the exact sequence in ${\cal F}(\Delta)$
 $$\begin{matrix}
   0&\rightmap{}&Q_i&\rightmap{}&P_i&\rightmap{\nu_i}&\Delta_i&\rightmap{}&0.
   \end{matrix}$$ 
 Thus, we have an exact sequence in ${\cal F}(\Delta')$
 $$\begin{matrix}
   0&\rightmap{}&\Theta(Q_i)&\rightmap{}&\Theta(P_i)&\rightmap{}&\Theta(\Delta_i)&\rightmap{}&0.
   \end{matrix}$$ 
This gives a surjective morphism $P'_j\rightmap{}\Delta'_i$ in $\Gamma\g\mod$, which has to factor through the projective cover $\nu_i':P'_i\rightmap{}\Delta'_i$, see again (\ref{P: admis implies H2}). Then, there is a surjective morphism $P'_i\rightmap{}P'_j$ and, so, we get $P'_i\cong P'_j$, and $i=j$. So we have that $\Theta'(P'_i)\cong P_i$, for all $i\in {\cal P}$. 

 Thus the equivalence $\Theta:{\cal F}(\Delta)\rightmap{}{\cal F}(\Delta')$ is such that $\Theta(\Delta_i)\cong \Delta'_i$ and $\Theta(P_i)\cong P'_i$, for all $i\in {\cal P}$. After adjusting $\Theta$, if necessary, we can assume that $\Theta(\Delta_i)= \Delta'_i$ and $\Theta(P_i)=P'_i$, for all $i\in {\cal P}$. Make $P:=\bigoplus_{i\in {\cal P}}P_i$ and $P':=\bigoplus_{i\in {\cal P}}P'_i$, then $\Theta$ determines an isomorphism of $k$-algebras
 $$\Lambda':=\End_\Lambda(P)^{op}=
 \End_{\Lambda}(\bigoplus_{i\in {\cal P}}P_i)^{op}
 \rightmap{\theta}
 \End_{\Gamma}(\bigoplus_{i\in {\cal P}}P'_i)^{op}=\End_{\Gamma}(P')^{op}=:\Gamma'.$$
 We know that $\Gamma$ is Morita equivalent to $\Gamma'$ and that $\Lambda$ is Morita quivalent to $\Lambda'$. 
 Consider the following series of equivalences
$$\Gamma\g\mod\rightmap{\Phi}\Gamma'\g\mod\rightmap{F_\theta}\Lambda'\g\mod\leftmap{\Psi}\Lambda\g\mod$$
where $\Phi=\Hom_\Gamma(P',-)$, $\Psi=\Hom_\Lambda(P,-)$, and $F_\theta$ is the restriction functor determined by the isomorphism  $\theta$. Once the equivalence $\Theta$ has been fixed as above, we can choose wisely a quasi-inverse $\Theta':{\cal F}(\Delta')\rightmap{}{\cal F}(\Delta)$ of $\Theta$ such that  $\Theta'(\Delta'_i)=\Delta_i$, $\Theta'(P'_i)=P_i$, for $i\in {\cal P}$ and, furthermore, such that $\Theta'\Theta(\lambda')=\lambda'$, for any $\lambda'\in\Lambda'$. 

We claim that $F_\theta\Phi(\Delta'_i)\cong \Psi(\Delta_i)$, for all $i\in {\cal P}$. Indeed, we have 
$\Phi(\Delta'_i)=\Hom_\Gamma(P',\Delta'_i)=\Hom_{{\cal F}(\Delta')}(P',\Delta'_i)$. Similarly, we have  $\Psi(\Delta_i)=\Hom_{\Lambda}(P,\Delta_i)=\Hom_{{\cal F}(\Delta)}(P,\Delta_i)$.  By the preceding considerations, we have an isomorphism of vector spaces
$$F_\theta\Phi(\Delta'_i)=\,_\theta\Hom_{{\cal F}(\Delta')}(P',\Delta'_i)\rightmap{\Theta'}\,_{\Lambda'}\Hom_{{\cal F}(\Delta)}(P,\Delta_i)\cong\Psi(\Delta_i).$$
So we only have to verify that $\Theta'$ is an isomorphism of $\Lambda'$-modules. Take any morphism $f:P'\rightmap{}\Delta'_i$ in $\Gamma\g\mod$, $\lambda'\in \Lambda'$ and $p\in P$, then we have 
$$\begin{matrix}
\Theta'(\lambda' f)(p)
&=&
\Theta'(\theta(\lambda') f)(p)\hfill\\
&=&
 \Theta'(f\circ\Theta(\lambda'))(p)\hfill\\
 &=&
 (\Theta'(f)\circ\lambda')(p)\hfill\\
 &=&
 \Theta'(f)(p\lambda')=(\lambda'\Theta'(f))(p).\hfill\\
  \end{matrix}$$
Therefore, if $\Psi':\Lambda'\g\mod\rightmap{}\Lambda\g\mod$ is any quasi-inverse of $\Psi$, we have that the composition $\Omega:=\Psi'F_\theta\Phi:\Gamma\g\mod\rightmap{}\Lambda\g\mod$ is an equivalence of categories with $\Omega(\Delta'_i)=\Psi'F_\theta\Phi(\Delta'_i)\cong \Delta_i$, for all $i\in {\cal P}$. 
From this we obtain the strictness condition for the homological system ${\cal H}$ from the corresponding condition for the homological system ${\cal H}'$, see (\ref{P: strict homological system for Gamma}). 
\end{proof}

Now, we can derive the following. 

\begin{theorem}\label{T: Borel subalgebra}
 Let $k$ be an algebraically closed field and let $\Lambda$ be a finite dimensional $k$-algebra with a strict homological system ${\cal H}=({\cal P},\leq,\{\Delta_i\}_{i\in {\cal P}})$. Then, the algebra $\Lambda$ is Morita equivalent to a finite-dimensional $k$-algebra $\Gamma$ which admits a strict homological system   
  ${\cal H}'=({\cal P},\leq,\{\Delta'\}_{i\in P})$ such that $\Gamma$ has a regular homological exact Borel subalgebra $\underline{A}$, as in (\ref{D: exact Borel subalgebra}). Moreover, there is an equivalence $\Omega:\Gamma\g\mod\rightmap{}\Lambda\g\mod$ such that $\Omega(\Delta'_i)\cong \Delta_i$, for all $i\in {\cal P}$.
\end{theorem}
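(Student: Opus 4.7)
The plan is to exhibit explicitly the Borel subalgebra as $\underline{A}=A/I$ sitting inside the right algebra $\Gamma$ of the interlaced weak ditalgebra $\underline{\cal A}(\Delta)$ constructed in Section 4, and then to verify, one by one, the conditions in Definition \ref{D: exact Borel subalgebra}, together with the homological and regular properties.

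First, I would invoke Theorem \ref{T: Yoneda A-infinito algebra es estricta} to equip the Yoneda algebra $A$ of $\Delta$ with a strict $A_\infty$-structure, and then form the interlaced weak ditalgebra $\underline{\cal A}(\Delta)=({\cal A}(\Delta),I)$ of Definition \ref{D: def del bocs A(Delta)}. By Proposition \ref{P: triangularidad de I} this is a triangular Roiter interlaced weak ditalgebra, and by inspection of Remark \ref{R: la bigrafica de (cal A)(Delta)} together with the observations in Lemma \ref{L: precycles} it is in fact ${\cal P}$-strict in the sense of Definition \ref{D: strict weak interlaced ditalgebras}: indeed $e_jW_0e_i\neq 0$ forces $\overline{i}<\overline{j}$, $e_jW_1e_i\neq 0$ forces $\overline{i}\leq\overline{j}$, and the generators of $I$ are in $\rad(A)^2$ since $\beta(\hat D(B_1))$ lands in $T_S(\hat D(B)_0)$ without linear part. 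The right algebra $\Gamma$ of Definition \ref{D: Gamma} is then finite-dimensional and carries, by Proposition \ref{P: strict homological system for Gamma}, a strict homological system ${\cal H}'=({\cal P},\leq,\{\Delta'_i\}_{i\in {\cal P}})$ with $\Delta'_i=\Gamma\otimes_{\underline{A}}S_i$. The Morita equivalence $\Omega:\Gamma\g\mod\to\Lambda\g\mod$ satisfying $\Omega(\Delta'_i)\cong\Delta_i$ is precisely the one produced in the last part of the proof of Theorem \ref{T: charact of strict Yoneda algebras}.

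Next, I would identify $\underline{A}=A/I$ as a subalgebra of $\Gamma$ via the canonical embedding $\underline{A}\hookrightarrow\End_{\underline{\cal A}}(\underline{A})^{op}=\Gamma$ of Definition \ref{D: Gamma}, and check the three structural axioms of an exact Borel subalgebra: (1) $\underline{A}$ is directed, because the quiver of $A$ has no oriented cycle (any path of solid arrows would force a strict chain $\overline{i_0}<\overline{i_1}<\cdots$ in the finite poset $\overline{\cal P}$) and $I\subseteq\rad(A)^2$ does not create new arrows; together with Lemma \ref{L: homological systems on directed algebras} this guarantees that $\underline{A}$ is quasi-hereditary with simple standards $S_i$. (2) The right $\underline{A}$-module $\Gamma$ is projective by Corollary \ref{Gamma es underline(A) modulo proy derecho}. (3) The identification $\Delta'_i\cong\Gamma\otimes_{\underline{A}}S_i$ is built into the definition of $\Delta'_i$ in Proposition \ref{P: strict homological system for Gamma}.

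I would then deal with the homological and regular properties. The homological property is immediate from Proposition \ref{P: de A-mod en Gamma-mod y Burt-Butler}(4), which says exactly that the tensor functor $\Gamma\otimes_{\underline{A}}-$ induces epimorphisms on $\Ext^n$ for $n\geq 1$ and isomorphisms for $n\geq 2$. For regularity, the same proposition handles $n\geq 2$ and, for $n=0$, both $\Hom_{\underline{A}}(S_i,S_j)$ and $\Hom_\Gamma(\Delta'_i,\Delta'_j)$ are identified via $\Omega$ with $\Hom_\Lambda(\Delta_i,\Delta_j)$ restricted to its diagonal part (using that $\End_\Lambda(\Delta_i)$ maps injectively and the strict homological system forces the off-diagonal vanishing), so they coincide. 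The main obstacle will be the case $n=1$: here Proposition \ref{P: de A-mod en Gamma-mod y Burt-Butler}(4) only yields a surjection, and to upgrade it to an isomorphism I plan a dimension count. Since $I\subseteq\rad(A)^2$, the quiver of $\underline{A}$ coincides with the quiver ${\cal B}_0$ of $A$, so
\[
\dim_k\Ext^1_{\underline{A}}(S_i,S_j)=\dim_k e_jW_0e_i=\dim_k e_j\hat D(B_0)e_i=\dim_k\Ext^1_\Lambda(\Delta_i,\Delta_j),
\]
using the explicit description of $B_0$ in Remark \ref{R: B y D(B) homogeneos}. On the other hand, the equivalence $\Omega$ gives $\Ext^1_\Gamma(\Delta'_i,\Delta'_j)\cong\Ext^1_\Lambda(\Delta_i,\Delta_j)$, so the two spaces have the same finite dimension and the surjection must be an isomorphism. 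This is where the argument is most delicate, because it rests crucially on the $\rad^2$-control of $I$ (which was established in Step 1 and is a genuine feature of the construction) and on the fact that the equivalence $\Omega$ from Theorem \ref{T: charact of strict Yoneda algebras} preserves all $\Ext$-groups between the standard modules, not merely $\Hom$'s. Once this is checked, all axioms of Definition \ref{D: exact Borel subalgebra} are verified and the theorem follows.
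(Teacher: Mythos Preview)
Your proposal is correct and follows essentially the same route as the paper: build $\underline{\cal A}(\Delta)$, take its right algebra $\Gamma$, invoke Proposition~\ref{P: strict homological system for Gamma} and Corollary~\ref{Gamma es underline(A) modulo proy derecho} for the exact Borel axioms, use Proposition~\ref{P: de A-mod en Gamma-mod y Burt-Butler}(4) for the homological property, and settle regularity in degree $1$ by the dimension count $\dim_k\Ext^1_{\underline{A}}(S_i,S_j)=\dim_k e_j\hat D(B)_0e_i=\dim_k\Ext^1_\Lambda(\Delta_i,\Delta_j)$ together with the Morita equivalence $\Omega$ from Theorem~\ref{T: charact of strict Yoneda algebras}.

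One small correction: your treatment of the case $n=0$ is both unnecessary and incorrect. In the paper's convention $\hueca{N}$ begins at $1$, so regularity only concerns $n\geq 1$ (the paper's own proof ignores $n=0$ entirely). More importantly, your claim that $\Hom_{\underline{A}}(S_i,S_i)$ and $\Hom_\Gamma(\Delta'_i,\Delta'_i)$ coincide is false in general: the former is always $k$, while the latter is isomorphic to $\End_\Lambda(\Delta_i)$ via $\Omega$, and for a general strict homological system (unlike the quasi-hereditary case) $\End_\Lambda(\Delta_i)$ need not be one-dimensional. Simply drop that paragraph.
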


\begin{proof} Consider the ${\cal P}$-strict interlaced weak ditalgebra $\underline{\cal A}(\Delta)$, adopt the notations of this section.  
 From (\ref{P: strict homological system for Gamma}) and (\ref{Gamma es underline(A) modulo proy derecho}), the algebra $\underline{A}$ is an exact Borel subalgebra of $\Gamma$, the right algebra of $\underline{\cal A}(\Delta)$. From (\ref{P: de A-mod en Gamma-mod y Burt-Butler})(4), the Borel subalgebra $\underline{A}$ is homological. In order to show that $\underline{A}$ is regular, consider the epimorphisms 
 $$\Ext_{\underline{A}}^1(S_i,S_j)\rightmap{}\Ext^1_\Gamma(\Gamma\otimes_{\underline{A}}S_i,\Gamma\otimes_{\underline{A}}S_j)=\Ext_\Gamma^1(\Delta'_i,\Delta'_j)\cong \Ext_\Lambda^1(\Delta_i,\Delta_j),$$
 for $i,j\in {\cal P}$, where the last isomorphism is obtained from the equivalence $\Omega:\Gamma\g\mod\rightmap{}\Lambda\g\mod$ constructed in the  proof of (\ref{T: charact of strict Yoneda algebras}). Then, since 
 $e_j\hat{D}(B)_0e_i\cong e_j\Ext^1_\Lambda(\Delta,\Delta)e_i\cong\Ext^1_\Lambda(\Delta_i,\Delta_j)$, we get 
 $\dim_k\Ext_{\underline{A}}^1(S_i,S_j)=\dim_ke_j\hat{D}(B)_0e_i=\dim_k\Ext^1_\Lambda(\Delta_i,\Delta_j)$.  
\end{proof}

\begin{example}\label{E: Drozds ditalgebra}
Let $\Lambda$ be a finite-dimensional $k$-algebra and ${\cal D}=(T,\delta)$ its Drozd's ditalgebra, see \cite{BSZ}\S19 and (23.25). We recall some of its features. Here, we have a splitting $\Lambda=R\oplus J$ of the algebra $\Lambda$ over its radical  $J$, so the algebra $R$ is a finite product of copies of the field $k$, say $R=\bigoplus_{i\in F}ke_i$, where $1_R=\sum_{i\in F}ke_i$ is a decomposition of the unit element as a sum of primitive orthogonal idempotents of $R$. 
Consider the  product algebra $S=R\times R$, so we have that $S$ is again a finite product of fields and we can write  $S=\bigoplus_{i\in {\cal P}}ke_i$, with ${\cal P}:={\cal P}'\biguplus{\cal P}''$, where 
${\cal P}'=\{i'\mid i\in F\}$ and ${\cal P}''=\{i''\mid i\in F\}$ are disjoint copies  of the set $F$, and $1=\sum_{i\in {\cal P}}e_i$ is a decomposition of the unit  as a sum of primitive central ortogonal idempotents of $S$. Let us fix a basis $\hueca{B}(i,j)$ of the space $e_jJe_i$, for each $i,i\in F$. 
Then, the tensor algebra $T$ mentioned above can be identified with the graded path algebra $k({\cal B})$ of the bigraph ${\cal B}$ defined by the following.
The set of points of ${\cal B}$ is ${\cal P}$; and, for each $\alpha\in \hueca{B}(i,j)$, there is a solid arrow $\alpha:i'\rightarrow j''$ and two dashed arrows $\alpha':i'\rightarrow j'$ and $\alpha'':i''\rightarrow j''$ in ${\cal B}$.  

The ditalgebra ${\cal D}$ has no relations, that is we take $I=0$. Consider the preorder $\leq $ in ${\cal P}$ defined by $i'\leq j'$ and $i''\leq j''$, for all $i,j\in F$ and $i<j$ iff $\hueca{B}(i,j)\not=\emptyset$. Thus, the poset $\overline{\cal P}$ has only two elements. Then, the triangular interlaced weak ditalgebra $\underline{\cal D}=({\cal D},0)$ is strict with preordered set $({\cal P},\leq)$ and this section applies to it. Its right algebra is not quasi-hereditary in general.

Similarly, given any finite partially ordered sets $\hueca{S}$ and $\hueca{T}$, the associated ditalgebra ${\cal A}(\hueca{S},\hueca{T})$, see \cite{BSZ}(34.1), determines a triangular interlaced weak ditalgebra $\underline{{\cal A}(\hueca{S},\hueca{T}})=({\cal A}(\hueca{S},\hueca{T}),0)$, with a preordered set $({\cal P},\leq)$ defined similarly. 
\end{example}

\noindent{\bf  Acknowledgements.}  The authors acknowledge partial financial  support by  CONACyT, sabbatical grant 710306, and UNAM-IES mobility academic program 2020.

\hskip2cm

\vbox{\noindent R. Bautista\\
Centro de Ciencias Matem\'aticas\\
Universidad Nacional Aut\'onoma de M\'exico\\
Morelia, M\'exico\\
raymundo@matmor.unam.mx\\}

\vbox{\noindent E. P\'erez\\
Facultad de Matem\'aticas\\
Universidad Aut\'onoma de Yucat\'an\\
M\'erida, M\'exico\\
jperezt@correo.uady.mx\\}

\vbox{\noindent L. Salmer\'on\\
Centro de Ciencias  Matem\'aticas\\
Universidad Nacional Aut\'onoma de M\'exico\\
Morelia, M\'exico\\
salmeron@matmor.unam.mx\\}

 \end{document}